\newtheorem{thm}{Theorem}[section]
\newtheorem{lem}[thm]{Lemma}
\theoremstyle{remark}
\newtheorem{rem}{Remark} [section]
\theoremstyle{definition}
\newtheorem{defi}[thm]{Definition}
\newtheorem*{thm*}{Theorem}
\newtheorem*{cor*}{Corollary}
\newtheorem*{conj*}{Conjecture}
\newtheorem{proper}[thm]{Properties}
\newcommand{\Span}[1]{\operatorname{span}\set{#1}}
\DeclareFontFamily{OMX}{yhex}{}
\DeclareFontShape{OMX}{yhex}{m}{n}{<->yhcmex10}{}
\DeclareSymbolFont{yhlargesymbols}{OMX}{yhex}{m}{n}
\DeclareMathAccent{\wip}{\mathord}{yhlargesymbols}{"F3}
\title{Strip deformations of decorated hyperbolic polygons}
\author{Pallavi Panda}
\date{}
\begin{document}
	
	\maketitle
	\paragraph{Abstract.}In this paper we study the hyperbolic and parabolic strip deformations of ideal (possibly once-punctured) hyperbolic polygons whose vertices are decorated with horoballs. We prove that the interiors of their arc complexes parametrise the open convex set of all uniformly lengthening infinitesimal deformations of the decorated hyperbolic metrics on these surfaces, motivated by the work of Danciger-Guéritaud-Kassel. 
	\tableofcontents

	\section{Introduction}
A crowned hyperbolic surface is a complete non-compact hyperbolic surface with polygonal boundary where the vertices (called spikes) are projections of points on the boundary of the hyperbolic plane $\HP$. Topologically, this is an orientable surface with finitely many points removed from its boundary. The smallest crowned hyperbolic surfaces are an ideal polygon $\ip n$ ($n\geq 3$) and an ideal once-punctured polygon $\punc n$ ($n\geq 1$). 
	 
	 The arc complex $\ac{S_{g,n}}$ of such a surface, first defined by Harer \cite{harer}, is a pure flag simplicial complex generated by isotopy classes of embedded arcs with endpoints on the spikes. The arc complexes of most of the surfaces are locally non-compact. Harer showed that a specific open dense subset of the arc complex, referred to as the \emph{pruned arc complex} in this paper, is an open ball of dimension one less than that of the deformation space of the surface. Penner \cite{penner} proved that the arc complexes of $\ip n$ and $\punc n$ are $PL$-homeomorphic to spheres of dimension $n-4$ and $n-2$, respectively. In \cite{sphereconj}, he gave a complete list of surfaces for which the quotient of the arc complex by the pure mapping class group is a sphere or a $PL$-manifold. 
	 The arc complex of a crowned surface is simplicially isomorphic to that of a surface with marked points on its boundary, where the arcs end on the marked points. 
	  In \cite{cluster}, Fomin-Shapiro-Thurston proved that the latter is a subcomplex of the cluster complex associated to the cluster algebra of a surface.  The relationship to cluster algebras was heavily motivated by Penner's Decorated Teichmuller Theory \cite{Pennerpunc} \cite{Pennerbordered}. In these papers, he gave \emph{lambda length} coordinates (see Section \ref{lambda}) to the decorated Teichmuller space of a crowned surface whose spikes are decorated with horoballs. Furthermore, using the arc complex, he gave a cell-decomposition of the decorated Teichmuller space of the surface. 
	  
	   An \emph{admissible deformation} of such a surface is an infinitesimal deformation that uniformly lengthens all closed geodesics.  Goldman-Labourie-Margulis, in \cite{glm}, proved that the subspace of admissible deformations forms an open convex cone, called the \emph{admissible cone}. On the other hand, the pruned arc complex of these surfaces once again form an open ball of dimension one less than that of the Teichmuller space, which is obtained by reinterpreting Harer's result. Danciger-Guéritaud-Kassel \cite{dgk} showed that the pruned arc complex parametrises the positively projectivised admissible cone. The authors uniquely realised (Theorem \ref{cpt} of \cite{dgk}) an admissible deformation of the surface by performing hyperbolic strip deformations along positively weighted embedded arcs with endpoints on the boundary, corresponding to a point in the pruned arc complex. Hyperbolic strip deformations were first introduced by Thurston in \cite{thurston}. A hyperbolic strip is defined to be the region in $\HP$ bounded by two geodesics whose closures are disjoint.  A hyperbolic strip deformation is the process of cutting the surface along an embedded arc and gluing in a strip, without shearing. 
	  
	 Motivated by the above works, in this paper we study the arc complexes of a decorated ideal polygon $\dep n$ ($n\geq 3$) and a decorated once-punctured ideal polygon $\depu n$ ($n\geq 1$). These are generated by finite arcs whose endpoints lie on the boundary, and infinite arcs with one end at a decorated vertex and the other endpoint on the boundary. In both of these cases the arc complexes are finite. The horoball connections are simply the edges and diagonals. The admissible cone the set of all infinitesimal deformations that lengthens all edges and diagonals of the polygon. The main results of this paper are to show that the pruned arc complexes parametrise the admissible cone of decorated (possibly once-punctured) polygons via the projectivised strip map.
	 
	 \begin{thm*}
	 	Let $\Pi$ be a decorated $n$-gon $\dep n$ ($n\geq 3$)   (resp. a decorated once-punctured $n$-gon $\depu n$ ($n\geq 1$)) with a decorated metric $m$    in the deformation space $\tei {\Pi}$. Fix a choice of strip template (see Subsection \ref{template}). Then the projectivised infinitesimal strip map $\mathbb{P}f: \ac \Pi\longrightarrow \ptan \Pi$, when restricted to the pruned arc complex $\sac {\Pi}$, is a homeomorphism onto its image $\mathbb{P}^+(\adm m)$ $\simeq \R^{2n-4}\, (\text{resp. } \R^{2n-2}$), where $\adm m$ is the admissible cone.
	 \end{thm*}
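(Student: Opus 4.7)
The plan is to follow the strategy of Danciger--Gu\'eritaud--Kassel for compact convex cores, adapted to the essentially contractible setting of $\Pi$, where the geodesics whose lengths need to be monitored are just the finitely many edges and diagonals of the decorated polygon. I would structure the proof in four steps.

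First, I would check that $\mathbb{P}f$ is well defined and continuous on $\ac{\Pi}$. A point of the open simplex spanned by pairwise disjoint arcs $\alpha_1,\dots,\alpha_k$ is a positively weighted formal sum $\sum t_i\alpha_i$; the infinitesimal strip deformation $f(\alpha_i)$ along a single arc depends smoothly on $\alpha_i$ and on the fixed strip template, and extends linearly in the weights to the simplex. Continuity then descends to the projectivisation, provided the resulting cocycle is non-zero, which follows from the length-variation formula below.

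Second, I would establish the inclusion $\mathbb{P}f(\sac{\Pi})\subseteq \mathbb{P}^+(\adm m)$. By the definition of the pruned arc complex, the arcs supporting a simplex $\sigma\in\sac{\Pi}$ collectively fill $\Pi$, in the sense that every edge (and in the punctured case, every sufficiently short loop around the puncture) is crossed by at least one arc of $\sigma$. Using the explicit first-order effect of hyperbolic and parabolic strip deformations on $\lambda$-lengths — each transverse crossing with positive weight contributes strictly positively, a non-crossing contributes zero — summing over the support of $\sigma$ yields an infinitesimal deformation that lengthens every edge, hence lies in $\adm m$.

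The crux is injectivity, and I expect this to be the main obstacle. Two weighted arc systems $\sigma_1\neq\sigma_2$ in $\sac{\Pi}$ lying in a common (closed) simplex are compared by showing that the matrix of partial derivatives of edge-lengths with respect to the arc weights has full rank: for each arc $\alpha$ there exists a test edge whose $\lambda$-length variation depends on the weight of $\alpha$ but not on the weights of the other arcs of the simplex, which is verified by differentiating the strip formulas at $m$ in $\lambda$-length coordinates on $\tei{\Pi}$. Two weighted systems lying in different simplices are separated by the same analysis applied to any edge that meets the symmetric difference of their supports. Here the extra care is needed because parabolic strip deformations along infinite arcs and hyperbolic strip deformations along finite arcs have different analytic signatures, and the loop-around-the-puncture case must be handled separately in $\depu n$.

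Finally, to identify the image with all of $\mathbb{P}^+(\adm m)$ and promote the continuous injection to a homeomorphism, I would carry out a dimension count combined with invariance of domain. The pruned arc complex is a topological manifold of dimension $\dim\tei{\Pi}-1$ (whose ball-structure will in fact be deduced as a corollary), and $\mathbb{P}^+(\adm m)\subseteq \ptan{\Pi}$ is an open convex subset of the same dimension; a continuous injection between manifolds of equal dimension is automatically an open homeomorphism onto its image. Properness of $\mathbb{P}f$ on $\sac{\Pi}$ — a sequence of weighted arc systems escaping to $\partial\sac{\Pi}$ must concentrate mass on arcs that fail to cross some fixed edge, so the limit deformation cannot lengthen that edge — then forces the open image to be all of $\mathbb{P}^+(\adm m)$, closing the argument.
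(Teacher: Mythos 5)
Your Steps 1, 2 and 4 match the paper: the image statement is the length-variation formula (Lemma \ref{lenderiv} here, summing $w_x(p)\sin\angle_p$ over crossings, strictly positive because a filling system meets every edge and diagonal), and the properness argument (mass concentrating on a non-filling subsystem forces $\mathrm{d}l_{\be}(f(y))=0$ for some missed horoball connection $\be$ while $f(y)\neq 0$) is exactly Theorem \ref{properdeco}. The gap is in Step 3, which you correctly identify as the crux but do not actually carry out.

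Two distinct problems. First, within a single top-dimensional simplex, the claim that each arc $\al$ admits a ``test edge whose $\lambda$-length variation depends on the weight of $\al$ but not on the weights of the other arcs'' is not true in general (several arcs of a triangulation typically end on, or cross, the same edge), and even the weaker assertion that the matrix $\bigl(\mathrm{d}l_{\be_j}(f_{\al_i}(m))\bigr)$ has full rank is precisely the content of Theorems \ref{thmdeco} and \ref{thmdecopunc}; the paper has to prove it by a genuine argument (neutral tile maps $\phi_0:\tile\to\lalg$, an induction along the dual tree locating $[\np d]$ inside a projective triangle based at each internal edge, plus the observation that a tile fixing a decorated vertex and a tile fixing a different decorated vertex force $\np d=0$). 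Asserting full rank ``by differentiating the strip formulas in $\lambda$-length coordinates'' is not a proof of linear independence. Second, and more seriously, your separation of two points lying in \emph{different} simplices ``by any edge that meets the symmetric difference of their supports'' cannot work: by your own Step 2, both images lie in $\adm m$, so \emph{every} functional $\mathrm{d}l_{\be}$ is strictly positive on both; there is no sign to distinguish them. This is exactly why the paper (following Danciger--Gu\'eritaud--Kassel) does not attempt direct global injectivity. Instead it proves that $\mathbb{P}f$ is a \emph{local} homeomorphism — via the codimension-1 statement that the images of two adjacent top simplices have disjoint interiors and union convex for a suitable strip template (Theorem \ref{codim1}, established by exhibiting an explicit coherent tile map realising $c_{\al_1}f_{\al_1}+c_{\al_2}f_{\al_2}+\sum c_{\be}f_{\be}=0$ with $c_{\al_i}>0$, $c_{\be}\le 0$), a degree-one argument on the circle link of codimension-2 faces, and an induction on codimension — and then combines local homeomorphism with properness to get a covering map onto the simply connected convex set $\mathbb{P}^+(\adm m)$, whence a homeomorphism. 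Without some substitute for the codimension-1 disjointness/convexity analysis, your injectivity step has no content, and the invariance-of-domain conclusion in Step 4 has nothing to stand on.
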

	 
	 This is obtained by following the proof structure in \cite{dgk}, using both hyperbolic strip deformations along finite arcs and parabolic strip deformations along infinite arcs. 
	 
	 Finally, we also give a version of the above theorem for the undecorated ideal polygons and once-punctured ideal polygons.  In these cases the arcs are finite with endpoints on non-consecutive edges of the polygon. We show (Theorems \ref{Mainideal} and \ref{Mainpunc}) that the arc complex parametrises the entire positively projectivised deformation space in these cases.
	 \begin{thm*}
	 	Let $\Pi$ be an ideal polygon $\ip n$ ($n\geq4$) (resp. a once punctured polygon $\punc n$ ($n\geq2$)) with a metric $m\in \tei \Pi$. Fix a choice of strip template. Then, the projectivised infinitesimal strip map $\mathbb{P}f: \ac \Pi\longrightarrow \ptan \Pi$ $\simeq \s{n-4}\,( \text{resp. } \s{n-2})$ is a homeomorphism.
	 \end{thm*}		
	 In a future paper we give the version of the above results for "bigger" decorated crowned hyperbolic surfaces. 
	 
	 The paper is structured into sections in the following way:
	 Section  \ref{prelim} recapitulates the necessary vocabulary from hyperbolic, Lorentzian and projective geometry, and introduces every type of surface mentioned above along with their deformation spaces and admissible cones. In Section \ref{arc}, we discuss the arcs and the arc complexes of the different types of surfaces and study their topology. Section \ref{Sd} gives the definitions of the various strip deformations along different types arcs and some estimations that will be required in the proofs. We also give a recap of the main steps of the proof of their main result in \cite{dgk}. Finally, Section \ref{proof} contains the proofs of our main theorems.

\paragraph{Acknowledgements.}  
This work was done during my PhD at Université de Lille from 2017-2020 funded by the AMX scholarship of Ecole Polytechnique. I would like to thank my thesis advisor François Guéritaud for his valuable guidance and extraordinary patience. I am grateful to my thesis referees Hugo Parlier and Virginie Charette for their helpful remarks. I am also grateful to Université du Luxembourg for funding my postdoctoral research (Luxembourg National Research Fund OPEN grant O19/13865598). Finally, I would like to thank Katie Vokes, Viola Giovannini and Thibaut Benjamin for their constant support and faith in my work.

	\section{Preliminaries}\label{prelim}
	In this section we recall the necessary vocabulary and notions and also prove some results in hyperbolic geometry that will be used in the rest of the paper.
	\subsection{Minkowski space $\Min$}
	\begin{defi}
		The \emph{Minkowski space} $\R^{2,1}$ is the affine space $\R^3$ endowed with the quadratic form $\norm \cdot$ of signature $(2,1)$: \[\text{for } v=(x_1,x_2,x_3)\in \R^3,\quad \norm v ^2=x_1^2+x_2^2-x_3^2.\] 
		
	\end{defi}
	There is the following classification of points in the Minkowski space: a non-zero vector $\mathbf{v}\in \Min$ is said to be
	\begin{itemize}
		\item \emph{space-like} if and only if $\norm {\mathbf v}^2>0$,
		\item \emph{light-like} if and only if $\norm {\mathbf v}^2=0$,
		\item \emph{time-like} if and only if $\norm {\mathbf v}^2<0$.
	\end{itemize}
	A vector $\mathbf v$ is said to be \emph{causal} if it is time-like or light-like. A causal vector $\mathbf v=(x,y,z)$ is called \emph{positive} (resp.\ \emph{negative}) if $z>0$ (resp.\ $z<0$). Note that by definition of the norm, every causal vector is either positive or negative. 
	The set of all light-like points forms the \emph{light-cone}, denoted by $$L:=\{\mathbf v=(x,y,z)\in \Min \mid x^2+y^2-z^2=0\}.$$
	The \emph{positive} (resp.\ \emph{negative}) cone is defined as the set of all positive (resp.\ \emph{negative}) light-like vectors.
	\paragraph{Subspaces.} A vector subspace $W$ of $\Min$ is said to be 
	\begin{itemize}
		\item \emph{space-like} if $W\cap C=\{(0,0,0)\}$,
		\item \emph{light-like} if $W\cap C=\Span {\mathbf v}$ where $\mathbf v$ is light-like,
		\item \emph{time-like} if $W$ contains at least one time-like vector.
	\end{itemize}
	A subspace of dimension one is going to be called a line and a subspace of dimension two a plane. The adjective "affine" will be added before the words "line" and "plane" when we are referring to some affine subspace of the corresponding dimension.
	\paragraph{Duals.} Given a vector $\mathbf {v}\in\Min$, its dual with respect to the bilinear form of $\Min$ is denoted $\bf v^{\perp}$. For a light-like vector $\bf v$, the dual is given by the light-like hyperplane tangent to $C$ along $\Span{\bf v}$. For a space-like vector $\bf v$, the dual is given by the time-like plane that intersects $C$ along two light-like lines, respectively generated by two light-like vectors $\bf v_1$ and $\bf v_2$ such that $\Span {\bf v}=\bf v_1^{\perp}\cap \bf v_2^{\perp}$. Finally, the dual of a time-like vector $\bf v$ is given by a space-like plane. One way to construct it is to take two time-like planes $W_1,W_2$ passing through $\bf v$. Then the space $\bf v^\perp$ is the vectorial plane containing the space-like lines $W_1^\perp$ and $W_2^\perp$.
	
	\subsection{The different models of the hyperbolic 2-space}
	In this section we recall some vocabulary and notions related to the different models for the hyperbolic plane, that will be used in the calculations and proofs later.
	\paragraph{Hyperboloid model.} The classical hyperbolic space of dimension two $\HP$ can be identified with the upper sheet of the two-sheeted hyperboloid  $\{\mathbf v=(x,y,z)\in\Min \mid \norm{\mathbf v}^2=-1\},$ along with the restriction of the bilinear form. It is the unique (up to isometry) complete simply-connected Riemannian 2-manifold of constant curvature equal to -1. Its isometry group is isomorphic to $\so$ and the identity component $\sop$ of this group forms the group of its orientation-preserving isometries; they preserve each of the two sheets of the hyperboloid individually. If the hyperbolic distance between two points $\mathbf u,\mathbf v\in \HP$ is denoted by $d_{\HP}(\mathbf u,\mathbf v)$, then $\cosh d_{\HP}(\mathbf u,\mathbf v)= -\bil {\mathbf u }{\mathbf v}$. The geodesics of this model are given by the intersections of time-like hyperplanes with $\HP$.
	\paragraph{Klein's disk model.} This model is the projectivisation of the hyperboloid model.
	
	Let $\mathbb{P}:\Min\smallsetminus \{\mathbf 0\} \longrightarrow \pp$ be the projectivisation of the Minkowski space. The projective plane $\pp$ can be considered as the set $A \cup \mathbb{RP}^1$, where $A:=\set{(x,y,1)\,|\,x,y\in \R}$ is an affine chart and the one-dimensional projective space represents the line at infinity, denoted by $\pli$. The $\p$-image of a point $\mathbf v\in \Min$ is denoted by $[\mathbf v]$. A line in $A$, denoted by $\pl$, is defined as $A\cap V$ where $V$ is a two-dimensional vector subspace of $\Min$, not parallel to $A$. 
	
	In the affine chart $A$, the light cone is mapped to the unit circle and the hyperboloid is embedded onto its interior.  This is the Klein model of the hyperbolic plane; its boundary a circle. This model is non-conformal. The geodesics are given by open finite Euclidean straight line segments, denoted by $l$, lying inside $\HP$, such that the endpoints of the closed segment $\pls$ lie on $\HPb$. 
	The distance metric is given by the Hilbert metric 
	$d_{\HP}(w_1,w_2)=\frac{1}{2}\log [p,w_1;w_2,q]$, where $p$ and $q$ are the endpoints of $\pls$, $l$ being the unique hyperbolic geodesic passing through $w_1,w_2\in \HP$, and the cross-ratio $[a,b;c,d]$ is defined as $\frac{(c-a)(d-b)}{(b-a)(d-c)}$. The group of orientation-preserving isometries is identified with $\mathrm{PSU}(1,1)$. A point $p$ is called \emph{real} (\emph{ideal}, \emph{hyperideal}) if $p\in \HP$ (resp. $p\in \HPb$, $p\in \pl\cup A\backslash \cHP$).
	
	The dual of $\pli$ is the point $(0,0,1)$ in $A$.
	The dual of any other projective line $\pl=A\cap V$ is given by the point $A\cap V^{\perp}$. The dual $p^{\perp}$ of a point $p\in \pp$ is the projective line $A\cap {\Span p}^{\perp}$. If $l$ is a hyperbolic geodesic, then $l^{\perp}$ is defined to be $\pl^{\perp}$; it is given by the intersection point in $\pp$ of the two tangents to $\HPb$ at the endpoints of $\pls$.
	
	\emph{Notation:} We shall use the symbol $\cdot^{\perp}$ for referring to the duals of both linear subspaces as well as their projectivisations.

	\paragraph{Upper Half-plane Model.} The subset $\{z=x+iy\in \C \,| \,y>0\}$ of the complex plane is the upper half-space model of the hyperbolic space of dimension 2.
	The geodesics are given by semi-circles whose centres lie on $\R$ or straight lines that are perpendicular to $\R$. We shall call the former as \emph{horizontal} and the latter as $vertical$ geodesics. The boundary at infinity $\HPb$ is given by $\R \cup \{\infty\}$. The orientation-preserving isometry group is given by $\psl$ that acts by Möbius transformations on $\HP$. 
	
	\emph{Notation:} We shall denote by $G$ the isomorphic groups $\mathrm{Isom}(\HP), \so,\pgl$ and by $\lalg$ the Lie algebra of $G$.

	\subsection{Horoballs and decorated geodesics}\label{lambda}
	
	\begin{figure}
		\centering
		\frame{\includegraphics[height=4cm]{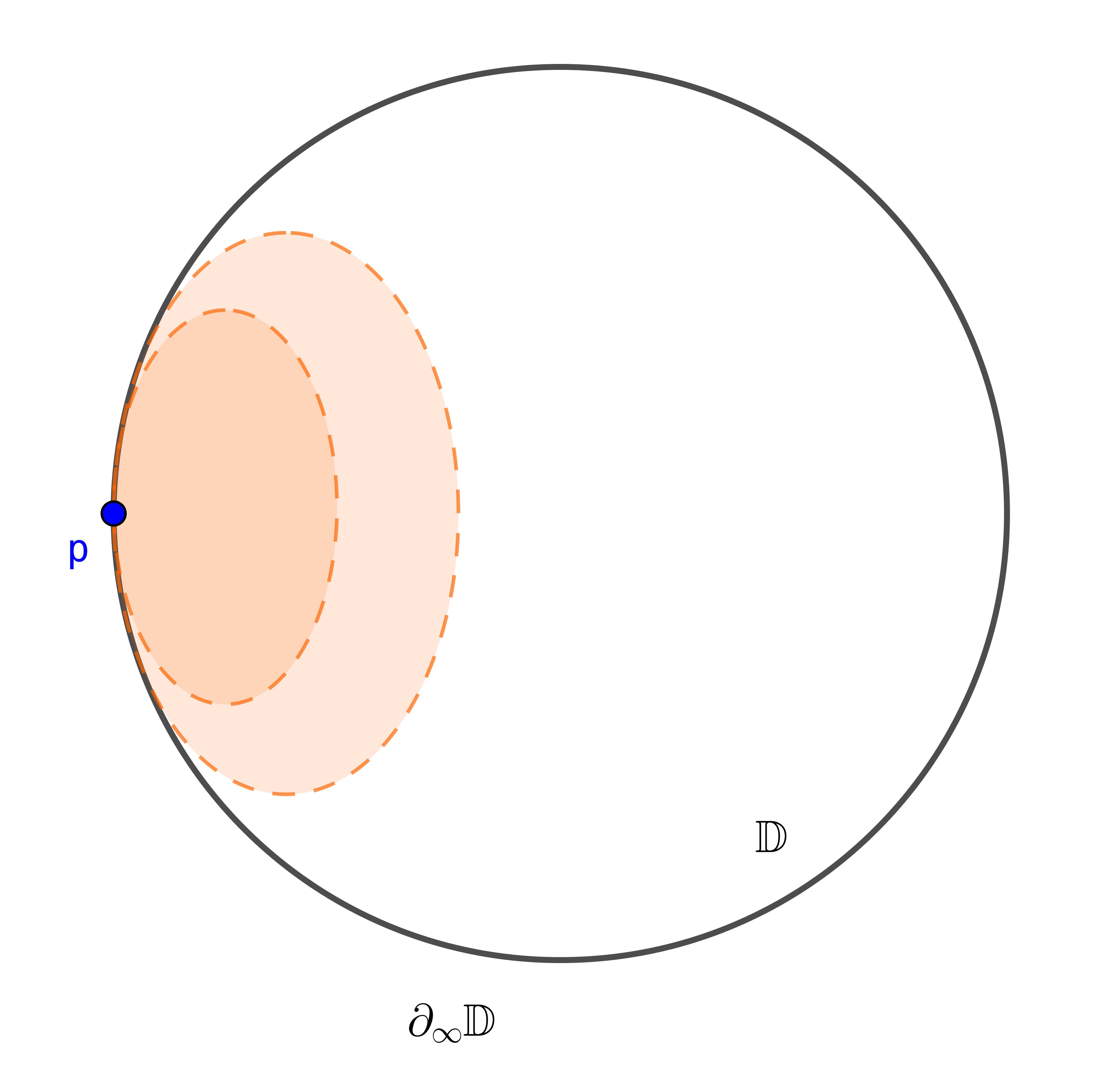}
			\includegraphics[height=4cm]{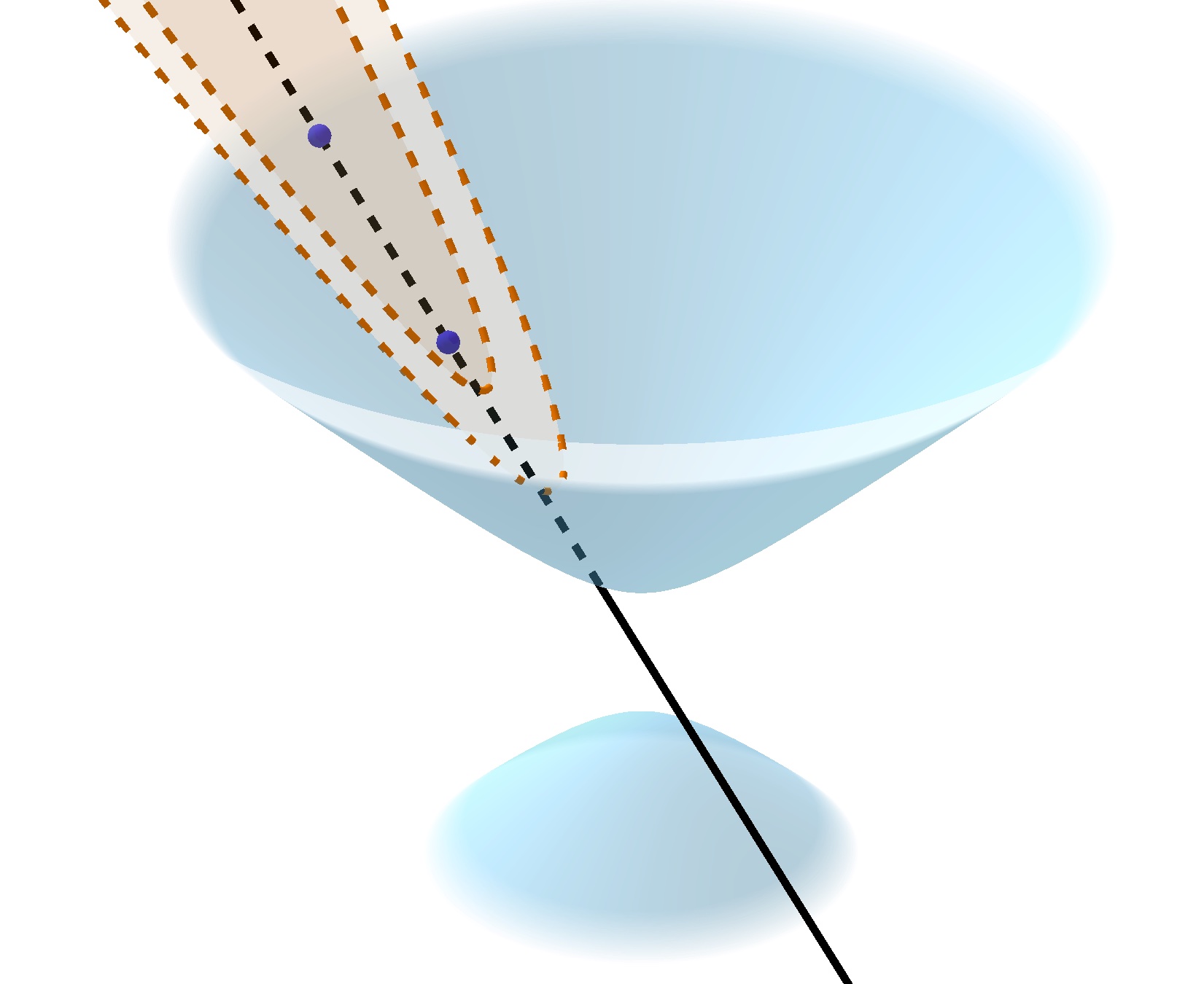}}
		\caption{Concentric horoballs}
		\label{horoball}
	\end{figure}
	An \emph{open horoball} $h(p)\subset \HP\subset\pp$ based at $p\in\HPb$ is the projective image of $$H (\mathbf v)=\{\mathbf w\in \HP\mid \bil{\mathbf w}{\mathbf{v}}>-1 \}$$x where $\mathbf v$ is a future-pointing light-like point in $\pinv p$. 
	If $k\geq k'>0$, then $H(k\mathbf v_0)\subset H(k'\mathbf v_0)$. See Fig. \ref{horoball}.
	
	The boundary of $h(p)$ is called a \emph{horocycle}. It is the projective image of the set $$h(\mathbf v):=\{\mathbf w\in \HP \mid \ang{\mathbf w,\mathbf v}=-1 \}.$$
	In the projective disk model, it is a Euclidean ellipse inside $\HP$, tangent to $\HPb$ at $p$. In the upper half-plane model, horocycles are either Euclidean circles tangent to a point on the real line or horizontal lines which are horocycles based at $\infty$. In the Poincaré disk model, a horocycle is an Euclidean circle tangent to $\HPb$ at $[p]$. A geodesic, one of whose endpoints is the centre of a horocycle, intersects the horocycles perpendicularly. Note that any horoball is completely determined by a future-pointing light-like vector in $\Min$ and vice-versa. From now onwards, we shall use either of the notations introduced above to denote a horoball. Finally, the set of all horoballs of $\HP$ forms an open cone (the positive light cone). 
	
	Given an ideal point $p\in \HPb$, a \emph{decoration} of $p$ is the specification of an open horoball centred at $p$. A geodesic, whose endpoints are decorated, is called a \emph{horoball connection}.
	The following definition is due to Penner \cite{penner}.
	
	\begin{figure}
		\centering
		\frame{\includegraphics[width=8cm]{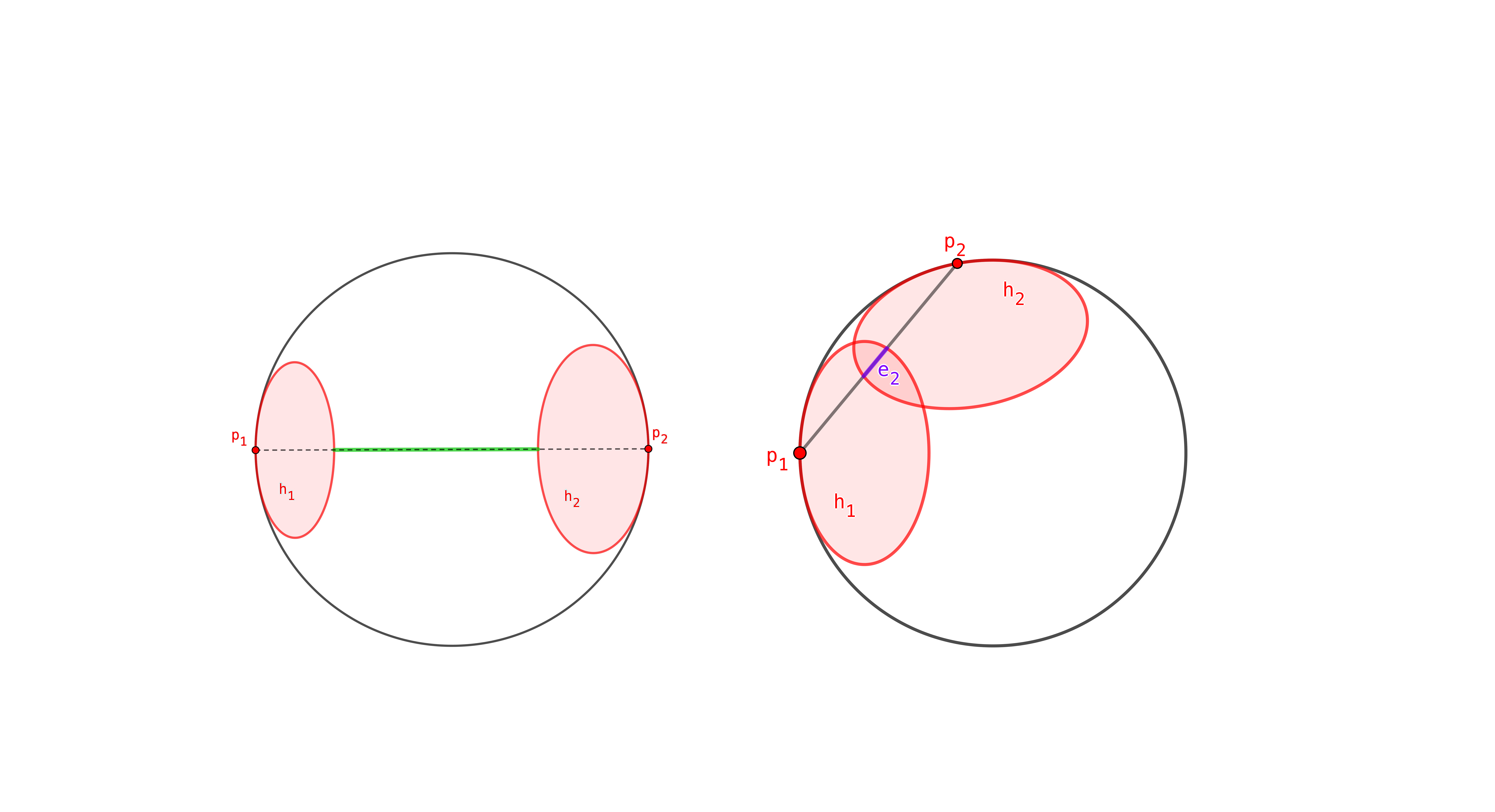}}
		\caption{Length of horoball connections}
	\end{figure}
	The \emph{length} of a horoball connection joining two horoballs based at $\mathbf v_1,\mathbf v_2$ is given by
	\begin{equation*}
		l:= \ln(-\frac{  \ang{\mathbf v_1,\mathbf v_2}}{2}).
	\end{equation*}
	It is the signed length of the geodesic segment intercepted by the two horoballs at the endpoints. In particular, if the horoballs are not disjoint, then the length of the horoball connection is negative. 
	
	Penner defined the \emph{lambda length} of two horocycles $h(\mathbf v_1),h(\mathbf v_2)$ to be \[ \lambda (h(\mathbf v_1),h(\mathbf v_2)):=\sqrt{-\ang{\mathbf v_1,\mathbf v_2}}=\sqrt{2 \e ^l}.  \]
	\subsection{Killing Vector Fields of $\HP$}
	The Minkowski space $\Min$ is isomorphic to $(\lalg, \kappa)$ where $\lalg$ is the Lie algebra of $G:=\pgl$ and $\kappa$ is its Killing form, via the following map:
	\[\mathbf v=(x,y,z)\mapsto V=\begin{pmatrix}
		y & x+z\\
		x-z & -y
	\end{pmatrix} .\]
	
	The Lie algebra $\lalg$ is also isomorphic to the set $\mathscr{X}$ of all Killing vector fields  of $\HP$:
	\[V\mapsto \bra{ 
		\begin{array}{ccc}
			X_v:&\HP\longrightarrow &\mathrm T\HP\\
			& p\mapsto &\frac{d}{dt} (e^{tV}\cdot p)|_{t=0}
	\end{array}}\]
	Next, one can identify $\Min$ with $\mathscr{X}$ via the map:
	\[ \bf v\mapsto \bra{
		\begin{array}{ccc}
			X_v:&\HP\longrightarrow &\mathrm T\HP\\
			&p\mapsto &\bf {v} \mcp p
	\end{array}}
	\] where $\mcp$ is the Minkowski cross product:
	\[(x_1,y_1,z_1)\mcp(x_2,y_2,z_2):=(-y_1z_2+z_1y_2,-z_1x_2+x_1z_2, x_1y_2-y_1x_2).\]
	
	Finally, in the upper half space model, one can identify $\mathscr{X}$ with the real vector space $\R_2[z]$ of polynomials of degree at most 2:
	\[
	P(\cdot) \mapsto \bra{z\mapsto P(z)\frac{\partial}{\partial z}}
	\]
	The discriminant of a polynomial in $\R_2[z]$ corresponds to the quadratic form $\norm \cdot$ in $\Min$. So the nature of the roots of a polynomial determines the type of the Killing vector field. In particular, when
	\begin{itemize}
		\item $P(z)=1$, the corresponding Killing vector field is parabolic, fixing $\infty$;
		\item $P(z)=z$, the corresponding Killing vector field is hyperbolic, fixing $0,\infty$;
		\item $P(z)=z^2$, the corresponding Killing vector field is parabolic, fixing $0$.
	\end{itemize}
	
	\begin{proper}
		
		Using these isomorphisms, we have that 
		\begin{itemize}
			\item A spacelike vector $\mathbf v$ corresponds, in $\mathscr{X}$, to an infinitesimal hyperbolic translation whose axis is given by $\mathbf v^{\perp}\cap \HP$. If $\mathbf v^+$ and $\mathbf v^-$ are respectively its attracting and repelling fixed points in $C^+$, then $(\bf v^-, v,v^+)$ are positively oriented in $\Min$.
			\item A lightlike vector $\mathbf v$ corresponds, in $\mathscr{X}$, to an infinitesimal parabolic element that fixes the light-like line $\Span {\mathbf v}$.
			\item A timelike vector $\mathbf v$ corresponds, in $\mathscr{X}$, to an infinitesimal rotation of $\HP$ that fixes the point $\frac{\mathbf v}{\sqrt {-\norm{\mathbf v}}}$ in $\HP$.
		\end{itemize} 
	\end{proper}
	\vspace{0.4cm}
	\begin{proper}\label{tang}~
		\begin{enumerate}
			\item Given a light-like vector $\mathbf v\in\Min$, the set of all Killing vector fields that fix $\Span {\mathbf v}$ is given by its dual $\mathbf v^{\perp}$. In $\pp$, the set of projectivised Killing vector fields that fix $[\mathbf v] \in \HPb$ is given by the tangent line at $[\mathbf v]$.
			\item The set of all Killing vector fields that fix a given ideal point $p\in\HPb$ and a horocycle in $\HP$ with centre at $p$ is given by $\Span {\mathbf v}$, where $\mathbf v\in \mathbb{P}^{-1}(p)$ in $\Min$.
			\item The set of all Killing vector fields that fix a given hyperbolic geodesic $l$ in $\HP$ is given by $\mathbb{P}^{-1}(l^{\perp})$.
		\end{enumerate}
		
	\end{proper}
	\subsection{Calculations in different models of hyperbolic geometry}
	\begin{defi}
		Let $\ol{l}$ be a projective line segment contained in $\cHP$ with endpoints, denoted by $A,B$. Then the two projective triangles formed by $\du{A},\du{B}$ and $\oarr{l}$, with their disjoint interiors intersecting $\HP$, are said to be \emph{based} at $\ol{l}$.
	\end{defi}

\begin{figure}[h!]
	\frame{	\includegraphics[width=15cm]{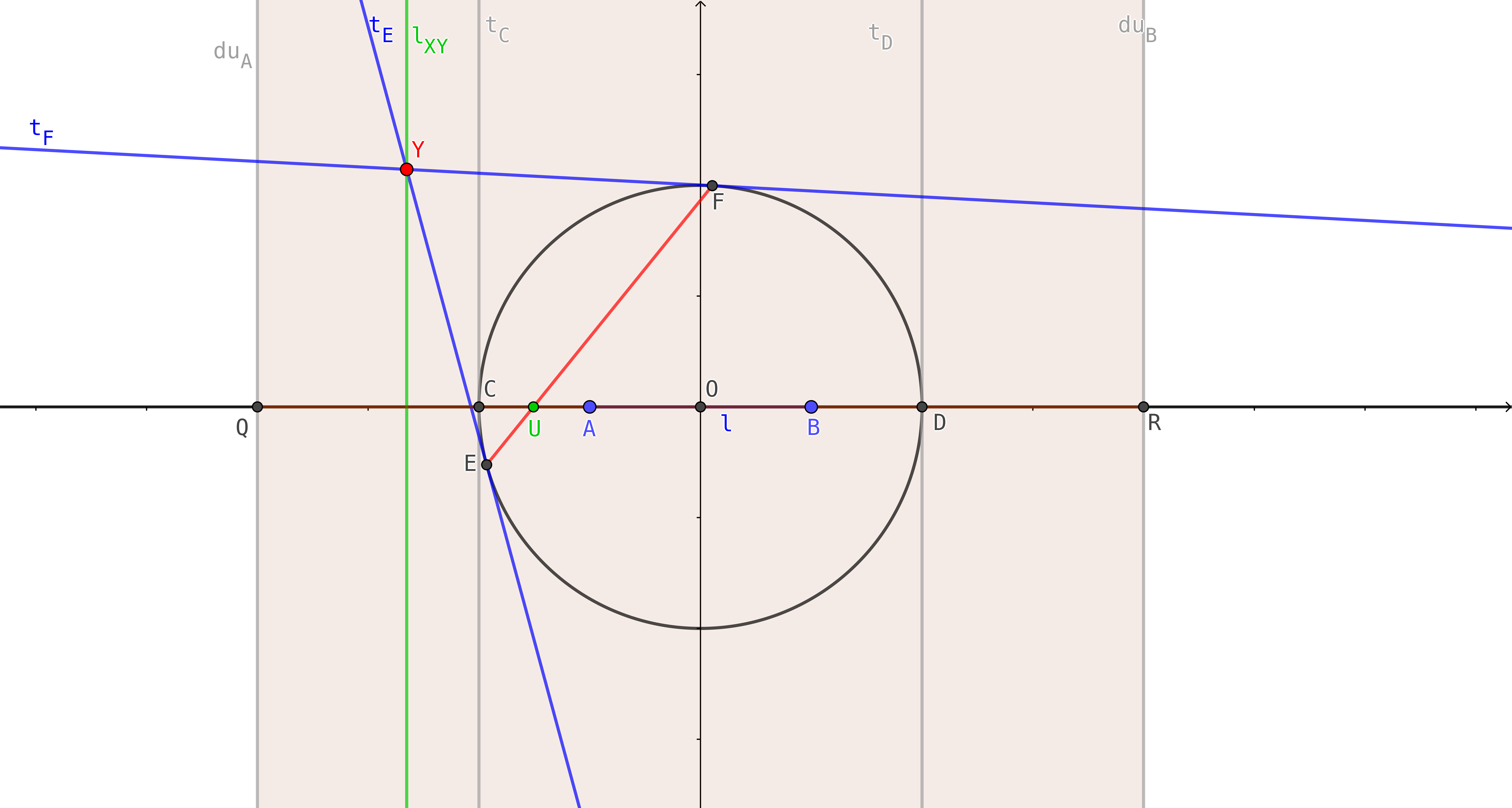}}
	\centering
	\caption{Properties \ref{line}}
	\label{assochord}
\end{figure}
\begin{proper}\label{line}
	Let $\ol{l}$ be a projective line segment contained in $\cHP$. Then, any projective line $\ol{l'}$ that intersects $\HP$, is disjoint from $\ol{l}$ if and only if its dual $\du{l'}$ is a space-like point contained in the interior of the bigon equal to the union of the two triangles based at $\ol{l}$.
\end{proper}

\begin{proof}
	Let the endpoints of $\ol l$ be denoted by $A,B$.
	There are three possibilities for $l$ — either a geodesic segment (both $A,B \in \HP$) or $l$ is a geodesic ($A,B\in \HPb$ ), or a geodesic ray ($A$ or $B$ on $\HPb$, the other inside $\HP$). It is enough prove the lemma for first case, the two others being limit cases of the first.
	
	Let $\oarr {l'}$ be another projective line that intersects $\HP$. Let $X,Y$ be the respective dual points $\du {\ol l}, \du {\ol {l'}}$. Since both the line segments intersect $\HP$, neither $X$ nor $Y$ can line inside $\cHP$. Then, $\ol l$ and $\ol {l'}$ intersect each other at a point $U\in \HP$ if and only if $U=\du {\ol {XY}}$. 
	
	Using a hyperbolic isometry, we can assume that both the points $A,B$ lie on the horizontal axis, on either side of the origin. Then the line segment $\ol l$ is given by the closed interval $[a,b]\times {0}$, where $A=(a,0), B=(b,0)$, with $a<0<b$. Owing to this choice of $A,B$, the duals $\du A, \du B$ are vertical lines passing through $(\frac{1}{a},0),(\frac{1}{b},0)$, respectively, with their point of intersection $X$ lying on the line at infinity $\pli$. The union $\Delta$ of the two projective triangles based at $\ol l$ is given by the open vertical strip bounded by these two verticals, that contains $\HP$. Now the line segment $\ol {XY}$ is a vertical line passing through $(y,0)$, where $y$ is the horizontal coordinate of $Y$. The coordinates of the dual point $\du{\ol{XY}}$ is given by $(\frac{1}{y},0)$. Then $\ol l$ and $\ol {l'}$ intersect each other if and only if $$a\leq \frac{1}{y} \leq b\Leftrightarrow \frac{1}{a}\leq y \text{ or } y\geq \frac{1}{b}.$$ In other words, the line $\oarr{l'}$ is disjoint from the segment $\ol l$ if and only if $Y$ is a space-like point inside $\Delta$.
\end{proof}
\begin{lem}\label{centre}
	Let $\ga_1=(a_1,b_1)$ and $\ga_2=(a_2,b_2)$ be two geodesics in the upper half-plane model of $\HP$ where $a_1,b_1,a_2,b_2$ are real numbers satisfying \[a_1<b_1<a_2<b_2.\] Let $\ga$ be the unique common perpendicular to $\ga_1$ and $\ga_2$. If $x$ denotes the centre of the semi-circle containing $\ga$, then \[x=\frac{a_2b_2-a_1b_1}{a_2+b_2-a_1-b_1}.\]
\end{lem}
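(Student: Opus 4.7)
\bigskip
\noindent\textbf{Proof plan for Lemma \ref{centre}.}

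The plan is to translate the perpendicularity condition into the familiar Euclidean fact that two circles in $\R^2$ intersect orthogonally if and only if the squared distance between their centres equals the sum of the squares of their radii. Since $\gamma_1,\gamma_2$ are non-vertical geodesics in the upper half-plane, the common perpendicular $\gamma$ is itself a semi-circle (it cannot be vertical, as it would then need to be perpendicular to two distinct non-vertical semi-circles simultaneously, which is impossible once $\gamma_1,\gamma_2$ are disjoint); write its feet as $(\alpha,0)$ and $(\beta,0)$ with $\alpha<\beta$, so that its Euclidean centre is $x=(\alpha+\beta)/2$ and its radius is $r=(\beta-\alpha)/2$.

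Next, I would record the Euclidean data of the three semi-circles: $\gamma_i$ has centre $m_i=(a_i+b_i)/2$ and radius $r_i=(b_i-a_i)/2$, and in particular $m_i^{2}-r_i^{2}=a_i b_i$. The orthogonality conditions $\gamma\perp\gamma_1$ and $\gamma\perp\gamma_2$ then read
\begin{equation*}
    (x-m_1)^2 = r^2+r_1^2, \qquad (x-m_2)^2 = r^2+r_2^2.
\end{equation*}
Subtracting eliminates $r^{2}$ and gives $2x(m_2-m_1)+(m_1^{2}-m_2^{2}) = r_1^{2}-r_2^{2}$, i.e.\ $2x(m_2-m_1) = (m_2^{2}-r_2^{2})-(m_1^{2}-r_1^{2}) = a_2b_2-a_1b_1$. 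Using $2(m_2-m_1)=a_2+b_2-a_1-b_1$ (which is strictly positive under the hypothesis $a_1<b_1<a_2<b_2$, so the denominator is non-zero), one solves for $x$ to obtain the stated formula.

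The only genuine check is that the common perpendicular exists and is unique, but this is guaranteed by the fact that $\gamma_1$ and $\gamma_2$ are disjoint geodesics with disjoint closures in $\cHP$ (which follows from $b_1<a_2$). I would therefore not expect any serious obstacle; the main care is purely algebraic, ensuring the subtraction step eliminates $r^2$ cleanly and that the denominator $a_2+b_2-a_1-b_1$ is non-zero, which both hold under the ordering hypothesis. An alternative, equally short route would be to write each geodesic as the zero locus of $X^{2}+Y^{2}-(a_i+b_i)X+a_ib_i=0$ and apply the orthogonality relation $D_1D_2+E_1E_2=2(F_1+F_2)$ between the coefficients of two orthogonal circles; this yields the same linear system in the coefficients of $\gamma$ and produces the formula for $x$ immediately.
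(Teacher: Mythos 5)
Your proof is correct, and it takes a genuinely different route from the paper. The paper realises the common perpendicular $\ga$ as the fixed circle of the inversion $g\in\pgl$ that reflects the upper half-plane across $\ga$: since $\ga_1,\ga_2$ are orthogonal to $\ga$, this inversion swaps $a_1\leftrightarrow b_1$ and sends $a_2\mapsto b_2$, while sending the centre $x$ to $\infty$; the formula then drops out of the resulting linear system in the matrix entries of $g$. You instead use the purely Euclidean criterion that two circles meet orthogonally if and only if the squared distance between their centres equals the sum of the squares of their radii, together with the identity $m_i^2-r_i^2=a_ib_i$; subtracting the two orthogonality equations eliminates the unknown radius $r^2$ and yields the same linear equation for $x$. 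Your algebra checks out, and your side remarks are sound: the denominator $2(m_2-m_1)=a_2+b_2-a_1-b_1$ is positive under the ordering hypothesis, and the common perpendicular cannot be vertical since a vertical geodesic is orthogonal to a semicircular one only if it passes through its centre, which it cannot do for two distinct centres $m_1\neq m_2$. The trade-off is minor: the paper's inversion argument is intrinsically hyperbolic and reuses machinery already present ($\pgl$ acting by Möbius transformations), whereas yours is more elementary and self-contained but imports a Euclidean orthogonality fact that the paper does not otherwise state.
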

\begin{proof}
	Let $g=\begin{pmatrix}
		p&q\\
		r&s
	\end{pmatrix} \in \pgl$ be the inversion with respect to the semi-circle $\ga$. Then, by definition of inversion, we have 
	\begin{align}
		x\mapsto \infty &\Rightarrow rx+s=0, \label{xinfi}\\
		a\mapsto b& \Rightarrow pa_1+q=a_1b_1r+b_1s,\label{a2b}\\
		b\mapsto a & \Rightarrow pb_1+q=a_1b_1r+a_1s,\label{b2a}\\
		c\mapsto d & \Rightarrow pa_2+q=a_2b_2r+b_2s.\label{c2d}
	\end{align}, where "$\mapsto$" refers to the action of $g$.
	From eq.\eqref{a2b} and \eqref{b2a}, we get that $p=-s$ and from the eqs.\eqref{xinfi}, \eqref{a2b} and \eqref{c2d}, we get that \[x=\frac{-s}{r}=\frac{a_2b_2-a_1b_1}{a_2+b_2-a_1-b_1}.\]
\end{proof}
\begin{lem}
	Let $\ga_1,\ga_2,\ga_3$ be three pairwise disjoint semi-circular, possibly asymptotic geodesics in $\HP$ such that none of them separates the remaining two geodesics from each other. For $i\in \Z_3$, let $\be_i$ be the common perpendicular to $\ga_{i-1}$ and $\ga_{i+1}$, whenever possible. Let $x_i$ be the centre of $\ga_i$ for $i=1,2,3$. Let $y_i$ be the centre of $\be_i$ or the common endpoint of $\ga_{i-1},\ga_{i+1}$ for $i=1,2,3$. Then the following equation holds:
	\begin{align}
		\frac{x_1-x_2}{x_2-x_3}&=\frac{y_1-y_2}{y_2-y_3}\label{ratio}\\
		\ie \bra{\infty,x_1,x_2,x_3}&=\bra{\infty,y_1,y_2,y_3}.
	\end{align}
\end{lem}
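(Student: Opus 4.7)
The plan is a direct computation using Lemma \ref{centre}. First, up to an orientation-preserving isometry of $\HP$ and a cyclic relabelling of the indices, I may assume the six endpoints of the $\gamma_i$'s lie on the real axis in the order $a_1<b_1<a_2<b_2<a_3<b_3$, with $\gamma_i=(a_i,b_i)$; this is the unique combinatorial configuration compatible with the hypotheses (three pairwise disjoint semi-circular geodesics, no one separating the other two). The asymptotic cases (where $b_i=a_{i+1}$ for some $i$) will be handled at the end by direct substitution in the same formulas.

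Introducing the abbreviations $s_i:=a_i+b_i$ and $p_i:=a_ib_i$, the centres become $x_i=s_i/2$, and Lemma \ref{centre} applied to each ordered pair $(\gamma_{i-1},\gamma_{i+1})$ yields
\[
y_1=\frac{p_3-p_2}{s_3-s_2},\qquad y_2=\frac{p_3-p_1}{s_3-s_1},\qquad y_3=\frac{p_2-p_1}{s_2-s_1}.
\]
The key step is to expand $y_1-y_2$ and $y_2-y_3$ over common denominators $(s_3-s_2)(s_3-s_1)$ and $(s_3-s_1)(s_2-s_1)$. After simplification, both numerators collapse to the same symmetric expression
\[
N\;:=\;p_1(s_3-s_2)+p_2(s_1-s_3)+p_3(s_2-s_1).
\]
Dividing the two quotients then gives $\frac{y_1-y_2}{y_2-y_3}=\frac{s_2-s_1}{s_3-s_2}=\frac{x_1-x_2}{x_2-x_3}$, which is equation \eqref{ratio}. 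The cross-ratio reformulation follows at once from the definition used in the excerpt.

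For the degenerate configurations in which two $\gamma_i$'s share a boundary point, the formula from Lemma \ref{centre} continues to make sense and, by inspection, collapses to precisely the common endpoint: for instance, if $b_2=a_3=c$ then $y_1=(cb_3-a_2c)/(b_3-a_2)=c$, which is exactly the defined value of $y_1$. Thus the identity extends verbatim and no separate argument is needed. The only non-routine moment in the whole proof is the coincidence of the two numerators $N$; once that cancellation is spotted, everything else is a single line of algebra, so I do not anticipate any serious obstacle.
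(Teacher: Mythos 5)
Your proof is correct and takes essentially the same route as the paper's: the same normalisation of the six endpoints to the ordered configuration $a_1<b_1\le a_2<b_2\le a_3<b_3$, the same application of Lemma \ref{centre} to each pair, and the same key observation that the numerators of $y_1-y_2$ and $y_2-y_3$ both collapse to the symmetric expression $N$. Your explicit verification of the asymptotic case is a minor addition that the paper handles implicitly by allowing equalities in the ordering of the endpoints.
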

\begin{figure}
	\frame{\includegraphics[width=\linewidth]{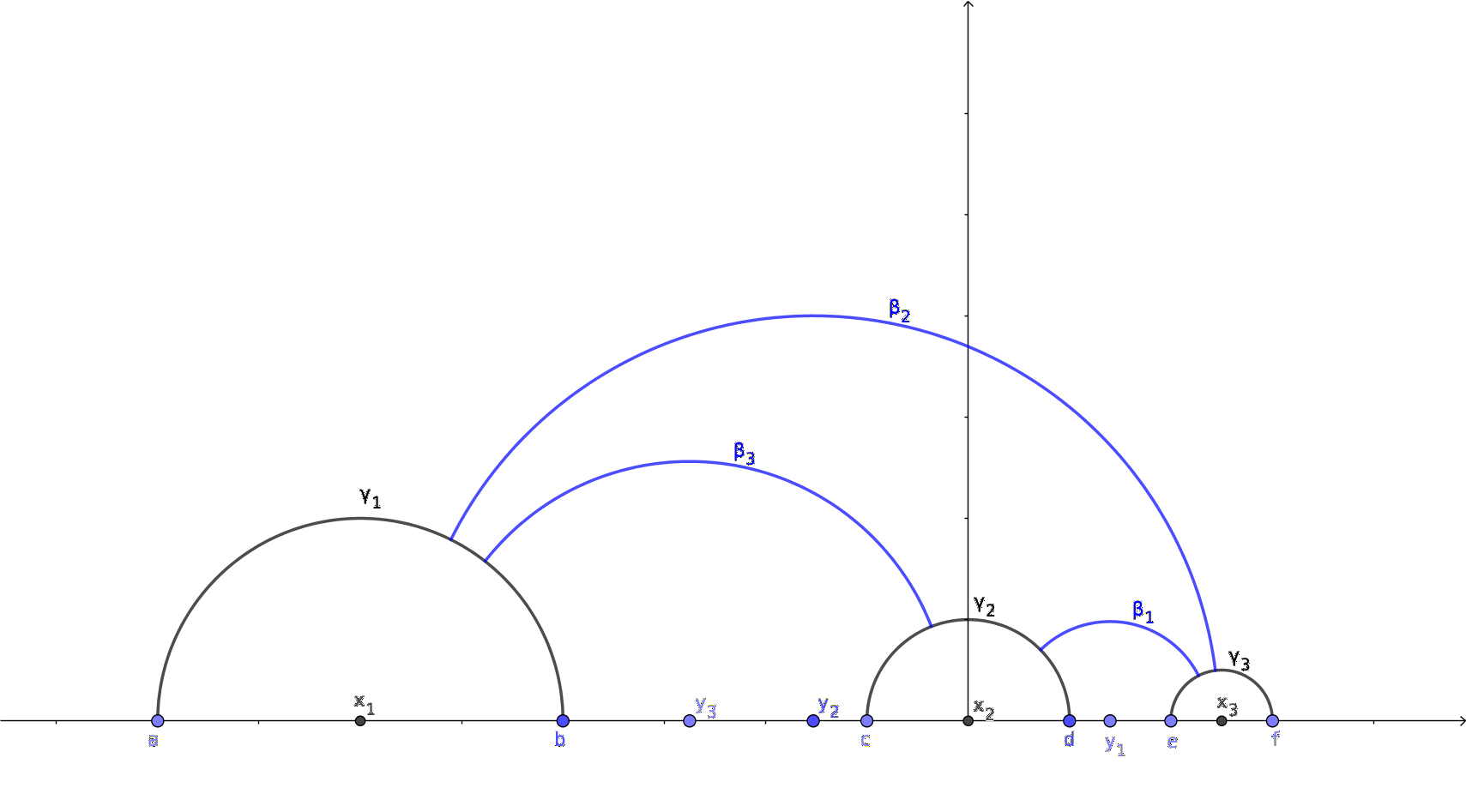}}
	\caption{Common perpendiculars}
	\label{comperp}
\end{figure}

\begin{proof}
	Label the endpoints of $\ga_1,\ga_2,\ga_3$ by $\{a,b\}, \{c,d\},\{e,f\}$ such that \[ a<b\leq c<d\leq e<f. \] Then from Lemma \eqref{centre}, we get that 
	\begin{align}
		x_1&=\frac{a+b}{2}, &y_1&=\frac{ef-cd}{e+f-c-d},\\
		x_2&=\frac{c+d}{2}, & y_2&=\frac{ef-ab}{e+f-a-b},\\
		x_3&=\frac{e+f}{2},& y_3&=\frac{cd-ab}{c+d-a-b},
	\end{align} 
	Using these coordinates, we calculate the right hand side of \eqref{ratio}:
	\begin{align*}
		y_1-y_2&=\frac{ef-cd}{e+f-c-d}-\frac{ef-ab}{e+f-a-b}\\
		&=\frac{ ab(e+f-c-d)+cd(a+b-e-f)+ef(c+d-a-b)}{(e+f-c-d)(e+f-a-b)},\\
		y_2-y_3&=\frac{ef-ab}{e+f-a-b}-\frac{cd-ab}{c+d-a-b}\\
		&=\frac{ ab(e+f-c-d)+cd(a+b-e-f)+ef(c+d-a-b)}{(e+f-a-b)(c+d-a-b)}.
	\end{align*}
	Hence,
	\begin{align}
		\frac{y_1-y_2}{y_2-y_3}&=\frac{(c+d-a-b)}{(e+f-a-b)}
		=\frac{x_1-x_2}{x_2-x_3}.
	\end{align}
\end{proof}
\begin{lem}\label{ineqcentres}
	Let $y_1,y_2,y_3$ be as in the hypothesis of the previous lemma. Then we have $y_3<y_2<y_1$.
\end{lem}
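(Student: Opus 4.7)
The plan is to combine the preceding lemma with an affine-algebra argument that identifies each $y_k$ as the unique zero of a specific linear function built from the monic quadratics whose roots are the endpoints of the $\gamma_i$'s.

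First, by the preceding lemma $\frac{y_1-y_2}{y_2-y_3} = \frac{x_1-x_2}{x_2-x_3}$, and the right-hand side is strictly positive because $x_1 < x_2 < x_3$ (from the ordering of the endpoints $a<b\leq c<d\leq e<f$). Hence $y_1 - y_2$ and $y_2 - y_3$ have the same sign, so it suffices to prove $y_2 > y_3$.

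To that end, for $i = 1, 2, 3$ let $P_i(y)$ denote the monic quadratic whose roots are the two endpoints of $\gamma_i$ on the real axis, so $P_1(y) = (y-a)(y-b)$, $P_2(y) = (y-c)(y-d)$ and $P_3(y) = (y-e)(y-f)$. A direct computation with the orthogonality condition for two Euclidean circles centred on the real axis shows that a semicircle with centre $y$ and radius $R > 0$ is orthogonal to $\gamma_i$ if and only if $R^2 = P_i(y)$. Applied to $\beta_3$, which is perpendicular to both $\gamma_1$ and $\gamma_2$, this gives $P_1(y_3) = P_2(y_3)$; in other words $y_3$ is the unique zero of the affine function $P_1 - P_2$. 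Similarly $y_2$ is the unique zero of $P_1 - P_3$, which is affine with positive slope $e+f-a-b$.

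It remains to compare signs. Evaluating $P_1 - P_3$ at $y_3$ and using $P_1(y_3) = P_2(y_3)$ gives $(c-y_3)(d-y_3) - (e-y_3)(f-y_3)$. A routine check (essentially contained in the derivation of the formula for $y_3$) places $y_3 \in [b, c]$, so $c-y_3, d-y_3, e-y_3, f-y_3$ are all nonnegative; moreover $d - y_3 > 0$, $e - y_3 \geq d - y_3$ and $f - y_3 > c - y_3$. A brief elementary comparison then yields $(c-y_3)(d-y_3) < (e-y_3)(f-y_3)$, that is $(P_1 - P_3)(y_3) < 0$. Since $P_1 - P_3$ is strictly increasing with unique zero at $y_2$, this forces $y_3 < y_2$, and combined with the first step we conclude $y_3 < y_2 < y_1$. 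The main obstacle is spotting the identity $R^2 = P_i(y)$ that translates orthogonality into a quadratic statement; once this is in hand everything reduces to sign tracking of affine functions, and the asymptotic degenerations $b = c$ or $d = e$ cause no trouble since the strict comparison in the last step still survives.
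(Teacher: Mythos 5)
Your proposal is correct. The first step (using the cross-ratio identity $\tfrac{y_1-y_2}{y_2-y_3}=\tfrac{x_1-x_2}{x_2-x_3}>0$ to reduce to a single inequality) is exactly the paper's opening move, but the way you establish that single inequality is genuinely different. The paper proves $y_1>y_2$ by interpolating the auxiliary asymptotic geodesic $(b,c)$ and applying Lemma \ref{ineqasym} twice, and Lemma \ref{ineqasym} is itself proved by brute-force computation of a difference of the closed formulas from Lemma \ref{centre}, with the numerator factoring as $(c-a)(e-b)(f-b)$. You instead prove $y_2>y_3$ via the orthogonality identity $R^2=(y-p)(y-q)$ for circles centred on the real axis, which recasts each $y_k$ as the unique zero of the affine function $P_i-P_j$ (and indeed reproduces the formulas of Lemma \ref{centre}: e.g.\ $P_1-P_2$ vanishes at $\tfrac{cd-ab}{c+d-a-b}=y_3$). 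The sign comparison then goes through: $y_3\le c$ follows from $(c-a)(c-b)\ge 0$, so all four factors $c-y_3,d-y_3,e-y_3,f-y_3$ are nonnegative with $d-y_3>0$, and the chain $(e-y_3)(f-y_3)\ge(d-y_3)(f-y_3)>(d-y_3)(c-y_3)$ gives $(P_1-P_3)(y_3)<0$, hence $y_3<y_2$ since $P_1-P_3$ is increasing with zero at $y_2$. Your route bypasses Lemma \ref{ineqasym} entirely, gives a conceptual explanation of the formulas in Lemma \ref{centre}, and treats the asymptotic degenerations $b=c$ and $d=e$ uniformly; the paper's route has the advantage of reusing a lemma it has already proved. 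Both are elementary and complete.
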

In order to prove this, we need the following lemma:
\begin{lem}\label{ineqasym}
	Let $\ga_1:=(a,b)$ and $\ga_2:=(b,c)$ be two asymptotic geodesics in $\HP$. Let $\ga_3:=(e,f)$ be another geodesic ultraparallel to $\ga_1,\ga_2$ such that \begin{equation}\label{asymineq}
		a<b<c<e<f.
	\end{equation} Let $\be_1, \be_2$ be the common perpendiculars to the pairs $\ga_2,\ga_3$ and $\ga_1,\ga_3$. Let $y_i$ be the centre of the semi-circle $\be_i$, for $i=1,2$. Then we have $y_1>y_2$.
\end{lem}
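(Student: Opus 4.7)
The plan is to use the explicit center formula from Lemma \ref{centre} to write down $y_1$ and $y_2$ directly in terms of the endpoints, and then reduce the inequality $y_1 > y_2$ to a factorization that is manifestly positive under the ordering hypothesis \eqref{asymineq}.

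Applied to the pair $(\gamma_2,\gamma_3) = ((b,c),(e,f))$ with $b < c < e < f$, Lemma \ref{centre} gives
\[
y_1 \;=\; \frac{ef - bc}{e + f - b - c},
\]
and applied to the pair $(\gamma_1,\gamma_3) = ((a,b),(e,f))$ with $a < b < e < f$, it gives
\[
y_2 \;=\; \frac{ef - ab}{e + f - a - b}.
\]
Both denominators are strictly positive because $e,f$ exceed all of $a,b,c$. Hence the sign of $y_1 - y_2$ coincides with the sign of the cross-multiplied numerator
\[
N \;:=\; (ef - bc)(e + f - a - b) \;-\; (ef - ab)(e + f - b - c).
\]

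The next step is the algebraic heart of the argument: I would expand $N$ and collect terms, with the aim of factoring out $(c - a)$. A short computation (writing $S = e+f$, distributing, and grouping the $ef$, $ab$ and $bc$ contributions) reduces this to
\[
N \;=\; (c - a)\bigl(ef - b(e+f) + b^2\bigr) \;=\; (c - a)(e - b)(f - b).
\]
Under the hypothesis \eqref{asymineq} each of the three factors is strictly positive, so $N > 0$ and therefore $y_1 > y_2$, as required.

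The only real obstacle is the algebraic bookkeeping in the expansion of $N$; the identity $ef - b(e+f) + b^2 = (e-b)(f-b)$ is what makes the factorization clean, and one should double-check the signs during the expansion since several terms cancel in pairs. Apart from that, the proof is purely mechanical once the two explicit expressions for $y_1$ and $y_2$ are in hand, and it requires no further geometric input beyond the center formula of Lemma \ref{centre}.
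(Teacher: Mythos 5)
Your proposal is correct and follows essentially the same route as the paper: both compute $y_1,y_2$ from Lemma \ref{centre}, reduce to the sign of a numerator (the paper works with the combined difference over a common denominator, you cross-multiply, which is equivalent), and factor it as $(c-a)(e-b)(f-b)>0$. No gaps.
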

\begin{proof}
	From Lemma \eqref{centre}, we know that,
	\begin{align*}
		y_1&=\frac{ef-bc}{e+f-b-c},& y_2&=\frac{ef-ab}{e+f-a-b}.
	\end{align*}
	Calculating their difference, we get that,
	\begin{align*}
		y_1-y_2&=\frac{ef-bc}{e+f-b-c}-\frac{ef-ab}{e+f-a-b}\\
		&=\frac{ef(c-a)+bc(a+b-e-f)+ab(e+f-b-c)}{(e+f-b-c)(e+f-a-b)}.
	\end{align*}
	Using the hypothesis \eqref{asymineq}, we know that the denominator of $y_1-y_2$ is positive. So it suffices to check the sign of the numerator.
	\begin{align*}
		ef(c-a)+bc(a+b-e-f)+ab(e+f-b-c)&=ef(c-a)+b\{c(a+b-e-f)+a(e+f-b-c)\}\\
		&=ef(c-a)+b\{c(b-e-f)-a(b-e-f)\}\\
		&=(c-a)(ef+b(b-e-f))\\
		&=(c-a)(e-b)(f-b).
	\end{align*}
	By eq\eqref{asymineq}, we have that the numerator is positive. Hence, $y_1>y_2$.
\end{proof}

\begin{proof}[Proof of Lemma \ref{ineqcentres}]
	Firstly, $x_1<x_2<x_3$. Then from \eqref{ratio} we get that, $y_1-y_2$ and $y_2-y_3$ have the same sign. So we shall calculate the sign of only one of them. 
	Let $\be$ be the common perpendicular to $\ga_3$ and $\ga:=(b,c)$. Let $y'$ be the centre of the semi-circle $\be$. Then using Lemma \eqref{ineqasym} for the geodesics $\ga,\ga_2,\ga_3$, we get that $y'<y_1$. Again, by using the same Lemma for the geodesics $\ga_1,\ga,\ga_3$, we get that $y_2<y'$. Hence, $y_1>y_2$.
\end{proof}
\subsection{The four types of polygons}
In this section, we will introduce the four different types of polygons which are the main objects of study in this paper.

\paragraph{Ideal Polygons.} An ideal $n$-gon, denoted by $\ip n$, is defined as the convex hull in $\HP$ of $n(\geq3)$ distinct points on $\HPb$. The points on the boundary are called \emph{vertices} and they are marked as $x_1,\ldots, x_n$. The \emph{edges} are infinite geodesics of $\HP$ joining two consecutive vertices. The restriction of the hyperbolic metric to an ideal polygon gives it a geodesically complete finite-area (equal to $\pi(n-2)$) hyperbolic metric with geodesic boundary. The top-left panel of Fig.\ref{4typespoly} illustrates an ideal quadrilateral.

\paragraph{Ideal once-punctured polygons.} For $n\geq 2$, an \emph{ideal once-punctured $n$-gon}, denoted by $\punc n$, is another non-compact complete hyperbolic surface with geodesic boundary, obtained from an ideal $(n+2)$-gon, by identifying two consecutive edges using a parabolic element $T \in\psl$ that fixes the common vertex. The resulting surface has a missing point which we shall call a \emph{puncture}. The fundamental group $\fg {\punc n}$ of the surface is generated by the homotopy class of a simple closed loop that bounds a disk containing this puncture inside the surface. If $\rho:\fg {\punc n} \longrightarrow\psl$ is the holonomy representation, then $\rho(\fg {\punc n})\simeq \Z$, with $\rho(\ga)$ a parabolic element of $G$. The edges of the polygon are the connected components of the boundary. The vertices are the ideal points. In the bottom-left panel of Fig.\ref{4typespoly}, we have a once-punctured bigon.
\begin{figure}[h!]
	\centering
	\includegraphics[width=10cm]{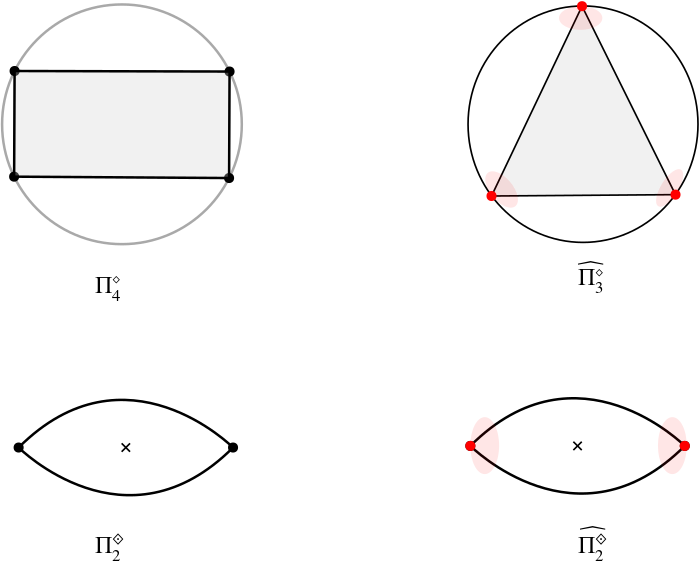}
	\caption{The four types of polygons}
	\label{4typespoly}
\end{figure}

\paragraph{Decorated Polygons.} An ideal vertex $v$ is said to be \emph{decorated} if a horoball, based at $v$ is added. For $n\geq 3$, a \emph{decorated ideal $n$-gon}, denoted by $\dep n$, is an ideal $n$-gon, all of whose vertices are  decorated with pairwise disjoint horoballs. Similarly, for $n\geq 2$, a \emph{decorated ideal once-punctured $n$-gon}, denoted by $\depu n$, is a once-punctured ideal $n$-gon, all of whose vertices are decorated with pairwise disjoint horoballs. See right panels of Fig.\ref{4typespoly}.

For an ideal or punctured polygon, its \emph{deformation space} is defined to be the set of all complete finite-area hyperbolic metrics with geodesic boundary, up to isometries that preserve the markings of the vertices.
\begin{thm}\label{tei}~
	\begin{enumerate}
\item The deformation space $\tei {\ip n}$ of an ideal polygon $\ip n$, $n\geq 3$, is homeomorphic to an open ball $\ball{n-3}$.
\item The deformation space $\tei {\punc n}$ of a punctured polygon $\punc n$, $n\geq 1$, is homeomorphic to an open ball $\ball{n-1}$.
\item 	The deformation space $\tei {\dep n}$ of a decorated polygonal surface $\dep n$ ($n\geq 3$)  is homeomorphic to an open ball of dimension $2n-3$. 
\item The deformation space $\tei {\dep n}$ of a decorated once-punctured polygonal surface $\dep n$ ($n\geq 3$) is homeomorphic to an open ball of dimension $2n-1$. 
	\end{enumerate}
\end{thm}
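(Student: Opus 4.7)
The plan is to treat each of the four statements separately, with (1) and (2) following from a direct parameterization of vertex configurations on $\HPb$, and (3) and (4) from Penner's lambda length parameterization (recalled in Section \ref{lambda}) of decorated Teichm\"uller space.

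For (1), the fact that $G$ acts simply and 3-transitively on $\HPb$ allows us to normalize three vertices of $\ip n$ at $0, 1, \infty$ in the upper half-plane model. The remaining $n-3$ vertices lie on a subinterval of $\HPb$ consistent with the cyclic order, giving a parameter space of the form $\{(x_4, \ldots, x_n) : x_4 < \cdots < x_n < 0\} \subset \R^{n-3}$, an open simplex homeomorphic to $\ball{n-3}$.

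For (2), we use the description of $\punc n$ as the quotient of an ideal $(n+2)$-gon by a parabolic $T$ identifying two adjacent edges at a common vertex $v_0$. Conjugation in $G$ normalizes $T(z) = z+1$ (forcing $v_0 = \infty$), and the residual translational freedom commuting with $T$ places the first non-cusp vertex at $u_1 = 0$; the identification then forces $u_{n+1} = T(u_1) = 1$. The remaining vertices $u_2, \ldots, u_n$ lie in $(0, 1)$ in increasing order, so the parameter space is again an open simplex, homeomorphic to $\ball{n-1}$.

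For (3), Penner's lambda length theorem yields a diffeomorphism $\tei{\dep n} \cong \R_{>0}^E$, where $E$ is the number of edges in any ideal triangulation of $\ip n$; the count is $n$ boundary edges plus $n-3$ diagonals, whence $E = 2n-3$ and $\tei{\dep n} \cong \ball{2n-3}$. For (4), a small adjustment is needed since the cusp of $\punc n$ is not decorated: we first artificially decorate the cusp and apply Penner to obtain the fully decorated space $\R_{>0}^{2n}$ (where the count of $2n$ edges comes from the $(n+2)$-gon triangulation modulo the identification of two boundary edges), and then quotient by the free $\R_{>0}$-action rescaling the cusp horocycle, which scales the lambda lengths incident to the cusp by a common factor. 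The quotient is $\R_{>0}^{2n-1} \cong \ball{2n-1}$, and this cusp-rescaling quotient is the main technical subtlety of the argument.
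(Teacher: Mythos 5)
Your parts (1) and (2) follow essentially the same route as the paper: normalise three boundary points using the triple transitivity of $G$ and read off the remaining vertices as an ordered tuple in an interval, i.e.\ an open simplex homeomorphic to a ball; your treatment of $\punc n$ (normalising $T(z)=z+1$, pinning $u_1=0$ by the residual translation and noting $u_{n+1}=T(u_1)$ is then forced) is in fact more explicit than the paper's one-line reduction ``$\tei{\punc n}\simeq \ball{(n+2)-3}$''. For parts (3) and (4) you take a genuinely different route. The paper simply observes that the horoballs at the $n$ decorated spikes form a free $\R_{>0}^{n}$-factor, so $\tei{\dep n}\cong\tei{\ip n}\times\R_{>0}^{n}$ and $\tei{\depu n}\cong\tei{\punc n}\times\R_{>0}^{n}$, giving $2n-3$ and $2n-1$ immediately; you instead invoke Penner's lambda-length parametrisation and, for the once-punctured case, a quotient by the cusp-rescaling action. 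Both are correct and the edge counts ($2n-3$, respectively $2n$ before the quotient) check out, but the paper's product decomposition is more elementary and sidesteps the subtlety you rightly flag: the cusp of $\depu n$ carries no decoration, and the rescaling action touches only the lambda lengths of edges incident to the cusp, so one must note that the action is free and proper and admits a global slice to conclude $\R_{>0}^{2n}/\R_{>0}\cong\R_{>0}^{2n-1}$. What your route buys in exchange is explicit global coordinates and direct contact with the lambda-length/arc-complex picture that the paper relies on elsewhere.
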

\begin{proof}
	Let $x_1,\ldots,x_n \in \R\cup\{\infty\}$ denote the cyclically ordered vertices of an ideal polygon. Since the isometry group $G$ of $\HP$, acts triply transitively on $\HPb$, there exists a unique $g\in G$ that maps $(x_1,x_2,x_3)$ to $(\infty, 0, 1)$. Therefore, a metric on an ideal $n$-gon is determined by the real numbers $x_4,\ldots,x_n$. Hence, the deformation space $\tei {\ip n}=\left\{(x_4,\ldots,x_n)\in \R^{n-3}: 1<x_4<\ldots<x_n \right\}$ is homeomorphic to $\ball{n-3}$. \\
	Since a once-punctured $n$-gon  is constructed by identifying two consecutive edges of an ideal $(n+2)$-gon, we have that $\tei{\punc n}\simeq \ball{n-1}$. There is one real-parameter family of horoballs based on an ideal point. So the deformations spaces of $\tei {\dep n}$ and $\tei {\depu n}$ are homeomorphic to $\ball{2n-3}$ and $\ball{2n-1}$, respectively.
\end{proof}
	Given a polygonal surface $\Pi$, a vector in the tangent space $\tang{\Pi}$ is called an \emph{infinitesimal deformation} of $\Pi$.

\begin{defi}
The \emph{admissible cone} of a decorated polygonal surface $\dep n$ is defined to be the set of all infinitesimal deformations of a metric $m\in\tei {\dep n}$, such that all the decorated vertices are moved away from each other. It is denoted by $\adm{m}$.
\end{defi}

\begin{lem}
The admissible cone of a decorated (possibly punctured) polygon $\Pi$, endowed with a metric $m$, is an open convex subset of $\tang \Pi$.
\end{lem}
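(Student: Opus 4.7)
The strategy is to exhibit $\adm m$ as a finite intersection of open half-spaces in the vector space $T_m\tei{\Pi}$; both conclusions will then follow from elementary linear-algebraic facts.

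First, I would attach to each pair $\{i,j\}$ of distinct decorated vertices of $\Pi$ a smooth length function $L_{ij}\colon \tei{\Pi}\to\R$ measuring the signed length of the horoball connection between the decorations at vertices $i$ and $j$, as in Section \ref{lambda}. Smoothness follows from the explicit formula $l=\ln\bigl(-\tfrac12\langle \mathbf v_i,\mathbf v_j\rangle\bigr)$ combined with the smooth dependence of the future light-like vectors $\mathbf v_i,\mathbf v_j$ on the Teichmuller coordinates given by Theorem \ref{tei}. The condition that decoration $i$ and decoration $j$ are infinitesimally pushed apart by $v\in T_m\tei{\Pi}$ is then exactly the linear inequality $dL_{ij}|_m(v)>0$, and the left-hand side is a continuous linear form on $T_m\tei{\Pi}$.

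Next, I would combine these conditions into the description
\[
\adm m \;=\; \bigcap_{\{i,j\}}\,\bigl\{\,v\in T_m\tei{\Pi}\,\bigm|\, dL_{ij}|_m(v)>0\,\bigr\},
\]
where the intersection ranges over the finite set of unordered pairs of decorated vertices. In the punctured case $\Pi=\depu n$, one selects, for each such pair, the canonical arc obtained as the image of a chord of the $(n+2)$-gon covering used to define $\depu n$, keeping the indexing set finite. Each factor in the intersection is an open half-space of $T_m\tei{\Pi}$: it is open because $dL_{ij}|_m$ is continuous, and convex because any open half-space is convex. A finite intersection of open convex subsets of a vector space is again open and convex, which yields the lemma.

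The only point requiring real care is the unambiguous choice of arc representative in the punctured case, where distinct homotopy classes connect the same two decorated vertices due to the parabolic deck transformation; but this is naturally handled by the $(n+2)$-gon model of $\depu n$. I do not anticipate any other obstacle: once the length functions are shown to be smooth, the proof is a packaging exercise.
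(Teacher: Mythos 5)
Your proposal is correct and follows essentially the same route as the paper: the paper likewise defines smooth length functions $l_1,\dots,l_N$ for the (finitely many) edges and diagonals, observes that admissibility of $v$ is exactly $dl_i(v)>0$ for all $i$, and concludes that $\adm m=\bigcap_i\{dl_i>0\}$ is a finite intersection of open convex half-spaces. The only difference is cosmetic indexing (pairs of vertices versus edges-and-diagonals), together with your extra remark on choosing representatives in the punctured case, which the paper elides.
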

\begin{proof}
Two decorated vertices are moved away from each other if and only if the length of the horoball connection joining them increases. Let $l_1,\ldots,l_N$ be the set of all edges and diagonals of the polygon. Then we can define the following smooth positive function for every $i=1,\ldots,N$:
\[
\begin{array}{ccl}
	l_i:&\tei \Pi\longrightarrow& \R_{>0}\\
	&m\mapsto&\text{length of $l_i$ w.r.t $m$.}
\end{array}
\] 
An infinitesimal deformation $v$ increases the length of $l_i$ if and only if $dl_i(v)>0$. So the admissible cone can be written as
\[\adm m=\bigcap\limits_{i=1}^N \{dl_i>0\}, \] which is open and convex in $\tang {\dep n}$.
\end{proof}

\section{Arcs and arc complexes} \label{arc}
\subsection{The different kinds of arcs}
An \emph{arc} on a polygon $\Pi$, is an embedding $\al$ of a closed interval $I\subset \R$ into $\Pi$. There are two possibilities depending on the nature of the interval:
\begin{enumerate}
	\item $I=[a,b]$: In this case, the arc $\al$ is finite. We consider those finite arcs that verifiy:  $\al(a),\al(b) \in \partial \Pi$ and $\al(I)\cap S=\set{\al(a),\al(b)}$. 
	\item $I=[a,\infty)$: These are embeddings of hyperbolic geodesic rays in the interior of the polgyon such that  $\al(a)\in \partial \Pi$.  The infinite end converges to an ideal point, \ie $\al(t)\overset{t\to \infty}{\longrightarrow} x$, where $x\in \HP$.
\end{enumerate}  
An arc $\al$ of a polygon $\Pi$ with non-empty boundary is called \emph{non-trivial} if each connected component of $\Pi\smallsetminus \{\al \}$ has at least one spike or decorated vertex. 
	
Let $\mathscr A$ be the set of all non-trivial arcs of the two types above. Two arcs $\al,\al':I\longrightarrow \Pi$ in $\mathscr A$ are said to be \emph{isotopic} if there exists a homeomorphism $f:\Pi\longrightarrow \Pi$ that preserves the boundary and fixes all decorated vertices or (possibly decorated) spikes and a continuous function $H:\Pi \times [0,1]\longrightarrow \Pi\}$ such that 
\begin{enumerate}
	\item $H(\cdot,0)=\mathrm{Id}$ and $H(\cdot,1)=f$,
	\item for every $t\in [0,1]$, the map $H(\cdot,t): S\longrightarrow \Pi$ is a homeomorphism,
	\item for every $t\in I$, $f(\al(t))=\al'(t)$.
\end{enumerate}

\begin{defi}\label{ac}
	The \emph{arc complex} of a surface $\Pi$, generated by a subset $\mathcal{K}\subset \mathscr A$, is a simplicial complex $\ac \Pi$ whose base set $\ac{\Pi}^{(0)}$ consists of the isotopy classes of arcs in $\mathcal K$, and there is an $k$-simplex for every $(k+1)$-tuple of pairwise disjoint and distinct isotopy classes. 
\end{defi}
The elements of $\mathcal{K}$ are called \emph{permitted} arcs and the elements of $\mathscr A\smallsetminus\mathcal{K} $ are called \emph{rejected} arcs. 

Next we specify the elements of $\mathcal{K}$ for the different types of surfaces:
\begin{itemize}
	\item In the case of an undecorated ideal or punctured polygon, the set $\mathcal K$ of permitted arcs comprises of non-trivial finite arcs that separate at least two spikes from the surface.
	\item In the case of decorated polygons, an arc is permitted if either both of its endpoints lie on two distinct edges of $\dep n$ (\emph{edge-to-edge} arc) or exactly one endpoint lies on a decorated vertex (\emph{edge-to-vertex} arc).
\end{itemize}
\begin{rem}
	\begin{itemize}
		\item Two isotopy classes of arcs of $\Pi$ are said to be disjoint if it is possible to find a representative arc from each of the classes such that they are disjoint in $\Pi$. Such a configuration can be realised by geodesic segments in the context of polygons. 
		Since the surface is endowed with a metric of constant negative curvature, such a configuration can be realised by arcs that are geodesics segments with respect to such a metric. 
		In our discussion, we shall always choose such arcs as representatives of the isotopy classes. 
		\item In the cases of ideal and punctured polygons, we shall choose those geodesic arcs whose lifts are supported on projective lines that intersect outside $\pp\smallsetminus\cHP$.
	\end{itemize}
\end{rem}
\paragraph{Vocabulary.}	The 0-skeleton $\sigma^{(0)}$ of a top-dimensional simplex $\sigma$ of the arc complex is called a \emph{triangulation} of the polygon. A finite arc of a one-holed ideal polygon or a once-punctured ideal polygon is called \emph{maximal} if both its endpoints lie on the same edge. A finite arc of an non-decorated polygon is called \emph{minimal} if it separates a quadrilateral with two ideal vertices from the surface.

\begin{defi}\label{big} We define a \emph{filling} simplex of the arc complex of the different types of surfaces:
	\begin{itemize}
		\item For an undecorated ideal or a punctured polygon, a simplex $\sigma$ is said to be filling if the arcs corresponding to $\sigma^{(0)}$  decompose the surface into topological disks with at most two vertices. 
		\item For a decorated polygon, a simplex $\sigma$ is said to be filling if the arcs corresponding to $\sigma^{(0)}$  decompose the surface into topological disks with at most one vertex and a punctured disk with no vertex. 
	\end{itemize}
\end{defi}
From the definition it follows that any simplex containing a filling simplex is also filling. 
\begin{defi}\label{pac}
	The \emph{pruned arc complex} of a polygon $\Pi$, denoted by $\sac \Pi$ is the union of the interiors of the filling simplices of the arc complex $\ac \Pi$.
\end{defi}
Every point $x\in \sac {\Pi}$ is contained in the interior of a unique simplex, denoted by $\sigma_x$, \ie there is a unique family of arcs $\{\al_1,\ldots,\al_p\}$, namely the 0-skeleton of $\sigma_x$, such that \[ x=\sum_{i=1}^p t_{i} \al_i, \, \sum_{i=1}^p t_i =1,\,\text{ and } \forall i, \, t_i>0 .\]
Define the \emph{support} of a point $x\in \sac \Pi$ as $\supp x:= \sigma_x^{(0)}$.

\subsection{Ideal and Punctured Polygons}
To every ideal polygon $\ip n$, one can associate a Euclidean regular polygon with $n$ vertices, denoted by $\poly n$, in the following way:
\begin{itemize}
	\item The vertices of $\poly n$ correspond to the infinite geodesics of the boundary of $\ip n$,
	\item Two vertices in $\poly n$ are consecutive if and only if the corresponding infinite geodesics have a common ideal endpoint.
\end{itemize}
See Fig.\ref{arc2diag} for a transformation between an ideal quadrilateral and a Euclidean square.
\begin{figure}
	\centering
	\includegraphics[width=12cm]{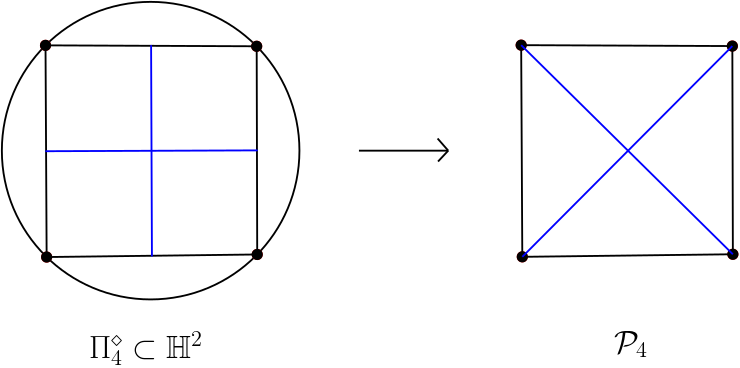}
	\caption{From hyperbolic to Euclidean polygon}
	\label{arc2diag}
\end{figure}
Then we have the following bijection:
\[\cur{\text{Isotopy classes of permitted arcs of } \ip n} \leftrightarrow \cur{\text{Diagonals of }\poly n}\]
Two distinct isotopy classes are pairwise disjoint if and only if the corresponding diagonals in $\poly n$ don't intersect inside $\poly n$. However, the diagonals are allowed to intersect at vertices – this takes place whenever the arcs have exactly one endpoint on a common edge of the ideal polygon. One can construct the arc complex of $\poly n$ in the same way as before and one has $\ac {\poly n}=\ac {\ip n}$. 
\begin{figure}
\begin{subfigure}{\linewidth}
			\centering
	\includegraphics[width=12cm]{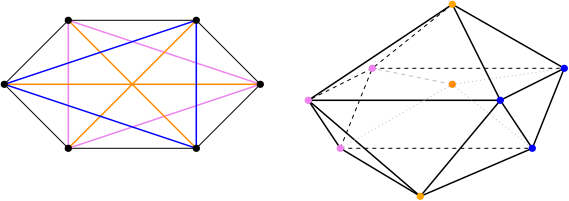}
\subcaption{The arcs and the arc complex of a hexagon $\poly6$}
\label{achexa}
	\end{subfigure}
\vspace{0.5cm}
\begin{subfigure}{\linewidth}
	\centering
\includegraphics[width=12cm]{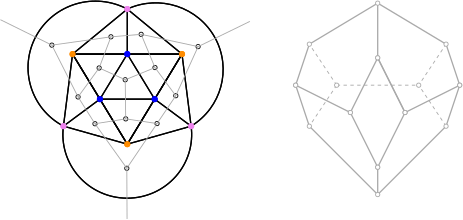}
\subcaption{The flattened perspective of $\ac{\poly 6} $ and its dual associahedron of dimension 2}
\label{asso}
\end{subfigure}
\caption{}
\end{figure}
The following theorem is a classical result from combinatorics about the topology of the arc complex of a polygon. See, for instance, \cite{penner} for a proof by Penner.
\begin{thm}\label{acideal}
	The arc complex $\ac {\poly n}$ ($n\ge 4$) is a sphere of dimension $n-4$. 
\end{thm}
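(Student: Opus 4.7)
The plan is to identify $\ac{\poly n}$ with the boundary complex of a convex simplicial polytope of dimension $n-3$. Specifically, I would realise $\ac{\poly n}$ as the boundary of the polar dual of the Stasheff associahedron $K_{n-1}$, and then invoke the classical fact that the boundary of any convex $d$-polytope is homeomorphic to $S^{d-1}$.

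Recall that $K_{n-1}$ is an $(n-3)$-dimensional convex polytope whose face lattice is in order-reversing bijection with the poset of sets of pairwise non-crossing diagonals of $\poly n$, ordered by inclusion. Under this bijection, the vertices of $K_{n-1}$ correspond to the triangulations of $\poly n$ (maximal non-crossing collections, each of cardinality $n-3$), while the unique top face corresponds to the empty collection. Passing to the polar dual $K_{n-1}^{*}$ yields a simplicial convex polytope of dimension $n-3$ whose proper-face poset matches, by construction, the face poset of $\ac{\poly n}$. Hence $\ac{\poly n} = \partial K_{n-1}^{*} \cong S^{n-4}$.

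The main obstacle is producing an actual convex realisation of $K_{n-1}$, as the combinatorial face lattice alone does not guarantee one. The existence was first established combinatorially by Stasheff, with explicit convex realisations later given by Lee, Loday, and --- most canonically --- by Gelfand, Kapranov, and Zelevinsky via the secondary polytope of the planar configuration formed by the vertices of $\poly n$. Once such a realisation is fixed, the identification with the arc complex is a direct unwinding of definitions.

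As a self-contained alternative, one could argue by induction on $n$. The base cases $n=4$ (two isolated vertices, giving $S^{0}$) and $n=5$ (the pentagonal cycle, giving $S^{1}$) are immediate. For the inductive step, any diagonal $d$ of $\poly n$ splits the polygon into sub-polygons with $p'$ and $p''$ vertices where $p'+p''=n+2$, and the link of $d$ in $\ac{\poly n}$ is the simplicial join $\ac{\poly{p'}}\ast\ac{\poly{p''}} \cong S^{p'-4}\ast S^{p''-4} = S^{n-5}$. Thus every vertex link has the expected dimension, showing that $\ac{\poly n}$ is a closed PL $(n-4)$-manifold. The remaining difficulty --- upgrading this local manifold structure to a global sphere identification --- would then have to be addressed by exhibiting a shelling of $\ac{\poly n}$ or by matching the complex explicitly to the boundary of the associahedron as above.
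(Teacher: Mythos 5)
The paper does not actually prove this statement: it is recorded as a classical fact with a pointer to Penner's proof (which the introduction in turn credits to Whitney), so your argument has to stand on its own, and your first route does. The complex of non-crossing collections of diagonals of $\poly n$ is precisely the boundary complex of the polar dual of a convex realisation of the $(n-3)$-dimensional associahedron $K_{n-1}$, and the boundary of a simplicial convex $(n-3)$-polytope is PL-homeomorphic to $S^{n-4}$; the only non-formal ingredient is the convex realisation itself, which you correctly isolate and which is legitimately citable (Lee, Loday, or the Gel'fand--Kapranov--Zelevinsky secondary polytope of $n$ points in convex position). This is consistent with the duality the paper itself alludes to when it remarks that the dual of the codimension $0$ and $1$ cells of $\ac{\poly 6}$ is the associahedron, so your proof is a natural completion of that remark rather than a departure from the paper's viewpoint. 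Your second, ``self-contained'' route is honestly flagged as incomplete, and the gap is genuine: computing that every link is $\ac{\poly{p'}}\ast\ac{\poly{p''}}\cong S^{p'-4}\ast S^{p''-4}=S^{n-5}$ (by induction and the join formula) shows only that $\ac{\poly n}$ is a closed PL manifold of dimension $n-4$, and a closed manifold of that dimension need not be a sphere; one really does need the shelling or the polytopal identification to conclude. Taken as submitted, the first argument is a valid proof and the second is, as you say yourself, only a partial one.
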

 Fig.\ref{achexa} shows the arcs and the arc complex of a hexagon. The dual of the codimension 0 and 1 simplices gives a convex polytope known as \emph{associahedron}. See Fig.\ref{asso} for the associahedron of dimension 3.

The following theorem about the arc complex of once-punctured polygons was proved by Penner in \cite{penner}.
\begin{thm}\label{idpac}~
	The arc complex $\ac{\punc n}$ of a punctured $n$-gon,($n\geq 2$), is homeomorphic to a sphere of dimension $n-2$.
\end{thm}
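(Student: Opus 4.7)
The plan is to reduce this to the polygon case (Theorem \ref{acideal}) via the branched double cover of $\punc n$. Topologically, $\punc n$ is a disk with $n$ boundary marked points and one interior puncture; its 2-fold cover branched at the puncture is the ideal polygon $\ip{2n}$, carrying the deck involution $\tau$ that rotates by $\pi$. I would first establish a bijection between isotopy classes of permitted arcs of $\punc n$ and $\tau$-orbits of diagonals of $\poly{2n}$: a boundary-to-boundary arc in $\punc n$ lifts to a $\tau$-invariant pair of disjoint arcs (the non-winding case) or to a single arc joining $\tau$-antipodal edges (the winding case). Because disjoint families downstairs correspond to $\tau$-equivariant disjoint families upstairs, this identification is simplicial, matching $\ac{\punc n}$ with the $\tau$-invariant subcomplex $\ac{\ip{2n}}^{\tau}$.

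I would then compute the topology of $\ac{\ip{2n}}^{\tau}$. By Theorem \ref{acideal}, $\ac{\ip{2n}} \cong S^{2n-4}$, realized as the boundary of the associahedron polytope $K$ of dimension $2n-3$. The involution $\tau$ acts linearly on the ambient real vector space and preserves $K$, and a direct count of $\tau$-orbits on triangulations shows that the $(+1)$-eigenspace of $\tau$ has dimension $n-1$ and meets $K$ in a full-dimensional convex subpolytope. The boundary of this $(n-1)$-dimensional polytope is both homeomorphic to $S^{n-2}$ and equal to $\ac{\ip{2n}}^{\tau}$, yielding the desired homeomorphism.

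The main obstacle is the first step: a priori, boundary-to-boundary arcs of $\punc n$ between fixed endpoint edges form a $\Z$-torsor under winding around the puncture, but only the parity of the winding is detected by the branched cover. The geodesic-representative convention introduced earlier in this section, which selects lifts supported on projective lines that meet outside $\cHP$, canonically picks out a single representative per parity class, so that the bijection becomes simplicially well-defined. Checking that this selection is compatible with disjointness, so that flips of triangulations of $\punc n$ lift to $\tau$-equivariant flips of $\poly{2n}$, is the delicate technical point, after which the sphericity follows from standard convex geometry.
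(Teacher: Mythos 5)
First, a point of comparison: the paper does not prove Theorem \ref{idpac} at all — it is quoted from Penner — so your proposal is not competing with an argument in the text. Your strategy (pass to the double cover branched at the puncture, identify $\ac{\punc n}$ with the fixed locus of the deck involution $\tau$ on $\ac{\ip{2n}}\cong \mathbb{S}^{2n-4}$, and realise that fixed locus as the boundary of a convex polytope) is viable — it is essentially the construction of the cyclohedron/type-$B$ associahedron — but two steps are misstated in ways that matter.

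The first gap is the lifting dichotomy and the phrase ``$\tau$-invariant subcomplex''. An embedded arc of $\punc n$ never lifts to a single arc: it is simply connected and misses the branch point, so its preimage is always two disjoint embedded arcs exchanged by $\tau$. For a maximal arc (both endpoints on one edge, encircling the puncture) the two lifts join the same pair of antipodal edges and are isotopic to each other, hence define a single $\tau$-fixed vertex of $\ac{\ip{2n}}$ (an antipodal diagonal of $\poly{2n}$); for a non-maximal arc they are non-isotopic and define a free $\tau$-orbit spanning a $\tau$-invariant edge with no fixed vertex. So your map sends some vertices of $\ac{\punc n}$ to vertices of $\ac{\ip{2n}}$ and others to barycenters of edges: the correct target is the fixed-point set $\lvert\ac{\ip{2n}}\rvert^{\tau}$ of the geometric realisation, a subcomplex of the barycentric subdivision, not a subcomplex of $\ac{\ip{2n}}$. (Read literally as the pointwise-fixed subcomplex, $\ac{\ip{2n}}^{\tau}$ is just the $n$ antipodal diagonals, which pairwise cross and form a discrete set — not $\mathbb{S}^{n-2}$ for $n\ge 3$.) Relatedly, your ``main obstacle'' is a phantom: embedded arcs of a once-punctured disk between two fixed boundary edges are classified by the induced partition of the spikes together with the side on which the puncture lies; an arc winding more than once around the puncture is not embedded, so there is no $\Z$-torsor and no choice of representative ``per parity class'' to make. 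The genuine checks are the ones above, plus that disjointness downstairs corresponds to pairwise compatibility of all four lifts upstairs (true, but it needs the observation that $\beta$ and $\tau\beta$ never cross for a non-antipodal diagonal $\beta$, and that two maximal arcs always intersect, matching the crossing of any two antipodal diagonals).

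The second gap is the convex-geometry step. You need an explicit realisation of the associahedron $K$ as a convex polytope on which the dihedral symmetry of $\poly{2n}$, hence $\tau$, acts affinely (e.g.\ the secondary polytope of the regular $2n$-gon); then $K\cap\mathrm{Fix}(\tau)$ is convex, contains the fixed barycenter of $K$ in its interior, and has boundary $\partial K\cap\mathrm{Fix}(\tau)=\lvert\partial K\rvert^{\tau}$. But the dimension of $\mathrm{Fix}(\tau)\cap\mathrm{aff}(K)$ is a linear-algebra computation in the ambient space; ``a direct count of $\tau$-orbits on triangulations'' does not compute an eigenspace dimension. The answer is $n-1$ (this intersection is Simion's type-$B$ associahedron), and one must still match the face poset of this polytope with the simplicial structure of $\lvert\ac{\ip{2n}}\rvert^{\tau}$ described above before concluding that the latter is $\mathbb{S}^{n-2}$. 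All of this can be done, but as written these steps are asserted rather than proved, and the assertions that are precise (single-arc lifts, a subcomplex of $\ac{\ip{2n}}$, the $\Z$ of windings) are not correct as stated.
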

\subsection{Pruned arc complex of decorated polygons}
In this subsection, we shall prove that the pruned arc complexes of a decorated ideal polygon and a decorated once-punctured polygon are open manifolds. Since the permitted arcs in this case are allowed to have one endpoint on a decorated vertex, we consider the following abstract set up to cover all the cases at the same time. 

We start with the polygon $\poly {2n}$ (defined in the previous section) with $n\geq 2$ and partition its vertex set into two disjoint subsets $G$ and $R$ such that $|G|=|R|=n$ and for every pair of consecutive vertices, exactly one belongs to $G$ and the other one belongs to $R$. Such a polygon is said to have an \emph{alternate partitioning} and shall be denoted by $(\poly {2n}, C_{alt})$, where $C_{alt}:=(G,R)$. 
\begin{figure}[!h]
	\centering
	\includegraphics[width=12cm]{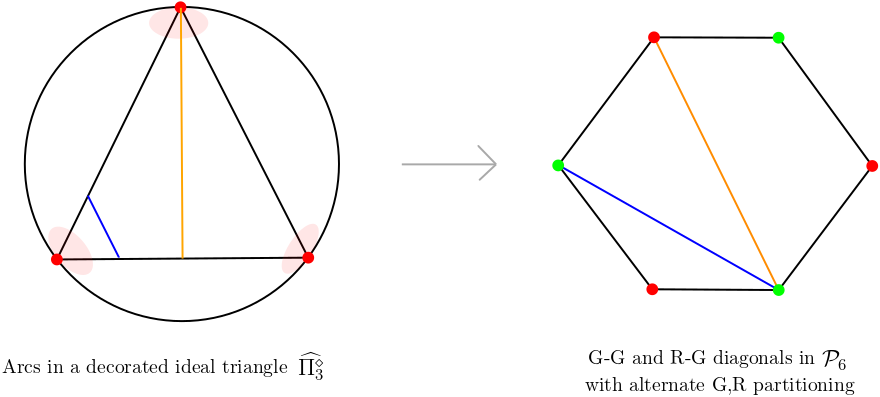}
	\caption{From decorated ideal $n$-gon to Euclidean $\poly{2n}$ with alternate partitioning}
	\label{arc2diagdeco}
\end{figure}
To every decorated polygon $\dep n$, one can associate the polygon $(\poly {2n}, C_{alt})$ in the following way:
\begin{itemize}
	\item a decorated vertex of $\dep n$ corresponds to a vertex of $\poly {2n}$ in $R$,
	\item an edge of $\dep n$ corresponds to a vertex of $\poly {2n}$ in $G$,
\end{itemize}
such that one $R$-vertex and one $G$-vertex are consecutive in $\poly {2n}$ if and only if the corresponding edge and decorated vertex of $\dep n$ are consecutive.
Again, we have the bijection:
\[\cur{\text{Isotopy classes of edge-to-edge arcs of } \dep n} \leftrightarrow \cur{G-G\text{ diagonals}}\]
\[\cur{\text{Isotopy classes of edge-to-vertex arcs of } \dep n} \leftrightarrow \cur{G-R\text{ diagonals} }\]

So the arc complex $\ac {\dep n}$ (resp. $\ac {\depu n}$ ) is isomorphic to the subcomplex $\acsub{\poly{2n}}$ of $\ac{\poly {2n}}$ (resp. $ \acsub{\puncp{2n}}$ of $\ac{\puncp{2n}}$) generated by the $G-G$ and $G-R$ diagonals. In the case of a polygon without puncture $\poly{2n}$, the $k+1$ diagonals of a filling simplex decompose the surface into $k+2$ smaller polygons none of which has more than one $R$-vertex. In the case of a punctured polygon $\puncp{2n}$, the $k+1$ diagonals of a filling simplex decompose the surface into $k+1$ smaller unpunctured polygons none of which has more than one $R$-vertex and exactly one smaller punctured polygon without any $R$-vertex. The boundary of $\acsub{\poly{2n}}$ as well as $\acsub{\puncp{2n}}$ consists of all the non-filling simplices. So the pruned arc complex $\sac {\dep n}$ (resp. $\sac{\depu n}$) is the interior of $\acsub{\poly{2n}}$ (resp. $\acsub{\puncp {2n}}$).\\
In the following theorem we prove that the interior of these subcomplexes form open manifolds of given dimensions.
\begin{thm}\label{sacdp}
	\begin{enumerate}
\item 	The interior of the simplicial complex $\acsub{\poly{2n}}$, ($n\geq 2$) of a polygon $\poly{2n}$ with an alternate partitioning is an open manifold of dimension $2n-4$. 
\item 	The interior of the simplicial complex $\acsub{\puncp {2n}}$ ($n\geq 1)$ of a once-punctured $\puncp{2n}$ with an alternate partitioning is an open manifold of dimension $2n-2$. 
	\end{enumerate}
\end{thm}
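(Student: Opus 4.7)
The plan is to realize $\mathrm{int}(\acsub{\poly{2n}})$ (and likewise $\mathrm{int}(\acsub{\puncp{2n}})$) as an open subset of the ambient arc complex, which by Theorems \ref{acideal} and \ref{idpac} is a sphere of dimension $2n-4$, respectively $2n-2$. Combined with the non-emptiness of top-dimensional filling simplices, openness inside this sphere immediately gives the claimed open manifold structure of the stated dimension.

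The crux is the following upward-closure lemma: for every filling simplex $\tau$ of $\acsub{}$, every coface $\sigma\supset\tau$ inside the ambient arc complex is automatically filling and already lies in $\acsub{}$. Granting this, the open star of $\tau$ in the ambient sphere, namely $\bigcup_{\sigma\supset\tau}\mathrm{int}(\sigma)$, is entirely contained in $\mathrm{int}(\acsub{})$. Since every point of $\mathrm{int}(\acsub{})$ lies in the relative interior of some filling simplex, this exhibits an open neighborhood in the ambient sphere around each such point, proving that $\mathrm{int}(\acsub{})$ is open in the sphere and hence an open manifold of the corresponding dimension.

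To prove the lemma, I would examine the decomposition of $\poly{2n}$ (or $\puncp{2n}$) cut out by $\tau$. Any additional diagonal in $\sigma\setminus\tau$ must lie inside a single piece of this decomposition. Since $\tau$ is filling, every ordinary piece contains at most one $R$-vertex and, in the punctured setting, the unique punctured piece contains no $R$-vertex; consequently no two vertices of a single piece are simultaneously $R$-vertices, so any diagonal inside a piece is of type $G$-$G$ or $G$-$R$, never $R$-$R$. This places $\sigma$ in $\acsub{}$. The filling property persists because subdividing an ordinary piece with at most one $R$-vertex yields sub-pieces still satisfying that bound, and every embedded arc inside a once-punctured sub-piece separates it into one ordinary sub-disk (no $R$-vertex) and one strictly smaller punctured sub-disk (still with no $R$-vertex), so that exactly one punctured piece survives any further subdivision.

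Non-emptiness in top dimension is witnessed by explicit filling triangulations: for $\poly{2n}$, the fan based at any fixed $G$-vertex $g$ uses the $2n-3$ diagonals from $g$ to every non-adjacent vertex (each of type $G$-$G$ or $G$-$R$) and produces triangles containing $g$ together with two consecutive boundary vertices, each having exactly one $R$-vertex; for $\puncp{2n}$, one first isolates the puncture by two $G$-$G$ arcs going around it on opposite sides, and then refines the resulting ordinary pieces by fans until $2n-1$ arcs are in use. The main obstacle is verifying the lemma in the punctured case, where one must track both the combinatorial bound on $R$-vertices and the persistence of exactly one punctured piece after subdivision; this reduces to the elementary topological fact that every embedded arc in a once-punctured disk separates it with the puncture lying on exactly one side.
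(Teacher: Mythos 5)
Your proof is correct, but it takes a genuinely different route from the paper's. The paper argues locally: for a point $x$ in the interior of a filling $k$-simplex $\sigma_x$, it computes the link of $\sigma_x$ as the join $\ac{\poly{n_1}}\Join\dots\Join\ac{\poly{n_{k+2}}}$ of the arc complexes of the pieces cut out by $\sigma_x$, applies Theorem \ref{acideal} (resp.\ \ref{idpac}) to each \emph{piece}, and uses a vertex-counting lemma ($s=2(k+1)+2n$) to conclude that this join is a sphere of dimension $2n-5-k$, so that $x$ has a ball neighbourhood. You instead apply the sphere theorems \emph{globally} to the ambient complexes $\ac{\poly{2n}}\simeq\s{2n-4}$ and $\ac{\puncp{2n}}\simeq\s{2n-2}$, and reduce everything to the upward-closure lemma (every ambient coface of a filling simplex is again filling and contains no $R$--$R$ diagonal), which makes the open star of each filling simplex an open neighbourhood inside the ambient sphere. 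That lemma is sound, and in fact the paper uses the same observation implicitly when it asserts that none of the smaller polygons of a filling decomposition contains an $R$--$R$ diagonal. Your argument is shorter and avoids the join computation and the counting lemma entirely; the trade-offs are that it leans essentially on the ambient complex being a closed PL manifold (the paper's link computation would survive in settings where only the pieces' complexes are known to be spheres), and it yields less refined local information — the paper's proof identifies the link of every filling simplex explicitly, which is the kind of data reused later in the covering-map arguments. Your existence construction (fan triangulations, and isolating the puncture first) is fine for the stated ranges of $n$, though, as with the paper's own statement, the degenerate small cases deserve a separate glance.
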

\begin{proof}
	
	\begin{enumerate}
	\item Let $x \in \acsub{\puncp{2n}}$ be point which lies in the interior of a unique simplex $\sigma_x$ of dimension say $k$. Here, $n-1\leq k \leq 2n-4$. We need to show that there is a neighbourhood of $x$ in $\sac{\poly{2n}}$ which is homeomorphic to an open ball of dimension $2n-4$. It suffices to prove that the link of $\sigma_x$ in the arc complex is a sphere of dimension $2n-5-k$.
	The $k+1$ arcs of the $0$-skeleton of $\sigma_x$ divide the polygon $\poly{2n}$ into $k+2$ smaller polygons $\poly {n_1},\ldots, \poly{n_{k+2}}$, with $3\leq n_i \leq 2n-1$ for every $i=1,\ldots,k+2$. 
	
	\begin{lem}\label{smalllemma}
		Let $s:=\sum_{r=1}^{k+2} n_r$ be the total number of vertices of all the smaller polygons. Then we have, 
		\[s=2(k+1)+2n.\]
	\end{lem}
	\begin{proof}
		For $p=1,\ldots,2N$, let $e_p$ be the total number of arcs that have endpoints on the $p$-th vertex. Let $w_p$ be the number of times the $p$-th vertex appears as a vertex of a smaller polygon. Then $w_p=e_p+1$ and $\sum_{p=1}^{2N} e_p=2(k+1)$. Hence we have $s=\sum_{p=1}^{2N} w_p=2(k+1)+2N$.
	\end{proof}
	
	Since $\sigma_x$ is a filling simplex, we have that none of the smaller polygons contain a $R-R$ diagonal. So each of their arc complexes is a sphere, from Theorem \eqref{acideal}. The link is then given by 
	\begin{align*}
		\Link{\ac{\poly{2n}}}{\sigma_x}&=\ac{\poly {n_1}}\Join\ldots \ac{\poly {n_{k+2}}}\\
		&=\s{n_1-4}\Join\ldots\s{n_{k+2}-4}& (\text{from Theorem \eqref{acideal}})\\
		&=\s{s-4(k+2)+k+1}\\
		&=\s{2n-5-k} &(\text{from Lemma \eqref{smalllemma}})
	\end{align*}
\item  Again we need to prove that the link of a $k$-dimensional filling simplex $\sigma$ in the arc complex is a sphere of dimension $2n-3-k$.
The $k+1$ arcs of the $0$-skeleton of $\sigma_x$ divide the punctured polygon $\puncp{2n}$ into $k+1$ smaller convex polygons with at most one $\poly {n_1},\ldots, \poly{n_{k+1}}$, with $3\leq n_i \leq 2n-1$ for every $i=1,\ldots,k+1$ and exactly one punctured polygon $\puncp{n_0}$, ($n_0\geq2$) without any $R$-vertex. So we have,
	\begin{align*}
	\Link{\ac{\poly{2n}}}{\sigma}&=\ac{\puncp{n_0}}\Join\ac{\poly {n_1}}\Join\ldots \ac{\poly {n_{k+1}}}\\
	&=\s{n_0-2}\Join \s{n_1-4}\ldots\s{n_{k+2}-4}& (\text{from Theorem \eqref{acideal}})\\
	&=\s{s-2-4(k+1)+k+1}\\
	&=\s{2n-5-k}, &(\text{from Lemma \eqref{smalllemma}}).
\end{align*}
\end{enumerate}
\end{proof}
\subsection{Tiles}\label{tilestypes}
	Let $S$ be a hyperbolic surface endowed with a hyperbolic metric $m\in \tei  S$. Let $\mathcal{K}$ be the set of permitted arcs for an arc complex $\ac S$ of the surface. Given a simplex $\sigma\subset \ac S$, the \emph{edge set} is defined to be the set \[ \ed:=\set{\al_g(m)\in \al | \al\in \sigma^{(0)}},\] where $\al_g(m)$ is a geodesic representative from its isotopy class. The set of all lifts of the arcs in the edge set in the universal cover $\wt S\subset \HP$ is denoted by $\led$. The set of connected components of the surface $S$ in the complement of the arcs of the edge set is denoted by $\tile$. The lifts of the elements in $\tile$ in $\HP$ are called \emph{tiles}; their collection is denoted by $\ltile$.
	\begin{rem}
		In the case of ideal polygons and decorated polygons, these components are homeomorphic to two-dimensional disks. In the case of punctured polygons, one of the components is a punctured disk.
	\end{rem}
The sides of a tile are either contained in the boundary of the original surface or they are the arcs of $\ed$. The former case is called a \emph{boundary side} and the latter case is called an \emph{internal side}. Two tiles $d,d'$ are called \emph{neighbours} if they have a common internal side. The tiles having finitely many edges are called \emph{finite}. 

If $\sigma$ has maximal dimension in $\ac S$, then the finite tiles can be of three types:
\begin{itemize}
	\item[Type 1:]  The tile has only one internal side, \ie it has only one neighbour.
	\item [Type 2:] The tile has two internal sides, \ie two neighbours.
	\item [Type 3:] The tile has three internal sides, \ie three neighbours.
\end{itemize}

\begin{rem}
	Any tile, obtained from a triangulation using a simplex $\sigma$, must have at least one and at most three internal sides. Indeed, the only time a tile has no internal side is when the surface is an ideal triangle. Also, if a tile has four internal sides, then it must also have at least four distinct boundary sides to accommodate at least four endpoints of the arcs. The finite arc that joins one pair of non-consecutive boundary sides lies inside $\mathcal{K}$. This arc was not inside the original simplex, which implies that $\sigma$ is not maximal. Hence a tile can have at most 3 internal sides.
\end{rem}
\begin{figure}
	\centering
	\includegraphics[height=18cm]{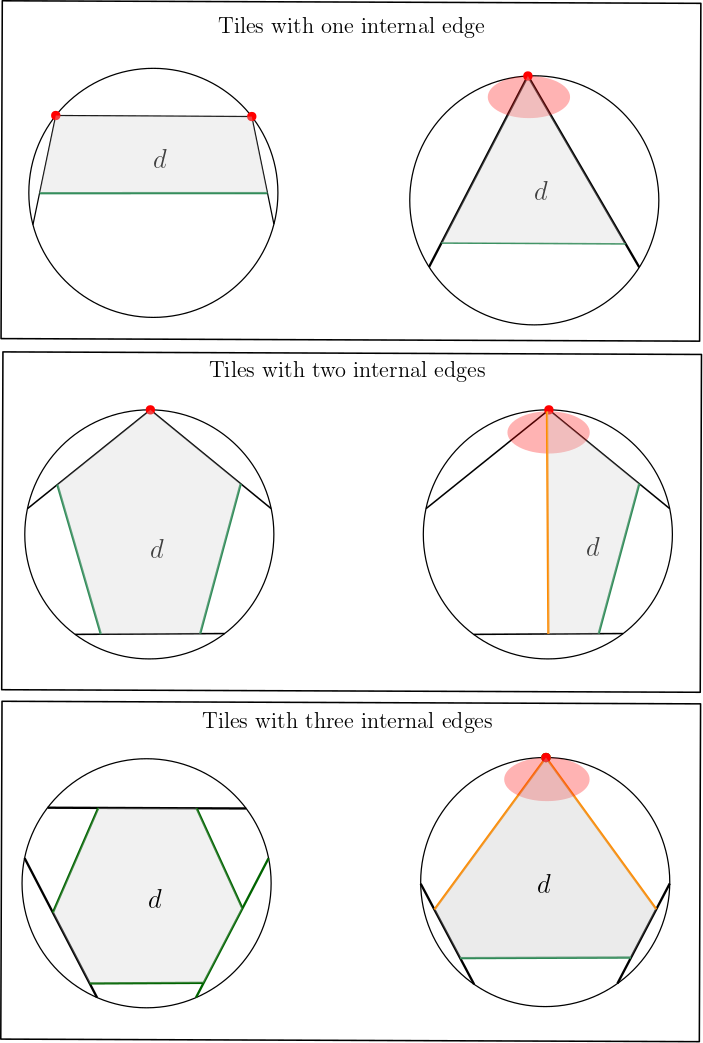}\vspace{0.3cm}
	\caption{Finite tiles for undecorated and undecorated polygons}
	\label{tiles}
\end{figure}

\paragraph{Undecorated polygons:}  There are three types of tiles possible after triangulating (cf. first column of Fig \ref{tiles}):
\begin{itemize}
	\item a hyperbolic quadrilateral with two ideal vertices and one permitted arc,
	\item a hyperbolic pentagon with one ideal vertex and two permitted arcs as alternating edges,
	\item a hyperbolic hexagon with three permitted arcs as alternating edges.
\end{itemize}
\paragraph{Decorated Polygons:} The different types of tiles possible in the case of a decorated polygon are shown in the last three columns of the table in Fig. \ref{tiles}.
\begin{itemize}
	\item When there is only one internal side of the tile, that side is an edge-to-edge arc of the original surface. The tile contains exactly one decorated vertex $\nu$ and two boundary sides. The three cases corresponding to the three possible types of the vertex are given in the first row of the table in Fig. \eqref{tiles}. 
	\item When there are two internal sides (second row in Fig. \eqref{tiles}), one of them is an edge-to-vertex and the other one is of edge-to-edge type. So the tile contains a decorated vertex. 
		\item There are two possibilities in this case: either all the three internal sides are of edge-to-edge type (fourth row in Fig. \eqref{tiles}) or two of them are edge-to-vertex arcs and one edge-to-edge arc (third row in Fig. \eqref{tiles}). In the former case, the tile does not contain any vertex whereas in the latter case it contains one.
\end{itemize}

\paragraph{Punctured polygons:} In the case of a punctured polygon, the possible connected components after cutting the surface along the arcs of the edgeset, can be of the three types as in the case of surfaces with non-decorated spikes and also a hyper-ideal punctured monogon. The lift of the latter in $\HP$ is an infinite polygon with one ideal vertex, as in Fig.\ \ref{punctile}. 
\begin{figure}
	\frame{	\includegraphics[width=15cm]{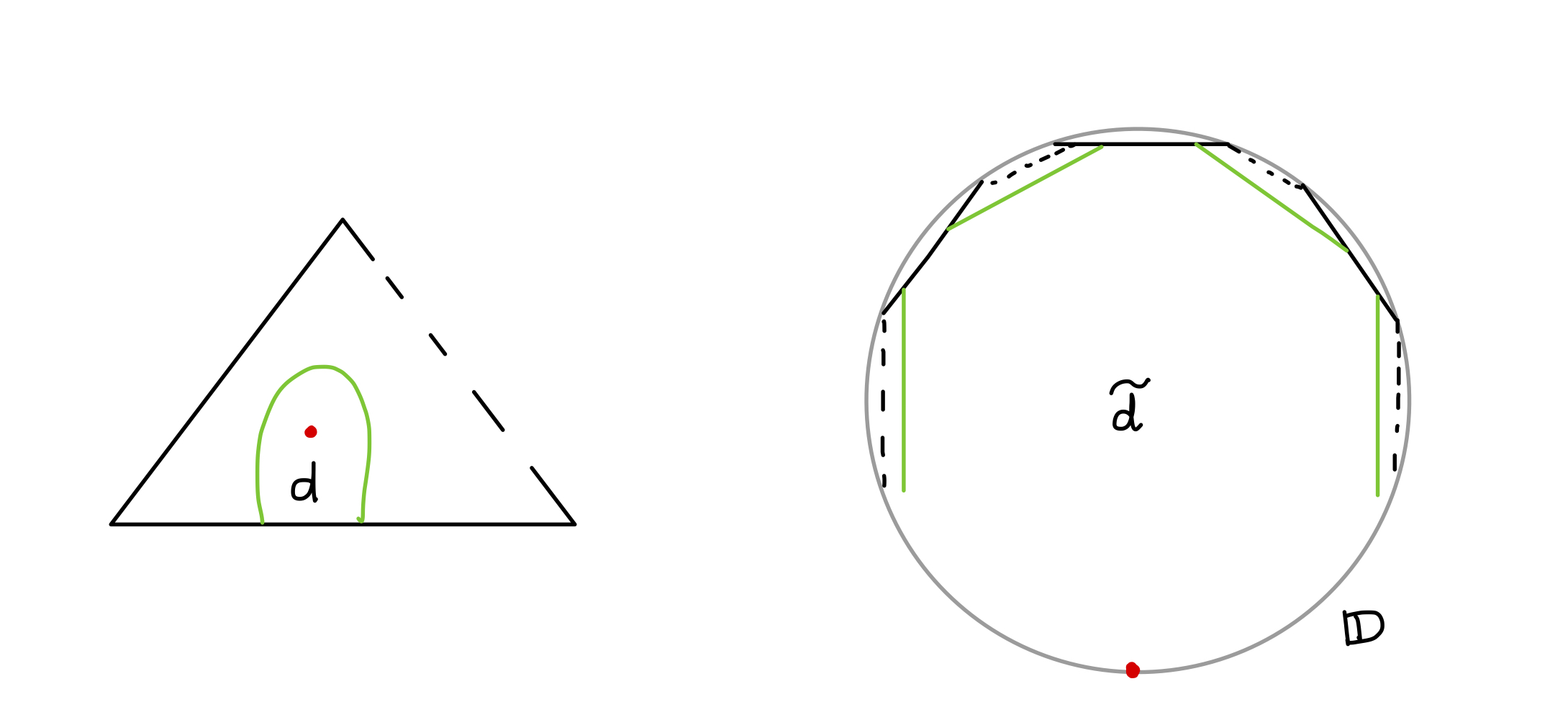}}
	\caption{Infinite tile containing the puncture}
	\label{punctile}
\end{figure}
\paragraph{Dual graph to a triangulation.} Let $\sigma\in \ac \Pi$ be a triangulation of a polygon ${\Pi}$. Then the corresponding \emph{dual graph} is a graph embedded in the universal cover of the surface such that the vertex set is $\ltile$ and the edge set is given by unordered pairs of lifted tiles that share a lifted internal edge. A vertex of the graph has valency 1 (resp. 2, 3) if and only if the corresponding tile is of the type 1 (resp. 2, 3).

\paragraph{Refinement.}  Let $\sigma$ be a top-dimensional simplex of an arc complex $\ac S$ of a hyperbolic surface $S$.  Let $\be$ be an arc such that $[\be]\in \ac S^{(0)}\smallsetminus \sigma^{(0)}$. So, $\be$ intersects every arc in the isotopy class of at least one arc in $ \sigma$. The set $\sigma\cup [\be]$  is called a \emph{refinement} of the triangulation $\sigma$. Let $\rtile$ be the set of connected components of $S\smallsetminus ( \be \bigcup\limits_{\al\in \ed} \al )$ and $\lrtile$ be the set of lifts of its elements. The elements of $\lrtile\smallsetminus\ltile $ are called \emph{small tiles}. 


	\section{Strip deformations}\label{Sd}
	In this section, we will introduce strip deformations, strip templates, tiles and tile maps. We shall also recapitulate the main ideas from the proof of the motivating theorem proved by Danciger-Guéritaud-Kassel in \cite{dgk}.
	
	Informally, a strip deformation of a polygon is done by cutting it along a geodesic arc and gluing a strip of the hyperbolic plane $\HP$, without any shearing. The type of strip used depends on the type of arc and the surface being considered. 
	\subsection{The different strips}
	Firstly, we define the different types of strips.
	Let $l_1$ and $l_2$ be any two geodesics in $\HP$. 
	Then there are two types of strips depending on the nature of their intersection:
	\begin{itemize}
		\item Suppose that $l_1$ and $l_2$ are disjoint in $\cHP$. Then the region bounded by them in $\HP$ is called a hyperbolic strip. The \emph{width} of the strip is the length of the segment of the unique common perpendicular $l$ to $l_1$ and $l_2$, contained in the strip. The \emph{waist} of the strip is defined to be the set of points of intersection $l\cap l_1$ and $l\cap l_2$.
		\item Suppose that $l_1$ and $l_2$ intersect in $\HPb$ at a point $p$. Let $h$ be a horocycle based at $p$. Then the region bounded by them inside $\HP$ is called a \emph{parabolic strip.} The waist in this case is defined to be the ideal point $p$ and the width (w.r.t $h$) is defined to be the length of the horocyclic arc of $h$ subtended by $l_1$ and $l_2$.
	\end{itemize}
	
	\subsection{Strip template}\label{template}
	Let $\Pi$ be a polygon endowed with a metric $m\in \tei \Pi$ on it. Let $\mathcal{K}$ be the set of permitted arcs (Definition \eqref{ac}). 
	A strip template is the following data:
	\begin{itemize}
		\item an $m$-geodesic representative $\al_g$ from every isotopy class $\al$ of arcs in $\mathcal{K}$, along which the strip deformation is performed,
		\item a point $p_\al\in \al_g$ where the waist of the strip being glued must lie.  
	\end{itemize} 
	A choice of strip template is the specification of this data. However, we shall see in the following section that even though we are allowed to choose the geodesic arcs in every case, the waists are sometimes fixed from beforehand by the nature of the arc being considered.
	\paragraph{Finite arcs:}Recall that finite arcs are embeddings of a closed and bounded interval into the surface with both the endpoints lying on the boundary of the surface. These arcs are present in the construction of every arc complex that we discuss. The strip glued along these arcs is of hyperbolic type. The representative $\al_g$ from the isotopy class of such an arc can be any geodesic segment from $v$ to that edge. In every case, including edge-to-edge arcs in decorated polygons, we are free to chose the geodesic representative and the waist of the hyperbolic strip.
	
	\paragraph{Infinite arcs:}Let $\al$ be the isotopy class of a permitted infinite arc of a decorated polygon $\dep n$. Then 
	An arc in $\al$ has one finite end lying on $\partial \dep n$ and one infinite end that escapes the surface through a spike. We can choose any geodesic arc $\al_g$ from $\al$ that does the same without any self intersection.

Now we will give a formal definition of a strip deformation and its infinitesimal version. 
	\begin{defi}
		Given an isotopy class $\al$ of arcs and a choice of strip template ($\al_g,\pal,\wal$), define the \emph{strip deformation} along $\al$ to be a map
		\[
		F_{\al}:\tei \Pi \longrightarrow \tei \Pi
		\]
		where the image $F_{\al}(m)$ of a point $m\in \tei \Pi$ is a new metric on the surface obtained by cutting it along the $m$-geodesic arc $\al_g$ in $\al$ chosen by the strip template and gluing a strip whose waist coincides with $\pal$. The type of strip used depends on the type of arc and the surface being considered. 
	\end{defi}
	\begin{defi}
		Given an isotopy class of arcs $\al$ of a polygon $\Pi$ and a strip template $\{(\al_g,\pal,\wal)\}_{\al\in \mathcal{K}}$ adapted to the nature of $\al$ for every $m\in \tei {S}$, 
		define the \emph{infinitesimal strip deformation}
		
		\[
		\begin{array}{cc}
			f_{\al}:&\tei {S} \longrightarrow T\tei{S}\\
			&m\mapsto [m(t)]
		\end{array}
		\]
		where the image $m(\cdot)$ is a path in $\tei {S}$ such that $m(0)=m$ and $m(t)$ is obtained from $m$ by strip deforming along $\al$ with a fixed waist $\pal$ and the width as $t\wal$. 
	\end{defi} 
	
	Let $m=( [\rho, \vec{x}])\in \tei S$ be a point in the deformation space of the surface, where $\rho$ is the holonomy representation and denote $\Gamma=\rho(\fg S)$. Fix a strip template $\{\st \}$ with respect to $m$. Let $\sigma$ be a simplex of $\ac S$. Given an arc $\al$ in the edgeset $\ed$, there exist tiles $\del, \del'\in \tile$ such that every lift $\wt \al$ of $\al$ in $\wt S$ is the common internal side of two lifts $\wt \del,\wt {\del'}$ of the tiles. Also, $p_{\ga\cdot\wt\al}=\ga\cdot p_{\wt\al}$, for every $\ga\in \Gamma$. Then the infinitesimal deformation $f_{\al}(m)$ tends to pull the two tiles $ \del$ and ${\del'}$ away from each other due to the addition of the infinitesimal strip.  Let $u$ be a infinitesimal strip deformation of $\rho$ caused by $f_{\al}(m)$. Then we have a $(\rho, u)$-equivariant \emph{tile} map $\phi:\ltile \rightarrow\lalg$ such that for every $\ga\in \Gamma$, 
	\begin{align}\label{tile}
		\phi(\h\ga\cdot \wt\del)-\phi(\h\ga\cdot\wt\del')=\h\ga\cdot  v_{\wt\al},
	\end{align}
	where $v_{\wt\al}$ is the Killing field in $\lalg\simeq \mathscr X$ corresponding to the strip deformation $f_{\wt\al}(m)$ along a geodesic arc $\wt\al_g$, isotopic to $\wt\al$, adapted to the strip template chosen, and pointing towards $\wt\del$:
	\begin{itemize}
		\item If $\isda$ is a hyperbolic strip deformation with strip template $\st$, then $v_{\wt\al}$ is defined to be the hyperbolic Killing vector field whose axis is perpendicular to $\wt\al_g$ at the point $\wt\pal$, whose velocity is $\wal$.
		\item If $\al$ is an infinite arc joining a spike and a boundary component, then $\isda$ is a parabolic strip deformation with strip template $\st$, and $v_{\wt\al}$ is defined to be the parabolic Killing vector field whose fixed point is the ideal point where the infinite end of $\wt\al$ converges and whose velocity is .
	\end{itemize}
	\begin{rem}
		Such a strip deformation $f_{\al}:\tei S\longrightarrow\tang S$ does not deform the holonomy of a general surface with spikes (decorated or otherwise) if $\al$ is completely contained outside the convex core of the surface. However, it does provide infinitesimal motion to the spikes.
	\end{rem}
	More generally, a linear combination of strip deformations $\sum_{\al} c_\al f_{\al}(m)$ along pairwise disjoint arcs $\{\al_i\}\subset \ed$ imparts motion to the tiles of the triangulation depending on the coefficient of each term in the linear combination. A tile map corresponding to it is a $(\rho, u)$-equivariant map $\phi:\ltile \rightarrow\lalg$ such that for every pair $\del, \del ' \in \tile$ which share an edge $\al\in \ed$, the equation \eqref{tile} is satisfied by $\phi$. 
	
	\begin{defi}\label{ism}
		The \emph{infinitesimal strip map} is defined as:
		\[
		\begin{array}{ccrcl}
			\mathbb{P}f& : &	\sac S & \longrightarrow & \ptan {S}\\
			& &\sum\limits_{i=1}^{\dim \tei S} c_i \al_i&\mapsto&\bra{\sum\limits_{i=1}^{\dim \tei S}c_i f_{\al_i}(m)} 
		\end{array}
		\]
	\end{defi}
	where $\sac S$ is the pruned arc complex of the surface (Definition \eqref{pac}). 
	
	 Two tile maps $\phi, \phi'$ are said to be equivalent if for all $d\in \tile$, \[\phi(d)-\phi'(d)=v_0\in\lalg .\] The set of all equivalence classes of tile maps corresponding to a  simplex $\sigma\subset \ac S$ is denoted by $\Phi$. 
	  \begin{figure}[!h]
	 	\begin{center}
	 		\includegraphics[height=3cm]{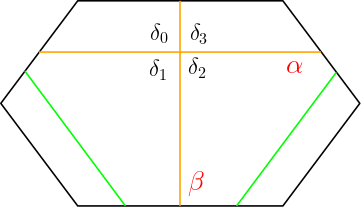}
	 		\caption{Refined tiles}
	 		\label{refine}
	 	\end{center}
	 \end{figure}
	 
	 Let $\sigma\cup [\be]$ be a refinement of $\sigma$. A \emph{consistent} tile map is a tile map $\phi: \mathcal{T}_{\sigma\cup [\be]}\longrightarrow\lalg$ that satisfies the consistency relation around every point of intersection: if the pairs $(\del_1, \del_0),(\del_3, \del_2)$ neighbour along $\al$ and the pairs $(\del_1, \del_3),(\del_0, \del_2)$ neighbour along $\be$ , then $\phi$ must satisfy
	 \begin{align}
	 	\phi(\del_1)-\phi(\del_0)=\phi(\del_3)-\phi(\del_2)=v_{\al},\\
	 	\phi(\del_1)-\phi(\del_3)=\phi(\del_0)-\phi(\del_2)=v_{\be},
	 \end{align}
	 
	 where $v_{\al}$ and $v_{\be}$ are the Killing vector fields adapted to the strip templates and the nature of $\al$ and $\be$.
	 The set of all equivalence classes modulo $\lalg$ of consistent tile maps is denoted by $\Phi^c$. Then there is a natural inclusion \[\Phi\subset\Phi^c.\] Also, we have the bijection between formal expressions of the form $\sum\limits_{\al \in \ed\cup \{\be\}}c_{\al}f_{\al}(m)$ and $\Phi^c$.

	 \begin{defi}\label{neutraltilemap}
	 	A \emph{neutral} tile map, denoted by $\phi_0$, is a tile map that fixes the decorated vertex or a spike of a tile whenever it has one and satisfies the equation 
	 	\begin{equation}
	 		\np{\gamma\cdot \delta})=\gamma\cdot \np \del, \text{ for every } \ga\in \Gamma.
	 	\end{equation}
	 \end{defi}
	 
	 Such a map belongs to the equivalence class corresponding to $0\in \tang S$.

		\subsection{Some useful estimates}
	Let $\Pi$ be a polygon with (possibly decorated) spikes with a metric $m$. Consider a strip deformation $\isda$ along a finite arc $\al$, with strip template $\st$. Then the strip added along $\al$ is hyperbolic.
	Let $w_\al(p)$ be the width of the strip at the point $p\in \al_g$. Let $\wt{\al_g}, \wt\pal,\wt p$ be the lifts of $\al_g,\pal,p$ such that $\wt p,\wt\pal\in \wt{\al_g}$. Suppose that $v_{\wt\al}$ is the Killing field acting across $\wt{\al_g}$ due to the strip deformation. Then, $\norm{v_{\wt\al}}=\wal$.
	
	In the hyperboloid model $\HP$, suppose that $v_{\wt\al}=(\wal,0,0)$ and let the plane containing $\wt{\al_g}$ be $\{(x,y,z)\in \R^3 \mid y=0\}$. So, $\wt\pal=(0,0,1)$. A point $p$ on the geodesic $\wt{\al_g}$ is of the form $(x,0,\sqrt{x^2+1})$, with $x\in \R$. Then we have 
	\begin{align}
		w_\al(p)=\norm{(v_{\wt\al}\mcp p}=\wal\sqrt{x^2+1}=-\wal\langle p, \wt\pal \rangle=\wal\cosh d_{\HP}(p,\wt\pal).
	\end{align}
	Now suppose that the arc $\al$ is joining a decorated spike and a boundary component of a decorated polygon. Then the infinitesimal strip added by $\isda$ is parabolic. Let $v_{\wt\al}=(\wal,0,\wal)$ be the corresponding parabolic Killing field. Then, 
	\begin{align*}
		w_\al(p)=\norm{v_{\wt\al}\mcp p}=\wal(\sqrt{x^2+1}-x).
	\end{align*}
	Let $L$ be the linear coordinate along the arc $\al$ such that $L<0$ if $p$ lies between the points $v_{\wt\al}$, $\pal$ and $L>0$ if $\pal$ lies between the points $v_{\wt\al}$ and $p$. Taking $x=\sinh L$ we get,
	$w_\al(p)=\e^{L}$.
	
	The point $\pal$ is called the point of \emph{minimum impact} because $w_\al(p_{\al})=\wal$.
	
	\begin{defi}
		Let $\Pi$ be a polygon with (possibly decorated) spikes with a metric $m$ and corresponding strip template $\{\st \}$. Let $x=\sum_{i=1}^{N_0} c_i\al_i$ be a point in the pruned arc complex $\sac \Pi$. Then the \emph{strip width function} is defined as:
		\[ 
		\begin{array}{rrcl}
			w_x:&\supp x&\longrightarrow&\R_{>0}\\
			& p&\mapsto&c_iw_{\al_i}(p),
		\end{array}
		\]
	\end{defi}
	\paragraph{Normalisation:} 
	Let $\Pi$ be a possibly decorated polygon and $\mathcal{K}$ be the set of permitted arcs. Then for every $\al\in \mathcal{K}$, we choose $\wal>0$ such that the following equality holds for every $x\in \sac \Pi$: 
	\begin{equation}\label{norma}
		\sum\limits_{p\in \partial \Pi\cap \supp x} w_x(p)=1.
	\end{equation}

	\begin{lem}\label{lenderiv}
		Let $\Pi$ be a decorated polygon endowed with a decorated metric $m$ and a corresponding strip template $\st$. Let $x\in \sac \Pi$ and $\ga$ be an edge or a diagonal of $\Pi$ instersecting $\supp x$. Then, 
		\begin{equation}\label{lengthderiv}
			\mathrm{d}l_{\ga}(f(x))=\sum\limits_{p\in \ga \cap \supp x} w_x(p) \sin \angle_p( \al_g, \supp x) > 0.
		\end{equation}  
	\end{lem}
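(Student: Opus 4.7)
The plan is to reduce to a single-arc computation by linearity, and then handle that single-arc case in the universal cover via the Killing-field conservation law for geodesics. First, write $x=\sum_{i=1}^{k}c_i\al_i$ in the unique open simplex containing it, so that $\supp x=\{\al_1,\dots,\al_k\}$ with every $c_i>0$. Linearity of the infinitesimal strip map and of $\mathrm{d}l_{\ga}$ gives
\[
\mathrm{d}l_{\ga}(f(x))=\sum_{i=1}^{k}c_i\,\mathrm{d}l_{\ga}(f_{\al_i}(m)).
\]
The arcs of $\supp x$ are pairwise disjoint $m$-geodesic segments and $\ga$ is a distinct $m$-geodesic edge or diagonal, so $\ga$ meets $\supp x$ transversely in a finite set of interior points $p$, each lying on a unique arc $\al_{i(p)}\in\supp x$. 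Re-indexing the sum over these crossings and using $w_x(p)=c_{i(p)}w_{\al_{i(p)}}(p)$, the lemma reduces to the single-arc identity
\[
\mathrm{d}l_{\ga}(f_{\al}(m))\;=\;w_{\al}(p)\sin\angle_p(\al_g,\ga)\qquad\text{whenever}\ \{p\}=\ga\cap\al_g.
\]

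For this local calculation I lift everything to $\HP$. Fix lifts $\wt{\al_g}$, $\wt\ga$ and $\wt p=\wt\ga\cap\wt{\al_g}$, and let $v_{\wt\al}\in\lalg\simeq\mathscr X$ be the Killing vector field driving the strip deformation — hyperbolic if $\al$ is a finite arc, parabolic if $\al$ is an edge-to-vertex arc. The width estimates above verify, uniformly in both cases, that $v_{\wt\al}(\wt p)$ is perpendicular to $\wt{\al_g}$ and has hyperbolic norm exactly $w_{\al}(p)$. To first order, $f_{\al}(m)$ is realised in the universal cover by holding one side of $\wt{\al_g}$ fixed and applying the flow of $v_{\wt\al}$ to the opposite side; the endpoint $B$ of $\wt\ga$ on the moving side (together with its horocycle if $\al$ is decorated) is carried with velocity $v_{\wt\al}(B)$, while the opposite endpoint stays put. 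The first-variation formula for geodesic length — or, in the decorated setting, for horoball-to-horoball length derived from $l=\ln(-\langle v_1,v_2\rangle/2)$ — then yields
\[
\mathrm{d}l_{\ga}(f_\al(m))\;=\;\langle v_{\wt\al}(B),\dot{\wt\ga}(B)\rangle,
\]
where $\dot{\wt\ga}$ denotes the unit tangent to $\wt\ga$ oriented from the fixed side to the moving side.

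The key step is the conservation law $\langle X(\ga(t)),\dot\ga(t)\rangle=\mathrm{const}$ for a Killing vector field $X$ along a geodesic $\ga$. Applying this to $v_{\wt\al}$ along $\wt\ga$, the inner product at $B$ equals the inner product at $\wt p$. At $\wt p$ the vector $v_{\wt\al}(\wt p)$ is perpendicular to $\wt{\al_g}$ with length $w_{\al}(p)$, and $\dot{\wt\ga}(\wt p)$ is a unit vector making angle $\angle_p(\al_g,\ga)$ with $\wt{\al_g}$, so the angle between the two is $\tfrac{\pi}{2}-\angle_p(\al_g,\ga)$, giving $\langle v_{\wt\al}(\wt p),\dot{\wt\ga}(\wt p)\rangle=w_{\al}(p)\sin\angle_p(\al_g,\ga)$ and proving the single-arc identity. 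Summing over $p$ then yields the stated formula, and strict positivity is immediate: each $c_{i(p)}>0$ by the choice of open simplex, each strip width $w_{\al_{i(p)}}(p)$ is positive, and each $\sin\angle_p(\al_g,\ga)>0$ because the transversal crossings satisfy $0<\angle_p(\al_g,\ga)<\pi$.

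The main obstacle is the single-arc derivative computation; the clean route through both the hyperbolic and parabolic cases is this Killing-field conservation law, which converts the variation at the far endpoint $B$ into the orthogonality $v_{\wt\al}(\wt p)\perp\wt{\al_g}$ already established in the width estimates. The only additional subtlety, in the decorated setting, is that the horocycles at the endpoints of $\ga$ are carried along by the flow of $v_{\wt\al}$ in the universal cover; treating this via the Minkowski triple-product expression for $\mathrm{d}l_{\ga}$ introduces no new content and returns the same geometric answer.
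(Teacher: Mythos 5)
Your proposal is correct, and the core of your single-arc computation is genuinely different from the paper's. The paper works entirely in explicit Minkowski coordinates: it places the horoball connection $\ga$ in the plane $x=0$, writes down the Killing vector $\nu_{\al}$ for each of the two cases (hyperbolic for a finite arc, parabolic for an edge-to-vertex arc), flows one light-like endpoint as $v_t=v+t\,\nu_{\al}\mcp v+o(t)$, and expands $\ln\bigl(-\tfrac{1}{2}\langle u,v_t\rangle\bigr)$ to first order to read off $\wal\cosh d\,\sin\theta$ (resp. $\wal\sin\theta$). You instead invoke the conservation law $\langle X(\ga(t)),\dot\ga(t)\rangle=\mathrm{const}$ for a Killing field along a geodesic to transport the first-variation pairing from the moving endpoint to the intersection point $p$, where perpendicularity of $v_{\al}$ to $\al_g$ and the width estimate $\|v_{\al}(p)\|=w_{\al}(p)$ give the answer at once and uniformly in both cases. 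Your route is more conceptual and explains \emph{why} the answer is $w_x(p)\sin\angle_p$ rather than verifying it; its price is that the first-variation formula must be extended to decorated (horoball-to-horoball) lengths, where the endpoint is ideal and $\langle v_{\al}(B),\dot\ga(B)\rangle$ only makes sense as the conserved quantity along $\ga$ — you flag this but dispatch it in one sentence, whereas the paper's coordinate computation handles it automatically. One small point common to both arguments: the final reduction ``by linearity'' silently assumes each crossing of $\ga$ with a given arc contributes independently with the correct sign of the moving side, which is immediate when $\ga$ meets each arc once (the only case in an unpunctured decorated polygon) but deserves a word in the once-punctured case where a diagonal can cross a maximal arc twice.
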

\begin{proof}
Let $\supp x$ contain only one arc $\al$. Consider the universal cover of the surface inside the hyperboloid model of $\HP$. Suppose that a lift $\tilde{\ga}$ of $\ga$ is the horoball connection joining the two light-like points $u=(0,y,y), v=(0,-y',y')$, with $y,y'>0$. Then the length of $\tilde{\ga}$ is given by \[ l(\tilde{\ga})=\ln -\frac{\ang{u,v}}{2}.   \] Suppose that $\al_g$ intersects $\ga$ at $p=(0,0,1)$ at an angle $\angle_p( \al_g,\ga):=\theta\leq\frac{\pi}{2}.$ \\
 \begin{figure}
	\centering
	\frame{\includegraphics[width=9cm]{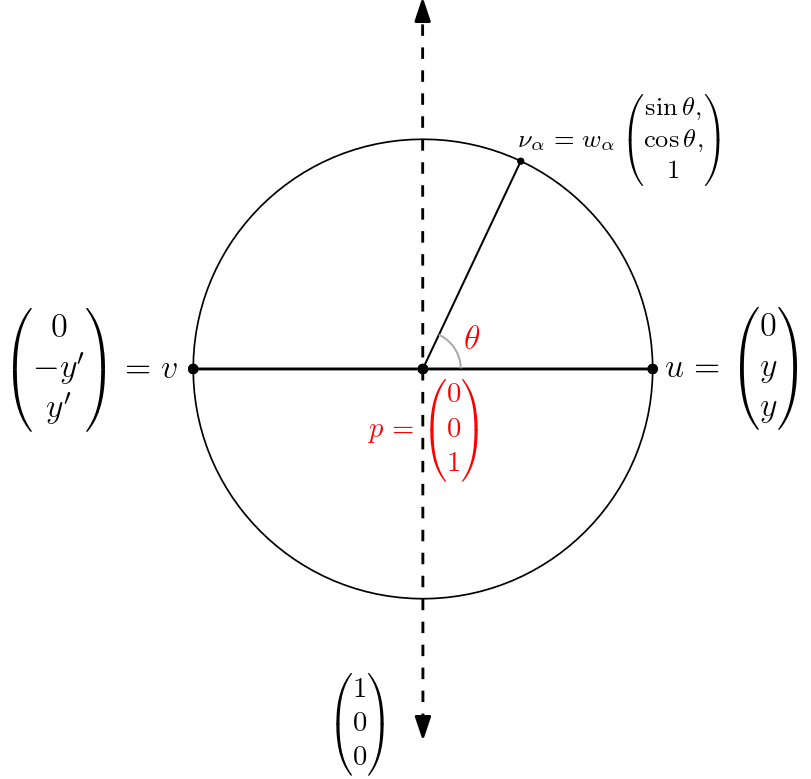}}
	\caption{Parabolic Killing field acting on a horoball connection}
	\label{parakvf}
\end{figure}
Firstly we consider the case when the arc $\al$ is of infinite type joining a spike and a boundary component of $\Pi$. Then the Killing field corresponding to the parabolic strip deformation $\isda$ along $\al_g$ with strip template $\st$ is given by $\nu_{\al}=(\wal\sin\theta,\wal\cos\theta,\wal)$. We need to show that \[ \mathrm{d}l_{\ga}(\isda)=\wal(p)\sin\theta=\wal\sin\theta.\]
The Killing field $\nu_{\al}$ pushes $v$ in the direction $w\mcp v$. 
The flow of the action of $\nu_{\al}$ on $v$ is given by \begin{align*}
	v_t&=v+t \nu_{\al}\mcp v + o(t)\\
	&=(0,-y',y')+t\wal y'(1+\cos\theta,-\sin\theta,\sin\theta)+ o(t)\\
	&=(t\wal y'(1+\cos\theta),-y'-t\wal y'\sin\theta,y'+t\wal y'\sin\theta) + o(t).
	\end{align*}
So the length of the horoball connection $\ga_t$ joining $u$ and $v_t$ is given by
\begin{align*}
l(\ga_t)&=\ln - \frac{\ang{u,v_t}}{2}\\
&= \ln yy'(1+t\wal \sin\theta)\\
&=\ln yy' + t\wal \sin\theta + o(t).
\end{align*}
Hence $\mathrm{d}l_{\ga}(\isda)=\frac{\mathrm d}{\mathrm {dt}}|_{t=0}  l(\ga_t)= \wal \sin\theta.$ \\
 \begin{figure}
 	\centering
 \frame{\includegraphics[width=9cm]{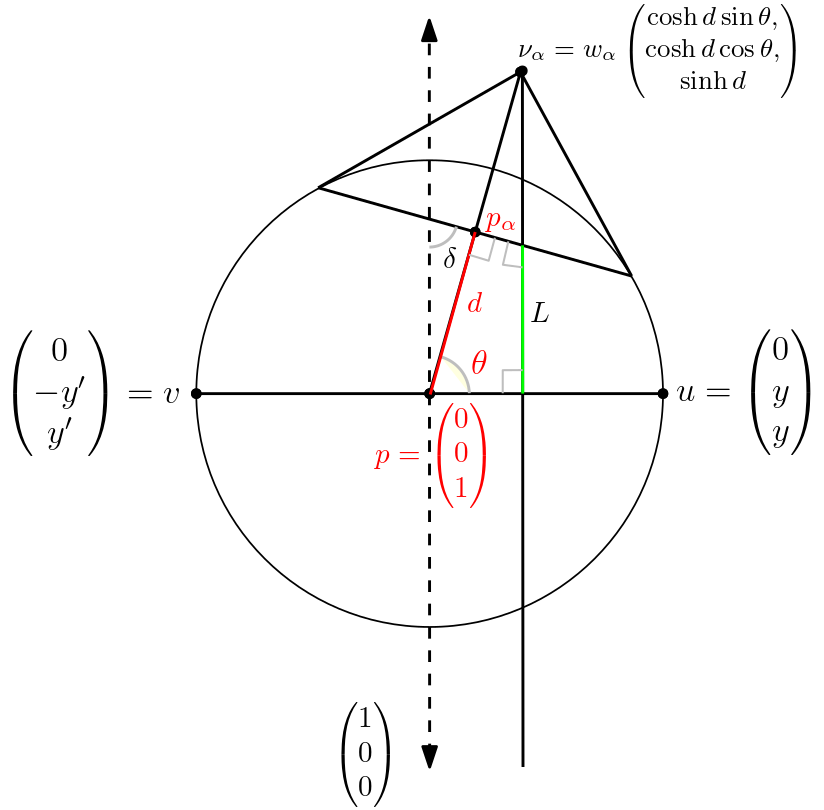}}
 \caption{Hyperbolic Killing field acting on a horoball connection}
 \label{hypkvf}
 \end{figure}
 Let us now suppose that $\al$ is a finite arc so that $\isda$ is a hyperbolic strip deformation with strip template $\st$. Let $d=d(p,p_\al)$ be the distance between the point of intersection and the waist. We need to prove that \[ \mathrm{d}l_{\ga}(\isda)=\wal(p)\sin\theta=\wal\cosh d\sin\theta.\] Then the Killing vector field corresponding to the strip deformation is given by \[\nu_{\al}=(\wal\cosh d\sin\theta, \wal\cosh d\cos\theta,\wal \sinh d).\] See Fig.\ref{hypkvf}
 We have, \begin{align*}
 	v_t&=v+t \nu_{\al}\mcp v + o(t)\\
 	&=(0,-y',y')+t\wal y'(\sinh d+\cosh d\cos\theta,-\cosh d\sin\theta,\cosh d\sin\theta)+ o(t)\\
 	&=(t\wal y'(\sinh d+\cosh d\cos\theta),-y'-t\wal y'\cosh d\sin\theta,y'+t\wal y'\cosh d \sin\theta) + o(t).
 \end{align*}
So the length of the horoball connection $\ga_t$ joining $u$ and $v_t$ is given by
\begin{align*}
	l(\ga_t)&=\ln - \frac{\ang{u,v_t}}{2}\\
	&= \ln yy'(1+t\wal\cosh d \sin\theta)\\
	&=\ln yy' + t\wal \cosh d \sin\theta + o(t).
\end{align*}
Hence $\mathrm{d}l_{\ga}(\isda)=\frac{\mathrm d}{\mathrm {dt}}|_{t=0}  l(\ga_t)= \wal\cosh d \sin\theta.$ \\
Finally, by linearity, we get the result for the general case with multiple arcs and intersection points.

\end{proof}

	\subsection{Summary of strip deformations of compact surfaces}\label{recap}
In this section, we will recall the statement of the parametrisation theorem proved by Danciger-Guéritaud-Kassel in \cite{dgk} for compact surfaces with totally geodesic boundary. We shall also give an idea of their proof, whose methods are going to be adapted to our case of surfaces with spikes. 

Let $S$ be a compact hyperbolic surface with totally geodesic boundary. Recall that when the surface is orientable (resp.\ non-orientable), it is of the form $S_{g,n}$ (resp.\ $T_{h,n}$), where $g$ is the genus (resp.\ $h$ is the total number of copes of projective plane) and $n$ is the total number of boundary components. Its deformation space $\tei {S}$ is homeomorphic to an open ball of dimension $N_0=6g-6+3n$ when $S$ is orientable and $N_0=3h-6+3n$ when $S$ is non-orientable. A point $m$ of the deformation space is expressed as $m=[\rho]$, where $\rho:\fg{S}\rightarrow\pgl$ is a holonomy representation of the surface. 
Given such an element $m\in \tei {S}$, its admissible cone $\adm m$ is the set of all infinitesimal deformations that uniformly lengthen every non-trivial closed geodesic. It is an open convex cone of the vector space $\tang{S}$. 

The arcs that are used to span the arc complex $\ac{S}$ of such a surface, are finite, non self-intersecting and their endpoints lie on boundary $\partial S$, like in the case of undecorated polygons. The pruned arc complex $\sac{S}$ of the surface $S$, given by the union of the interiors of all filling simplices, is an open ball of dimension $N_0-1$. Any point $x$ in $\sac{S}$ belongs to the interior of a unique filling simplex $\sigma_x.$ 

The strip deformations performed along the arcs are of hyperbolic type; their waists and widths are fixed by the choice of a strip template $\{\st \}_{\al\in\mathcal K}$. The infinitesimal strip map is given by  \[
\begin{array}{ccrcl}
	f& : &	\ac {S} & \longrightarrow & \tang {S}\\
	& &x=\sum\limits_{i=1}^{N_0} c_i \al_i&\mapsto&\sum\limits_{i=1}^{N_0}c_i f_{\al_i}(m),
\end{array}
\]
where $c_i\in [0,1]$ for every $i=1,\ldots,N_0$, and $\sum\limits_{i=1}^{N_0} c_i=1$.
Then the following result was proved in \cite{dgk}:

\begin{thm}\label{cpt}
	Let $S=S_{g,n}$ or $T_{h,n}$ be a compact hyperbolic surface with totally geodesic boundary. Let $m=([\rho])\in \tei {S}$ be a metric. Fix a choice of strip template $\{\st \}_{\al\in\mathcal K}$ with respect to $m$. Then the restriction of the projectivised infinitesimal strip map $\mathbb{P}f:\sac {S}\longrightarrow \ptan {S}$ is a homeomorphism on its image  $\mathbb{P}^+(\adm m)$.
\end{thm}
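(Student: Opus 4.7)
The plan is to follow the strategy of Danciger-Guéritaud-Kassel. First I would check that $\mathbb{P}f$ maps $\sac S$ into $\mathbb{P}^+(\adm m)$. The key input is the closed-geodesic analogue of Lemma \ref{lenderiv}: for every nontrivial closed geodesic $\ga \subset S$ and every $x=\sum_i c_i \al_i \in \sac S$ whose simplex is filling, each arc in $\supp x$ meets $\ga$ transversely in finitely many points, so
\[
dl_\ga(f(x)) \;=\; \sum_i c_i \sum_{p\in \ga\cap \al_i} w_{\al_i}(p)\,\sin \angle_p(\al_i,\ga) \;>\; 0.
\]
Since a filling collection intersects every closed geodesic, this positivity is uniform in $\ga$, so $f(x)\in \adm m$. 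Continuity of $f$ is immediate from the construction of the strip deformation.

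Both $\sac S$ and $\mathbb{P}^+(\adm m)$ are open manifolds of dimension $N_0 - 1$: the former because every filling simplex of $\ac S$ is top-dimensional of that dimension (the compact-surface analogue of Theorem \ref{sacdp}), the latter because $\adm m$ is an open convex cone in $\tang S \cong \mathbb{R}^{N_0}$. By invariance of domain it suffices to show that $\mathbb{P}f$ is injective and proper; since $\mathbb{P}^+(\adm m)$ is convex hence connected, properness plus openness will force the image to equal $\mathbb{P}^+(\adm m)$. For injectivity on a single filling simplex I would use the tile-map formalism: a combination $\sum c_\al f_\al(m)$ lifts to a class in $\Phi^c$ via the consistency equation \eqref{tile}, and each coefficient $c_\al$ is recovered from the jump of the tile map across the corresponding edge, since the Killing fields $v_\al$ attached to distinct arcs are linearly independent modulo the diagonal $\lalg$-action. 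Injectivity across different filling simplices is handled by the common-refinement trick: pass to a simplex containing both supports, reinterpret each side of a putative equality as a consistent tile map on the refined tiling, and observe that the consistency relations around every new intersection point force the extra coefficients to vanish.

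For properness I would analyse sequences $x_n \to x_\infty$ with $x_\infty$ on the boundary of $\sac S$. Either the support of $x_\infty$ lies on a non-filling simplex, or some coefficient vanishes in the limit and what remains is non-filling. In either case there is a simple closed geodesic $\ga_0$ disjoint from the limiting support, so $dl_{\ga_0}(f(x_\infty))=0$, placing $f(x_\infty)$ on the boundary of $\adm m$; consequently $\mathbb{P}f(x_n)$ exits every compact subset of $\mathbb{P}^+(\adm m)$. The hardest step will be injectivity across distinct filling simplices: containment in $\adm m$ and properness both follow directly from the length-derivative formula, but injectivity is where one must reason globally with the combinatorial structure of $\ac S$ and exploit the tile-map consistency relations to rule out projective coincidences coming from genuinely different combinations of strip deformations.
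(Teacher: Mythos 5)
Your overall architecture differs from the one the paper imports from Danciger--Gu\'eritaud--Kassel: they never prove global injectivity directly. Their route is (image lands in the admissible cone) $+$ (properness) $+$ (local homeomorphism, established separately in codimension $0$, $1$, $2$ and then by induction on the link) $\Rightarrow$ covering map $\Rightarrow$ homeomorphism, using that $\sac S$ is an open ball and $\mathbb{P}^+(\adm m)$ is an open convex, hence simply connected, set. Your replacement of the covering-map argument by ``global injectivity $+$ invariance of domain'' would be legitimate if injectivity could be established, but the step where you establish it is wrong. The consistency relations around intersection points do \emph{not} force the extra coefficients to vanish when you pass to a common refinement of two filling simplices. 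On the contrary, there exist genuine nontrivial relations $c_{\al_1}f_{\al_1}(m)+c_{\al_2}f_{\al_2}(m)+\sum_{\be}c_{\be}f_{\be}(m)=0$ among strip deformations along intersecting arcs; producing exactly such a relation, with the sign pattern $c_{\al_1},c_{\al_2}>0$ and $c_{\be}\le 0$, is the entire content of the codimension-one analysis (Theorems \ref{disjint} and \ref{codim2}, and the explicit tile maps of Section \ref{Sd}--\ref{proof}). It is this sign pattern --- not the vanishing of coefficients --- that shows the images of two adjacent top-dimensional simplices lie on opposite sides of the image of their common face, and one still needs the degree-one argument on the link of a codimension-two face (Lemma \ref{cd2link}) to rule out the map wrapping around. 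Without these ingredients your injectivity claim across distinct filling simplices is unsupported, and with them you have essentially reconstructed the local-homeomorphism route.

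There is a second gap specific to the compact-surface setting of Theorem \ref{cpt}: the arc complex $\ac S$ of $S_{g,n}$ or $T_{h,n}$ has infinitely many vertices, so a sequence $x_n$ leaving every compact subset of $\sac S$ need \emph{not} subconverge to a boundary point $x_\infty$ of a fixed simplex --- the supporting arcs themselves can diverge (their geodesic representatives becoming arbitrarily long). Your properness sketch, like the paper's own Theorem \ref{properdeco}, relies on extracting a convergent subsequence inside a finite complex; that works for the polygons treated in this paper but not for Theorem \ref{cpt}, where the properness proof in \cite{dgk} requires genuine length estimates controlling strip deformations along long arcs. Finally, a minor point: filling simplices of $\ac S$ are not all top-dimensional (compare the dimension range in Theorem \ref{sacdp}); that $\sac S$ is a manifold of dimension $N_0-1$ is proved via the link computation, not by the claim you give.
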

\paragraph{Structure of the proof.} Firstly, they show that the image of the map $\mathbb{P}f$ is given by the positively projectivised admissible cone. Since both the pruned arc complex and $\mathbb{P}^+(\adm m)$ are homeomorphic to open balls of the same dimension, it is enough to show that $\mathbb{P}f$ is a covering map. A classical result from topology states that a continuous map between two manifolds is a covering map if the map is proper and also a local homeomorphism. So the authors prove that the projectivised strip map $\mathbb P f$ satisfies these two properties. 

Firstly, they show the following theorem.
\begin{thm}\label{proper}
	The projectivised strip map $\mathbb{P}f:\sac {S}\longrightarrow \mathbb{P}^+(\adm m)$ is proper. 
\end{thm}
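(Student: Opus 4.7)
The plan is to adapt the strategy from \cite{dgk}: prove properness by establishing the contrapositive, that any sequence $(x_n)\subset\sac \Pi$ escaping every compact subset of the pruned arc complex has its projective image $\mathbb{P}f(x_n)$ escaping every compact subset of the open cone $\mathbb{P}^+(\adm m)$.

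By Theorem \ref{sacdp}, $\sac \Pi$ is the interior of a finite-dimensional simplicial complex, so after passing to a subsequence we may assume $x_n \to x_\infty$ with $x_\infty$ lying in the interior of some non-filling simplex $\sigma_\infty$ of $\ac \Pi$. By Definition \ref{big}, the arcs of $\supp x_\infty$ fail to decompose $\Pi$ into the prescribed atomic pieces, so at least one connected component $C$ of $\Pi\smallsetminus\supp x_\infty$ is too large: in the decorated case $C$ contains two distinct decorated vertices (or a decorated vertex together with the puncture), and in the undecorated case $C$ contains at least three spikes. Inside such a $C$ one exhibits an edge or diagonal $\beta$ of $\Pi$ (a boundary edge, a horoball connection, or a finite edge-to-edge arc, as appropriate) that is entirely disjoint from every arc of $\supp x_\infty$.

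Since $\beta$ then lies inside a single tile of the decomposition induced by $\supp x_\infty$, every elementary strip deformation $f_{\alpha_i}(m)$ with $\alpha_i\in\supp x_\infty$ acts on $\beta$ by a Killing isometry of $\HP$ and hence preserves its length; this is exactly the empty-intersection case of Lemma \ref{lenderiv}, which gives $dl_\beta(f(x_\infty))=0$. By continuity of $f$ in the barycentric coordinates of $x$, $dl_\beta(f(x_n))\to 0$, while the normalisation \eqref{norma} prevents $f(x_n)$ from collapsing to the zero deformation in the limit. Therefore the projective image $\mathbb{P}f(x_n)$ converges to a point on the boundary stratum $\{dl_\beta=0\}\cap \mathbb{P}^+(\overline{\adm m})$ of $\mathbb{P}^+(\adm m)$, contradicting any hypothetical confinement of the sequence to a compact subset of this open cone.

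The principal obstacle is the combinatorial step of the argument: for each of the four polygon types and each kind of non-filling simplex $\sigma_\infty$, one must single out a non-atomic component of $\Pi\smallsetminus\supp x_\infty$ and exhibit the right kind of disjoint edge or diagonal $\beta$ inside it; the presence of a puncture, and the different possible placements of decorated vertices versus undecorated spikes, make this a case analysis. A secondary technical point is the non-vanishing of $f(x_\infty)$, which follows from the linearity of $f$ together with the normalisation \eqref{norma}, but has to be verified uniformly along the sequence, in particular when parabolic strip deformations along infinite arcs are involved.
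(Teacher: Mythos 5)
Your strategy coincides with the one the paper actually carries out for the properness statements it proves itself (Theorems \ref{properdeco} and \ref{properdecop} in its final subsection): extract a subsequence converging to a point $y\in\ac\Pi\smallsetminus\sac\Pi$, exhibit a horoball connection $\be$ disjoint from $\supp y$, conclude $\mathrm{d}l_{\be}(f(y))=0$ because every term of the sum in Lemma \ref{lenderiv} vanishes, and check $f(y)\neq 0$ so that the projective class degenerates to the boundary of $\mathbb{P}^+(\adm m)$. Two caveats are worth recording. First, Theorem \ref{proper} as literally stated is the recalled result of \cite{dgk} for a \emph{compact} surface $S$, whose arc complex is an infinite simplicial complex; there your opening move of passing to a subsequence converging inside the complex is not available (finite-dimensionality is not finiteness), and the properness argument in \cite{dgk} must additionally control sequences whose supporting arcs leave every finite subcomplex. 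Your argument is really a proof for the polygon case, where $\ac\Pi$ is a finite complex and the extraction is legitimate --- and that is precisely the proof the paper gives for the decorated polygons. Second, the non-vanishing of $f(y)$ does not follow from the normalisation \eqref{norma} alone: normalising the strip widths does not by itself preclude cancellation among the Killing fields in the sum defining $f(y)$. The paper obtains it by applying Lemma \ref{lenderiv} to a horoball connection $\ga$ that \emph{does} meet $\supp y$, giving $\mathrm{d}l_{\ga}(f(y))>0$ (alternatively one could invoke the linear independence of Theorem \ref{basis} applied to a filling simplex containing $\sigma_y$ as a face). Since you already invoke Lemma \ref{lenderiv} elsewhere, this is a one-line repair rather than a structural flaw.
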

Secondly, they show that the map $\mathbb{P}f$ is a local homeomorphism around points $x\in \sac{S}$ such that $\codim{\sigma_x}\leq 2$, and then around points such that $\codim{\sigma_x}\geq 2$ by induction. This is done in the following steps:
\begin{itemize}
\item For points belonging to the interior of simplices with codimension 0, it is enough to show that the $f$-images of the vertices of any top-dimensional simplex form a basis in the deformation space of the surface. \begin{thm}
	\label{basis} 	Let $S$ be a compact hyperbolic surface with totally geodesic boundary, equipped with a metric $m\in \tei {S}$. Let $\sigma$ be a codimension zero simplex and let $\ed$ be the corresponding edge set. Then the set of infinitesimal strip deformations $B=\{   \isd| e\in \ed  \}$ forms a basis of $\tang {S}$.
\end{thm}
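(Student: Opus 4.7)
The plan is to show that $B$ is a basis by combining a dimension count with a linear independence argument based on the tile map formalism from the preceding subsection. First I would verify that $\lvert \ed \rvert = N_0 = \dim \tang S$: classifying the tiles of a top-dimensional triangulation as hyperbolic hexagons, pentagons, or quadrilaterals (contributing respectively $3$, $2$, $1$ internal sides), the double count of incidences (arc, adjacent tile) together with the Euler characteristic formula $\chi(S) = 2 - 2g - n$ in the orientable case (respectively $2 - h - n$) yields $\lvert \ed \rvert = 6g - 6 + 3n$ (respectively $3h - 6 + 3n$), matching $N_0$. With cardinalities equal, it suffices to prove linear independence of $B$.

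Suppose then that $u := \sum_{\al \in \ed} c_\al\, f_\al(m) = 0$ in $\tang S$. The linear combination $u$ is represented by some $(\rho, u)$-equivariant tile map $\phi : \ltile \to \lalg$ whose jump across every lifted arc $\tilde\al \in \led$ is, by construction, $\phi(\tilde d) - \phi(\tilde d') = c_\al v_{\tilde\al}$, with $v_{\tilde\al}\neq 0$ the Killing field prescribed by the strip template. Because the cohomology class $u$ vanishes in $\tang S$, I may translate $\phi$ by a constant $v_0 \in \lalg$ to obtain a strictly $\rho$-equivariant tile map $\phi' := \phi - v_0$ that descends to a well-defined section of the associated flat $\lalg$-bundle over the finite tile set $\tile \subset S$. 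The translation by $v_0$ leaves every jump unchanged, so the jumps of $\phi'$ are still $c_\al v_{\tilde\al}$ for every $\al \in \ed$.

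The descent of $\phi'$ forces the sum of signed jumps around any closed loop in the dual graph of the triangulation $\sigma$ (embedded in $S$) to vanish. The final task is to deduce from this system of loop relations that each $c_\al = 0$. One proceeds by examining small loops encircling a single vertex of the triangulation: the jumps involved in such a loop are weighted Killing fields attached to the finitely many arcs incident to that vertex, and the classification of Killing fields in Properties \ref{tang} (together with the fact that distinct arcs meeting a common endpoint produce Killing fields with distinct fixed point structure) shows that these Killing fields are linearly independent. Summing around every vertex loop therefore forces $c_\al v_{\tilde\al} = 0$, hence $c_\al = 0$. The principal obstacle is precisely this last combinatorial-geometric step, which requires separate verification depending on whether the tiles meeting the arc are of type $1$, $2$, or $3$; this is where compactness of $S$ and the absence of spikes (which would otherwise introduce parabolic strip contributions) are used in an essential way.
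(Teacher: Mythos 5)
Your dimension count and the reduction to linear independence are fine, and the setup via tile maps (normalising so that the map becomes $\rho$-equivariant when the deformation class vanishes) matches the paper's framework. The gap is in the step you yourself flag as "the principal obstacle": the mechanism you propose for it does not exist. The arcs of a top-dimensional simplex are pairwise disjoint embedded arcs with endpoints on $\partial S$, so there are no interior vertices of the triangulation and hence no "small loops encircling a single vertex" in the dual graph; for a disk the dual graph is a tree and carries no loop relations at all, and for a general compact $S$ the loops of the dual graph represent nontrivial elements $\gamma\in\pi_1(S)$, around which the sum of jumps of a $\rho$-equivariant tile map equals $\mathrm{Ad}(\rho(\gamma))\phi'(\tilde d)-\phi'(\tilde d)$ and need not vanish. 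Moreover, even if one did extract a relation $\sum_i c_i v_i=0$ among finitely many Killing fields, $\lalg$ is $3$-dimensional, so "distinct fixed point structure" cannot yield linear independence of more than three of them. In short, the vanishing of the cohomology class gives you the \emph{existence} of an equivariant tile map with prescribed jumps; it does not by itself produce any linear relations that force the $c_\al$ to vanish.

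What is actually needed — and what the paper does in its own versions of this statement (Theorems \ref{thmip}--\ref{thmdecopunc}, via Lemma \ref{lemideal} and its analogues, following \cite{dgk}) — is a genuinely geometric maximum-principle-type argument. One passes to a \emph{neutral} tile map $\phi_0$ and proves, by induction over the dual tree starting from the leaves (tiles with a single internal side, whose images are pinned down because $\phi_0$ fixes their ideal data), that whenever $\np{d}\neq 0$ the projective point $[\np{d}]\in\pp$ lies in the interior of the projective triangle based at the line carrying an internal edge $e$ of $d$, on the side containing $d$. If some $c_e\neq 0$, the two tiles adjacent along $e$ then have images in the two \emph{disjoint} triangles based at $e$, so by Property \ref{line} the line through $[\np{d}]$ and $[\np{d'}]$ meets $\oarr{e}$ inside $\cHP$ — contradicting the fact that their difference is the hyperbolic Killing field $c_e v_{\tilde e}$, whose dual point lies on $\oarr{e}\smallsetminus\cHP$. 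This projective confinement argument is the content your proposal is missing, and without it the proof does not close.
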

\item Next let $x\in \sac{S}$ such that $x\in \inte{\sigma_x}$ where $\sigma_x$ is a filling simplex with $\codim{\sigma_x}=1$. Since $\sac S $ is an open ball, there exist two simplices $\sigma_1,\sigma_2$ such that 
\begin{itemize}
	\item $\codim{\sigma_1}=\codim{\sigma_2}=0$,
	\item $\sigma_x=\sigma_1\cap\sigma_2$.
\end{itemize} The following theorem gives a sufficient condition for proving local homeomorphism of the projectivised strip map around points like in this case.
\begin{thm} \label{disjint}
	Let $S$ be a compact hyperbolic surface with totally geodesic boundary, equipped with a metric $m\in \tei {S}$. Let $\sigma_1 ,\sigma_2 \in \ac {S}$ be two top-dimensional simplices such that $$\codim {\sigma_1 \cap \sigma_2}=1\text{  and }\inte{\sigma_1 \cap \sigma_2}\subset \sac {S}.$$ Then we have that,
	\begin{equation}
		\inte{\mathbb{P}f(\sigma_1)}\cap \inte{\mathbb{P}f(\sigma_2)}=\varnothing.
	\end{equation}
\end{thm}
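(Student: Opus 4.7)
The plan is a proof by contradiction, combining the basis property of Theorem \ref{basis} with the length-derivative formula of Lemma \ref{lenderiv} in its closed-curve version. Suppose, toward contradiction, that there exist $x_1 \in \inte{\sigma_1}$ and $x_2 \in \inte{\sigma_2}$ with $\mathbb{P}f(x_1) = \mathbb{P}f(x_2)$; then $f(x_1) = \lambda f(x_2)$ in $\tang S$ for some $\lambda > 0$. Setting $\sigma_x = \sigma_1 \cap \sigma_2$ with $\sigma_x^{(0)} = \{\alpha_1, \ldots, \alpha_{N_0-1}\}$, $\sigma_1^{(0)} = \sigma_x^{(0)} \cup \{\alpha\}$ and $\sigma_2^{(0)} = \sigma_x^{(0)} \cup \{\beta\}$ with $\alpha \neq \beta$, I decompose
\[
x_1 = \sum_j c_j \alpha_j + c\alpha, \qquad x_2 = \sum_j c'_j \alpha_j + c'\beta,
\]
where every coefficient is strictly positive because $x_i$ lies in the open simplex.

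By Theorem \ref{basis}, $\{f_{\alpha_j}(m)\}_j \cup \{f_\alpha(m)\}$ is a basis of $\tang S$. I expand $f_\beta(m) = \sum_j a_j f_{\alpha_j}(m) + a_0 f_\alpha(m)$ in this basis, substitute into $f(x_2)$, and match basis coordinates on both sides of $f(x_1) = \lambda f(x_2)$, obtaining
\[
c_j = \lambda(c'_j + c' a_j)\ \text{for every } j, \qquad c = \lambda c' a_0.
\]
Since $c, c', \lambda > 0$, the second relation forces $a_0 > 0$.

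The main step is to contradict this by proving $a_0 \leq 0$. The strategy is to exhibit a simple closed geodesic $\gamma$ on $S$ that crosses $\alpha$ transversely but is disjoint from $\beta$ and from every $\alpha_j$. Applying the length derivative $\mathrm{d}l_\gamma$ to $f_\beta(m) = \sum_j a_j f_{\alpha_j}(m) + a_0 f_\alpha(m)$ and using the closed-curve analogue of Lemma \ref{lenderiv}, the left-hand side $\mathrm{d}l_\gamma(f_\beta(m))$ vanishes because $\gamma \cap \beta = \varnothing$, while the right-hand side collapses to $a_0 \cdot \mathrm{d}l_\gamma(f_\alpha(m))$, which is strictly positive when $a_0 > 0$. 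Hence $a_0 = 0$, a contradiction.

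The delicate point is producing the test curve $\gamma$. Since $\sigma_x$ is filling of codimension one, un-refining $\sigma_1$ (or $\sigma_2$) merges two top-dimensional tiles into a single tile $D$ with at most four internal sides; inside $D$ the two distinct diagonals $\alpha$ and $\beta$ must cross, otherwise $\sigma_x \cup \{\alpha, \beta\}$ would be a higher-dimensional simplex of $\ac S$. A transverse push of $\alpha$ inside a regular neighbourhood $N(\alpha) \subset D$ that avoids $\beta$, closed up into a simple loop via an arc running through the complement of $\sigma_2^{(0)}$ in $S$, produces a loop meeting $\alpha$ exactly once and avoiding every arc of $\sigma_2$; its geodesic representative in the metric $m$ is the desired $\gamma$. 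Verifying that this construction always succeeds on a compact hyperbolic surface $S$ with totally geodesic boundary, using that $\chi(S) < 0$ ensures the complement of $\sigma_2^{(0)}$ has enough connectivity to carry an essential simple closed curve in the prescribed free-homotopy class, is the technical heart of the proof.
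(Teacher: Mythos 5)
Your reduction is sound and matches the standard skeleton: writing $f(x_1)=\lambda f(x_2)$, expanding $f_{\beta}(m)=a_0f_{\alpha}(m)+\sum_j a_jf_{\alpha_j}(m)$ in the basis supplied by Theorem \ref{basis}, and observing that the assumption forces $a_0>0$, so that everything hinges on proving $a_0<0$ (equivalently, on exhibiting a vanishing linear combination $c_{\alpha}f_{\alpha}(m)+c_{\beta}f_{\beta}(m)+\sum_j c_{j}f_{\alpha_j}(m)=0$ with $c_{\alpha},c_{\beta}>0$). The gap is in how you try to get the sign of $a_0$. Your test curve $\gamma$ is required to cross $\alpha$ while being disjoint from $\beta$ \emph{and} from every $\alpha_j$, i.e.\ disjoint from the entire $0$-skeleton of $\sigma_2$. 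But $\sigma_2$ is a top-dimensional simplex of the arc complex, hence filling: its arcs decompose $S$ into topological disks. Any closed curve avoiding all of $\sigma_2^{(0)}$ is contained in one of these disks and is therefore null-homotopic, so it has no geodesic representative and $\mathrm{d}l_{\gamma}$ is vacuous. Your appeal to $\chi(S)<0$ giving the complement of $\sigma_2^{(0)}$ ``enough connectivity to carry an essential simple closed curve'' is exactly what fails. The loop you build from a transverse push of $\alpha$ closed up inside $S\smallsetminus\sigma_2^{(0)}$ is contractible. Nor can the method be repaired by allowing $\gamma$ to meet some $\alpha_j$: then the unknown signs of the $a_j$ re-enter the identity $0=a_0\,\mathrm{d}l_{\gamma}(f_{\alpha}(m))+\sum_j a_j\,\mathrm{d}l_{\gamma}(f_{\alpha_j}(m))$ and nothing about $a_0$ follows. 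The underlying obstruction is symmetric: since both triangulations are filling, every essential closed geodesic crosses arcs of both, so no length functional of a closed curve can separate $\sigma_1$ from $\sigma_2$.

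What the argument actually requires (and what is done in \cite{dgk}, and in this paper's polygonal analogue, Theorem \ref{codim1}) is a direct construction of the relation with controlled signs: one refines the triangulation by drawing both $\alpha$ and $\beta$, assigns Killing fields to the resulting small tiles (a ``tile map'' coherent around the intersection points of $\alpha$ and $\beta$), and reads off an explicit identity $c_{\alpha}f_{\alpha}(m)+c_{\beta}f_{\beta}(m)+\sum_{j}c_{j}f_{\alpha_j}(m)=0$ with $c_{\alpha},c_{\beta}>0$ and $c_{j}\leq0$. This immediately gives $a_0=-c_{\alpha}/c_{\beta}<0$, contradicting $a_0>0$ and completing your reduction; the negativity of the $c_j$ is what later feeds into the convexity statement of Theorem \ref{codim2}. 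So the first half of your proposal can be kept, but the second half needs to be replaced by this explicit geometric construction rather than a test-curve argument.
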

	
	\item The case for $\codim{\sigma_x}=2$ follows from the following theorem and lemma:
	
	\begin{thm}\label{codim2}
		Let $S$ be a compact hyperbolic surface with totally geodesic boundary, equipped with a metric $m\in \tei {S}$. Let $\sigma_1,\sigma_2$ be two simplices of its arc complex $\ac {S}$ satisfying the conditions of Theorem \ref{disjint}. Then there exists a choice of strip template such that $\mathbb{P}f(\sigma_1)\cup \mathbb{P}f(\sigma_2)$ is convex in $\ptan {S}$.
	\end{thm}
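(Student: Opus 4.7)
The plan is to reduce the convexity claim to a sign condition on a single explicit linear relation between $f_\alpha$ and $f_\beta$ in $\tang S$, where $\alpha \in E_{\sigma_1}\setminus E_{\sigma_2}$ and $\beta\in E_{\sigma_2}\setminus E_{\sigma_1}$ are the two arcs swapped by the flip, and then to achieve the correct signs by tuning the strip template on the finitely many arcs that bound the flip quadrilateral.

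Since $\codim(\sigma_1\cap\sigma_2)=1$ and $\inte{\sigma_1\cap\sigma_2}\subset \sac S$, the two triangulations differ by a single flip inside a common quadrilateral tile $Q$ whose two diagonals are $\alpha,\beta$ and whose four sides $s_1,\dots,s_4$ belong to $E_{\sigma_1\cap \sigma_2}\cup \partial S$. By Theorem \ref{basis}, the family $\{f_e:e\in E_{\sigma_1}\}$ is a basis of $\tang S$, so there is a unique expansion
\[
f_\beta \;=\; \lambda\, f_\alpha \;+\; \sum_{e\in E_{\sigma_1\cap\sigma_2}} \mu_e\, f_e.
\]
In the affine chart of $\ptan S$ determined by the normalisation \eqref{norma}, the union $\mathbb{P}f(\sigma_1)\cup\mathbb{P}f(\sigma_2)$ is the union of two affine simplices meeting along the common facet $\mathbb{P}f(\sigma_1\cap\sigma_2)$; a standard linear-algebra check shows that this union is convex if and only if $\lambda\le 0$ and $\mu_e\ge 0$ for every $e\in E_{\sigma_1\cap\sigma_2}$. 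Moreover, comparing the tile maps $\phi_\alpha$ and $\phi_\beta$ — which differ from the neutral tile map $\phi_0$ of Definition \ref{neutraltilemap} only on tiles inside or adjacent to $Q$ — forces $\mu_e=0$ whenever $e$ is not one of the sides $s_i$. So only five coefficients remain.

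The next step is to compute these five coefficients by lifting the relation to $\lalg$ via tile maps. Normalise $\phi_\alpha,\phi_\beta$ and the $\phi_{s_i}$ so that they agree on one triangle of $Q\setminus\alpha$; then apply the difference relation \eqref{tile} across $\alpha$, $\beta$ and the four $s_i$ to the remaining tiles adjacent to $Q$. This produces an overdetermined linear system in $\lalg\cong\mathbb{R}^3$ whose solution gives closed-form expressions of $\lambda$ and the $\mu_{s_i}$ in terms of the crossing angle of $\alpha$ and $\beta$, the angles at the four vertices of $Q$, and the waist/width data $(p_{s_i},w_{s_i})$ of the template. The coefficient $\lambda$ comes out strictly negative for purely geometric reasons: the diagonals $\alpha,\beta$ meet transversally inside $Q$, so the component of $v_\beta$ along $v_\alpha$ in the basis $(v_\alpha,v_{s_1},\dots,v_{s_4})$ is of opposite sign to what an outward strip along $\alpha$ would produce — this is ultimately the infinitesimal version of the fact that opening one diagonal of a convex quadrilateral closes the other.

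The main obstacle is thus the simultaneous non-negativity of the four $\mu_{s_i}$. Using the estimates of Section~\ref{tilestypes} and Lemma \ref{lenderiv}, each $\mu_{s_i}$ depends affinely on the pair $(p_{s_i},w_{s_i})$, giving two free parameters per side against a single scalar sign constraint. I would exploit this slack by placing the waist $p_{s_i}$ sufficiently close to the endpoint of $\alpha$ (or $\beta$) lying on $s_i$, which makes the contribution of $v_{s_i}$ align with the displacement required to neutralise the portion of $v_\beta-\lambda v_\alpha$ supported near $s_i$; then $w_{s_i}$ is rescaled to fix the magnitude. The subtlety is compatibility: a single strip template must serve every simplex at once, and the same side $s_i$ may bound other flip quadrilaterals corresponding to different pairs $(\sigma_1,\sigma_2)$. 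Because the required inequality $\mu_{s_i}\ge 0$ is an open condition, a generic choice of template, first fixed on arcs not bounding any flip quadrilateral and then perturbed inductively on the remaining arcs, achieves all the inequalities simultaneously, yielding convexity of $\mathbb{P}f(\sigma_1)\cup\mathbb{P}f(\sigma_2)$ in $\ptan S$.
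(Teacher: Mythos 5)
Your reduction is the right one and is exactly how this statement is actually established: the paper does not reprove Theorem \ref{codim2} (it is quoted from \cite{dgk}), but both \cite{dgk} and the paper's own polygonal analogue (Theorem \ref{codim1}, Section 5) proceed by exhibiting the linear relation of eq.~\eqref{lincomb}, i.e.\ a relation supported on the two exchanged arcs $\alpha,\beta$ and the sides $s_1,\dots,s_4$ of the flip quadrilateral, with positive coefficients on $\alpha,\beta$ and nonpositive ones on the sides; and your sign dictionary (convexity $\Leftrightarrow$ $\lambda<0$, $\mu_e\ge 0$) is correct, with $\lambda<0$ indeed recoverable from Theorems \ref{basis} and \ref{disjint}. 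The genuine gap is in the only step that carries the content of the theorem: producing a strip template for which the $\mu_{s_i}$ are all nonnegative. Your mechanism does not work as stated. The width $w_{s_i}>0$ only rescales $f_{s_i}$ by a positive factor, hence rescales $\mu_{s_i}$ positively and can never repair a sign, so there are not ``two free parameters per side''; the waist $p_{s_i}$ enters the Killing field $v_{s_i}$ nonlinearly, and moving it (or $p_\alpha$, $p_\beta$) changes \emph{all} the expansion coefficients at once, so the four sign constraints are coupled rather than independent; moreover ``the endpoint of $\alpha$ (or $\beta$) lying on $s_i$'' does not exist, since the endpoints of $\alpha$ and $\beta$ lie on $\partial S$, not on the sides of $Q$. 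Above all, a genericity/perturbation argument cannot establish a system of sign conditions: openness gives stability of solutions, not their existence. What is needed is an explicit template and an explicit computation — in \cite{dgk} a quadrilateral identity with waists adapted to the crossing point, and in this paper the explicit neutral/consistent tile maps for the refinement $\sigma_1\cup\{\alpha_2\}$ whose coefficients are computed and sign-checked via Lemma \ref{centre} and Lemmas \ref{ineqasym}, \ref{ineqcentres}. That computation is precisely what your proposal leaves out.

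Two further points. First, your claim that $\mu_e=0$ for arcs $e$ not bounding $Q$ is, in the actual proofs, a \emph{consequence} of constructing a relation supported on $Q$ together with uniqueness of the expansion in the basis of Theorem \ref{basis}; the ``locality of tile maps'' argument you give is not a proof of it and essentially presupposes the construction. Second, on a compact surface the four sides of $Q$ need not be pairwise distinct arcs of $S$: two sides of the lifted quadrilateral can be identified by the holonomy, so a single coefficient receives two contributions whose sum must be sign-checked — this is exactly the delicate sub-case the paper treats separately in its punctured and decorated settings (the $\wt{\beta_{j'}}=\gamma\cdot\wt{\beta_j}$ analysis in Section 5), and your proposal ignores it. Conversely, the compatibility across several flip quadrilaterals that your last paragraph worries about is not required: the statement concerns a single pair $(\sigma_1,\sigma_2)$, and the degree argument of Lemma \ref{cd2link} only needs one pair of consecutive angles to sum to less than $\pi$ for some template, the rest following by connectedness of the space of templates.
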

	\begin{lem}\label{cd2link}
		Let $S$ be a compact hyperbolic surface with totally geodesic boundary, equipped with a metric $m\in \tei {S}$.  Let $x\in\ac {S}$ such that $\codim{\sigma_x}= 2$. Then, $\mathbb{P}f|_{\Link{\ac {S}}{\sigma_p}}$ is a homeomorphism.
	\end{lem}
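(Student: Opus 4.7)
The plan is to exploit the low combinatorial dimension of the link. Since $\sac S$ is an open manifold of dimension $N_0-1$ and $\sigma_x$ is a filling simplex of dimension $N_0-3$, a neighbourhood of $\inte{\sigma_x}$ in $\sac S$ has the form $\inte{\sigma_x}\times \mathrm{Cone}(L)$ where $L:=\Link{\ac S}{\sigma_x}$; this forces $L$ to be a topological circle. Combinatorially, $L$ is a finite cyclic graph whose vertices are arcs $\beta_1,\ldots,\beta_k$ with each $\sigma_x\cup\{\beta_i\}$ a codimension-1 simplex of $\ac S$, and whose edges join consecutive $\beta_i,\beta_{i+1}$ whenever the pair is disjoint, so that $\tau_i:=\sigma_x\cup\{\beta_i,\beta_{i+1}\}$ is a top-dimensional simplex containing $\sigma_x$.

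First I would analyse $\mathbb{P}f$ on a single edge of $L$. Such an edge lies inside $\tau_i$, on which by Theorem \ref{basis} the family $\{f_\alpha(m)\mid \alpha\in\tau_i^{(0)}\}$ is a basis of $\tang S$. Hence $\mathbb{P}f$ is the projectivisation of a linear isomorphism onto an affine slice of $\tang S$, and in particular embeds each edge of $L$ as a projective segment in $\ptan S$.

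Next I would glue these segments by invoking Theorem \ref{codim2}. Two consecutive top-dimensional simplices $\tau_{i-1}$ and $\tau_i$ meet along the codimension-1 simplex $\sigma_x\cup\{\beta_i\}$, whose interior lies in $\sac S$ because $\sigma_x$ is filling. For an appropriate choice of strip template, Theorem \ref{codim2} then makes $\mathbb{P}f(\tau_{i-1})\cup\mathbb{P}f(\tau_i)$ convex in $\ptan S$, while Theorem \ref{disjint} ensures that the interiors of $\mathbb{P}f(\tau_{i-1})$ and $\mathbb{P}f(\tau_i)$ are disjoint. Restricted to the link, the two edges of $L$ abutting at $\beta_i$ therefore map to a simple arc in $\ptan S$ meeting only at the image of $\beta_i$.

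The main obstacle will be upgrading these local statements into a global embedding of $L\cong S^1$. The idea is that in a neighbourhood of $\mathbb{P}f(x)$ the image $\mathbb{P}f(L)$ is constrained to the $2$-dimensional linear slice of $\ptan S$ transverse to $\mathbb{P}f(\inte{\sigma_x})$; iterating the convexity of Theorem \ref{codim2} around the cycle then forces $\mathbb{P}f(L)$ to trace out the boundary of a convex polygon in this slice. The cyclic order of the $\beta_i$ along $L$ is preserved by $\mathbb{P}f$, so no winding can occur, and the resulting continuous, cyclically order-preserving map $S^1\longrightarrow \partial(\text{convex polygon})$ is a homeomorphism.
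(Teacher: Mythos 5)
Your set-up — identifying the link $L=\Link{\ac S}{\sigma_x}$ as a cycle (in fact a quadrilateral or a pentagon), using Theorem \ref{basis} to embed each closed edge of $L$ as a projective segment, and using Theorems \ref{disjint} and \ref{codim2} to get injectivity and local convexity at each vertex of $L$ — is exactly the paper's framework. The gap is in your final paragraph. A closed, locally injective, locally convex PL curve in the circle $\mathbb{P}^+(\tang{S}/V)\simeq \s 1$ (where $V$ is spanned by $\{f_{\al}(m)\}_{\al\in\sigma_x^{(0)}}$) need not be embedded and need not bound a convex polygon: it can wind around several times, the model being $z\mapsto z^2$ on the unit circle, which is a local homeomorphism preserving the local cyclic order everywhere yet has degree $2$. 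Your assertion that ``the cyclic order of the $\beta_i$ along $L$ is preserved by $\mathbb{P}f$, so no winding can occur'' is therefore precisely the global statement to be proved; it does not follow from iterating the local convexity around the cycle.

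The paper closes exactly this gap with a quantitative degree argument. Let $\theta_i\in(0,\pi)$ be the angle at the origin between the $f$-images of consecutive vertices of $L$; since $\mathbb{P}f|_{L}$ is a local homeomorphism of $\s 1$, the sum $\sum_i\theta_i$ equals $2\pi$ times the degree. The convexity statement of Theorem \ref{codim2} yields, for a suitable strip template, the estimate $\theta_i+\theta_{i+1}<\pi$ for an adjacent pair, which together with $\theta_j<\pi$ for the remaining angles forces $\sum_i\theta_i<4\pi$, hence degree one, hence a homeomorphism. Two further points your proposal omits: the degree count must then be transported to an arbitrary strip template, which the paper does by connectedness of the space of strip templates (the degree cannot jump from $1$ to $2$ continuously); and the convexity of Theorem \ref{codim2} is only guaranteed for \emph{some} template, so it cannot be ``iterated around the cycle'' simultaneously at every vertex for a fixed template, which is another reason the argument must go through a degree computation rather than through a global convex-polygon picture.
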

\begin{proof}[Idea of the proof of Lemma \ref{cd2link}:] Since $\codim{\sigma_x}=2$, there is space to put two more arcs that are disjoint from all the arcs of $\sigma_x$. There are two possibilities:
	\begin{itemize}
		\item there exist exactly two disjoint regions (hyperideal quadrilaterals) in the complement of $\supp x$; every other connected component is a hyperideal triangle. Each of these regions can be decomposed into hyperideal triangles in two ways by a diagonal exchange such that the exchanges are independent of each other. So the sub-complex $\Link{\ac {S}}{\sigma_x}$ of $\ac{S}$ is a quadrilateral in this case. 
		\item there exists exactly one region  in the complement of $\supp x$, which is not a hyperideal triangle. This region can be decomposed into three hyperideal triangles by two additional arcs that are pairwise disjoint from the rest. These two arcs can be chosen in 5 ways, using the "pentagonal moves". As a result, the sub-complex $\Link{\ac {S}}{\sigma_x}$ is a pentagon in this case.
	\end{itemize}
	So in both the cases, the restriction of the projectivised infinitesimal strip map to the link gives a P-L map $$\mathbb{P}f|_{\Link{\ac {S}}{\sigma_p}}: \s1\longrightarrow \s1.$$ Using Theorem \ref{codim2}, the authors prove that this map has degree one, which proves it to be a homemorphism.
\end{proof}
\item Finally the cases $\codim{\sigma_x}\geq2$ follow from the following theorem:

\begin{thm}
	Let $x\in\sac {S}$ such that $\codim{\sigma_x}\geq 2$. Let $V\subset \tang{S}$ be the vector subspace generated by the infinitesimal strip deformations $\{f_{\al}(m)\}_{\al\in \sigma_x^{(0)}}$. Then, the restriction map $$\mathbb{P}f: \Link{\ac {S}} {\sigma_x}\longrightarrow \mathbb{P}^+({\tang {S}}/V)$$ is a homeomorphism. 
\end{thm}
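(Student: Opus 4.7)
The plan is to proceed by induction on $k := \codim{\sigma_x} \geq 2$, with the base case $k = 2$ supplied by Lemma \ref{cd2link}. Set $L := \Link{\ac S}{\sigma_x}$. I first check that $L$ and the target $\mathbb{P}^+(\tang S / V)$ are both PL $(k-1)$-spheres. Applying Theorem \ref{basis} to any codimension-$0$ simplex of $\ac S$ containing $\sigma_x$ shows that the $N_0 - k$ infinitesimal strip deformations spanning $V$ are linearly independent, so $\dim(\tang S / V) = k$ and the target is a $(k-1)$-sphere. On the source side, since $\sac S$ is an open manifold of dimension $N_0 - 1$ and $\sigma_x$ is one of its filling simplices, the link $L$ is a PL $(k-1)$-sphere; equivalently, $L$ is the join of arc complexes of the pieces of $S$ cut along the geodesic representatives of $\sigma_x^{(0)}$, each of which is spherical by Penner's theorem.

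Next I argue that $\mathbb{P}f|_L$ is a PL map and a local homeomorphism. Each top-dimensional simplex $\tau$ of $L$ corresponds to a codimension-$0$ simplex $\sigma_x \cup \tau$ of $\ac S$; by Theorem \ref{basis} the strip deformations indexed by $(\sigma_x \cup \tau)^{(0)}$ form a basis of $\tang S$, so those indexed by $\tau^{(0)}$ descend to a basis of $\tang S / V$. Hence $\mathbb{P}f$ carries each top-dimensional simplex of $L$ homeomorphically onto an open projective simplex of $\mathbb{P}^+(\tang S / V)$. For a point in the interior of a proper face $\tau \subsetneq L$, I use the canonical identification $\Link{L}{\tau} = \Link{\ac S}{\sigma_x \cup \tau}$. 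Since $\codim(\sigma_x \cup \tau) < k$, either the inductive hypothesis applies (when this codimension is at least $2$) or the codimension-$0$ or $1$ results (Theorems \ref{basis} and \ref{disjint}) do, giving a homeomorphism of the sub-link onto the corresponding smaller projective sphere. Combined with the paragraph above, this yields a local homeomorphism on the star of $\tau$ in $L$.

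Compactness of $L$ together with local homeomorphism into the Hausdorff sphere $\mathbb{P}^+(\tang S / V)$ makes $\mathbb{P}f|_L$ a covering map. For $k \geq 3$ the target $(k-1)$-sphere is simply connected, so any covering is a homeomorphism, closing the induction.

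The main obstacle I expect is verifying that, near a proper face $\tau$ of $L$, the local homeomorphisms on $\tau$ and on $\Link{L}{\tau}$ truly assemble into a single local homeomorphism on the star, rather than just into separate maps in unrelated directions. Concretely, this reduces to checking that $\tang S / V$ splits compatibly as the span of the $\tau^{(0)}$-strip deformations plus a complement identified by the inductive hypothesis with $\tang S / V_{\sigma_x \cup \tau}$, where $V_{\sigma_x \cup \tau}$ is generated by the strip deformations along all arcs of $\sigma_x \cup \tau$. This is a linear-algebra compatibility that ultimately rests, via Theorem \ref{basis}, on the linear independence of the strip deformations along any top-dimensional simplex containing $\sigma_x \cup \tau$; once it is in place, the global conclusion follows from the covering-space argument above.
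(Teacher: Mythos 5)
Your proposal follows essentially the same route as the paper: induction on $\codim{\sigma_x}$ with base case $2$ supplied by Lemma \ref{cd2link}, local homeomorphism on the link obtained by identifying $\Link{L}{\tau}$ with $\Link{\ac S}{\sigma_x\cup\tau}$ and invoking the lower-codimension results, then concluding via compactness and simple-connectedness of $\s{d-1}$ for $d\geq 3$. You are in fact somewhat more careful than the paper, which does not address the assembly of the local homeomorphisms across the star of a proper face nor the sphericity of the source and target; your treatment of both points, resting on Theorem \ref{basis}, is a reasonable filling-in of the same argument.
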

We recall the proof of the above theorem as done in \cite{dgk}. We will use the same reasoning for our surfaces with spikes.
\begin{proof}
	Firstly, we note that the $V$ is a subspace of dimension $N_0-2$ because from Theorem \ref{basis} we get that $\{f_{\al}(m)\}_{\al\in \sigma_x^{(0)}}$ is linearly independent. So the space $\mathbb{P}^+({\tang {S}}/V)$ is homeomorphic to $\s{\codim{\sigma_x}-1}$.
	The statement is verified for $\codim{\sigma_x}=2$. Suppose that the statement holds for $2,\ldots,d-1$. We need to show that $$\mathbb{P}f|_{\Link{\ac {S}} {\sigma_x}}:\s{d-1}\longrightarrow \s{d-1}$$ is a local homeomorphism. Let $x\in \Link{\ac {S}} {\sigma_x}$. Then $x$ is contained in the interior of a simplex $\sigma_x$ whose codimension in $\Link{\ac {S}} {\sigma_x}$ is $d-1-\dim \sigma_x$, which is less than $d$. So by induction hypothesis, the map  $\mathbb{P}f|_{\Link{\ac {S}} {\sigma_x}}$ restricted to $\Link{\mathbb{P}f|_{\Link{\ac {S}} {\sigma_x}}}{\sigma_x}$ is a homeomorphism. This proves that $\mathbb{P}f|_{\Link{\ac {S}} {\sigma_x}}$ is a local homeomorphism. Since $\s{d-1}$ is compact and simply-connected for $d\geq 3$, it follows that $\mathbb{P}f|_{\Link{\ac {S}} {\sigma_x}}$ is a homeomorphism. 
\end{proof}

\end{itemize}

\section{Parametrisation of infinitesimal deformations of polygons}\label{proof}
	The goal of this section is to prove our parametrisation theorems for four types of polygons — ideal polygons, ideal once-punctured polygons, decorated polygons and decorated  once-punctured polygons. 
	Let $\Pi$ be the surface of any of these polygons and let $N_0:=\dim\tei\Pi$. 
	Recall from Definition \ref{ism} that the projectivised infinitesimal strip map for a fixed $m\in \tei \Pi$ is defined as:
	\[
	\begin{array}{ccrcl}
		\mathbb{P}f& : &	\ac \Pi & \longrightarrow & \ptan {\Pi}\\
		& &\sum\limits_{i=1}^{N_0} c_i \al_i&\mapsto&\bra{\sum\limits_{i=1}^{N_0}c_i f_{\al_i}(m)} 
	\end{array}
	\] where for every $i=1,\ldots,N_0$, $c_i\in [0,1]$ and $\sum\limits_{i=1}^{N_0} c_i=1$.
	The rest of the section is dedicated to proving the following four theorems, which constitute our main contribution.
	\begin{thm} \label{Mainideal}
		Let $\ip n$ ($n\geq 4$) be an ideal $n$-gon with a metric $m\in \tei {\ip n}$. Fix a choice of strip template. Then,	the infinitesimal strip map $$\mathbb{P}f:\ac {\ip n}\longrightarrow \ptan {\ip n}$$ is a homeomorphism.
	\end{thm}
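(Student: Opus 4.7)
The plan is to mirror the strategy of Theorem \ref{cpt} recapped in Section \ref{recap}, taking advantage of the fact that both $\ac{\ip n}$ and $\ptan{\ip n}$ are topological spheres of the same dimension $n-4$: the former by Theorem \ref{acideal}, the latter because $\tei{\ip n}\simeq\ball{n-3}$ by Theorem \ref{tei}. Since $\ac{\ip n}$ is already compact (unlike the pruned arc complex of a compact hyperbolic surface), properness of $\mathbb{P}f$ is automatic, so it will suffice to establish that $\mathbb{P}f$ is a local homeomorphism: a local homeomorphism between closed manifolds of the same dimension is a covering map, and a connectedness/degree argument then forces the covering to be trivial.

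First I would check that $\mathbb{P}f$ is well-defined and continuous. Continuity on each closed simplex is immediate from the definition in Section \ref{Sd}, and Lemma \ref{lenderiv} ensures that $f(x)\neq 0$ for every $x\in\ac{\ip n}$, so that projectivisation is defined: any nontrivial positive combination of strip deformations strictly lengthens every edge or diagonal it meets, and hence cannot represent the zero infinitesimal deformation.

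The core is local homeomorphism, which I would prove by induction on $\codim\sigma_x$ following the four steps of Section \ref{recap}. For codimension zero (analogue of Theorem \ref{basis}): for any triangulation $\sigma$ of $\ip n$ into ideal triangles (Type~3 tiles in the sense of Section \ref{tilestypes}), show that the $n-3$ infinitesimal strip deformations $\{f_\al(m)\}_{\al\in\sigma^{(0)}}$ form a basis of $\tang{\ip n}$; a linear dependence $\sum c_\al f_\al(m)=0$, paired against the differentials of the diagonal lengths, yields a homogeneous linear system whose matrix is strictly diagonally dominant by Lemma \ref{lenderiv} and therefore forces every $c_\al=0$. For codimension one (analogue of Theorem \ref{disjint}): two triangulations differing by a single diagonal flip produce strip deformations whose Killing fields lie on opposite sides of the hyperplane spanned by the common arcs, via the consistent tile-map formalism. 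For codimension two, the link in $\ac{\ip n}$ is either a quadrilateral (two independent flips) or a pentagon (a pentagonal move inside a hyperideal pentagon region), and one reproduces Lemma \ref{cd2link} by checking that $\mathbb{P}f$ restricted to the link is a PL map $S^1\to S^1$ of degree one. The general induction step for $\codim\sigma_x\geq 3$ is then formally identical to the argument at the end of Section \ref{recap}.

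Once local homeomorphism is established, $\mathbb{P}f$ is a covering of the $(n-4)$-sphere by itself; for $n\geq 6$ the target is simply connected and the covering is trivial, for $n=5$ the pentagonal-link computation already shows that $\mathbb{P}f$ has degree one on $S^1$, and for $n=4$ the claim reduces to checking directly that the two diagonals of an ideal quadrilateral produce strip deformations spanning opposite half-lines of the one-dimensional $\tang{\ip 4}$. The main obstacle I anticipate is the codimension-one and codimension-two steps in the presence of spikes: in the compact DGK setting these exploited the geometry of boundary-meeting hexagonal tiles with a true totally geodesic boundary, whereas here the relevant tiles are ideal triangles whose vertices escape to $\HPb$, so horocyclic and asymptotic comparisons---fed by the cross-ratio identities of Lemmas \ref{centre}--\ref{ineqcentres}---are needed to confirm that the Killing fields across a flip genuinely point outward from the shared arcs.
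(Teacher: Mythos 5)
Your global architecture is the same as the paper's: show $\mathbb{P}f$ is a local homeomorphism by stratifying according to $\codim{\sigma_x}$, conclude it is a covering of $\s{n-4}$ by itself, and kill the covering by simple connectedness for $n\geq 6$, with $n=4,5$ handled by hand exactly as you describe. The codimension-one, codimension-two and inductive steps you outline also match the paper (explicit consistent tile maps realising a relation $c_{\al_1}f_{\al_1}+c_{\al_2}f_{\al_2}+\sum c_\be f_\be=0$ with the right signs, then a degree-one argument on the quadrilateral or pentagonal link).

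The genuine gap is in your codimension-zero step. You propose to prove linear independence of $\{f_\al(m)\}_{\al\in\sigma^{(0)}}$ by pairing a dependence relation against the differentials $\mathrm{d}l_\be$ of diagonal lengths and invoking Lemma \ref{lenderiv} to get a strictly diagonally dominant matrix. This fails for two reasons. First, Lemma \ref{lenderiv} computes the derivative of the length of a \emph{horoball connection}, which requires the vertices to be decorated; for an undecorated ideal polygon the edges and diagonals are complete geodesics of infinite length, so the functionals $\mathrm{d}l_\be$ you want to pair against are not defined (this is precisely why the admissible cone, and hence properness, plays no role in the undecorated case). Second, even granting some renormalised length, Lemma \ref{lenderiv} only yields positivity of each entry $\mathrm{d}l_\be(f_\al(m))$ when $\al$ crosses $\be$; diagonal dominance would require a distinguished bijection between the $n-3$ arcs of the triangulation and $n-3$ diagonals together with a quantitative comparison of the diagonal entry against the off-diagonal ones, none of which is available. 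The paper's actual argument (Theorem \ref{thmip} via Lemma \ref{lemideal}) is entirely different: from a putative dependence one builds a neutral tile map $\phi_0:\tile\to\lalg$ fixing all ideal vertices, and one proves by induction along the dual tree of the triangulation that $[\np d]$ lies in the open projective triangle based at $\ol e$ on the side of the tile $d$; Property \ref{line} then forces the Killing field difference across any arc with nonzero coefficient to correspond to a line meeting $\oarr e$ inside $\cHP$, a contradiction. You would need to supply this (or an equivalent) argument for the basis step; the same projective induction is also what your later steps implicitly rely on, since it is what guarantees $f(x)\neq 0$ on all of $\ac{\ip n}$ and hence that $\mathbb{P}f$ is defined on the whole sphere.
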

	
	\begin{thm} \label{Mainpunc}
		Let $\punc n$ ($n\geq 2$) be an ideal once-punctured $n$-gon with a metric $m\in \tei {\punc n}$. Fix a choice of strip template. Then,	the infinitesimal strip map $$\mathbb{P}f:\ac{\punc n} \longrightarrow \ptan {\punc n}$$ is a homeomorphism.
	\end{thm}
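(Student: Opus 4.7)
The plan is to mirror the strategy of Danciger-Guéritaud-Kassel's proof of Theorem \ref{cpt}, summarized in Section \ref{recap}, and adapt it to the once-punctured polygon setting. By Theorem \ref{idpac} the source $\ac{\punc n}$ is a sphere of dimension $n-2$, and by Theorem \ref{tei}(2) the target $\ptan{\punc n}$ is a sphere of the same dimension $n-2$; the goal is to show that $\mathbb{P}f$ is a proper local homeomorphism between these two spheres and then to pin down the covering degree.

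The first simplification over the compact-surface setting of \cite{dgk} is that properness is automatic: $\ac{\punc n}$ is already compact (a finite simplicial complex, indeed a sphere), so any continuous map to the Hausdorff space $\ptan{\punc n}$ is proper. No estimate on escape of mass is needed, and the analogue of the hard Theorem \ref{proper} is trivial here.

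The bulk of the work is to establish local homeomorphism by induction on the codimension of the unique simplex $\sigma_x$ containing the base point. The codimension-$0$ step requires the analogue of Theorem \ref{basis}: that the $n-1$ infinitesimal strip deformations along the arcs of any triangulation of $\punc n$ form a basis of the $(n-1)$-dimensional space $\tang{\punc n}$. I will prove this via the tile map formalism. A triangulation of $\punc n$ decomposes it into tiles, one of which is a hyperideal punctured monogon enclosing the puncture (Figure \ref{punctile}); given a tangent vector $u\in\tang{\punc n}$, one constructs the corresponding $(\rho,u)$-equivariant tile map by propagating tile by tile along the dual graph, as in \cite{dgk}. The consistency condition around the puncture (equivariance under the parabolic generator $T=\rho(\ga)$) amounts to exactly one linear equation, which matches the fact that the parabolic identification cuts the deformation space of the underlying ideal $(n+2)$-gon down by one to get $\dim\tei{\punc n}=n-1$. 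The codimension-$1$ and codimension-$2$ steps (analogues of Theorems \ref{disjint} and \ref{codim2}, and of Lemma \ref{cd2link}) then transfer using the length-derivative estimate of Lemma \ref{lenderiv}, since every codimension-$2$ link in $\ac{\punc n}$ is still a quadrilateral (diagonal flip) or a pentagon (pentagonal move). The induction on higher codimension proceeds exactly as recalled in Section \ref{recap}.

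Combining these two ingredients, $\mathbb{P}f: S^{n-2}\to S^{n-2}$ is a covering map. For $n\geq 4$ the target is simply connected, so the degree is $1$ and $\mathbb{P}f$ is a homeomorphism. For $n=3$ the map is a covering $S^1\to S^1$ whose degree is forced to be $\pm 1$ by the codimension-$2$ convexity argument, and for $n=2$ the map $S^0\to S^0$ is handled by direct inspection, using that the two arcs on either side of the puncture produce infinitesimal strip deformations of opposite sign in the one-dimensional tangent space. The hard part will be the basis step in the presence of the puncture: verifying that the monodromy constraint arising from the parabolic identification precisely accounts for the one-dimensional drop from the $(n+2)$-gon deformation space to $\tei{\punc n}$, and that the strip deformations along the $n-1$ arcs of a triangulation surject onto this $(n-1)$-dimensional subspace, will require a careful compatibility check between the tile map around the punctured monogon and the holonomy formalism.
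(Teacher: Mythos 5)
Your overall architecture coincides with the paper's: properness is free because $\ac{\punc n}$ is a compact sphere, one proves local homeomorphism by induction on $\codim\sigma_x$, concludes that $\mathbb{P}f$ is a covering of $\s{n-2}$, and uses simple connectedness for $n\geq 4$ while treating $n=2,3$ by hand. However, the proposal misstates or defers exactly the three points at which the puncture genuinely changes the argument, so as written it has gaps.

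First, your claim that ``every codimension-$2$ link in $\ac{\punc n}$ is still a quadrilateral (diagonal flip) or a pentagon (pentagonal move)'' is false. When the unique non-triangulated region of the complement of $\supp x$ contains the puncture, that region admits six triangulations by two disjoint arcs (exactly one of which is always a maximal arc), so the link is a \emph{hexagon}; the degree-one count must then be run with six angles $\theta_1,\dots,\theta_6$, using the codimension-$1$ convexity statement at the three pairs adjacent to maximal arcs. Second, the codimension-$1$ step itself has a new configuration with no analogue in the ideal-polygon or compact cases: two maximal arcs $\al_1,\al_2$ intersect \emph{twice}, and one must build a $\Gamma$-equivariant refined tile map that is coherent around both lifted intersection points $\wt o_1$ and $\wt o_2=(T\cdot\wt\al_1)\cap\wt\al_2$ simultaneously; this is a separate explicit construction, not a transfer of the single-intersection case. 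Third, for the codimension-$0$ basis step you offer only a dimension count (``one linear equation matches the drop by one'') and explicitly defer the ``careful compatibility check.'' The paper's proof of linear independence hinges on a specific structural fact you do not identify: for a neutral tile map $\phi_0$ arising from a vanishing linear combination, $T$-equivariance forces $\phi_0(d_\infty)$ on the infinite tile around the puncture to be either zero or a parabolic Killing field fixing the puncture's ideal point $p$; this is what anchors the induction along the dual tree (rooted at $d_\infty$) placing each $[\phi_0(d)]$ in the correct projective triangle and yielding the contradiction. Matching dimensions reduces surjectivity to injectivity but does not establish either; without the $d_\infty$ analysis the basis step is not proved.
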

	\begin{thm} \label{Maindeco}
		Let $\dep n$ ($n\geq 3$) be a decorated $n$-gon with a metric $m\in \tei {\dep n}$. Fix a choice of strip template. Then the infinitesimal strip map $\mathbb{P}f$, when restricted to the pruned arc complex $\sac {\Pi}$, is a homeomorphism onto its image $\mathbb{P}^+(\adm m)$, where $\adm m$ is the set of infinitesimal deformations that lengthens all edges and diagonals of the polygon.
	\end{thm}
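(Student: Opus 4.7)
The plan is to mirror the strategy of Danciger--Guéritaud--Kassel recalled in Section \ref{recap}, adapting each step to accommodate the new feature of our setting: some arcs are infinite (edge-to-vertex) and produce \emph{parabolic} strip deformations, in addition to the hyperbolic ones along edge-to-edge arcs. The normalisation \eqref{norma} will be fixed throughout so that $\mathbb{P}f$ is well defined on the pruned arc complex.

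\textbf{Step 1 (image lies in $\mathbb{P}^+(\adm m)$ and source-target have matching dimension).} First I would check that $f(x)\in \adm m$ for every $x\in\sac{\dep n}$. This is an immediate consequence of Lemma \ref{lenderiv}: each edge or diagonal $\ga$ of $\dep n$ intersects some arc in $\supp x$ transversely (because $\sigma_x$ is filling and no $R$--$R$ diagonal belongs to $\ed$), so $\mathrm{d}l_\ga(f(x))>0$. By Theorem \ref{sacdp}, $\sac{\dep n}$ is an open manifold of dimension $2n-4$; on the other hand $\adm m$ is an open convex cone in $\tang{\dep n}$ of dimension $2n-3$ by the lemma following Theorem \ref{tei}, so $\mathbb{P}^+(\adm m)$ is an open convex subset of a sphere of dimension $2n-4$, hence an open ball of the same dimension. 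It therefore suffices to prove that $\mathbb{P}f|_{\sac{\dep n}}$ is a proper local homeomorphism onto $\mathbb{P}^+(\adm m)$.

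\textbf{Step 2 (properness).} Following the pattern of Theorem \ref{proper}, I would show that if $x_k\to x_\infty \in \partial\sac{\dep n}$, then $\mathbb{P}f(x_k)$ escapes $\mathbb{P}^+(\adm m)$. Boundary points of $\sac{\dep n}$ correspond to non-filling simplices, i.e.\ simplices whose complementary pieces contain either two decorated vertices or a puncture together with a decorated vertex separated by an $R$--$R$ diagonal not used. In each such configuration one can exhibit an edge or diagonal $\ga$ of $\dep n$ whose length is not increased by $f(x_\infty)$ (the horoball connection $\ga$ becomes disjoint from $\supp x_\infty$, or its two contributions cancel in the limit), contradicting membership in $\adm m$. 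The same horoball-connection computations from Lemma \ref{lenderiv} provide the needed bounds on the derivative of $l_\ga$.

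\textbf{Step 3 (local homeomorphism, codimension $0$).} For $x$ in the interior of a top-dimensional simplex $\sigma$ I must prove the analog of Theorem \ref{basis}: the set $\{f_\al(m)\mid \al\in\ed\}$ is a basis of $\tang{\dep n}$. Cardinality matches ($2n-3$ arcs triangulating the decorated polygon). For linear independence, I would use the neutral tile map $\phi_0$ of Definition \ref{neutraltilemap} and the tile map equation \eqref{tile}: a non-trivial linear relation $\sum c_\al f_\al(m)=0$ yields a $(\rho,0)$-equivariant tile map $\phi$ whose jumps across internal edges are the non-zero Killing fields $c_\al v_{\wt\al}$. Tracing $\phi - \phi_0$ around a tile (whose sides are hyperbolic or parabolic strip Killing fields pointing inwards) one obtains a closed cycle of Killing fields that can only vanish if all the $c_\al$ are zero, using the classification of the three types of Killing fields (hyperbolic axes and parabolic fixed points in generic position).

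\textbf{Step 4 (codimension $1$ and $2$).} For $\codim \sigma_x=1$, I would prove the analog of Theorem \ref{disjint} by the disjoint-interior argument: two top-dimensional simplices $\sigma_1,\sigma_2$ sharing a codimension-$1$ face $\sigma_x$ differ by a single arc-swap in a single tile $T$; as one crosses the face, some dual coordinate changes sign, and one can display an edge or diagonal of $\dep n$ whose length derivative changes sign across $\sigma_x$. For $\codim \sigma_x=2$ I would establish the analog of Theorem \ref{codim2} (convexity of $\mathbb{P}f(\sigma_1)\cup\mathbb{P}f(\sigma_2)$) by choosing the strip template so that the waists along the two swappable arcs lie at the common perpendicular foot, as in \cite{dgk}; then Lemma \ref{cd2link} transfers directly: the link $\Link{\ac{\dep n}}{\sigma_x}$ is either a quadrilateral (two independent flips in disjoint regions, including the new possibility of a vertex-flip inside a tile containing a decorated vertex) or a pentagon (a pentagonal move inside a single tile), and degree $1$ forces it to be a homeomorphism.

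\textbf{Step 5 (codimension $\geq 3$ by induction, and conclusion).} The inductive step from Section \ref{recap} applies verbatim once Steps 3--4 are in place: the restriction to each link is a local homeomorphism of spheres of the same dimension, and simple connectedness forces a global homeomorphism. Combined with Step 2, $\mathbb{P}f|_{\sac{\dep n}}$ is a proper local homeomorphism between open balls of dimension $2n-4$, hence a homeomorphism onto $\mathbb{P}^+(\adm m)$.

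\textbf{Main obstacle.} The delicate point will be Steps 3 and 4 in the tiles that contain a decorated vertex and thus two parabolic strip deformations along edge-to-vertex arcs meeting at that vertex (the second and third rows of Fig.\ \ref{tiles}). Unlike in the compact case, the two parabolic Killing fields share a common fixed ideal point, so naive independence arguments via hyperbolic axes fail; I will need to combine the parabolic calculations from Lemma \ref{lenderiv} with the tile-map cocycle around the decorated vertex (where the neutral tile map is required to fix the vertex, cf.\ Definition \ref{neutraltilemap}) to rule out degenerate cancellations. The strip template along edge-to-vertex arcs is less free than in the hyperbolic case (widths are measured against a horocycle), so the convexity statement of Step 4 also requires a coordinated choice of horocyclic scale, which will be the most technical calculation.
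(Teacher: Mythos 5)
Your skeleton coincides with the paper's: Theorem \ref{Maindeco} is proved by running the DGK scheme of Section \ref{recap} (local homeomorphism in codimension $0,1,2$, induction for higher codimension, properness as in Theorem \ref{properdeco}, then the covering-space conclusion), and your Steps 1, 2 and 5 match the paper. But the two steps carrying the real content are proposed with mechanisms that would fail.

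For codimension $0$ (the analog of Theorem \ref{basis}, namely Theorem \ref{thmdeco}), you propose to trace ``a closed cycle of Killing fields'' around a tile. A decorated polygon is simply connected, so the dual graph of a triangulation is a tree: there is no cycle to close, and a relation $\sum_{e\in\ed} c_e f_e(m)=0$ yields only the jump conditions \eqref{tile} across internal edges together with the requirement that the neutral tile map fix each decorated vertex and its horoball. The paper's argument (Lemma \ref{lemdeco}) is an induction from the leaves of the dual tree inward, confining $[\np d]$ to the projective triangle based at an internal edge of $d$ and containing $d$ via Property \ref{line}; the base case uses that a tile with decorated vertex $\nu$ has $\np d\in\Span{\mathbf v}$, i.e.\ $[\np d]=\nu$ (Property \ref{tang}), and the conclusion uses that a nonzero Killing field cannot fix two distinct decorated vertices with their horoballs. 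There is no appeal to ``generic position'' of axes and fixed points, and none is available: the configuration is dictated by the triangulation.

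For codimension $1$ you propose to ``display an edge or diagonal of $\dep n$ whose length derivative changes sign across $\sigma_x$.'' This cannot work: both $f(\sigma_1)$ and $f(\sigma_2)$ lie in $\adm m$, where $\mathrm{d}l_\ga>0$ for every edge and diagonal $\ga$, so no functional of that form separates the two images. The paper's Theorem \ref{codim1} instead constructs, for each configuration of the two swapped arcs (both edge-to-edge, or one edge-to-edge and one edge-to-vertex — two crossing edge-to-vertex arcs are excluded), an explicit refined tile map with values in $\R_2[z]$ realising a relation $c_{\al_1}f_{\al_1}(m)+c_{\al_2}f_{\al_2}(m)+\sum_{\be} c_{\be}f_{\be}(m)=0$ with $c_{\al_1},c_{\al_2}>0$ and $c_{\be}\le 0$; these signs give at once the disjointness of the interiors (the analog of Theorem \ref{disjint}) and the convexity of the union needed for the degree-one argument in codimension $2$. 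The coefficients are computed from the centres of the semicircles via Lemma \ref{centre} after fixing the strip template (arcs perpendicular to the boundary, waists at the foot of $\infty$, and for edge-to-vertex arcs the waist forced to be the ideal point); no ``coordinated choice of horocyclic scale'' beyond the normalisation \eqref{norma} is involved. As written, Steps 3 and 4 are declarations of intent whose stated mechanisms are respectively vacuous (no cycles in a tree) and false (no sign change inside the admissible cone), so the proposal does not yet constitute a proof.
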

	\begin{thm} \label{Maindecopunc}
	Let $\depu n$ ($n\geq 2$) be a decorated once-punctured polygon with a metric $m\in \tei {\depu n}$. Fix a choice of strip template. Then the infinitesimal strip map $\mathbb{P}f$, when restricted to the pruned arc complex $\sac {\depu n}$, is a homeomorphism onto its image $\mathbb{P}^+(\adm m)$, where $\adm m$ is the set of infinitesimal deformations that lengthens all edges and diagonals of the polygon.
\end{thm}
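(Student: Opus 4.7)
The plan is to mimic the DGK strategy recalled in Section \ref{recap}, adapting each step to the decorated once-punctured setting with both hyperbolic and parabolic strip deformations. The argument closely parallels the proof of Theorem \ref{Maindeco}, with additional bookkeeping for the infinite puncture tile and for the parabolic holonomy around the puncture. First I would verify that $\mathbb{P}f(\sac{\depu n})\subseteq \mathbb{P}^+(\adm m)$: for $x\in\sac{\depu n}$, Lemma \ref{lenderiv} gives $\mathrm{d}l_\gamma(f(x))>0$ for every edge or diagonal $\gamma$ meeting $\supp x$, and filling of $\sigma_x$ forces every edge and every diagonal of $\depu n$ to meet some arc of $\supp x$. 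Thus $f(x)\in \adm m$.

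Next, by Theorem \ref{sacdp}(2) the pruned arc complex $\sac{\depu n}$ is an open manifold of dimension $2n-2$, and $\mathbb{P}^+(\adm m)$ is an open convex subset of $\ptan{\depu n}$ of the same dimension $2n-2$ (since $\dim\tei{\depu n}=2n-1$ by Theorem \ref{tei}(4)). Since both spaces are open balls of equal dimension, it suffices to show that the continuous map $\mathbb{P}f$ is a proper local homeomorphism onto its image. Properness is the analogue of Theorem \ref{proper}: a sequence $x_k$ escaping $\sac{\depu n}$ approaches a non-filling simplex $\sigma$, so some tile in the decomposition of $\depu n$ along $\sigma^{(0)}$ either contains two decorated vertices or is an unpunctured disk without decoration. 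In either case I would exhibit an edge or diagonal $\gamma$ inside the offending tile whose length derivative under $f(x_k)$ vanishes in the limit after normalisation \eqref{norma}, so that $\mathbb{P}f(x_k)$ tends to $\partial \mathbb{P}^+(\adm m)$.

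For the local homeomorphism, I would proceed by codimension. For a top-dimensional simplex $\sigma$, the analogue of Theorem \ref{basis} must be shown: the $2n-1$ strip deformations $\{f_\alpha(m)\}_{\alpha\in \sigma^{(0)}}$ form a basis of $\tang{\depu n}$. This is done via the tile-map formalism: given any infinitesimal deformation $u$, the equivariant neutral tile map (Definition \ref{neutraltilemap}) normalises the candidate tile map, and Equation \eqref{tile} then determines the coefficients uniquely, where equivariance under the parabolic holonomy $\rho(\gamma_p)$ around the puncture is enforced by the neutral tile map fixing the puncture on the infinite puncture tile. The codimension-one case, analogue of Theorem \ref{disjint}, is obtained from the fact that the two top-dimensional simplices adjacent along $\sigma_x$ differ by a single diagonal flip (either in a hyperideal region or in a region adjacent to the puncture tile), and the resulting strip-deformation vectors point to opposite sides of the hyperplane spanned by the common sub-simplex.

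The codimension-two case will be the main obstacle. The link $\Link{\ac{\depu n}}{\sigma_x}$ is either a quadrilateral or a pentagon, but one must enumerate the new configurations that arise here: two independent flips in disjoint tiles, a single hyperideal pentagon undergoing a pentagonal move, and the new cases where one of the two exchange regions is the punctured monogon-like tile or where the flip involves edge-to-vertex arcs with parabolic strips. For each case I would choose the strip template so that the $\mathbb{P}f$-images of adjacent top-dimensional simplices have convex union (analogue of Theorem \ref{codim2}), then argue as in Lemma \ref{cd2link} that the restriction of $\mathbb{P}f$ to the link gives a degree-one piecewise-linear self map of $\s1$, hence a homeomorphism. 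The necessary convexity estimates for the puncture tile and for parabolic strips are provided by the computations of Lemma \ref{lenderiv} extended to the parabolic case. Once codimension two is settled, the induction on codimension at the end of Section \ref{recap} goes through verbatim, yielding the theorem.
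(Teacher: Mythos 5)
Your overall architecture --- local homeomorphism by codimension plus properness, then the covering-map argument on the contractible pruned arc complex --- is exactly the paper's strategy, and your treatment of properness and of the induction for codimension $\geq 2$ matches the paper's. The genuine gap is in the codimension-zero step. Writing that ``Equation \eqref{tile} then determines the coefficients uniquely'' is precisely the assertion to be proved (the analogue of Theorem \ref{basis}, namely Theorem \ref{thmdecopunc}); it does not follow formally from equivariance of a neutral tile map, and one must actually rule out a nontrivial vanishing combination $\sum_e c_e f_e(m)=0$. The paper does this with concrete geometric input: (i) $\Gamma$-equivariance forces the Killing field $\np{d_\infty}$ of the infinite puncture tile to be zero or parabolic fixing the puncture (a matrix computation with the parabolic holonomy $T$); (ii) an induction on the dual tree (Lemma \ref{lemdecopunc}, combining Lemmas \ref{lempunc} and \ref{lemdeco}) confines $[\np d]$ to the projective triangle based at an internal edge-to-edge arc of $d$ and containing $d$, using convexity of $\HPb$ and Property \ref{line}; and (iii) the conclusion that two tiles sharing an edge-to-edge arc must have equal $\phi_0$-images, so that a path of such tiles reaching two distinct decorated vertices forces both images to vanish, together with the observation that the infinite tile has no edge-to-vertex side. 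None of this appears in your proposal, so the linear independence is unsupported.

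A smaller but concrete error is in your codimension-two enumeration: for (decorated) once-punctured polygons the link of a codimension-two filling simplex is \emph{not} always a quadrilateral or a pentagon. When the unique non-triangulated region contains the puncture, it admits six triangulations by two disjoint arcs (exactly one of which is maximal), so the link is a hexagon; the degree-one argument of Theorem \ref{codimtwo} still applies, but the angle-sum estimate $\sum_i\theta_i=2\pi$ must be run with the correct number of link vertices, so the list (quadrilateral, pentagon, hexagon) has to be established first. Finally, your codimension-one step states the right conclusion but supplies no argument: the paper proves it by exhibiting, for an explicit choice of strip template, a consistent refined tile map realising $c_{\alpha_1}f_{\alpha_1}(m)+c_{\alpha_2}f_{\alpha_2}(m)+\sum_\beta c_\beta f_\beta(m)=0$ with $c_{\alpha_1},c_{\alpha_2}>0$ and $c_\beta\le 0$, with separate constructions according to whether both exchanged arcs are finite, one is an edge-to-vertex arc (parabolic strip), or both are maximal and intersect twice around the puncture; ``the vectors point to opposite sides of the hyperplane'' is the desired conclusion, not a proof.
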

	
	\paragraph{Idea of the proofs.} Each of the proofs of the four theorems follows the same strategy as discussed at the end of Section \ref{Sd}. Firstly, we show that the map $\psm$ is a local homeomorphism. 
	Since the sphere is compact, we have that $\psm$ is a covering map for the first two cases — $\ip n$, $\punc n$. Finally, for $n\geq 6$ (ideal $n$-gon) and $n\geq 4$ (punctured $n$-gon), the spheres $\s{n-4}$ and $\s{n-2}$ are simply-connected, so the maps are homeomorphisms. The cases $\ip 4,\ip5,\punc 2,\punc3$ will be treated separately. For the decorated polygons $\dep n, \depu n$, we show properness in order to get a covering map. Their arc complexes are contractible, hence we get a global homeomorphism.
	
	Let $\Pi$ be the topological surface of any hyperbolic polygon. Every point $p\in \ac \Pi$ belongs to a unique open simplex, denoted by $\sigma_p$. Like in \cite{dgk}, we prove that $\psm$ is a local homeomorphism for points $p$ such that $\codim{\sigma_p}=0,1,2$ and for $p$ with $\codim{\sigma_p}\geq 2$, the proof is by induction.
	\subsection{Local homeomorphism: codimension 0 faces}
	In this section, for each of the four types of polygons, we shall prove the local homeomorphism of the projectivised strip maps around points that belong to the interior of codimension 0 simplices in their respective arc complexes.
	\subsubsection{Ideal polygons}
	\begin{thm} \label{thmip}Let $m\in\tei {\ip n}$ be a metric on an ideal $n$-gon $\ip n$, with $n\geq 4$. Fix a choice of strip template. Let $\sigma$ be a top-dimensional simplex of its arc complex $\ac{\ip n}$ and let $\ed$ be the corresponding edge set. Then the set of infinitesimal strip deformations $B=\{   \isd| e\in \ed  \}$ forms a basis of the tangent space $\tang{\ip n}$.
	\end{thm}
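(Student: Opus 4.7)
The plan is to combine a dimension count with the tile-map framework of Section \ref{Sd}. By Theorem \ref{tei} one has $\dim \tang{\ip n} = n-3$; on the other hand, a top-dimensional simplex $\sigma$ of $\ac{\ip n}$ corresponds to a triangulation of $\ip n$ into $n-2$ ideal triangles along exactly $n-3$ diagonals, so $|\ed| = n-3$. Hence it suffices to prove that $B = \{f_e(m) : e \in \ed\}$ is linearly independent in $\tang{\ip n}$.

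To that end, I would assume $\sum_{e \in \ed} c_e f_e(m) = 0$ in $\tang{\ip n}$ and deduce $c_e = 0$ for every $e$. Since $\ip n$ is simply connected, the holonomy is trivial, the universal cover coincides with $\ip n$, and the equivariance condition on tile maps is vacuous. The dual graph of $\sigma$ is a tree with $n-2$ vertices and $n-3$ edges (one per diagonal in $\ed$), so the formal expression $\sum_e c_e f_e(m)$ determines a unique equivalence class of tile maps $[\phi] \in \Phi$: fix a root tile $T_0$, set $\phi(T_0) = 0$, and propagate along the tree by accumulating $c_e v_e$ across each edge. The absence of cycles makes this well-defined, so the coefficients $(c_e)$ can be recovered from the tile equation $\phi(T) - \phi(T') = c_e v_e$.

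The geometric content enters in arguing that the vanishing hypothesis forces $[\phi] = [0]$, i.e.\ that $\phi$ is constant on $\tile$. Indeed, $\sum_e c_e f_e(m) = 0$ in $\tang{\ip n}$ means that, to first order, the $n$ marked ideal vertices move by a single ambient Killing field $v_0 \in \lalg$; since each tile is an ideal triangle rigidly determined by its three ideal vertices, the Killing field $\phi(T)$ realising their motion must equal $v_0$ for every tile $T$. Plugging $\phi \equiv v_0$ into the tile equation yields $c_e v_e = 0$ in $\lalg$ for every diagonal $e \in \ed$, and since $v_e$ is a nontrivial hyperbolic Killing field (waist width $w_e > 0$), we conclude $c_e = 0$ for all $e$.

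The main obstacle is the rigidity step pulling the vanishing of the deformation back to constancy of the tile map. It rests on the injectivity of the evaluation map sending $v \in \lalg$ to the triple of infinitesimal motions induced at any three fixed distinct ideal points of $\HPb$, which is an isomorphism by a matching dimension count together with the simple transitivity of $G = \pgl$ on ordered triples of distinct points of $\HPb$. A minor bookkeeping point is to fix a consistent sign convention in the tile equation when propagating along the dual tree (namely, which of the two neighbouring tiles is pushed by $+c_e v_e$ and which by $-c_e v_e$); once this is pinned down, the argument closes purely linear-algebraically and no further induction on $n$ is required.
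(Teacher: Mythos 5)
Your reduction to linear independence via the dimension count $\dim\tang{\ip n}=\#\ed=n-3$, and your setup of the tile map propagated along the dual tree, both match the paper's strategy. But the rigidity step --- the actual content of the theorem --- rests on a false premise: the tiles of a top-dimensional simplex of $\ac{\ip n}$ are \emph{not} ideal triangles. The permitted arcs here are finite geodesic arcs with both endpoints on the boundary geodesics of $\ip n$ (the vertices of the associated Euclidean polygon $\poly n$ are the boundary \emph{edges} of $\ip n$, not its spikes), so the complementary tiles are hyperideal quadrilaterals with two ideal vertices, pentagons with one, and hexagons with none (Section \ref{tilestypes}). Consequently the evaluation-at-three-ideal-points isomorphism you invoke is unavailable: a hexagonal tile carries no ideal vertex at all, and a pentagonal tile carries only one, so knowing that all spikes move by a common Killing field $v_0$ constrains $\phi(T)$ only modulo a subspace of $\lalg$ of dimension $2$ (one ideal vertex) or $3$ (no ideal vertex). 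The claim that $\phi\equiv v_0$ therefore does not follow, and this is precisely where the difficulty of the theorem lives.

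The paper closes this gap with Lemma \ref{lemideal}: after normalising to a neutral tile map $\phi_0$ fixing all spikes, one proves by induction on the distance to a leaf of the dual tree that whenever $\np d\neq 0$, the projective point $[\np d]$ lies in the interior of the projective triangle based at the line carrying an internal edge $e$ of $d$ on the side of $d$; the induction step uses Property \ref{line} (duality) and the convexity of $\HPb$ to propagate this localisation from the neighbours of $d$ across the other internal edges. One then derives a contradiction from $c_e\neq 0$ by comparing the positions of $[\np d]$ and $[\np{d'}]$ for the two tiles adjacent along $e$. If you want to salvage your argument, you would need to replace the ``ideal triangle rigidity'' step by some such propagation argument; as written, the proof does not go through.
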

	\begin{proof}
		Since $\dim \tang \Pi=\#\ed=n-3$, it is enough to show that the set $B$ is linearly independent. We proceed by contradiction: suppose that there exists reals $c_{e}$, not all equal to 0, such that
		\begin{equation}\label{lindep}
			\sum_{e\in \ed} c_{e}f_{e}(m)=0.
		\end{equation}
		Then we get an equivalence class of tile maps, up to an additive constant in $\lalg$, which do not deform the polygon. 
		From this class, we can choose a neutral tile map $\phi_0:\tile\rightarrow \lalg$ (see definition \ref{neutraltilemap} in Section \ref{Sd}), which fixes all ideal vertices of the tiles in $\tile$.
		The following lemma finds a permitted region for the $[\np d]$ any type of tile $d$. 
		
		\begin{lem}\label{lemideal} Let $\sigma$ be a top-dimensional simplex of $\ac{\ip n}$. Let $\phi_0:\tile\rightarrow \lalg$ be a neutral tile map corresponding to the linear combination eq.\eqref{lindep}. Let $e\in \ed$ be an internal edge of a tile $d \in \tile$ such that $\np d\neq 0$. Then the point $[\np d]\in \pp$ lies in the interior of the projective triangle, based at the infinite geodesic $\ol e$ carrying $e$, that contains the tile $d$.
		\end{lem}
		
		\begin{proof}
			Consider the dual graph of the triangulation of the surface by the top-dimensional simplex $\sigma$. It is a tree the valence of whose vertices is at most 3. Let $\tau$ be the sub-tree spanned by the tiles that are on the same side of $e$ as $d$. 
			Define $M(d)$ as the length of the longest path in $\tau$ joining $d'$ and a leaf (quadrilateral). The lemma will be proved by induction on $M$.
			When $M(d)=0$, the tile $d$ is a quadrilateral. The neutral tile map $\phi_0$ fixes the two ideal vertices of $d$. Applying Corollary \ref{tang} to these vertices, we get that $[\np d]$ is the point of intersection of the tangents to $\HPb$ at these ideal vertices. Lastly, the convexity of $\HPb$ implies that $[\np d]$ lies in the interior of $\Delta$. 
			
			Next, we suppose the statement to be true for $M(d)=0,\ldots,k$. Let $d\in \ltile$ be a tile such that $M(d)=k+1$. Then the tile $d$ can be either a hexagon or a pentagon because a quadrilateral has only one neighbouring tile and it must lie outside the triangle $\Delta$. We will treat the two cases separately below:
			\begin{itemize}
				\item If $d$ is a hexagon, then apart from $e$, it has two other internal edges $e', e''\in \ed$ along which $d$ neighbours two tiles $d', d''$, respectively. We note that both $d',d''$ lie inside $\Delta$. 
				\begin{itemize}
					\item Suppose that both $\np{d'}, \np{d''}$ are non-zero. See Fig.\ \ref{bothnonzero}. Denote by $\oarr{t_1},\oarr{t_2},\oarr{t_3},\oarr{t_4}$, the tangents to $\HPb$ at the endpoints $P,Q$ and $R,S$ of $e',e''$, respectively. Label the following points
					\[
					\begin{array}{ccc}
						X:=\oarr {t_1}\cap \oarr{t_2}, & Y:=\oarr {t_3}\cap \oarr{t_4},&A:=\oarr {t_2}\cap \oarr{t_3},\\
						B:=\oarr {t_1}\cap \oarr{t_3},&C:=\oarr {t_1}\cap \oarr{t_4}, & D:=\oarr {t_2}\cap \oarr{t_4}.
					\end{array}
					\]
					By the induction hypothesis, the points $[\np{d'}]$ and $[\np{d''}]$ lie inside the projective triangles $\Delta PQX$ and $\Delta RYS$ that contain $d'$ and $d''$, respectively. Since these two triangles are disjoint, $\np d$ cannot be equal to $\np{d'}$ as well as $\np{d''}$. In other words, the coefficients  $c_{e'}, c_{e''}$ in $\eqref{lindep}$ cannot be simultaneously equal to zero. Without loss of generality, suppose that $c_{e'}=0\neq c_{e''}$. So, $\np d=\np {d'}\neq \np {d''}$. Consequently, $\np d$ lies inside $\Delta PQX$ and $\np{d}-\np{d''}$ is a hyperbolic Killing vector field whose projective image lies on $\oarr{e''}\backslash \cHP$, i.e, the straight line joining the points $[\np{d}]$ and $[\np{d''}]$ intersects $e''$ outside $\cHP$. Using Property \ref{line} for $e''$, we know that the $[\np{d}]$ must be contained in the region $K_1:=\pp$ $\backslash \Delta''$ where $\Delta''$ is the projective triangle based at $e''$ that does not contain $d''$.  
					Since $\HPb$ is convex, $\Delta''$ is disjoint from $K_1$, which implies that $\np d\neq \np d'$, which is a contradiction. So we must have $c_{e'}\neq 0$. Using the same argument as in the case of $e''$, we get that $[\np d]$ lies in the region $K_2:=\pp$ $\backslash \Delta'$, where $\Delta'$ is the projective triangle based at $\ol {e'}$, not containing $d'$.
					Hence, the point $[\np d]$ must lie inside the intersection $R_1\cap R_2$, which is the quadrilateral $ABCD$ entirely contained in $\Delta\backslash e$, as required.
					
					\begin{figure}[!h]
						\begin{center}
							\frame{\includegraphics[height=8cm]{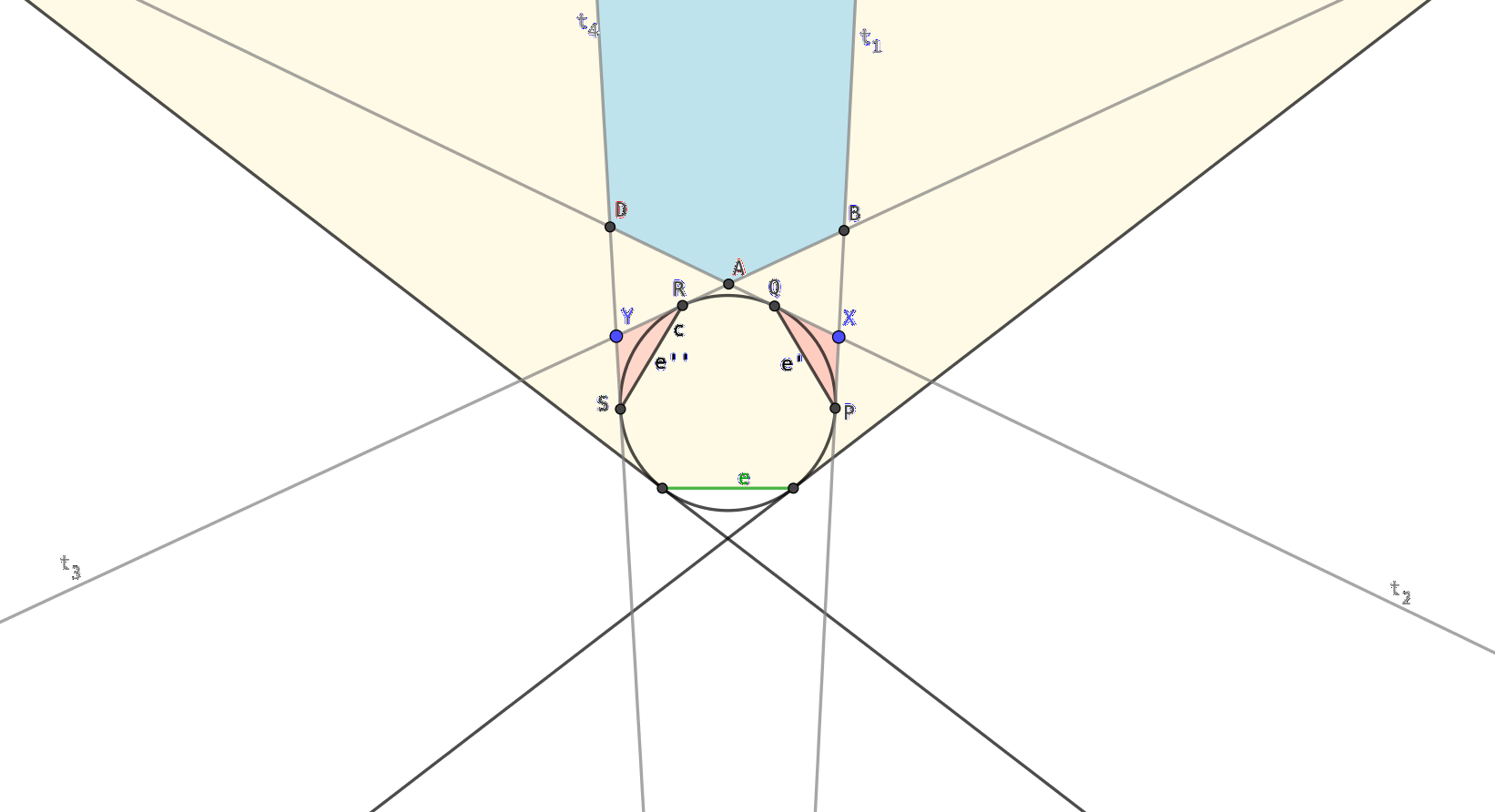}}
							\caption{$\np{d'}, \np{d''}\neq 0$}
							\label{bothnonzero}
						\end{center}
					\end{figure}
					\item Next we suppose that $\np {d'}= 0$ and $\np {d''}\neq0$. See Fig.\ \ref{onenonzero}. Again, by using the induction hypothesis on the tile $d''$ and the edge $e''$, we get that $\np{d''}$ lies in the triangle $\Delta RYS$, containing $d''$. 
					Using the same argument and notation of the previous case, we have that the region where the point $[\np d]$ must lie so that the straight line joining $[\np d]$ and $[\np {d''}]$ intersects $\oarr{e''}$ outside $\cHP$, is given by $K_1$. Label the points $\oarr{e'}\cap \oarr{t_3}, \oarr{e'}\cap \oarr{t_4}$ as $T,O$, respectively. Since $\np d\neq 0$, the coefficient $e'$ is non-zero. So, $[\np d]\in \oarr{e'}\backslash \cHP$. Hence, the point $[\np d]$ must lie in the intersection $(\oarr{e'}\backslash \cHP )\cap K_1$ which is a segment (coloured blue in the figure) completely contained inside $\Delta$. 
					\begin{figure}[!h]
						\begin{center}
							\frame{\includegraphics[height=8cm]{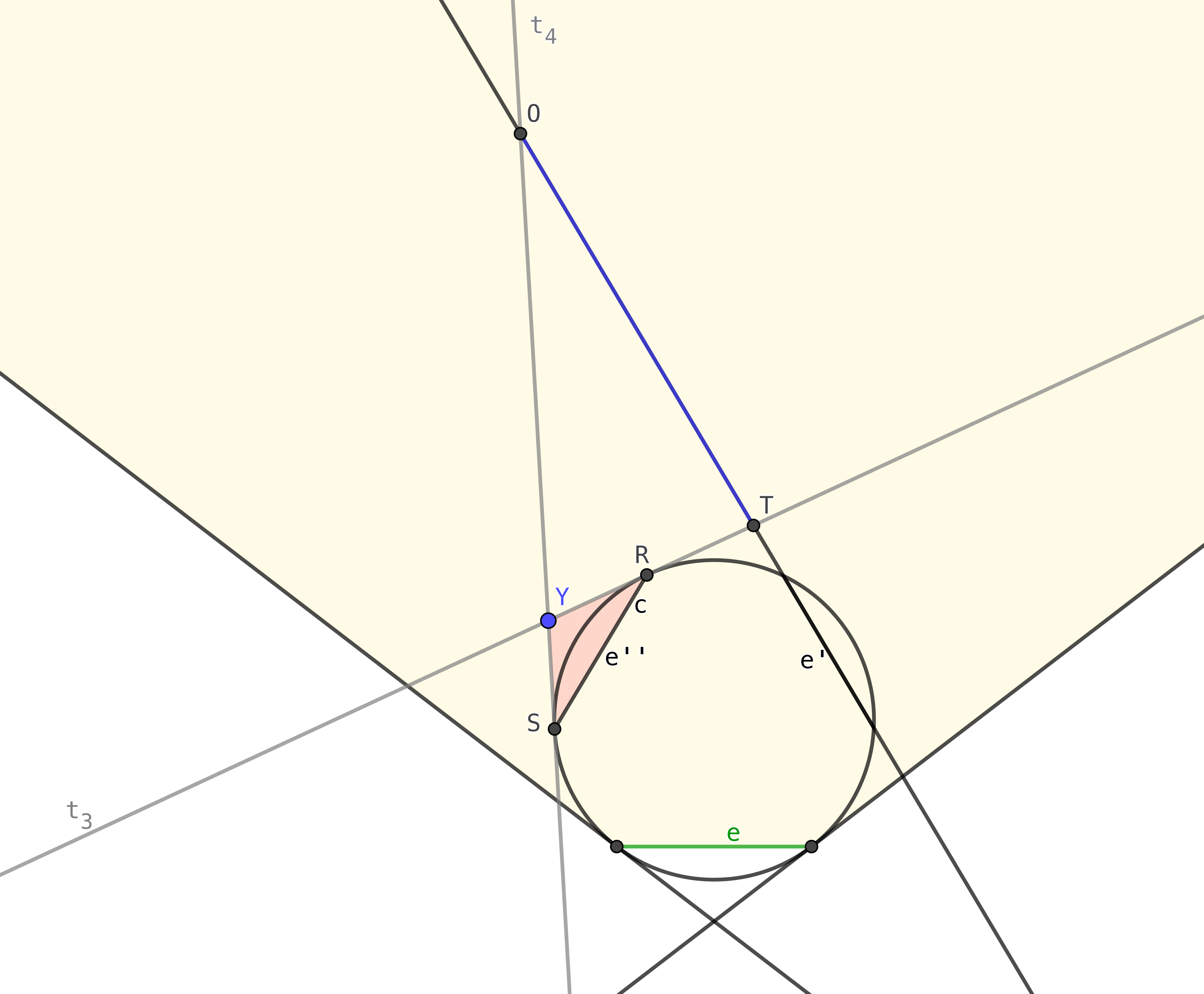}}
							\caption{$\np{d'}= 0$}
							\label{onenonzero}
						\end{center}
					\end{figure}
					\item Finally, we suppose that $\np {d'}=0=\np {d''}$. Again, $\np d\neq 0$ implies that $c_{e'}, c_{e'' }\neq 0$. Then the point $[\np \del]$ is given by the intersection of the two straight lines $\oarr{e'}, \oarr{e''}$. Since $e',e''$ are disjoint, the intersection point is hyperideal and lies inside $\Delta\backslash e$.
					
					\begin{figure}[h]
						\begin{center}
							\frame{\includegraphics[height=10cm]{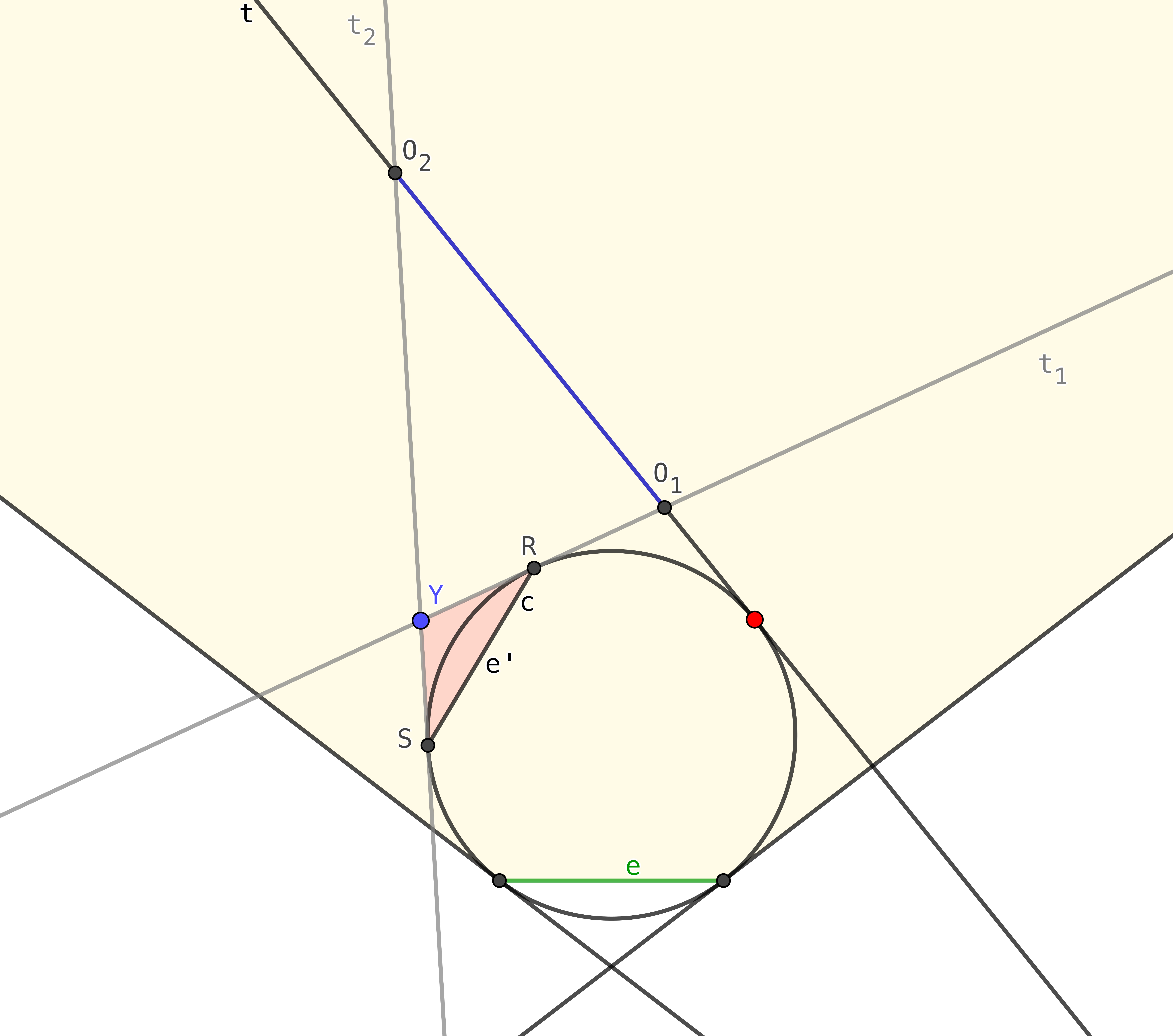}}
							\caption{$d$ is a pentagon, $\np{d'}\neq 0$}
							\label{pentnon}
						\end{center}
					\end{figure}
					
				\end{itemize} 
				\item If $d$ is a pentagon, then Corollary \ref{tang} implies that $\np d$ must lie on the tangent $\oarr t$ to the ideal vertex of $d$. Also, this tile has exactly one neighbour $d'$ that is contained in $\Delta$. Let $e'\in \ed$ be the common internal edge  of $d,d'$.
				\begin{itemize}
					\item If $\np {d'}=0$, then $[\np d]\in \oarr{e'}$. So we have  $[\np d]=\oarr{e'}\cap \oarr t$, which lies inside $\Delta$, by convexity of $\HPb$. 
					\item If $\np {d'}\neq0$, then by the induction hypothesis, $[\np {d'}]$ lies inside the projective triangle $\Delta'$ based at $e'$ that doesn't contain $d'$. See Fig.\ \ref{pentnon} Again by Property $\ref{line}$, the point $[\np d]$ is contained in the region $K:=\pp\backslash \Delta '$.  Let $\oarr{t_1}, \oarr{t_2}$ be the tangents to $\HPb$ at the endpoints of $\ol {e'}$. Label the points $\oarr {t_1}\cap \oarr t, \oarr {t_2}\cap \oarr t$ by $O_1, O_2$ respectively. Then ${\np d}$ is contained in the segment $\ol {O_1O_2}$, which lies in the interior of $\Delta$.
				\end{itemize}
			\end{itemize}
			This proves the induction step and hence the lemma for ideal polygons. 
		\end{proof}
		Now, we come back to the proof of the theorem. Let $e \in \ed$ be an arc such that $c_{e} \neq 0$. Let $d,d'$ be the two tiles with common edge $e$. Then, $\np d\neq \np {d'}$, and the point $[\np d- \np {d'}]$ belongs to $\oarr e\backslash \cHP$. Let $\Delta, \Delta'$ be the projective triangles based at $\ol e$. Let $d,d'$ be the tiles in $\tile$ neighbouring along $e$ such that $d\subset \Delta$ and $d'\subset \Delta'$. 
		
		If both $\np d, \np{d'}$ are non-zero, then the above lemma applied to the pairs $d,e$ and $d',e$ gives us that $[\np d]\in \inte{\Delta}$ and $[\np {d'}]\in \inte {\Delta '}$. Using \ref{line}, we get that the line joining $[\np d]$ and $[\np {d'}]$ intersects $\oarr e$ inside $\HPb$, which is a contradiction. 
		
		If $\np {d'}=0$, then $\np d \in \oarr e\backslash\cHP$, which is disjoint from the interior of $\Delta$. So we again reach a contradiction. Hence we must have $c_e=0$ for every $e=0$. 
		This concludes the proof.
	\end{proof}

\subsubsection{Punctured polygons}
\begin{thm} \label{thmpunc}
	Let $m\in\tei {\punc n}$ be a metric on an ideal once-punctured $n$-gon $\punc n$, with $n\geq 2$. Fix a choice of strip template. Let $\sigma$ be a top-dimensional simplex of its arc complex $\ac{\punc n}$ and let $\ed$ be the corresponding edge set. Then the set of infinitesimal strip deformations $B=\{\isd\mid e\in \ed\}$ forms a basis of the tangent space $\tang{\punc n}$.
\end{thm}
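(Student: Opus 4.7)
The plan is to follow the strategy of Theorem \ref{thmip} and upgrade it to handle the single infinite tile contributed by the puncture. Since $\#\ed = \dim\tang{\punc n}$, it suffices to prove that $B$ is linearly independent. Assume there is a nontrivial relation $\sum_{e\in\ed} c_e f_e(m) = 0$; this determines an equivalence class of tile maps modulo $\lalg$ representing the trivial infinitesimal deformation. I would pick a neutral representative $\phi_0:\ltile\to\lalg$ in the sense of Definition \ref{neutraltilemap}, which fixes every ideal vertex of every finite tile.

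The new ingredient relative to the ideal-polygon case is the unique infinite tile $\mathsf{P}\in\ltile$ containing the lift of the puncture (Fig.~\ref{punctile}). If $\gamma\in\fg{\punc n}$ is the loop around the puncture and $T=\rho(\gamma)\in G$ is the corresponding parabolic isometry fixing $q\in\HPb$, then $T\cdot\mathsf{P}=\mathsf{P}$, and the equivariance of $\phi_0$ forces $\phi_0(\mathsf{P}) = T\cdot\phi_0(\mathsf{P})$, so $\phi_0(\mathsf{P})$ must commute with $T$ in $\lalg$. Since the centraliser of a parabolic in $\lalg$ is the one-dimensional line of parabolic Killing fields fixing $q$, Property \ref{tang} gives either $\phi_0(\mathsf{P})=0$ or $[\phi_0(\mathsf{P})]=q\in\HPb$.

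Next I would prove a punctured-polygon analogue of Lemma \ref{lemideal}: for every tile $d\in\ltile$ and every internal edge $e$ of $d$ with $\phi_0(d)\neq 0$, the point $[\phi_0(d)]$ lies in the interior of the projective triangle based at $\ol{e}$ that contains $d$. The argument is by induction on $M(d)$, the length of the longest path in the dual graph on the side of $e$ containing $d$ to either a quadrilateral leaf or to the infinite tile $\mathsf{P}$. The quadrilateral base case is unchanged from Theorem \ref{thmip}. For the new base case where the path terminates at $\mathsf{P}$, the preceding paragraph gives $[\phi_0(\mathsf{P})]=q\in\HPb$, and because $q$ is a cusp of $\mathsf{P}$ itself, convexity of $\HPb$ places $q$ inside every projective triangle based at a bounding edge of $\mathsf{P}$ and containing $\mathsf{P}$. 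The inductive step for pentagons and hexagons then runs word-for-word as in Theorem \ref{thmip}, exploiting Property \ref{line} to locate $[\phi_0(d)]$ in the admissible region.

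The most delicate point will be justifying this infinite-tile base case: one must check that $q$ sits genuinely in the interior of the correct projective triangle relative to each bounding edge of $\mathsf{P}$. This reduces to verifying that the parabolic fixed point $q$ — which is simultaneously an accumulation point of the ideal vertices of $\mathsf{P}$ on $\HPb$ — lies on the same side of each internal edge of $\mathsf{P}$ as $\mathsf{P}$ itself, a short horocyclic-geometry computation near the cusp. Once the analogue of Lemma \ref{lemideal} is established, the conclusion reproduces the final contradiction of Theorem \ref{thmip}: for any $e\in\ed$ with $c_e\neq 0$, the values $\phi_0(d),\phi_0(d')$ on the two tiles across $e$ would be forced into disjoint projective regions on opposite sides of $\ol{e}$ whose connecting line crosses $\ol{e}$ inside $\cHP$, contradicting Property \ref{line}. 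Hence all $c_e$ must vanish and $B$ is a basis.
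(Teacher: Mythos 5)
Your proposal is correct and follows essentially the same route as the paper: reduce to linear independence, observe via $T$-invariance that the tile map value on the unique infinite tile is either zero or parabolic with fixed point at the puncture's ideal point (the paper does this by an explicit matrix computation rather than by citing the centraliser, but the content is identical), and then run the induction of Lemma \ref{lemideal} with the infinite tile as an additional base case before concluding with the same Property \ref{line} contradiction. The "delicate point" you flag is handled in the paper simply by noting that the cusp $p$ is an ideal vertex of $d_\infty$ lying strictly on the $d_\infty$-side of each internal edge, hence in the interior of the relevant projective triangle by convexity of $\HPb$.
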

\begin{proof}
	Like in the case of ideal polygons, we have that $\dim \tang {\punc n}=\#\ed=n-1$. So we only need to prove the linear independence of $B$. Again we start with an equation as in \eqref{lindep} with a corresponding neutral map $\phi_0:\ltile\longrightarrow\lalg$. This map is $\rho(\ga)$-invariant, where $\ga$ is the generator of the fundamental group of the surface. So $\phi_0$ satisfies the following equation: 
	\begin{equation}\label{inv}
	\rho(\ga)	\cdot\np d=\np {\rho(\ga)\cdot d}, \text{ for every } d\in \ltile.
	\end{equation}
W assume that $\rho(\ga)$ is given by the matrix $T=\begin{pmatrix}
	1&1\\
	0&1
\end{pmatrix}$.  
	Recall from Section \ref{arc} that the permitted arcs generating the arc complex are finite arcs with their endpoints on the boundary. There is exactly one maximal arc $e_M$ (separates the puncture from the spikes) in every triangulation. The surface is decomposed into four types of tiles. The first three types (quadrilateral, pentagon, hexagon) are finite hyperbolic polygons and the fourth one is a tile containing the puncture. It lifts to a tile, denoted by $d_\infty$, with infinitely many edges, each given by a lift of the unique maximal arc $e_M\in \ed$ of the triangulation, and  exactly one ideal vertex, denoted by $p$ that corresponds to the puncture. See Fig. \ref{infini}.
	
	 	\begin{figure}[h]
	 	\begin{center}
	 		\includegraphics[width=12cm]{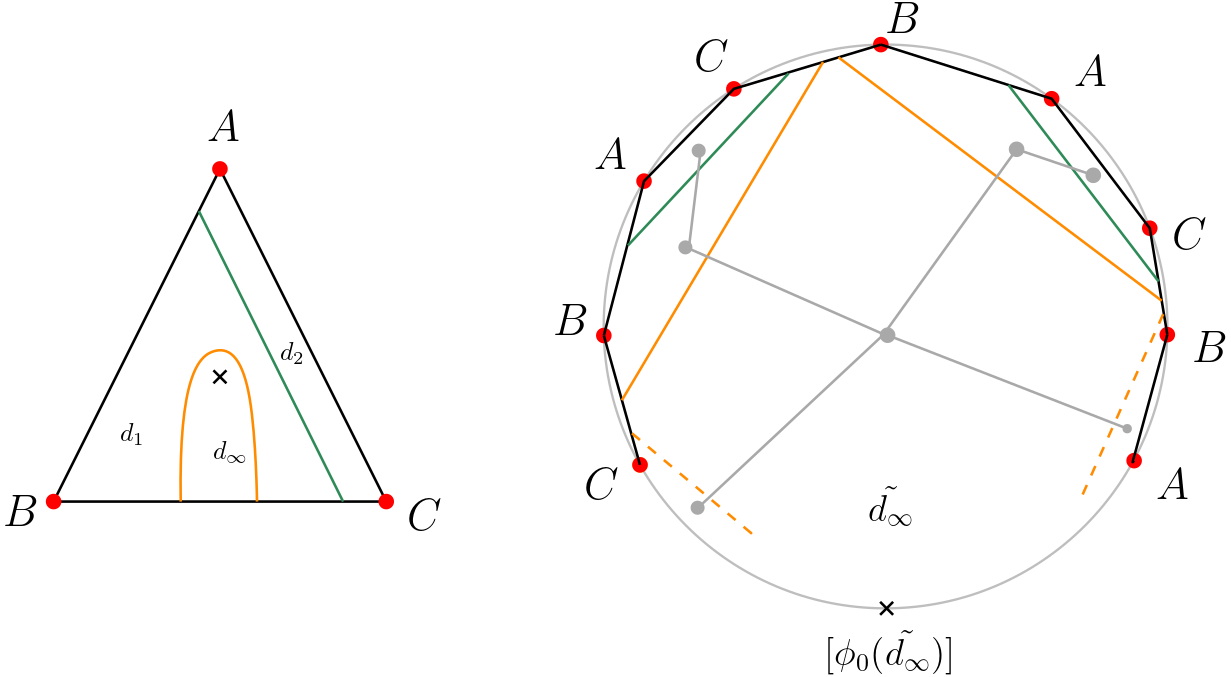}
	 		\caption{Punctured triangle and its universal cover.}
	 		\label{infini}
	 	\end{center}
	 \end{figure}
	
	Now, we show that the Killing field  $\np {d_{\infty}}$ associate to the unique infinite tile $d_\infty\in \ltile$, is either zero or a parabolic element with fixed point $p\in \HPb$ that corresponds to the puncture. We know that $d$ is invariant under the action of the isometry $T$: 
	\begin{equation}\label{invtile}
		\np {d_\infty}=\np {T^n\cdot d_\infty} \text{ for every } n\in\Z.
	\end{equation}
	Using the isomorphism between the Lie algebra $\lalg$ and $\Min$, we have that $\np d$ is represented by the matrix 
	$	\begin{pmatrix}
		y & x+z \\
		x-z & -y
	\end{pmatrix}$. The generator $T$ acts on $p$ by conjugation:
	\begin{align*}T\cdot \np {d_\infty}=&\begin{pmatrix}
			1&-1\\
			0&1
		\end{pmatrix}\begin{pmatrix}
			y & x+z \\
			x-z & -y
		\end{pmatrix}\begin{pmatrix}
			1&1\\
			0&1
		\end{pmatrix}\\
		=&\begin{pmatrix}
			y-x+z&2(y+z)\\
			x-z&x-z-y
		\end{pmatrix}
	\end{align*}
	From eqs.\ \eqref{inv} and \eqref{invtile}, we get that $y=0, x=z$. Hence, $\np {d_\infty}$ is either zero or a parabolic element, fixing the light-like line $\R p$ and $[\np {d_\infty}]=p$. 
	
	We now prove an analogous version of Lemma \ref{lemideal} for a punctured polygon.
	\begin{lem}\label{lempunc}
		Let $\sigma$ be a top-dimensional simplex of $\ac{\punc n}$. Let $\phi_0:\ltile\rightarrow \lalg$ be a neutral tile map corresponding to the linear combination \eqref{lindep}. Let $e\in \led$ be an internal edge of a tile $d \in \ltile$ such that $\np d\neq 0$. Then the point $[\np d]\in \pp$ lies in the interior of the projective triangle, based at the geodesic $\ol e$ carrying $e$, that contains the tile $d$.
	\end{lem}
	\begin{proof}
		Let $d \in \ltile$ such that $\np d\neq 0$ and let $e\in \led$ be an internal edge of $d$.
		Consider the dual graph of the triangulation of the universal cover of the surface by $\sigma$. It is an infinite tree invariant by the action of $\ang g$. It can be seen as the countable union of finite trees and rooted at the infinite tile $d_\infty$. The latter has infinitely many edges, each given by a lift of the unique maximal arc $e_M\in \ed$ of the triangulation. There are two possibilities — either $d=d_\infty$ or there exists a unique lift $\wt {e_M}$ that separates $d$ from $d_\infty$. 
		Let $\tau$ be the finite rooted sub-tree spanned by the tile $d_\infty$ and all those tiles that are separated by $\wt {e_M}$ from $d_\infty$. Define $M(d)$ as the length of the longest path on $\tau$ joining $d$ and a quadrilateral tile or the root tile $d_\infty$ such that the path does not cross the edge $e$ of $d$. Then the lemma is proved by induction on $M$. 
		
		When $M(d)=0$, $d$ is either a quadrilateral or the tile $d_\infty$. In the former case, we know that $\np d$ is a hyperbolic Killing field with fixed points as the two ideal vertices of the quadrilateral; the point $[\np d]$ is given by the intersection of the two tangents to the boundary circle $\HPb$ at the ideal vertices. So the lemma is verified in this case. Next we suppose that $d=d_\infty$. Then from the discussion before the lemma, we have that $[\np {d_\infty}]=p$ which lies inside the desired triangle. So the statement of the lemma is satisfied in this base case. 
		
		Now suppose that the statement is true for $M=1,\ldots, k$. Consider a tile $d$ inside $\tau$ such that $M(d)=k+1$. Then $d$ is either a pentagon with one ideal vertex and two internal edges (both finite) or a hexagon with three internal edges and no spikes. Also, there exists a finite path of length $k+1$ in the tree $\tau$ starting from $d$ and ending at a vertex which is either a quadrilateral or the root tile. By proceeding in the exact same way as in the induction step of Lemma \ref{lemideal} for ideal polygons, we get that the induction step is verified in this case well.
		This finishes the proof of the lemma.
	\end{proof}
	Now suppose that the coefficient $c_e$ of $\isd$ is non-zero for some $e\in \led$. Let $d,d'$ be the two tiles with common edge $e$. Then, $\np d\neq \np {d'}$, and the point $[\np d- \np {d'}]$ belongs to $\oarr e\backslash \cHP$. Let $\Delta, \Delta'$ be the projective triangles based at the geodesic carrying the arc $e$ such that $d\subset \Delta$ and $d'\subset \Delta'$. 
	
	If both $\np d, \np{d'}$ are non-zero, then the above lemma applied to the pairs $d,e$ and $d',e$ gives us that $[\np d]\in \inte{\Delta}$ and $[\np {d'}]\in \inte {\Delta '}$. Using \ref{line}, we get that the line joining $[\np d]$ and $[\np {d'}]$ intersects the projective line $\oarr e$ carrying the arc $e$ inside $\HPb$, which is a contradiction. 
	
	If $\np {d'}=0$, then $\np d \in \oarr e\backslash\cHP$, which is disjoint from the interior of $\Delta$. So we again reach a contradiction. 
	
	Hence, we have $c_e=0$ for every arc $e\in \led$, which proves Theorem\ref{thmpunc}. 
	\end{proof}
\subsubsection{Decorated Polygons}
Firstly we shall prove the linear independence in the case of decorated polygons without a puncture.
\begin{thm} \label{thmdeco}Let $m\in\tei {\dep n}$ be a metric on a decorated $n$-gon $\dep n$, with $n\geq 3$. Fix a choice of strip template. Let $\sigma$ be a top-dimensional simplex of its arc complex $\ac{\dep n}$ and let $\ed$ be the corresponding edge set. Then the set of infinitesimal strip deformations $B=\{\isd\mid e\in \ed\}$ forms a basis of the tangent space $\tang{\dep n}$.
\end{thm}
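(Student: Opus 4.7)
The plan is to mimic the structure of the proofs of Theorems~\ref{thmip} and \ref{thmpunc}. Since the deformation space of $\dep n$ has dimension $2n-3$, which is exactly $\#\ed$ for a top-dimensional simplex $\sigma$, it is enough to prove that $B$ is linearly independent. I argue by contradiction: assume that a nontrivial relation $\sum_{e\in\ed}c_e\,\isd=0$ holds, and extract a neutral tile map $\phi_0:\tile\to\lalg$ representing the zero deformation. The crucial new ingredient compared to Theorem~\ref{thmip} is that at every tile $d$ containing a decorated vertex $\nu$, the map $\phi_0$ must fix both $\nu$ and its horocycle; by Property~\ref{tang}(2) this forces $\np d\in\Span{\mathbf v_\nu}$, i.e.\ $[\np d]=[\nu]\in\HPb$. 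This is a stronger condition than the mere fixing of an ideal point encountered in the ideal or punctured case, and it will effectively play the role of the "leaf" constraint for tiles with a decorated vertex.

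The heart of the argument is an analog of Lemma~\ref{lemideal}: whenever $\np d\neq0$ and $e$ is an internal edge of $d$, the projective point $[\np d]$ lies in the interior of the projective triangle $\Delta$ based at $\oarr e$ that contains $d$. I prove this by induction on $M(d)$, the length of the longest path in the subtree of the dual graph of $\sigma$ spanned by tiles lying on the $d$-side of $e$. The base case $M(d)=0$ corresponds to a leaf of the dual tree, i.e.\ a tile with a single internal edge; from the classification of tiles in Figure~\ref{tiles} such a tile necessarily contains a decorated vertex $\nu$, and the horocycle-fixing condition yields $[\np d]=[\nu]$. The geometric check that $[\nu]$ lies strictly inside $\Delta$ is straightforward since $\nu$ is a vertex of $d\subset\Delta$ sitting on $\HPb$, and it does not lie on any of the three lines bounding $\Delta$ (namely $\oarr e$ and the duals of the endpoints of $e$) because both endpoints of the edge-to-edge arc $e$ are inside $\HP$.

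For the induction step, I would follow the case split of Lemma~\ref{lemideal} adapted to the four decorated tile types. If $d$ is the "hexagonal" tile with three edge-to-edge internal sides and no decorated vertex, the original argument of Lemma~\ref{lemideal} carries over verbatim, using the induction hypothesis at the two neighbors lying on the $d$-side of $e$ and applying Property~\ref{line}. If $d$ contains a decorated vertex $\nu$ (either the three-internal-side tile with two edge-to-vertex arcs, or the two-internal-side tile), then the absolute constraint $[\np d]=[\nu]$ determines the position directly, and it suffices to check once and for all that $[\nu]$ belongs to the interior of every triangle $\Delta$ based at a non-incident internal edge $e$ of $d$; this follows again from convexity of $\HPb$. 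The conclusion is then obtained exactly as in Theorem~\ref{thmip}: if $c_e\neq0$ for some $e$, the two tiles $d,d'$ across $e$ satisfy $\np d\neq\np{d'}$ with $[\np d-\np{d'}]\in\oarr e\smallsetminus\cHP$, contradicting the placement $[\np d]\in\Delta$, $[\np{d'}]\in\Delta'$ provided by the lemma together with Property~\ref{line}.

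The main obstacle I expect is the careful handling of the parabolic strip deformations along edge-to-vertex arcs in the induction step. Unlike the ideal case, where the difference $\np d-\np{d'}$ is always a hyperbolic Killing field whose projective image lies on $\oarr e\smallsetminus\cHP$, an edge-to-vertex arc produces a parabolic Killing field with its fixed point at the decorated vertex of the arc, so the allowed positions of $[\np d-\np{d'}]$ collapse to a single point on $\HPb$. I would need to verify that this degeneracy remains compatible with Property~\ref{line} (so that placing $[\np d]$ and $[\np{d'}]$ in opposite triangles based at $\oarr e$ is still impossible) and that the intersections of constrained loci appearing in the pentagonal-type subcase genuinely sit inside $\Delta$. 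Once this is checked, the rest of the argument is a routine adaptation of \cite{dgk} and of Theorems~\ref{thmip}--\ref{thmpunc}.
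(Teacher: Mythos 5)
Your overall strategy coincides with the paper's: reduce to linear independence, pass to a neutral tile map $\phi_0$, observe that fixing a decorated vertex $\nu$ \emph{together with its horoball} pins $[\np{d}]$ at the single point $[\nu]\in\HPb$ (Property \ref{tang}(2)), run an induction on the dual tree to place $[\np{d}]$ in the interior of the projective triangle based at an internal arc, and conclude with Property \ref{line}. Up to that point your proposal reproduces the paper's argument, including the base case at tiles with one internal side.

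The genuine gap is exactly the ``obstacle'' you flag at the end, and it cannot be repaired in the form you propose. For an edge-to-vertex arc $e$ terminating at the decorated vertex $\nu$, \emph{both} tiles $d,d'$ adjacent to $e$ contain $\nu$, so both $\np{d}$ and $\np{d'}$ lie in $\Span{\mathbf v_\nu}$; their difference is a parabolic Killing field fixing $\nu$, which is precisely what a nonzero parabolic strip along $e$ contributes. Nothing in the local picture at $e$ is contradictory: when nonzero, $[\np{d}]$ and $[\np{d'}]$ both equal $[\nu]$, which is a \emph{vertex} of the two triangles based at $\ol e$ rather than an interior point (so your lemma, stated for all internal edges, is false for edge-to-vertex arcs incident to the decorated vertex of $d$), and $[\np{d}-\np{d'}]=[\nu]\in\oarr e\cap\cHP$, so Property \ref{line} yields nothing. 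The paper avoids this in two ways: (i) the placement lemma is stated and used only for edge-to-edge arcs, so parabolic differences never enter the induction (a tile without a decorated vertex has only edge-to-edge internal sides, and a tile with one has its image pinned at $[\nu]$ without induction); and (ii) the edge-to-vertex coefficients are killed by a separate \emph{global} argument: once $c_e=0$ for every edge-to-edge arc, $\phi_0$ is constant along any path of tiles crossing only edge-to-edge arcs, and since such paths connect tiles with two \emph{distinct} decorated vertices $\nu\neq\nu'$ while a nonzero Killing field cannot fix two distinct decorated points, one gets $\phi_0\equiv 0$ and hence $c_e=0$ for the edge-to-vertex arcs as well. You need to restrict your lemma as in (i) and add step (ii); as written, concluding ``exactly as in Theorem \ref{thmip}'' does not go through for edge-to-vertex arcs.
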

\begin{proof}
	Again, we have that  $\dim \tang {\dep n}=\#\ed=2n-3$. So, it is enough to show that the above set is linearly independent. Since every decorated polygon is simply connected, we have that $\ed=\led$ and $\tile=\ltile$. Suppose that $\sum_{e\in \ed} c_{e}f_{e}(m)=0$, with not all $c_{e}$'s equal to 0. Let $\phi_0:\ltile\rightarrow \lalg$ be a neutral tile map; by definition, it fixes the decorated vertices of every tile.
	Suppose a tile $d$ has a decorated vertex $\nu$ (Fig. \ref{decodim0}). The Killing field $\np d$ fixes the ideal point as well as the horoball decoration.  If $\np d\neq 0$, then the point $[\np d]$  contained in the interior of the desired triangle, due to the convexity of $\HPb$. 
\begin{figure}
	\centering
	\begin{subfigure}{0.5\textwidth}
		\centering
		\includegraphics[width=10cm]{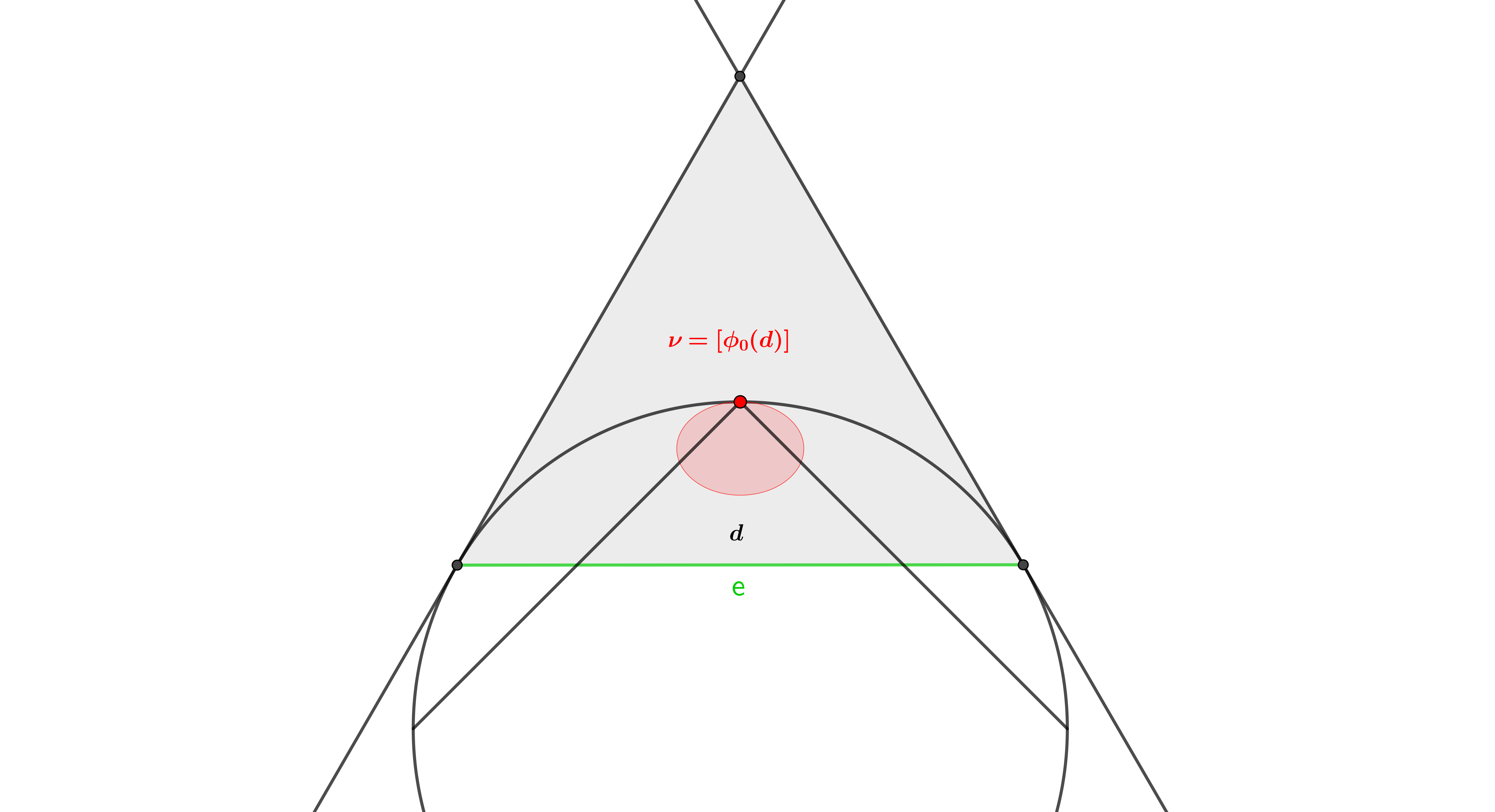}
		\caption{Type 1}
		\label{decodim0quad}
	\end{subfigure}%
	\begin{subfigure}{.5\textwidth}
		\centering
		\includegraphics[width=\linewidth]{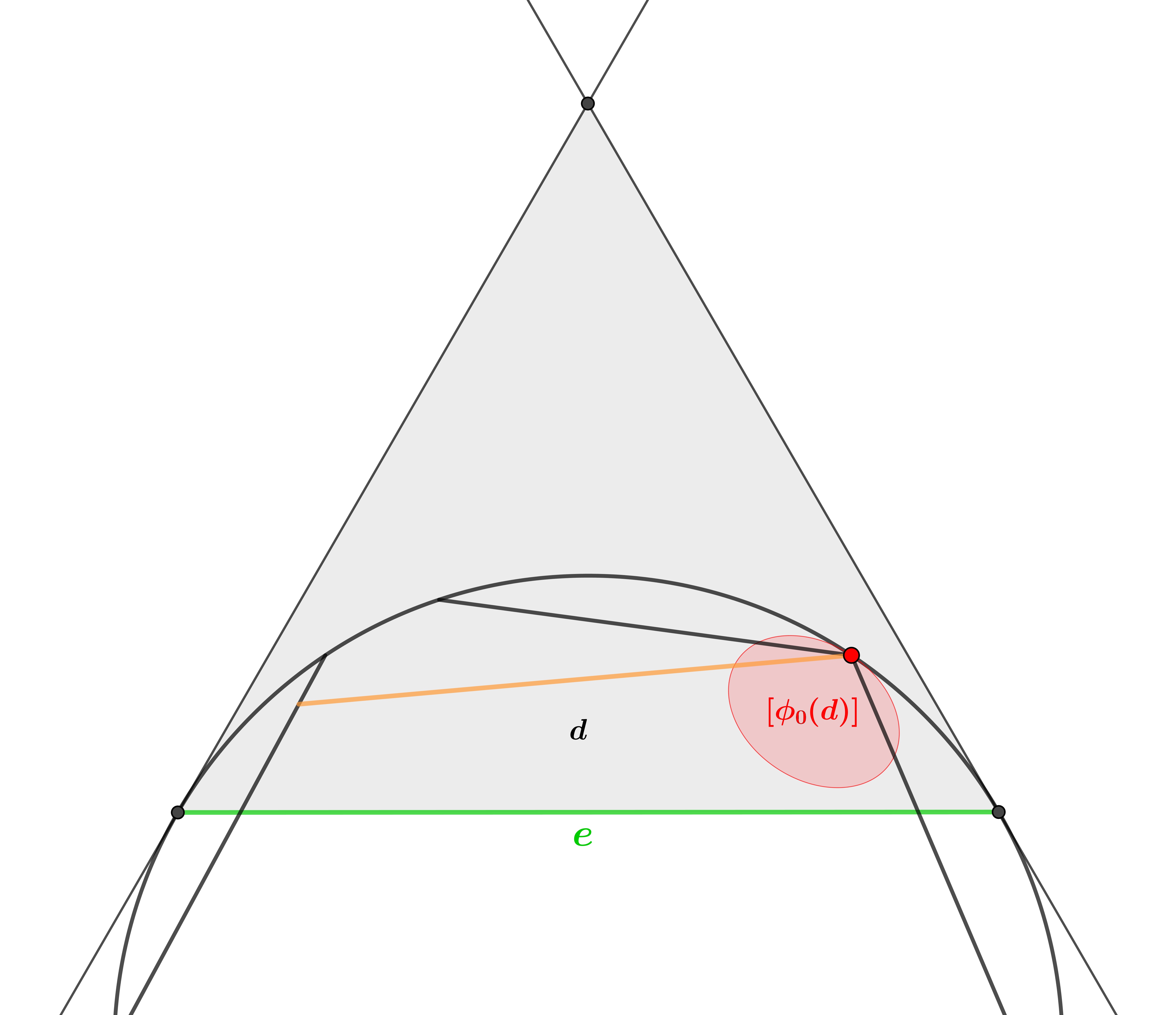}
		\caption{Type 2}
		\label{decodim0penta}
	\end{subfigure}
	\begin{subfigure}{\textwidth}
	\centering
	\includegraphics[width=0.5\linewidth]{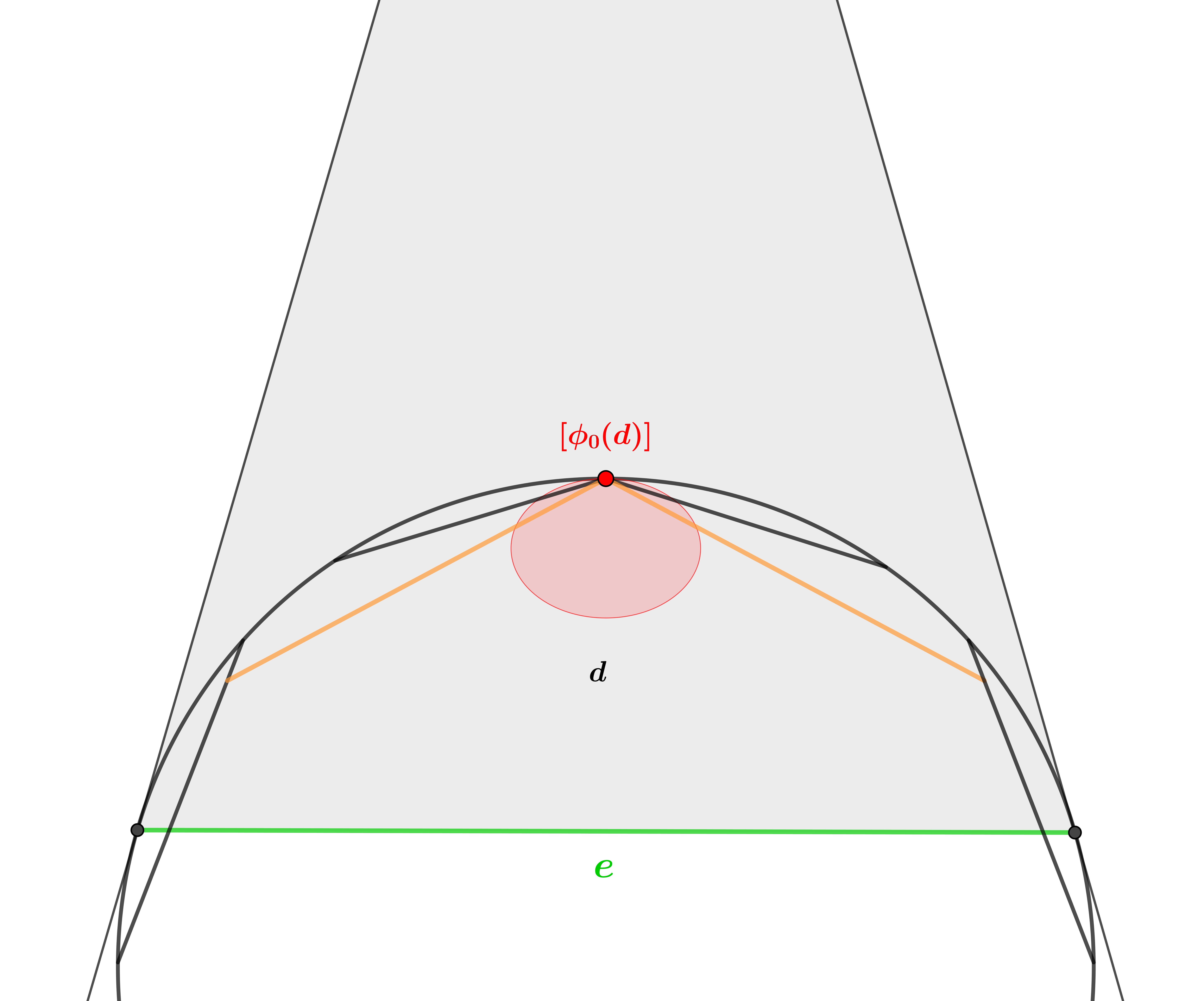}
	\caption{Type 2}
	\label{decodim0hexa}
\end{subfigure}
	\caption{$\phi_0$-images of tiles 1,2,3}
	\label{decodim0}
\end{figure}


	\begin{lem}\label{lemdeco}
		Let $\sigma$ be a top-dimensional simplex of $\ac{\dep n}$. Let $\phi_0:\ltile\rightarrow \lalg$ be a neutral tile map corresponding to the linear combination \eqref{lindep}. 
		Let $e$ be an internal edge-to-edge arc of a tile $d\in \tile$ such that $\np{d}\neq 0$. Then, $[\np{d}]$ is contained in the interior of the projective triangle in $\pp$, based at the geodesic $\ol e$ carrying $e$, that contains $d$.
	\end{lem}
	
	\begin{proof}
		For every triangulation $\sigma$, there is at least one tile of type one and every tile has at least one internal edge-to-edge arc. Consider the dual graph of the triangulation of the decorated polygon by $\sigma$. It is a finite tree. Let $\tau$ be the finite rooted sub-tree crossing the arc $e$ with root at the tile $d$. We will now prove that every tile on this sub-tree satisfies the lemma. 
		Let $d\in \tile$ be any tile and $e$ be an internal edge-to-edge arc. We define $M(d)$ to be the longest path in $\tau$ joining $d$ and a tile containing one decorated vertex.
		The proof is done by induction on $M$. 
		
		When $M(d)=0$, the tile $d$ is of type one (one decorated vertex and one internal edge. From the discussion before the lemma, we get $[\np d]$ lies in the desired triangle. 
		
		Now, let the statement be true for $M(d)=0,\ldots,k$. Again, if $d$ is a tile with a decorated vertex then we know already that the statement is verified. So we assume that $d$ is a hexagon without any decorated vertex, such that $\np d\neq 0$. Then it has two neighbouring tiles $d', d''$ contained in $\Delta$, with common arcs $e', e''$ respectively. Both $e',e''$ are edge-to-edge arcs.  The proof is then identical to that of Lemma \ref{lemideal}.
		This proves the induction step.
	\end{proof}
	
	Now we prove by contradiction that the coefficient $c_e$ of any edge-to-edge arc $e$ has to be zero. Let $e\in \ed$ be an edge-to-edge arc, that is common to the two neighbouring tiles $d_1, d_2$. Let $c_{e} \neq 0$. Then, $[\phi(d_1)-\phi(d_2)]\in \oarr{e}\backslash \cHP$. Since both $\np{d_1}$ and $\np{d_2}$ cannot be simultaneously equal to zero, we have two cases:
	\begin{enumerate}
		\item Let $\np{d_1}$ and $\np{d_2}$ be both non-zero. By the above lemma, $\np{d_1}$ and $\np{d_2}$ belong to two disjoint triangles associated to $e$. By Property \ref{line}, we have $[\np{d_1}-\np{d_2}]$ must intersect $e$ inside $\HP$, which is a contradiction. 
		\item Suppose that $\np{d_1}=0\neq \np{d_2}$. Then, the point $[\np{d_1}-\np{d_2}]=[\np{d_2}]$ does not intersect $\oarr{e}$, which is again a contradiction. 
	\end{enumerate}
	So, we have $\np{d_1}=\np{d_2}$, whenever two tiles $d_1,d_2\in \tile$ have a common edge-to-edge arc.
	\\
	Let $d, d'\in\tile$ be two tiles with different decorated vertices $\nu,\nu'$ such that $d$ and $d'$ can be joined by a path in the dual tree that crosses only edge-to-edge arcs. Then, from the above discussion we have that $[\np{d}]=[\np{d'}]$. But $\np{d'}$ must fix $\nu'$ which is different from $\nu$.  So we get $\np{d}=\np{d'}=0$. Since every tile has an edge-to-edge arc and there is more than one decorated vertex, we get that $\np d=0$ for every $d\in \tile$. So we get that $c_e=0$ for every $e\in \ed$, which proves the theorem.
\end{proof}	
Finally we will consider the case of decorated once-punctured polygons.

\begin{thm} \label{thmdecopunc}Let $m\in\tei {\depu n}$ be a metric on a decorated once-punctured polygongon $\depu  n$, with $n\geq 2$. Fix a choice of strip template. Let $\sigma$ be a top-dimensional simplex of its arc complex $\ac{\depu n}$ and let $\ed$ be the corresponding edge set. Then the set of infinitesimal strip deformations $B=\{   \isd| e\in \ed  \}$ forms a basis of the tangent space $\tang{\depu n}$.
\end{thm}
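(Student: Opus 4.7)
The plan is to adapt the arguments of Theorems \ref{thmpunc} and \ref{thmdeco} simultaneously: the puncture is handled via the equivariance trick from the once-punctured case, while the decorated vertices are handled via the rigidity argument from the decorated case. Since $\dim\tang{\depu n}=\#\ed=2n-1$, it suffices to prove that $B=\{f_e(m)\mid e\in\ed\}$ is linearly independent. So I assume a relation $\sum_{e\in\ed}c_e f_e(m)=0$ and pick a neutral tile map $\phi_0:\ltile\to\lalg$ in the corresponding equivalence class; by construction $\phi_0$ fixes the decoration at every decorated vertex of a tile, and it is $\rho(\gamma)$-equivariant, where $\gamma$ generates $\fg{\depu n}$.

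First I describe the structure of $\ltile$. Exactly one tile (up to the $\langle\rho(\gamma)\rangle$-action) is the infinite lift $d_\infty$ of the punctured component; it has infinitely many edges (all lifts of the unique maximal arc of the triangulation) and a single ideal vertex $p\in\HPb$ corresponding to the puncture. Since $\rho(\gamma)\cdot d_\infty=d_\infty$ and $\phi_0$ is equivariant, the same matrix computation as in the proof of Theorem \ref{thmpunc} (conjugation by the parabolic $T$ fixing $p$) shows that $\phi_0(d_\infty)$ is either zero or a parabolic Killing field with fixed point $p$, so $[\phi_0(d_\infty)]=p$.

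Next I prove the analog of Lemmas \ref{lemideal}, \ref{lempunc}, \ref{lemdeco}: if $e$ is an internal edge-to-edge arc of a tile $d$ with $\phi_0(d)\neq 0$, then $[\phi_0(d)]$ lies in the interior of the projective triangle based at $\bar e$ that contains $d$. The dual graph of $\sigma$ in the universal cover is a tree; root it at $d_\infty$ and let $\tau$ be the finite sub-tree of tiles separated from $d_\infty$ by $e$. Induct on the length $M(d)$ of the longest path in $\tau$ from $d$ to either $d_\infty$, a tile with a decorated vertex, or a quadrilateral. The two new base cases are: (i) a tile with a decorated vertex $\nu$, where $\phi_0(d)$ fixes $\nu$ with its horoball, so $[\phi_0(d)]$ lies on the tangent to $\HPb$ at $\nu$, which is contained in the desired triangle by convexity; (ii) $d=d_\infty$, where $[\phi_0(d_\infty)]=p\in\HPb$ is the ideal vertex of $d_\infty$ and hence lies in the desired triangle. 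The inductive step for hexagonal tiles without decorated vertex is identical to the argument of Lemma \ref{lemideal}, using only Property \ref{line} and convexity.

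With this lemma in hand, the conclusion follows as in Theorem \ref{thmdeco}. For any edge-to-edge arc $e\in\ed$ shared by tiles $d_1,d_2$, if $c_e\neq 0$ then $[\phi_0(d_1)-\phi_0(d_2)]\in\bar e\smallsetminus\cHP$; but the lemma places $[\phi_0(d_1)]$ and $[\phi_0(d_2)]$ in the interiors of the two disjoint triangles based at $\bar e$ (with the convention that $[\phi_0(d_i)]=p$ when $d_i=d_\infty$), which contradicts Property \ref{line}; the case where one of the two is zero is handled exactly as before. Hence $c_e=0$ for every edge-to-edge arc, and consequently $\phi_0(d)=\phi_0(d')$ whenever $d,d'$ share such an arc. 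Since every tile of $\ltile$ (including $d_\infty$ and every tile carrying a decorated vertex) can be connected in the dual tree by a path crossing only edge-to-edge arcs, and since the resulting common Killing field would have to fix simultaneously the parabolic point $p$ and the decorated horoball at each decorated vertex, it must be zero. Therefore $\phi_0\equiv 0$, and for every edge-to-vertex arc $e$ the relation $0=\phi_0(d_1)-\phi_0(d_2)=c_e v_e$ together with $v_e\neq 0$ forces $c_e=0$.

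The main obstacle is the bookkeeping in the induction: one must verify that rooting the dual tree at $d_\infty$ is compatible with the "longest path" measure $M$ used in Lemma \ref{lemideal}, and that \emph{every} finite tile is connected to both $d_\infty$ and to some tile carrying a decorated vertex by a path of edge-to-edge arcs, so that the final rigidity step applies uniformly.
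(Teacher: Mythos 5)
Your proposal follows essentially the same route as the paper: reduce to linear independence via the dimension count, use a $\rho(\gamma)$-equivariant neutral tile map, show $[\phi_0(d_\infty)]=p$ by the conjugation computation from the punctured case, establish the triangle-localisation lemma for edge-to-edge arcs by combining the inductions of Lemmas \ref{lempunc} and \ref{lemdeco}, and then kill all coefficients by the rigidity argument (tiles joined by edge-to-edge arcs share a Killing field that must fix incompatible data, hence vanishes). The paper states this more tersely by citing the earlier lemmas, but the content, including the final connectivity bookkeeping you flag, is the same.
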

\begin{proof}
	Again, we have that  $\dim \tang {\dep n}=\#\ed=2n-1$. So, it is enough to show that the above set is linearly independent. We start with an equation as in \eqref{lindep} with a corresponding $\rho(\ga)$-invariant neutral map  $\phi_0:\ltile\longrightarrow\lalg$. This map is $\rho(\ga)$-invariant, where $\ga$ is the generator of the fundamental group of the surface. From the proof of Theorem \ref{thmpunc}, we know that $\np {d_\infty}$ is either zero or a parabolic element, fixing the light-like line $\R p$ and $[\np {d_\infty}]=p$. We also know that  a tile $d$ has a decorated vertex $\nu$ (Fig. \ref{decodim0}). The Killing field $\np d$ fixes the ideal point as well as the horoball decoration.  If $\np d\neq 0$, then the point $[\np d]$  contained in the interior of the desired triangle, due to the convexity of $\HPb$. The following lemma follows from the Lemmas \ref{lempunc} and \ref{lemdeco}.
		\begin{lem}\label{lemdecopunc}
		Let $\sigma$ be a top-dimensional simplex of $\ac{\depu n}$. Let $\phi_0:\ltile\rightarrow \lalg$ be a neutral tile map corresponding to the linear combination \eqref{lindep}. 
		Let $e$ be an internal edge-to-edge arc of a tile $d\in \tile$ such that $\np{d}\neq 0$. Then, $[\np{d}]$ is contained in the interior of the projective triangle in $\pp$, based at the geodesic $\ol e$ carrying $e$, that contains $d$.
	\end{lem}
Using the argument after the end of the proof of Lemma \ref{lemdeco}, we get that $\np{d_1}=\np{d_2}$, whenever two tiles $d_1,d_2\in \tile$ have a common edge-to-edge arc. Since the infinite tile $\tilde{d_\infty}$ has no vertex-to-edge arc, we conclude that $c_e=0$ for every $e\in \ed$, which proves the theorem.

\end{proof}

\subsection{Local homeomorphism: Codimension 1}
In this section we show that the projectivised strip map $\mathbb Pf:\sac \Pi\longrightarrow \ptan \Pi$ is a local homeomorphism around points belonging to the interiors of simplices of codimension 1. 

\begin{thm}\label{codim1}
	Let $\Pi$ be any one of the four types of polygons — ideal $n$-gons $\ip n$, once-punctured $n$-gons $\punc n$, decorated $n$-gons $\dep n$and decorated once-punctured $n$-gons $\depu n$. Let $m\in \tei {\Pi}$ be a metric. Let $\sigma_1 ,\sigma_2 \in \ac \Pi$ be two top-dimensional simplices such that $$\codim {\sigma_1 \cap \sigma_2}=1 \text{ and } \inte{\sigma_1 \cap \sigma_2}\subset\sac\Pi.$$ Then,
	\begin{equation}\label{interior}
		\inte{\mathbb{P}f(\sigma_1)}\cap \inte{\mathbb{P}f(\sigma_2)}=\varnothing.
	\end{equation}
	Moreover, there exists a choice of strip template such that $\mathbb{P}f(\sigma_1)\cup \mathbb{P}f(\sigma_2)$ is convex in $\ptan \Pi$.
\end{thm}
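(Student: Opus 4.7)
The plan is to adapt the argument of Theorems \ref{disjint} and \ref{codim2} in \cite{dgk} to our polygonal surfaces, using the linear-independence results (Theorems \ref{thmip}, \ref{thmpunc}, \ref{thmdeco} and \ref{thmdecopunc}) and the localisation lemmas (Lemmas \ref{lemideal}, \ref{lempunc}, \ref{lemdeco} and \ref{lemdecopunc}) already established above. Write $\sigma := \sigma_1 \cap \sigma_2$, so $\sigma_i = \sigma \cup \{\alpha_i\}$ for two distinct permitted arcs $\alpha_1$ and $\alpha_2$ whose geodesic representatives cross at a single interior point $q \in \Pi$; the refined configuration $\bar\sigma := \sigma \cup \{\alpha_1, \alpha_2\}$ cuts out a small quadrilateral with four small tiles around $q$, each carrying either $\alpha_1$ or $\alpha_2$ as a side.

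For the disjointness \eqref{interior}, we argue by contradiction. Assuming the two open images intersect, after rescaling we would produce a vanishing linear combination
\begin{equation*}
c_{\alpha_1} f_{\alpha_1}(m) + c_{\alpha_2} f_{\alpha_2}(m) + \sum_{e \in \sigma^{(0)}} c_e f_e(m) = 0
\end{equation*}
with $c_{\alpha_1} > 0$ and $c_{\alpha_2} < 0$. The plan is to represent this relation by a neutral consistent tile map $\phi_0 : \mathcal{T}_{\bar\sigma} \to \lalg$ whose jumps across $\alpha_i$ are the Killing fields $c_{\alpha_i} v_{\alpha_i}$, and then to run the outward induction along the dual tree of $\bar\sigma$ starting from the four small tiles around $q$, exactly as in the proofs of Lemmas \ref{lemideal}--\ref{lemdecopunc}. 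This will locate each $[\phi_0(d)]$ in an explicit projective triangle based at $\overline{\alpha_1}$ or $\overline{\alpha_2}$. Property \ref{line} applied to these four localisations around the small quadrilateral will then force $c_{\alpha_1}$ and $c_{\alpha_2}$ to carry the same projective sign, the desired contradiction.

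For the convexity statement, we choose the strip template so that the waists of the hyperbolic (or parabolic, for infinite arcs terminating at a common spike) strips along $\alpha_1$ and $\alpha_2$ both coincide with the crossing point $q$, rescaling the widths $w_\alpha$ by the normalisation \eqref{norma}. With this choice, the two Killing fields $v_{\alpha_1}, v_{\alpha_2}$ attached to $q$ together with the four Killing fields associated to the sides $e_1,\ldots,e_4$ of the small quadrilateral at $q$ satisfy, through the identification $\lalg \simeq \Min$ of Section \ref{prelim}, an elementary Ptolemy-type relation: any convex combination $c\,f_{\alpha_1}(m) + (1-c)f_{\alpha_2}(m)$ lies in the positive cone spanned by the $f_{e_i}(m)$ modulo deformations supported on $\sigma^{(0)}$. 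Since $\mathbb{P}f(\sigma_1)$ and $\mathbb{P}f(\sigma_2)$ share the common face $\mathbb{P}f(\sigma)$, this exhibits $\mathbb{P}f(\sigma_1) \cup \mathbb{P}f(\sigma_2)$ as the join of that face with the segment $[\mathbb{P}f(\alpha_1),\mathbb{P}f(\alpha_2)]$, hence as a convex subset of $\ptan \Pi$.

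The main obstacle will be making the outward induction work uniformly across the four polygon types. The base tiles differ from case to case --- quadrilaterals for $\ip n$, tiles containing a decorated vertex for $\dep n$ and $\depu n$, and the infinite tile $d_\infty$ for $\punc n$ and $\depu n$ --- and one must verify that the ``starting triangles'' based at $\overline{\alpha_1}$ and $\overline{\alpha_2}$ around the small quadrilateral at $q$ remain valid even though these chords are interior rather than sides of the original triangulation. The case $\depu n$ combines both flavours of constraint: checking that the parabolic rigidity of $\phi_0(d_\infty)$ from the proof of Theorem \ref{thmpunc} remains compatible with the decorated localisations of Lemma \ref{lemdecopunc} is where the most delicate bookkeeping lies.
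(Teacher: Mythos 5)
Your overall frame --- that everything reduces to understanding the one-dimensional space of linear relations among the $N_0+1$ vectors $f_{\al_1}(m)$, $f_{\al_2}(m)$, $\{f_{\be}(m)\}_{\be\in\edo\cap\edt}$ --- is the right one, but the two places where you defer the actual work are exactly where the paper's proof lives, and as written both steps have gaps. The paper does not argue by contradiction: it \emph{constructs}, for an explicit strip template (geodesic representatives perpendicular to the boundary, waists at the feet of the perpendiculars from $\infty$), an explicit consistent neutral tile map on the refined tiling $\sigma_1\cup\{\al_2\}$, by writing down linear polynomials $z\mapsto a_j(z-y_j)$ on the four small tiles around the crossing and solving a small linear system for the $a_j$; the sign pattern $c_{\al_1},c_{\al_2}>0$ and $c_{\be}\le 0$ is then read off from Lemmas \ref{centre}, \ref{ineqasym} and \ref{ineqcentres} on the ordering of the centres of common perpendiculars. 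Both \eqref{interior} and the convexity assertion follow at once from that single sign pattern. Your ``Ptolemy-type relation'' in the convexity step \emph{is} this sign pattern, i.e.\ precisely the thing to be proved, and no argument is offered for it. Likewise your disjointness step proposes to run the induction of Lemmas \ref{lemideal}--\ref{lemdecopunc} ``outward from the four small tiles around $q$'', but those lemmas are proved for tile maps attached to an honest triangulation (arcs pairwise disjoint, tiles of the three standard types, induction running from the quadrilateral leaves inward toward the chosen edge); the small tiles at the crossing are not of those types, the consistency relation at $q$ is an extra constraint those lemmas never see, and the coefficients on the common arcs produced by your contradiction hypothesis have no a priori sign, so the claim that Property \ref{line} ``forces $c_{\al_1}$ and $c_{\al_2}$ to carry the same projective sign'' is asserted rather than proved.

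Two further concrete problems. First, you assume throughout that $\al_1$ and $\al_2$ cross at a single point $q$; for (decorated) once-punctured polygons both arcs can be maximal, in which case they cross twice, and the paper gives a separate tile-map construction for that configuration (with the infinite tile $d_\infty$ sent to $0$ and equivariance under the parabolic holonomy). Second, the template choice ``waists of the hyperbolic or parabolic strips both coincide with $q$'' is not available when one of the arcs is an edge-to-vertex arc of a decorated polygon: the waist of a parabolic strip is the ideal point itself and cannot be placed at the crossing point, which is why the paper keeps the waists at the projections of $\infty$ and treats the edge-to-edge/edge-to-vertex configuration by a separate computation in which the tiles adjacent to the spike receive constant polynomials.
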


Firstly, we will give a general idea of the proof for any type of polygon and then we will give the proof in each case in the subsequent sections \ref{ipdim1}-\ref{decodim1punc}.
\paragraph{Idea of the proof:}	Let $\edo$ and $\edt$ be the edge sets of $\sigma_1$ and $\sigma_2$ respectively. Since the simplex $\sigma_1\cap \sigma_2$ has codimension one, we have that $\edo\backslash\edt$ (resp. $\edt\backslash\edo$) has exactly one arc, denoted by $\al_1$ (resp. $\al_2$). There are different possibilities for the pair $\{\al_1,\al_2 \}$ in the case of every polygonal surface. Let $\lred$ be the refined edgeset of $\ledo$ obtained by considering the refinement $\sigma:=\sigma_1\cup \{\al_2\}$. Let $\lrtile$ be the refined tile set of $\ltile$.

In every case, we shall give a choice of strip template and then construct a tile map that represents the following linear combination for a chosen strip template and is coherent around every point of intersection: 
\begin{equation} \label{lincomb}
	c_{\al_1}f_{\al_1}(m)+c_{\al_2}f_{\al_2}(m)+\sum_{\be\in \edo\cap\edt}c_{\be}f_{\be}(m)=0, 
\end{equation} 
where $c_{\be}\leq 0$ for every $\be\in \edo\cap\edt$ and $c_{\al_1},c_{\al_2}>0$. Drawing all arcs of $\sigma_1\cup\sigma_2$ subdivides the surface into a system of tiles that refines both the triangulations. We will choose strip templates and assign Killing fields equivariantly to these tiles in a way that expresses this linear combination. 
The construction is done in the upper half plane model $\HP$.  We shall use the identification $\lalg\simeq\R_2[z]$ from section 1.5.

\subsubsection{Ideal Polygons}\label{ipdim1}

Since ideal polygons are simply-connected, $\ledo=\edo, \ledt=\edt, \lrtile=\rtile$. We choose  an embedding of the polygon into the upper half plane so that the point $\infty$ is distinct from all the vertices of the polygon, for $n\geq 5$. We shall consider the following strip template:
\begin{itemize}
	\item For every isotopy class, choose the geodesic representative $\al_g$ which intersects the boundary of the polygon perpendicularly;
	\item  For every isotopy class $\al$, the waist $\pal$ is given by the projection of $\infty$ on $\al_g$. 
	\item For every isotopy class $\al$, we take the width of the strip deformation $\wal=1$. 
\end{itemize}
Then every geodesic arc used in the triangulation is carried by a semi-circle. 
		\begin{figure}
	\centering
	\begin{tabular}[c]{c@{\qquad\qquad}c}
		\includegraphics[width=.35\textwidth]{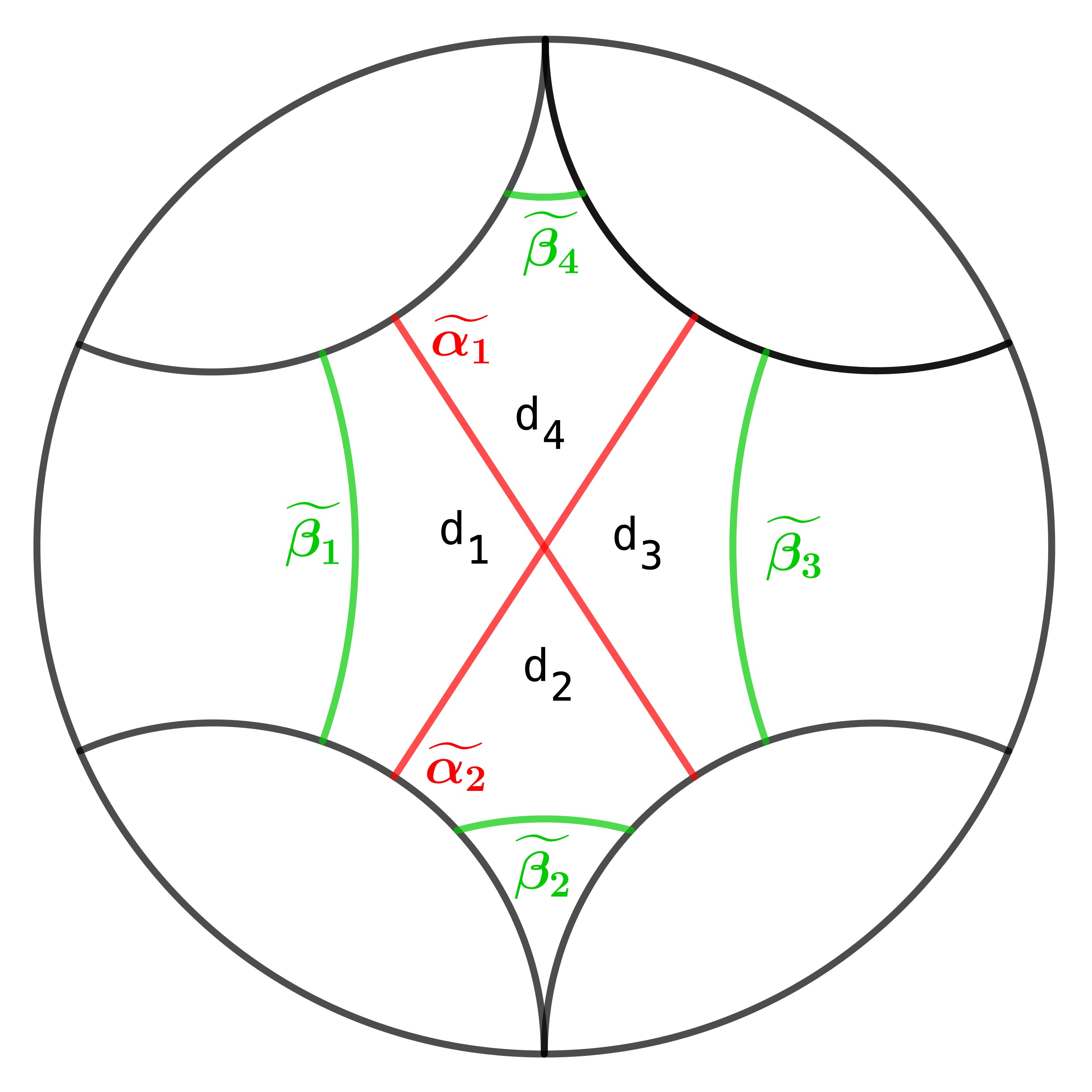}
		&
		\includegraphics[width=.35\textwidth]{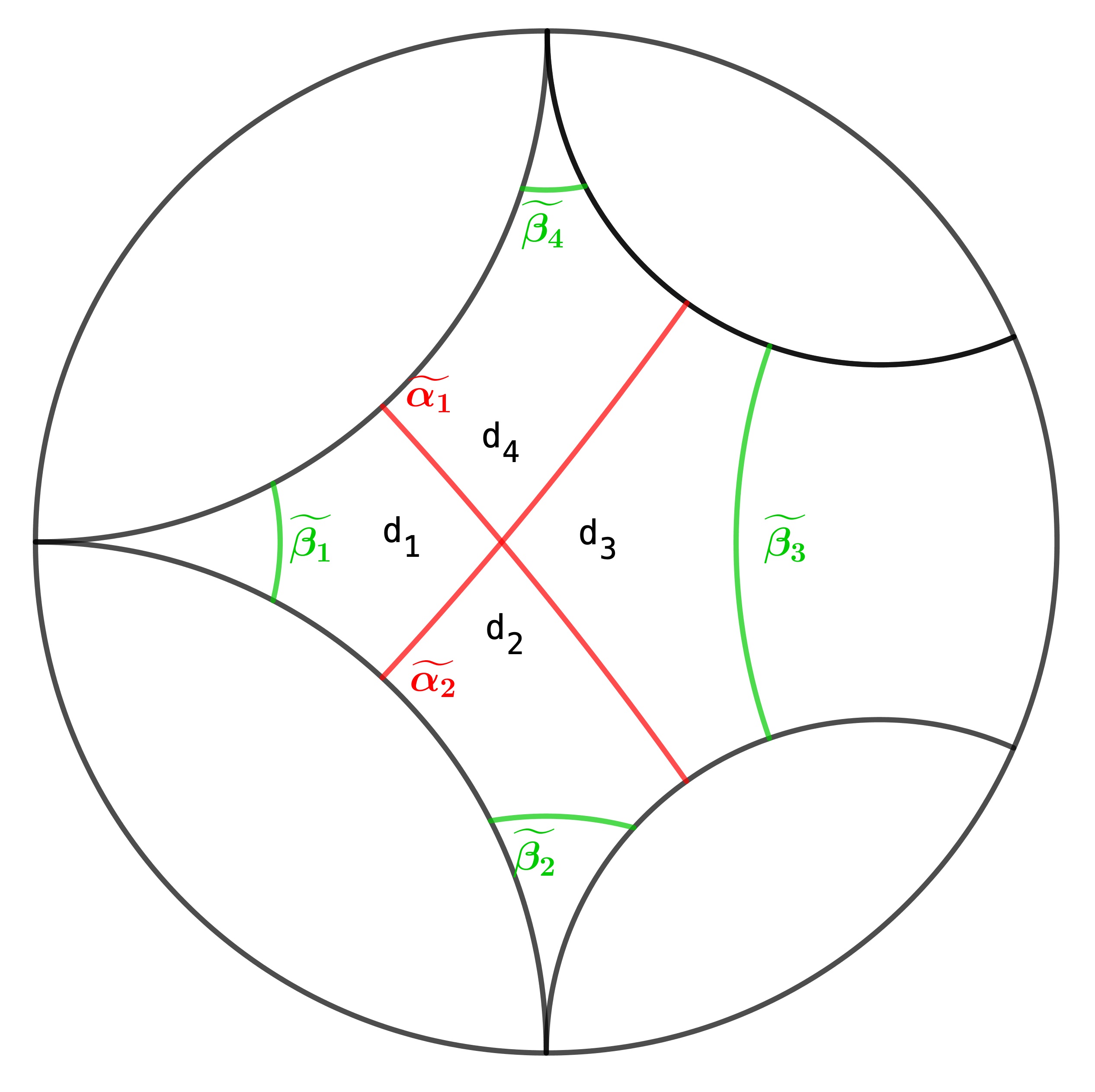}\\[2em]
		\includegraphics[width=.35\textwidth]{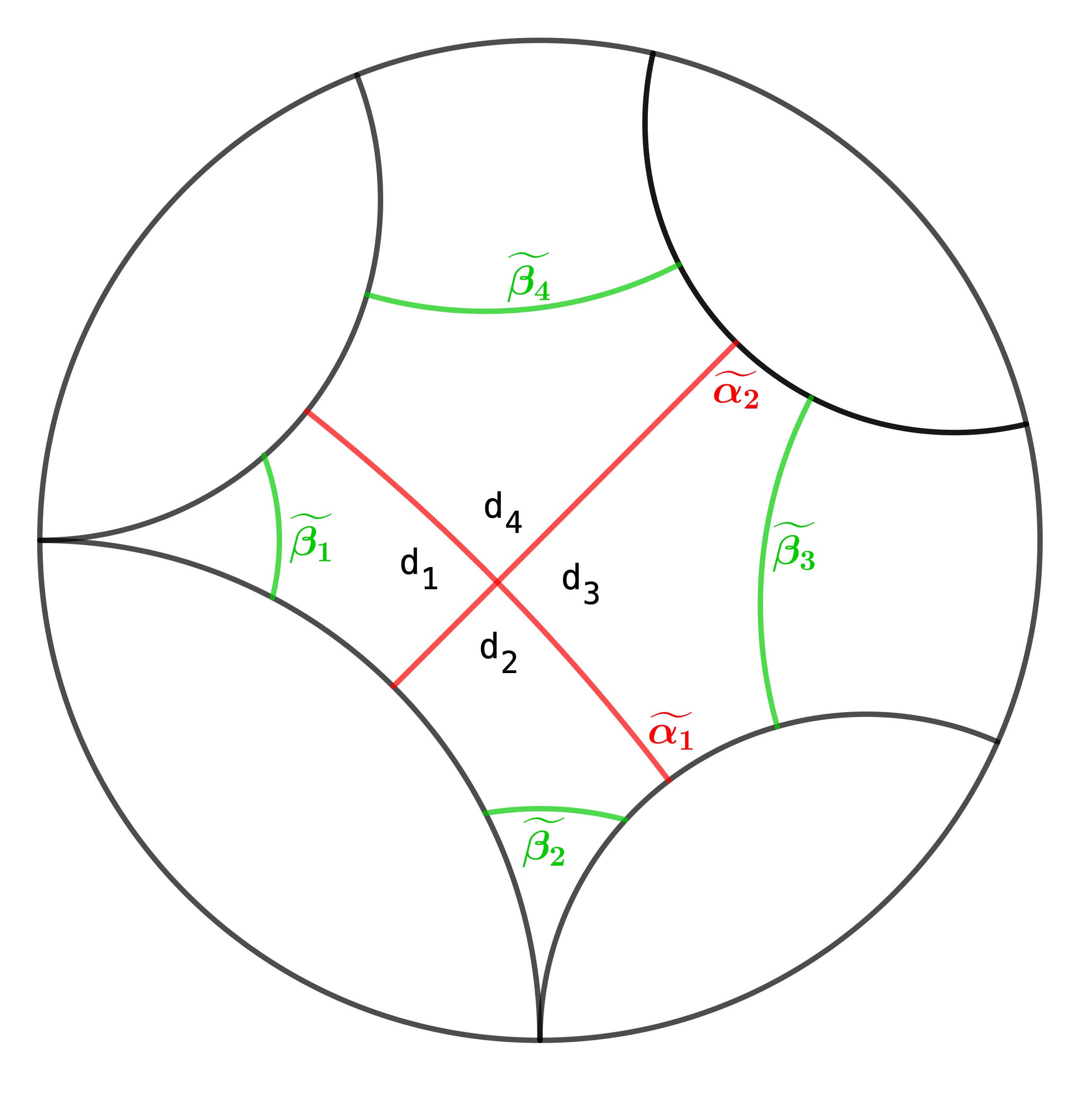} 
		&
		\includegraphics[width=.35\textwidth]{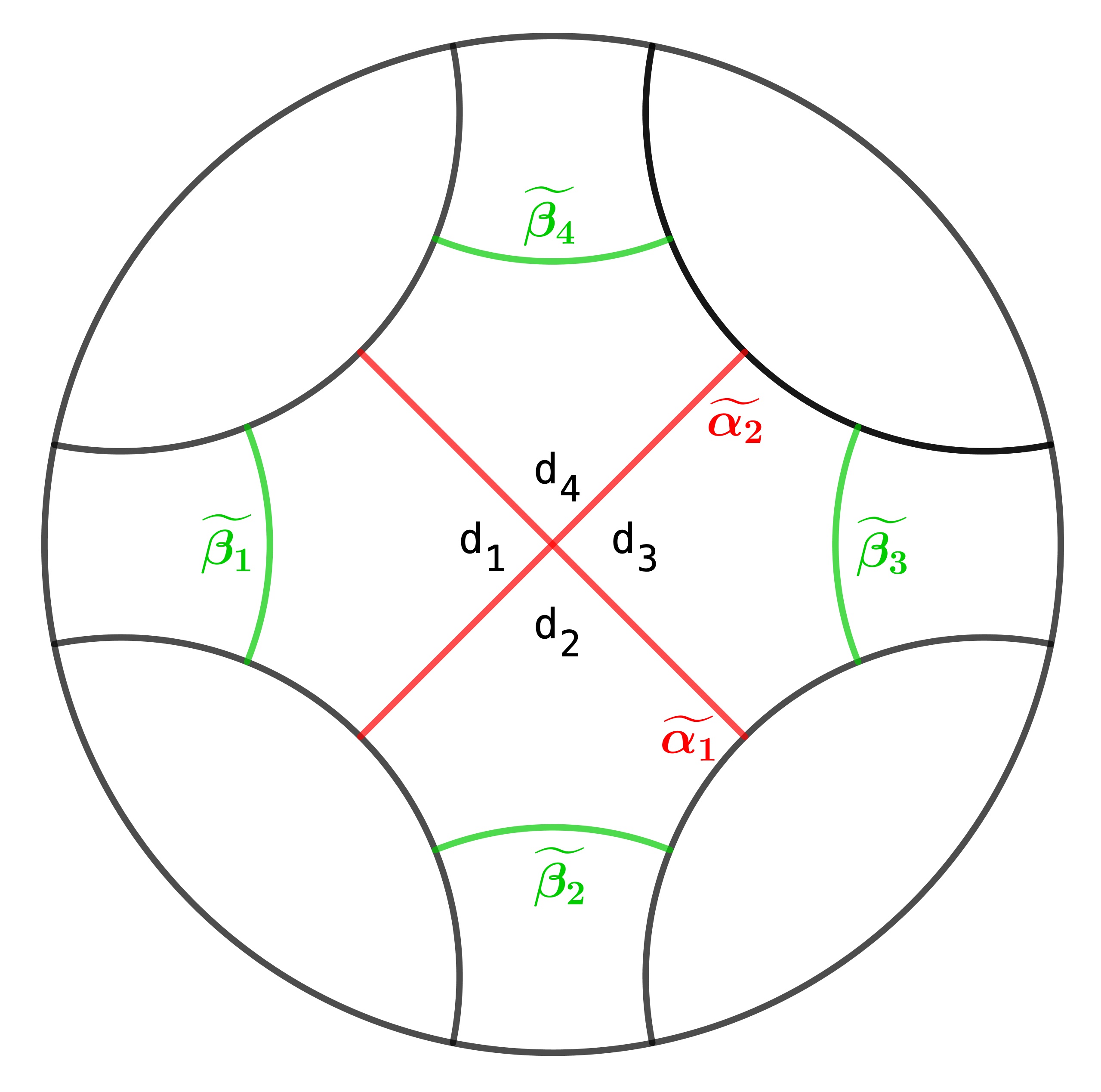}\\ [2em]
		\includegraphics[width=.35\textwidth]{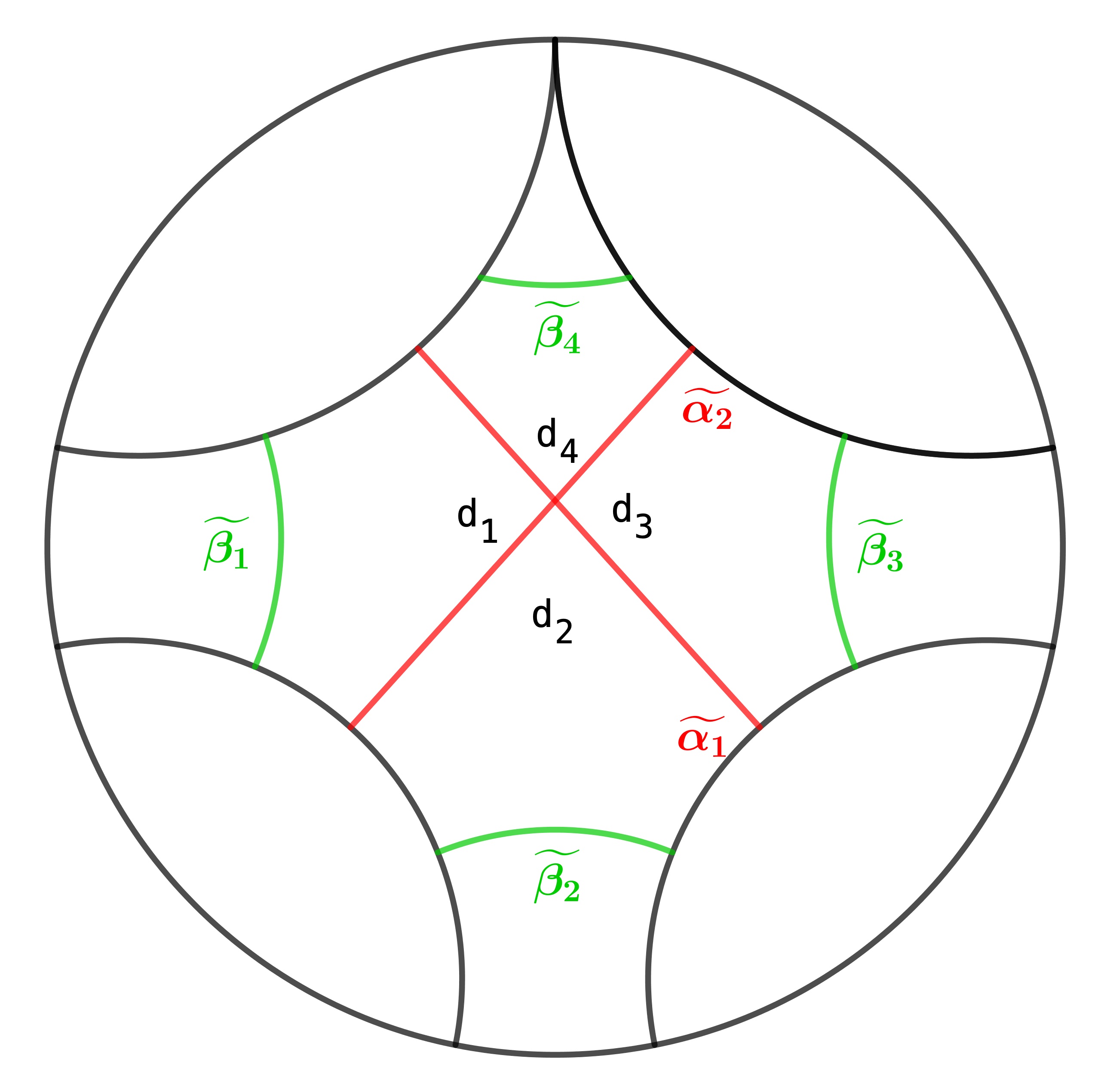}
		&
		\includegraphics[width=.35\textwidth]{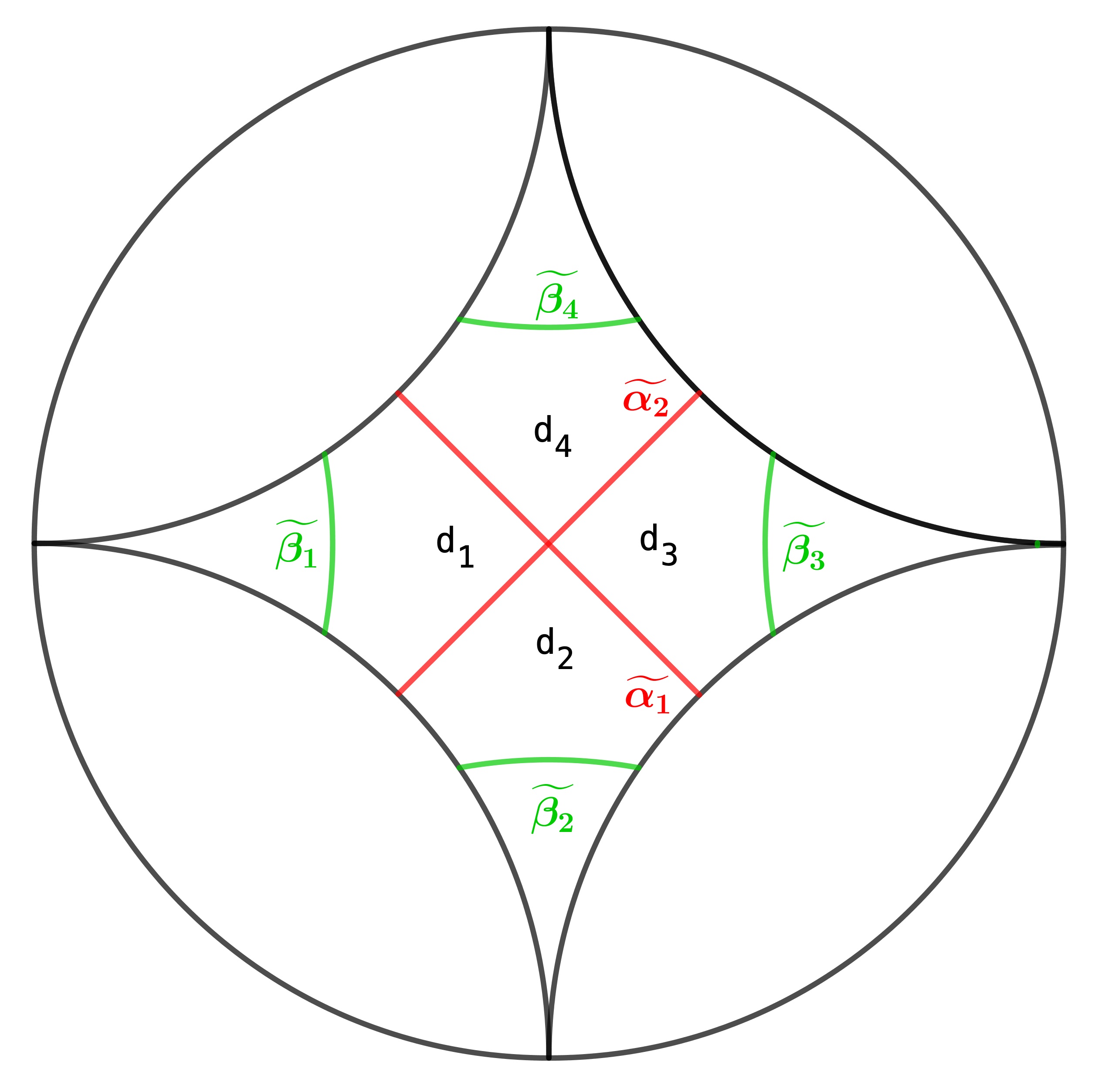}
	\end{tabular}
	\caption{The six possible configuration}
	\label{cd1decoee}
\end{figure}
In an ideal polygon, any two arcs intersect at most once. See Fig.\ref{cd1decoee}. The geodesic arcs that are coloured green in the figure are common to both $\sigma_1$ and $\sigma_2$. Let $o$ be the point of intersection of $\al_1, \al_2$. In each of the six cases, there are four small tiles formed around $o$, namely $d_j$, $j=1,2,3,4$, labeled anti-clockwise such that $d_1,d_2$ lie below the semi-circle carrying $\al_1$. 
For each $j$, $d_j$ is either a quadrilateral with exactly one ideal vertex and two internal edges contained in $\al_1$ and $\al_2$, or it is a pentagon with exactly three internal edges: $\al_1,\al_2$ and a third arc $\be_j\in \edt\cap\edo$. Let $\mathcal J\subset \{1,\ldots,4\}$ be such that the tile $d_j$ is pentagonal if and only if $j\in \mathcal J$. Note that the arc $\be_j$  intersects the boundary of the polygon perpendicularly, due to the choice of strip template. For $i=1,2$, let $x_i\in \R$ be the centre of the semi-circle carrying the geodesic arc $\al_i$. For $j=1,\ldots,4$, let $y_j\in \R$ denote the ideal vertex of $d_j$ or the centre of the semi-circle carrying the geodesic arc $\be_j$. 
\begin{figure}[!h]
	\begin{center}
		\frame{\includegraphics[width=\textwidth]{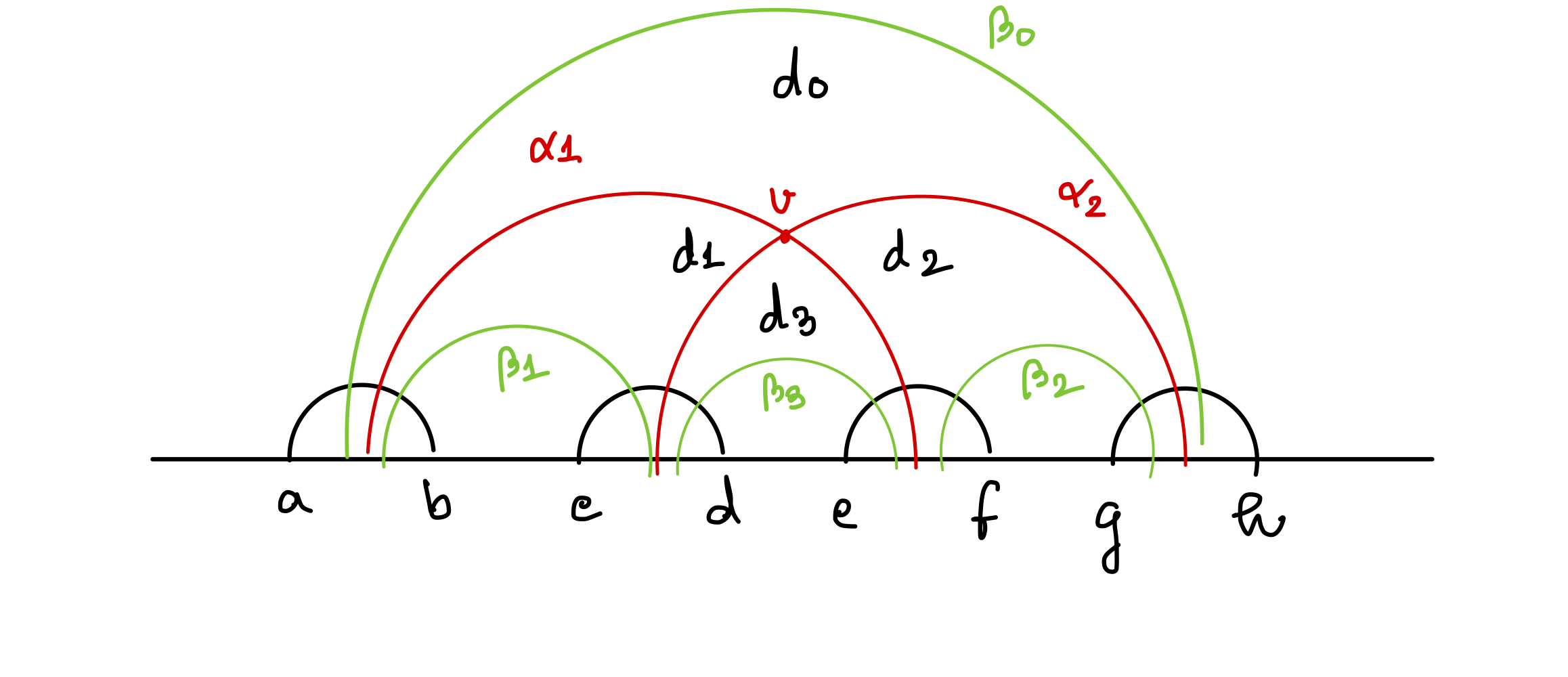}}
		\caption{Ideal polygons: The generic case}
		\label{idcodim1}
	\end{center}
\end{figure}
We shall construct a tile map  corresponding to the following linear combination:
\begin{equation}\label{lincombi}
	c_{\al_1}f_{\al_1}(m)+c_{\al_2}f_{\al_2}(m)+\sum_{j\in \mathcal J} c_{\be} f_{\be}(m)=0,
\end{equation}
with $c_{\al_1},c_{\al_2}>0$ and $c_{\be_j}< 0$ for every $j\in \mathcal J$. 
\begin{proper}\label{propid}
	A neutral tile map $\phi_0:\rtile\longrightarrow\R_2[z]$ represents the linear combination \eqref{lincombi} if and only if it verifies the following properties:
	\begin{enumerate}
		
		\item \label{idneutral} The polynomial $\np \del$ vanishes at every ideal vertex of $\del\in \rtile$ whenever it has one.
		\item \label{idcoh}The tile map is coherent around the intersection point $o$: $ \np{d_4}-\np{d_1}= \np{d_3}-\np{d_2}.$
		\item \label{idalp}Let $\del,\del'\in \rtile$ be two tiles with common internal edge contained in $\al_i$ for $i=1,2$ such that $\del$ lies above and $\del'$ lies below the semi-circle carrying the common internal edge. Then $\np \del-\np {\del'}$ is a hyperbolic Killing field with attracting fixed point at $\infty$ and repelling fixed point at $x_i$. In particular, its axis intersects $\al_i$ at $p_{\al_i}$. In terms of polynomials, we must have $$\np \del-\np {\del'}=(z\mapsto A(z-x_i)), \, \text{for some } A>0.$$
		
		where $v_i$ is a hyperbolic Killing vector field that whose axis is perpendicular to $\al_i$ at $p_{\al_i}$ and whose direction is towards the tile that lies above $\al_i$.
		\item If $d,d'\in \rtile$ are two tiles with common internal edge contained in $\al_i$ for $i=1,2$, then $\np \del-\np {\del'}$ is a hyperbolic Killing vector field whose axis is perpendicular to $\al_i$ at $p_{\al_i}$ and whose direction is towards $\del$.
		\item \label{idbe}Suppose $\del,\del'\in \rtile$ are two tiles with common internal edge $\be_j$ for $j\in\mathcal J$, such that $\del$ lies above $\be_j$. Then $\np \del-\np {\del'}$ is a hyperbolic Killing field with attracting fixed point at $y_j$ and repelling fixed point at $\infty$. In particular, its axis intersects $\be_j$ at $p_{\be_j}$. In terms of polynomials, we must have $\np \del-\np {\del'}=(z\mapsto B(z-y_j)),\, \text{for some } B<0.$
	\end{enumerate}
\end{proper}
Suppose that the endpoints of $\al_1$ lie on the boundary geodesics $(a,b)$ and $(e,f)$ and those of $\al_2$ lie on $(c,d)$ and $(g,h)$ such that the following inequalities hold for $n\geq 5$:
\begin{align}
	&a<b\leq c<d\leq e<f\leq g<h.
\end{align}
We shall treat the case $n=4$ separately.
polynomial with positive leading coefficient which vanishes at $x_i$. is a polynomial with negative leading coefficient which vanishes at $y_j$. 

Using Lemma \ref{centre}, we get that
\begin{align}
	x_1=\frac{ef-ab}{e+f-a-b}, &\quad x_2=\frac{gh-cd}{g+h-c-d},\\
	y_1=\frac{cd-ab}{c+d-a-b},&\quad y_2=\frac{ef-cd}{e+f-c-d},\\
	y_3=\frac{gh-ef}{g+h-e-f}, &\quad y_4=\frac{gh-ab}{g+h-a-b}.
\end{align}
definition of tile map
For $j=1,\ldots,4$, define 
\begin{eqnarray*}
	\phi_0&:\tile\longrightarrow &\twop\\
	&\del \longmapsto &\left\{ 
	\begin{array}{ll}
		(z\mapsto a_j(z-y_j)), & \text{if } \del=d_j\\
		0, & \text{otherwise,}
	\end{array}
	\right.
\end{eqnarray*}
where \[\begin{array}{llll}
	a_1=\frac{x_1-y_4}{x_1-y_1},&a_2=\frac{(x_1-y_4)(x_2-y_4)-(y_4-y_1)(y_4-y_3)}{(x_1-y_1)(x_2-y_3)}&a_3=\frac{x_2-y_4}{x_2-y_3} &a_4=1.
\end{array} \]
The $a_i's$ as defined above are a nontrivial solution to the following system of linear equations in four unknowns:
\begin{align}
	a_1-a_2+a_3-a_4&=0,\label{one}\\
	a_1y_1-a_2y_2+a_3y_3-a_4y_4&=0,\label{two}\\
	a_1(x_1-y_1)-a_4(x_1-y_4)&=0\label{three}\\
	a_3(x_2-y_3)-a_4(x_2-y_4)&=0.\label{four}
\end{align}
Applying Lemma \eqref{ineqasym} to the geodesics $(a,b), (c,d),(e,f)$ and then to $(c,d),(e,f),(g,h)$ we get that for $j=2,4$, $y_1<x_1<y_j<x_2<y_3,$
So, $a_1, a_2,a_3<0$.
\begin{rem}
	Note that for every $\del\in \rtile$, $\np \del\in \onep$. This is a consequence of our choice of normalisation and strip template.
\end{rem}

\begin{proof}[Verification of the Properties \ref{propid}:]~\\
	\vspace{-1em}
\begin{enumerate}
	
	\item Suppose that $\del$ is a tile with an ideal vertex. If $\del\in \supp{\phi_0} $, then $\del=d_j$  for some $j\in \set{1,\ldots,4}$, so that ideal vertex is given by $y_j$. From the definition of the tile map we have that $\np {\del}=P_j$ which vanishes at $y_j$.  If $\del\notin \supp{\phi_0} $, then $\np \del=0$, which automatically fixes its ideal vertex.
	\item  From eqs. (\ref{one}) and (\ref{two}) it follows that, 
	\[\begin{array}{ll}
		(\np{d_4}-\np {d_1})(z)&=(a_4-a_1)z-a_4y_4+a_1y_1\\
		&=(a_3-a_2)z-a_3y_3+a_2y_2\\
		&=(\np{d_3}-\np{d_2} )(z).
	\end{array}\]
	\item The tiles that share an edge carried by $\al_1$ are the pairs $\{d_1,d_4 \}$ and $\{d_2,d_3\}$. The tiles that share an edge carried by $\al_2$ are the pairs $\{d_1,d_2 \}$ and $\{d_3,d_4\}$. From the coherence property \eqref{idcoh}, it is enough to verify the property for $\{d_1,d_4 \}$ and $\{d_3,d_4 \}$. The tile $d_4$ lies above both the semicircles carrying the arcs $\al_1,\al_2$, respectively. From the definition of $\phi_0$ we have that,
	\begin{eqnarray*}
		(\np{d_4}-\np {d_1})(z)&=&(a_4-a_1)z+a_1y_1-a_4y_4,\\
		(\np{d_4}-\np {d_3})(z)&=&(a_0-a_3)z+a_3y_3-a_4y_4.
	\end{eqnarray*}
	Since $a_4>0$ and $a_1,a_3<0$, the leading coefficients $a_4-a_1$ and $a_4-a_3$ are both positive. The polynomials $\np{d_4}-\np {d_1}$ and $\np{d_4}-\np {d_3}$ vanish at $x_1$ and $x_2$ respectively, by eq.\eqref{three} and eq.\eqref{four} . 
	\item Suppose that the two tiles $\del,\del'$ have a common edge of the form $\be_j$ for $j\in\mathcal J$, with $\del$ lying above $\be_j$. Then either $\del'=d_4$ and $\del\notin \supp {\phi_0}$ or $\del=d_j$ for $1\leq j \leq 3$ and $\del'\notin \supp {\phi_0}$. 
	
	In the first case, 
	$(\np \del- \np{d_4})(z)=-a_4(z-y_0).$
	Since $-a_0<0$, the property is verified.
	This is a polynomial of degree 1, with negative leading coefficient, and which vanishes at $y_0$. So it is a hyperbolic element in $\lalg$ whose repelling fixed point is $y_0$ and attracting fixed point at $\infty$. 
	In the second case, for $1\leq j \leq 3$, $(\np {d_j}-\np {\del'})(z)= a_j(z-y_j).$
	Since $a_j<0$ for every $j=1,2,3$, the leading coefficient is negative.  \qedhere
	\end{enumerate}
\end{proof}
\subsubsection{Punctured Polygons}	
Next, we shall prove Theorem \ref{codim1} for undecorated punctured polygons.

Let $\sigma_1$ and $\sigma_2$ be as in the hypothesis with $\al_1\in\edo\backslash \edt$ and $\al_2\in\edt\backslash\edo$. These two arcs intersect either exactly once at a point $o$ (when both are non-maximal) or twice at the points $o_1,o_2$ (when both are maximal). We suppose that the ideal point corresponding to the puncture is at $\infty$ in the upper half plane model of $\HP$ and that  $\Gamma=\rho(\fg{\punc n})$ is generated by the parabolic element $T:z\mapsto z+1$, after normalisation. Let $\lred$ and $\lrtile$ be the refined edge set and tile set respectively for the refinement $\sigma=\sigma_1\cup \{\al_2 \}$. We take the following strip template: 
\begin{itemize}
	\item From every isotopy class of arcs, we choose the geodesic arc which intersects the boundary of the surface perpendicularly.
	\item For every geodesic arc, the waist is chosen to be the point of projection of $\infty$. This choice of waist is $\Gamma$-equivariant because $T$ fixes $\infty$.
\end{itemize}
We have the two following cases, depending on the maximality of $\al_1, \al_2$. 
\begin{figure}[!h]
	\begin{center}
		\frame{\includegraphics[width=\textwidth]{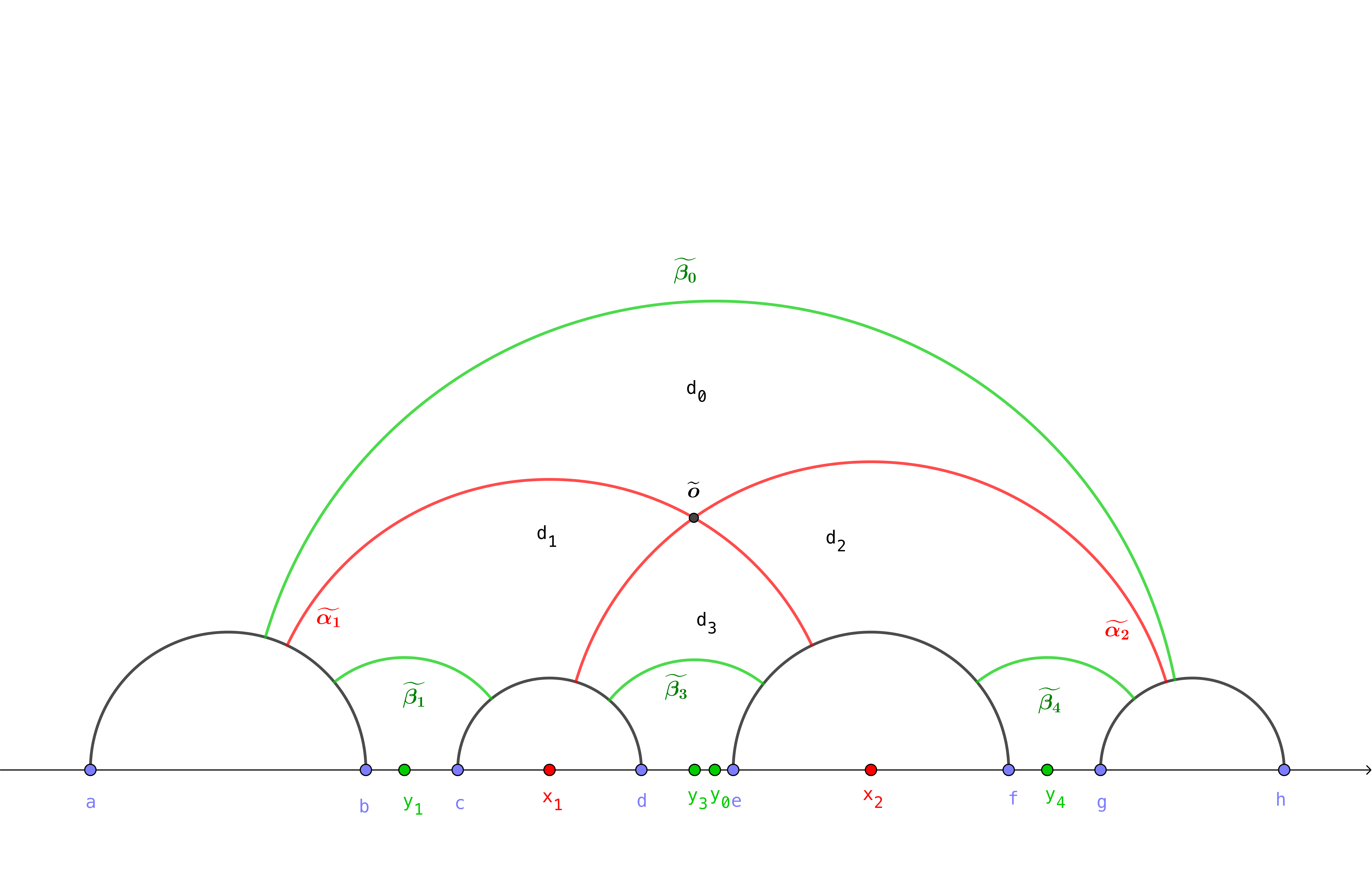}}
		\caption{$\al_1$ and $\al_2$ are non-maximal arcs}
		\label{non-mini}
	\end{center}
\end{figure}
\begin{enumerate}
	\item See Fig. \ref{non-mini}. When $\al_1, \al_2$ are both non-maximal, the construction is very similar to that in the case of the ideal polygons. Let $\wt o$ be a lift of the point $o$. Then $\wt o= \alo\cap \alt$ for two lifts  of $\al_1$ and $\al_2$ respectively. There are four finite tiles formed around $\wt o$, denoted by $d_j$, for $j=0,\ldots, 3$. For each $j\in \set{0,\ldots,3}$, the tile $d_{j}$ is either a quadrilateral with an ideal vertex and exactly two arc edges carried by $\alo$ and $\alt$, or it is a pentagon with exactly three arc edges carried by $\alo,\alt$ and a third arc $\wt{\be_j}$, which is a lift of an arc $\be_j\in \edt\cap\edo$. Let $\mathcal J\subset \{1,\ldots,4\}$ be such that the tile $d_j$ is pentagonal if and only if $j\in \mathcal J$. For $i=1,2$, let $x_i$ be the centre of the semi-circle containing $\wt{\al_i}$. For $j=0,\ldots,3$, let $y_j$ denote the ideal vertex of $d_j$ or the centre of the semi-circle containing $\wt{\be_j}$.
	In this case, a tile map representing the linear combination \eqref{lincombi} is a map $\phi_0: \lrtile\longrightarrow \R_2[z]$ that satisfies the following properties:
	\begin{proper}\label{proppunc}
		\begin{enumerate}
			\item $\phi_0$ is $\Gamma$-equivariant: for every $m\in \Z$, $\np {T^m\cdot \del}(z)=\np \del (z-m).$
			\item\label{proppuncneu} The polynomial $\np \del$ vanishes at every ideal vertex of $\del\in \rtile$ whenever it has one.
			\item \label{proppuncoh}The tile map is coherent around every point of intersection of the lifts of $\al_1$ and $\al_2$.
			\item \label{proppuncal}Let $\del,\del'\in \lrtile$ be two tiles neighbouring along an edge contained in a lift $T^m\cdot\wt{\al_i}$ of $\al_i$, for some $i\in \set{1,2}$, such that $\del$ lies above $\wt{\al_i}$. Then the difference $\np \del-\np {\del'}$ is a hyperbolic Killing vector field with attracting and repelling fixed points at $\infty$ and $x_i+m$, respectively. The axis intersects $\wt{\al_i}$ at $p_{\wt{\al_i}}$. In other words, 
			$$\np \del-\np {\del'} (z)=A(z-x_i-m),\, \text{for some } A>0.$$
			\item \label{proppuncbe}Let $\del,\del'\in \rtile$ be two tiles neighbouring along an edge $\wt\be\in \ledo\cap \ledt$ such that $\del$ lies above the edge. If $\wt\be=T^m\cdot \wt\be_j$ for some $j=1,\ldots,4$ then the difference $\np \del-\np {\del'}$ is a hyperbolic Killing vector field with attracting and repelling fixed points at $x_j+m$ and $\infty$, respectively. The axis intersects $\wt{\be_j}$ at $p_{\wt{\be_j}}$. In other words, 
			$\np \del-\np {\del'} (z)=B(z-y_j-m), \, \text{for some }B<0.$ Otherwise, $\np \del=\np {\del'}$.
			
		\end{enumerate}
	\end{proper}
	
	Let $(a,b)$ and $(e,f)$ be the two boundary geodesics that are joined by $\alo$. Similarly, let $(c,d)$ and $(g,h)$ be the two boundary geodesics joined by $\alt$ such that
	\begin{align}
		&a<b\leq c<d\leq e<f\leq g<h\leq a+1.
	\end{align}
	
	We consider the non-trivial solution $(a_0, a_1,a_2,a_3)$ of the system of equations \eqref{one}-\eqref{four} defined in the ideal polygon proof.
	For $j=0,\ldots,3$ and $m\in \Z$, define 
	\begin{eqnarray*}
		\phi_0&:\lrtile\longrightarrow &\lalg\\
		&\del \longmapsto &\left\{ 
		\begin{array}{ll}
			(z\mapsto a_j(z-y_j-m)), & \text{if } \del=T^m\cdot d _j \\
			0, & \text{otherwise.}
		\end{array}
		\right.
	\end{eqnarray*}
	
		\begin{proof}[Verification of Properties \ref{proppunc}:]~\\
			\vspace{-1em}
	\begin{enumerate}
			\item For every $m\in \Z$, $(T^m)'(z)=1$. If $\del= d_j$ for some $j\in\set{1,\ldots,4}$, then from the definition of $\phi_0$ we have that for every $m\in \Z$, $$\np {T^m\cdot \del} (z)=a_j(z-y_j-m)= \np \del (z-m).$$So, the equivariance condition is satisfied in this case. For $\del\notin \supp{\phi_0}$, the condition holds trivially.
		\end{enumerate}
		Since the map has been proved to be $\Gamma$-equivariant, it suffices to verify the properties \eqref{proppuncneu}-\eqref{proppuncbe} around the point $\wt o$. This is identical to the proof in the case of ideal polygons. This finishes the proof in the non-maximal case.
			\end{proof}
			\item Let $\al_i$ be maximal for $i=1,2$. See Fig. \ref{mini}.  Let $\wt {o_1},\wt {o_2}$ be two lifts of $o_1,o_2$ respectively, such that $\wt {o_1}=\alo\cap\alt$ and $\wt{o_2}=(T\cdot\alo)\cap\alt$ for two lifts $\alo, \alt$ of $\al_1,\al_2$, respectively. Let $d_0$ be the infinite tile around $\vo$,  and $d_1$ and $d_2$ be the tiles neighbouring $d_0$ along edges carried by $\alo$ and $\alt$, respectively. The fourth tile formed at the vertex $\vo$ is denoted by $d_3$. The tiles around $\vt$ are $d_0,d_2, T\cdot d_1$ and a fourth tile denoted by $d_4$. Again, for $ j=3,4$, the tile $d_{i}$ is either a quadrilateral with one ideal vertex, formed by two boundary edges of the punctured polygon and with two arc edges respectively contained in $\alo,\alt$, or it is a pentagon with two arc edges respectively contained in $\alo,\alt$ and a third arc edge $\lbe j$ which is a lift of an arc $\be_j\in\ledo\cap\ledt$. For $i=1,2$, let $x_i$ denote the centre of the semi-circle containing $\wt{\al_i}$. Let $y_3$ denote the ideal vertex of $\del_3$ or the centre of the semi-circle containing $\be_3$. Similarly, let $y_4$ denote the ideal vertex of $d_4$ or the centre of the semi-circle containing $\wt{\be_4}$. 
		Let $a,b,c,d\in \R$ be such that $\alo$ joins the two geodesics $(c-1,d-1)$ and $(c,d)$, and $\alt$ joins $(a,b)$ and $(a+1,b+1)$. Then $a,b,c,d$ satisfies \[a<b\leq c<d\leq a+1.\] Again, from Lemma \ref{centre}, we have that
		\begin{align}
			x_1=\frac{c+d-1}{2},&\quad x_2=\frac{a+b+1}{2}\\
			y_3=\frac{cd-ab}{c+d-a-b},&\quad y_4=\frac{cd-(a+1)(b+1)}{c+d-a-b-2}
		\end{align}
	\begin{eqnarray*}
	\phi_0&:\lrtile\longrightarrow& \R_2[z]\\
	&\del \mapsto &\left\{ 
	\begin{array}{lll}
		(z\mapsto a_i(z-x_i-m)), & \text{if } \del=T^m\cdot d _i, &i=1,2,\\
		(z\mapsto (a_1+a_2)(z-m)-a_1x_1-a_2x_2), & \text{if } \del=T^m\cdot d_3&\\
		(z\mapsto (a_1+a_2)(z-m)-a_1x_1-a_2x_2-a_1), & \text{if } \del=T^m\cdot d_4\\
		0, & \text{otherwise},
	\end{array}
	\right.
\end{eqnarray*}
where $a_1=-1, a_2=\frac{y_3-x_1}{y_3-x_2}.$ In particular, $\np{d_0}=0$. We note that $a_1,a_2$ satisfy the following two equations \begin{align}
	a_1(y_3-x_1)+a_2(y_3-x_2)&=0\label{One}\\
	a_1(y_4-x_1-1)+a_2(y_4-x_2)&=0.\label{Two}
\end{align}
\begin{figure}[!h]
	\begin{center}
		\frame{\includegraphics[width=15cm]{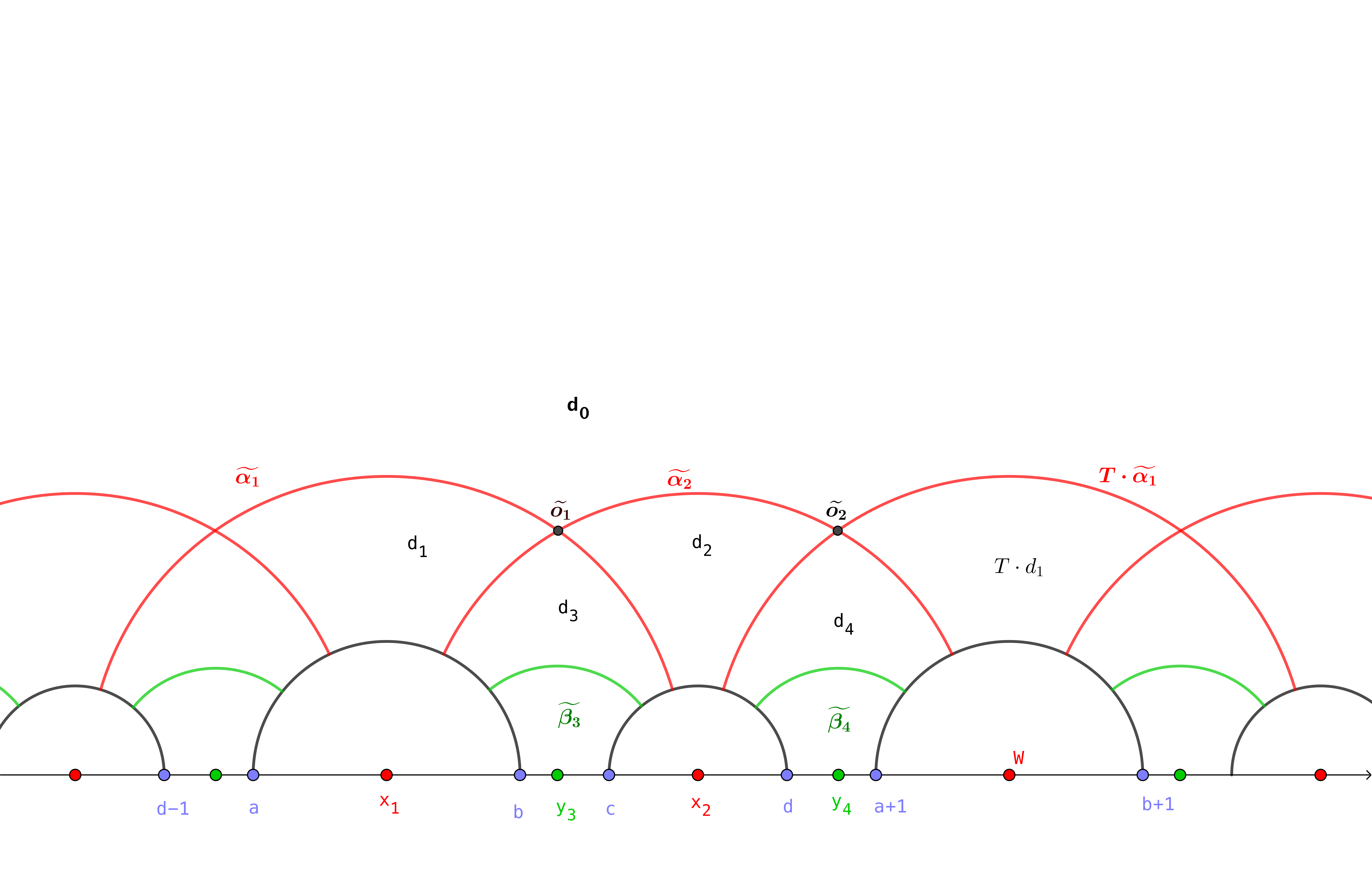}}
		\caption{$\al_1,\al_2$ are maximal}
		\label{mini}
	\end{center}
\end{figure}
Applying Lemma \eqref{ineqcentres} to the two triples of geodesics $(c-1,d-1),(a,b),(c,d)$ and \\$(a,b),(c,d),(a+1,b+1)$, we get that $x_1<y_2<x_2<y_3$. As a result, $a_2<0$. 				

\begin{proof}[Verification of Properties \eqref{proppunc}:]~\\
	\vspace{-1em}
	\begin{enumerate}
		\item The equivariance follows from the definition of $\phi_0$.
		
		\item Suppose that $\del$ is a tile with an ideal vertex. If $\del\in \supp{\phi_0} $, then $\del=d_j$ for some $j\in \set{3,4}$, then that ideal vertex is given by $y_j$. From the equations \eqref{One} and \eqref{Two}, we get that $\np \del$ vanishes at $y_j$.  If $\del\notin \supp{\phi_0} $, then $\np \del=0$, which automatically fixes its ideal vertex.
		
		\item We show that the map is coherent around $\vo$:
		\begin{align*}
			(\np {d_0}-\np{d_1}) (z)&=-a_1z+a_1x_1\\
			&=a_2z-a_2z-a_1z+a_1x_1+a_2x_2-a_2x_2\\
			&=a_2(z-x_2)-(a_1+a_2)(z)-a_1x_1-a_2x_3\\
			&=	(\np {d_2}-\np{d_3}) (z).
		\end{align*}
		Around the point $\vt$:
		\begin{align*}
			(\np {d_0}-\np{d_2}) (z)&=-a_2z+a_2x_2\\
			&=a_1z-a_1z-a_1z+a_2x_2+a_1x_1-a_1x_1+a_1-a_1\\
			&=a_1(z-x_1-1)-(a_1+a_2)(z)-a_1x_1-a_2x_2-a_1\\
			&=	(\np {T\cdot d_1}-\np{d_4}) (z).
		\end{align*}
		Hence by equivariance, the map $\phi_0$ is coherent around every intersection point.
		\item Let $\del,\del'\in \lrtile$ be two tiles with a common internal edge of the form $T^m \cdot \wt{\al_i}$, for some $m\in \Z$ and $i=1,2$, such that $\del$ lies above the edge. Using the equivariance and coherence of the map, it suffices to verify the property when $m=0, \del=d_0,\del'=d_i$, $i=1,2$.

			From the calculations of the proof of the coherence property, we have that 
		\begin{align*}
			(\np {d_0}-\np{d_1}) (z)&=-a_1(z-x_1)\\
			(\np {d_0}-\np{d_2}) (z)&=-a_2(z-x_2).
		\end{align*}
		Since $a_1,a_2<0$, the difference is of the desired form for every $i=1,2$.

		\item If the two tiles $\del,\del'$ have a common internal edge of the form $ \wt{\be_j}$ for $m\in \Z$ and $j=3,4$ such that $\del$ lies above the edge, then $\del=\wt{\be_j}$ and $\del'\notin \supp {\phi_0}$. 
		\begin{itemize}
		\item For $j=3$, $(\np {d_3}-\np{\del'})(z)=(a_1+a_2)z-a_1x_1-a_2x_2.$ 
		\item 		For $j=4$, $(\np {d_4}-\np{\del'})(z)=(a_1+a_2)z-a_1x_1-a_2x_2-a_1.$
		\end{itemize}
			Since $a_1,a_2<0$, both of these polynomials have negative leading coefficient $a_1+a_2$. By eqs. \eqref{One}, \eqref{Two}, the polynomial $\np {d_j}-\np{\del'}$ vanishes at $y_j$, for $m\in \Z$ and $j=3,4$. 
		
		If two tiles $d_1,d_2\in \lrtile$ have a common internal edge $\be\in \ledo\cap \ledt$, which is not of the above form, then $d_1,d_2\notin \supp{\phi_0}$. So, $\np {d_1}-\np{d_2}=0$.\qedhere
	\end{enumerate}
\end{proof}
So $\phi_0$ is a $\Gamma$-equivariant refined tile map that realises the required linear combination.
\end{enumerate}
	\subsubsection{Decorated ideal polygons}\label{decodim1}
In this section, we will prove Theorem \ref{codim1} fo decorated polygons without punctures.

The surface is contractible. So, $\lred=\red,\lrtile=\rtile$. 
Firstly, we remark that at most one of the two intersecting arcs $\al_1,\al_2$ can be of the edge-to-vertex type. Indeed, if for every $i=1,2$ the arc $\al_i$ joins the vertex $v_i$ with the edge  $e_i$ then these four vertices must be cyclically ordered as $e_1,v_2,v_1,e_2$ and no two are consecutive. In particular, $v_1,v_2$ are not consecutive. So either there exists an arc $\be$ in $\edo$ that has one endpoint on $e_1$ and another on a vertex or an edge that lies between $v_2$ and $v_1$ or there exists an arc $\be'\in \edo\cap \edt$ joining $v_1,v_2$. In the first case, the arc $\al_2$ must intersect $\be$, hence $\codim {\sigma_1\cap \sigma_2}>1$ which contradicts our hypothesis. The second case is not possible because there are no vertex-to-vertex arcs in this arc complex. 

So we have the following two cases:
\begin{itemize}
	\item The proof, in the case where both $\al_1$ and $\al_2$ are edge-to-edge arcs, is identical to that for ideal polygons.
	\item Let $\al_1$ be an edge-to-edge arc joining two edges $e_1, e_3$ and let $\al_2$ be an edge-to-vertex arc joining the edge $e_2$ and the decorated ideal vertex $\nu$. As shown in the Fig.\ref{deco4possi}, there are four configurations  
	possible depending on whether $e_1$ or $e_3$ are consecutive to $p$ or not. Fig. \ref{codim1deco} focuses on the generic possibility.
	\begin{figure}[!h]
		\begin{center}
			\includegraphics[width=12cm]{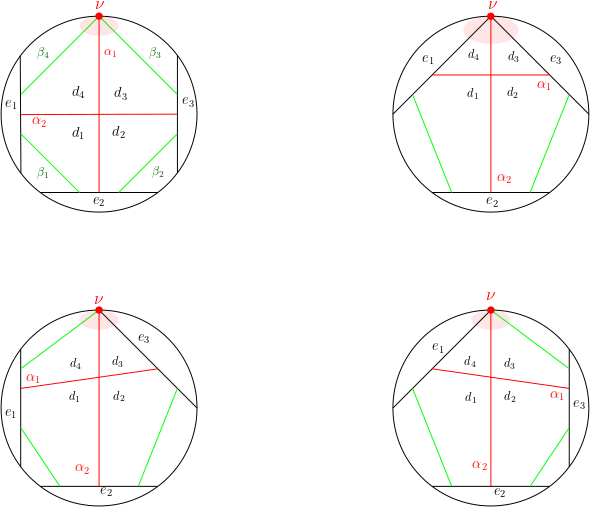}
			\caption{The four possible configurations}
			\label{deco4possi}
		\end{center}
	\end{figure}
\begin{figure}[!h]
	\begin{center}
		\frame{\includegraphics[width=12cm]{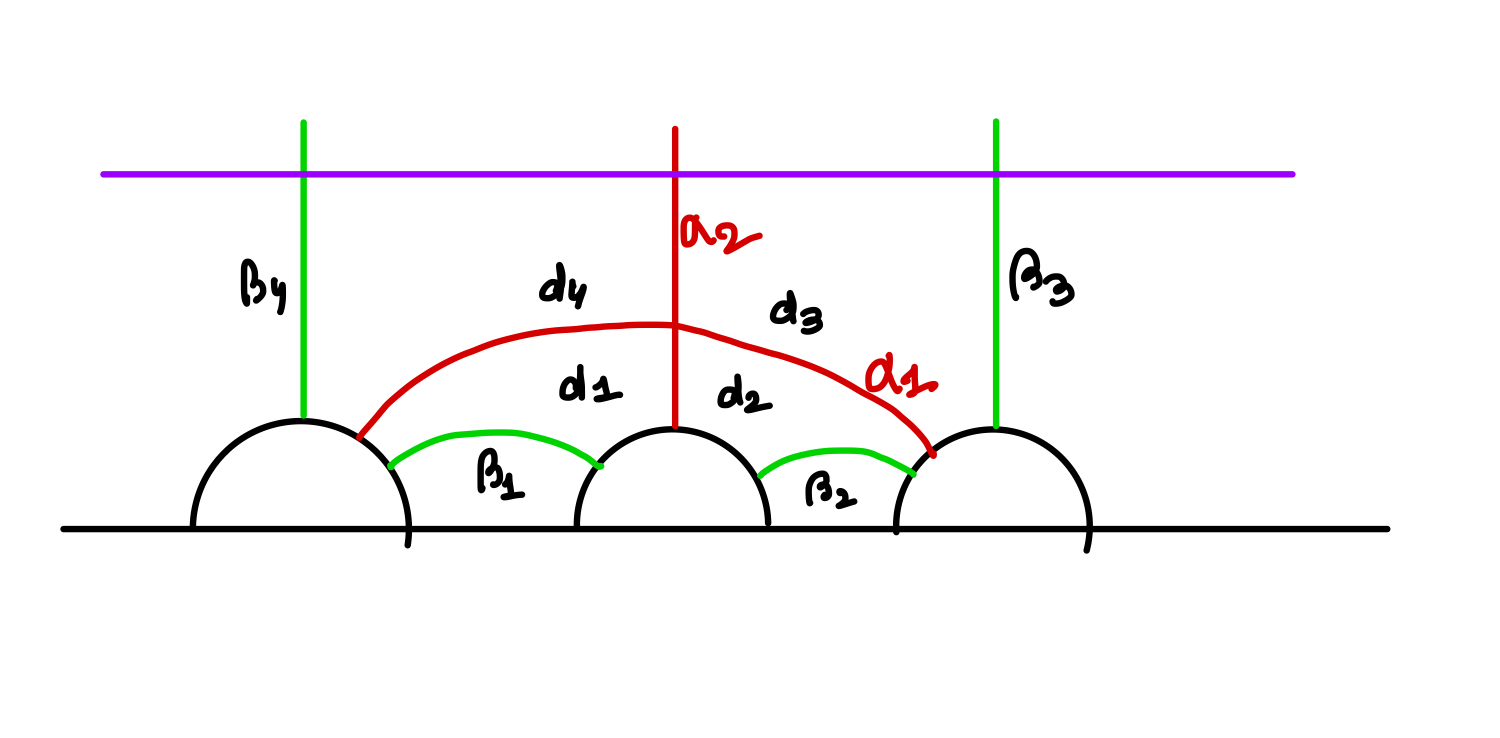}}
		\caption{Codimension 1: the generic case}
		\label{codim1deco}
	\end{center}
\end{figure}
	Since neither $e_1,e_2$ nor $e_2,e_3$ can be consecutive, there always exist two edge-to-edge arcs $\be_1$ and $\be_2$ in $\edo\cap\edt$ that respectively join these two pairs.
	Again, if $e_1,\nu$ or $e_3,\nu$ are not consecutive, there must exist two edge-to-vertex arcs $\be_4$ and $\be_3$ in $\edo\cap\edt$, joining the pairs, respectively. 
	Let $d_1,\ldots,d_4$ be the smaller tiles of the refinement $\sigma_1\cup \{\al_2\}$, such that $\be_i$ is an internal edge of $d_i$ whenever $\be_i$ exists. 
	
	We may suppose that $\nu=\infty$. We make the following choice of strip template:
	\begin{itemize}
		\item The arcs are chosen from the isotopy classes so that they intersect the boundary perpendicularly.
		\item For edge-to-edge arcs, the waists are chosen to the point of projection of the ideal point $\infty$. For edge-to-vertex arcs, the waist is always the point $\infty$.
	\end{itemize}
	
	The arcs $\be_1,\be_2$ are semi-circular with centres $y_1,y_2$ whereas $\be_3,\be_4$ are vertical lines. Let $x_0$ be the centre of the semi-circle carrying $\al_1$. Using Lemma \eqref{centre}, we have that \begin{equation}\label{ineqcentredeco}
		y_1<x_0<y_2.
	\end{equation}
	We shall construct a tile map corresponding to the linear combination \eqref{lincomid}.
	
	\begin{proper}\label{propdeco}
		A neutral tile map $\phi_0:\rtile\longrightarrow\R_2[z]$ represents the linear combination \eqref{lincomid} if and only if it verifies the following properties:
		\begin{enumerate}
			\item \label{ideconeut}The polynomial $\np d$ vanishes at every ideal vertex of $\del\in \rtile$.
			\item \label{idecoch}The tile map is coherent around the intersection point $o$.
			\item Let $\del,\del'\in \lrtile$ be two tiles with common internal edge contained in $\al_1$ for such that $\del$ lies above $\al_1$. Then $\np \del-\np {\del'}$ is a hyperbolic Killing field with attracting fixed point at $\infty$ and repelling fixed point at $x_1$. In particular, its axis intersects $\al_1$ at $p_{\al_1}$. In terms of polynomials, we must have $\np \del-\np {\del'}=(z\mapsto A(z-x_1)),\text{ for some }A>0,$ a linear polynomial with positive leading coefficient.
			\item \label{idecoalhori} Let $\del,\del'\in \rtile$ be two tiles with common internal edge contained in $\al_2$ for $i=1,2$ such that $\del$ lies to the left of the edge. Then $\np \del-\np {\del'}$ is a parabolic Killing vector field with fixed point at $\infty$, pointing towards $\del$. In terms of polynomials, we must have $\np \del-\np {\del'}=(z\mapsto B),\text{ for some }B<0,$ a constant polynomial.
			
			\item \label{idecobehori}Suppose $\del,\del'\in \rtile$ are two tiles with common internal edge $\be_j$ for $j=1,2$, such that $\del$ lies above $\be_j$. Then $\np \del-\np {\del'}$ is a hyperbolic Killing vector field with attracting fixed point at $y_j$ and repelling fixed point at $\infty$. In particular, its axis intersects $\be_j$ at $p_{\be_j}$. In terms of polynomials, we must have $\np \del-\np {\del'}=(z\mapsto C(z-y_j)), \,C<0.$
			\item\label{idecobevert} Suppose $\del,\del'\in \rtile$ are two tiles with common internal edge $\be_j$ for $j=3,4$, such that $\del$ lies to the left $\be_j$. Then $\np \del-\np {\del'}$ is a parabolic Killing vector field with fixed point at $\infty$, pointing away from $\del$. In terms of polynomials, we must have $\np \del-\np {\del'}=(z\mapsto D),\, D>0.$
		\end{enumerate}
	\end{proper}
	We define the tile map
	\begin{eqnarray*} 
		\phi_0&:\lrtile\longrightarrow &\R_2[z]\\
		&\del \mapsto &\left\{ 
		\begin{array}{lll}
			(z\mapsto a_j(z-y_j)), & \text{if } \del=d_j, &i=1,2,\\
			(z\mapsto a_3), & \text{if } \del=d_3\\
			(z\mapsto a_4), & \text{if } \del=d_4\\
			0, & \text{otherwise},
		\end{array}
		\right.
	\end{eqnarray*}
	where
		$a_1=a_2=-1, \quad a_3=y_2-x_0>0,\quad a_4=y_1-x_0<0.$

	\begin{lem}
		The tile map defined above satisfies Properties \eqref{propdeco}.
	\end{lem}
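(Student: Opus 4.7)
The plan is to verify the six conditions in Properties \ref{propdeco} one by one by direct computation from the definition of $\phi_0$. The only geometric input that will be needed is the ordering $y_1<x_0<y_2$ recorded in \eqref{ineqcentredeco}, which will supply every sign.

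First I will check the ideal-vertex condition. The pentagonal tiles $d_1,d_2$ carry ideal vertices at $y_1,y_2$, and by definition $\phi_0(d_j)(y_j)=-(y_j-y_j)=0$; the tiles $d_3,d_4$ (when they contain the decorated vertex $\nu=\infty$) are assigned constant polynomials, which in the identification $\lalg\simeq\R_2[z]$ correspond to parabolic Killing fields fixing $\infty$, as required. All other tiles receive $0$, for which the condition is vacuous.

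The coherence relation around $o$ and condition~(3) across $\al_1$ will follow from the single computation
\begin{align*}
\np{d_3}-\np{d_2}&=(y_2-x_0)-\bigl(-(z-y_2)\bigr)=z-x_0,\\
\np{d_4}-\np{d_1}&=(y_1-x_0)-\bigl(-(z-y_1)\bigr)=z-x_0,
\end{align*}
whose two lines match (coherence) and have positive leading coefficient with root at $x_0$ (property (3), with axis meeting $\al_1$ orthogonally at the projection of $\infty$). For condition~(4) across the vertical arc $\al_2$, the differences $\np{d_2}-\np{d_1}=y_1-y_2$ and $\np{d_3}-\np{d_4}=y_2-y_1$ are nonzero constants of opposite sign, giving parabolic Killing fields fixing $\infty$, with the direction controlled by $y_1<y_2$. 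For conditions~(5) and (6), the exterior neighbour across each $\be_j$ is sent to $0$, so the differences reduce to $\np{d_j}$ itself: the linear polynomial $-(z-y_j)$ of negative leading coefficient vanishing at $y_j$ across $\be_j$ for $j=1,2$, and the constants $y_2-x_0>0$ and $y_1-x_0<0$ across $\be_3,\be_4$, all of the prescribed forms thanks again to $y_1<x_0<y_2$.

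The only obstacle is bookkeeping: fixing the labelling of the four small tiles so that the ``above/below'' and ``left/right'' conventions of Properties \ref{propdeco} match the sign conventions coming out of the computation, and treating the degenerate sub-configurations of Figure \ref{deco4possi} where one or more of the arcs $\be_j$ is absent because a pair among $e_1,e_2,e_3,\nu$ is consecutive. In those degenerate cases the corresponding small tile simply disappears and the associated condition becomes vacuous; the remaining identities above are unaffected, and the lemma follows.
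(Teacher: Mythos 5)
Your verification takes exactly the paper's route: substitute the definition of $\phi_0$ into each of the six conditions of Properties \ref{propdeco} and let the single inequality $y_1<x_0<y_2$ of \eqref{ineqcentredeco} supply every sign. The computations for coherence, for the hyperbolic difference $z-x_0$ across $\al_1$, and for the arcs $\be_1,\ldots,\be_4$ all match the paper's (up to which member of each neighbouring pair is subtracted from which, a convention you explicitly defer to the figure), and your observation that the conditions attached to an absent $\be_j$ are vacuous correctly disposes of the degenerate configurations of Fig.~\ref{deco4possi}.

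One sentence is wrong as written. For the check across $\al_2$ you claim $\np{d_2}-\np{d_1}=y_1-y_2$ and $\np{d_3}-\np{d_4}=y_2-y_1$ are ``of opposite sign''. In fact $\np{d_2}-\np{d_1}=\bigl(-(z-y_2)\bigr)-\bigl(-(z-y_1)\bigr)=y_2-y_1$, so the two differences are \emph{equal}, as they must be: $(d_1,d_2)$ and $(d_4,d_3)$ are the two pairs of tiles neighbouring along $\al_2$, and the coherence you have just verified forces both differences to represent the same Killing field $v_{\al_2}$; if they genuinely had opposite signs, $\phi_0$ would represent no linear combination \eqref{lincomid} at all. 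With the paper's labelling the correctly oriented differences are $\np{d_4}-\np{d_3}=\np{d_1}-\np{d_2}=y_1-y_2<0$, the required negative constant. Two smaller points: across $\be_4$ the prescribed positive quantity is $x_0-y_1$, not $y_1-x_0$ (again an orientation issue you acknowledge); and $d_1,d_2$ carry no ideal vertices in this configuration ($y_1,y_2$ are the centres of the semicircles carrying $\be_1,\be_2$), so condition (1) is vacuous for them rather than verified by evaluating at $y_j$.
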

	\begin{proof}
		\begin{enumerate}
			\item Suppose that $\del$ is a tile with a decorated ideal vertex. If $\del\in \supp{\phi_0} $, then $\del=d_j$  for some $j\in \set{3,4}$ and that ideal vertex is given by $\infty$. From the definition of $\phi_0$, we get that $\np {d_j}$ is a constant polynomial. Hence, it fixes infinity and any horoball centered at $\infty$. If $\del\notin \supp{\phi_0} $, then $\np \del=0$, which automatically fixes its ideal vertex.
			\item Consistency around $\wt o$:
			\begin{align*}
				\np{d_1}-\np{d_2} (z)&=y_1-y_2\\
				&= y_1-x_0+x_0-y_2\\
				&=	\np{d_4}-\np{d_3} (z).
			\end{align*}
			\item Let $\del,\del'\in \rtile$ be two tiles with a common internal edge of the form $\al_1$ such that $\del$ lies above the edge. Using the coherence of $\phi_0$, it suffices to verify the property when $\del=d_4,\del'=d_1$. Substituting the values of $a_1,a_4$ and using the definition of $\phi_0$, we get that
		$\np{d_4}-\np{d_1} (z)=z-x_0,$
			which is of the desired form.
			\item Let $\del,\del'\in \rtile$ be two tiles with a common internal edge of the form $\al_2$ such that $\del$ lies to the left of the edge. Using the coherence of $\phi_0$, it suffices to verify the property when $\del=d_4,\del'=d_3$. From eq. \eqref{ineqcentredeco},
		$\np{d_4}-\np{d_3} (z)=y_1-y_2<0.$
		So the property is verified.
			\item Let $\del,\del'\in \rtile$ be two tiles with a common internal edge of the form $\be_j$ for $j=1,2$ such that $\del$ lies above the edge. Then $\del=d_j$ for $j=1,2$ and $\del'\notin \supp {\phi_0}$.
			For $j=1,2$, we have that
				$\np{d_j}-\np{\del'} (z)=-z+y_j,$
			which is of the desired form.
			\item Let $\del,\del'\in \rtile$ be two tiles with a common internal edge of the form $\be_j$ for $j=3,4$ such that $\del$ lies to the left of the edge. Then either $\del\notin \supp {\phi_0}, \del'=d_4$ or $\del=d_3, \del'\notin \supp {\phi_0}$.
			In the first case, we have that
				$\np{\del}-\np{d_4} (z)=x_0-y_1>0.$
			In the second case, we have that 
				$\np{d_3}-\np{\del'} (z)=y_2-x_0>0.$
		\end{enumerate}
		So we have a neutral tile map representing the linear combination \eqref{lincomid} for the chosen strip template. 
	\end{proof}
	
\end{itemize}
This concludes the proof of Theorem \ref{codim1} for decorated ideal polygons.

\subsubsection{Decorated once-punctured polygons}\label{decodim1punc}
Finally, we will prove Theorem \ref{codim1} for decorated once-punctured polygons.
	
	Let $\sigma_1$ and $\sigma_2$ be as in the hypothesis with $\al_1\in\edo\backslash \edt$ and $\al_2\in\edt\backslash\edo$. These two arcs intersect either exactly once at a point (when both are non-maximal) or twice (when both are maximal). In the first case: at most one of the two intersecting arcs $\al_1,\al_2$ can be of the edge-to-vertex type.
	
	\begin{enumerate}
		\item Suppose that both the geodesic arcs $\al_1$ and $\al_2$ are finite.  We choose  an embedding of the universal cover of the polygon in the upper half plane so that the point $\infty$ is distinct from the endpoints of all the arcs in the lifted edgesets $\ledo, \ledt$. Then every geodesic arc used in the triangulation is carried by a semi-circle. 
	Let $o$ be the point of intersection of $\al_1, \al_2$.  Let $\wt o$ be a lift of the point $o$. Then $\wt o= \alo\cap \alt$ for two lifts  of $\al_1$ and $\al_2$ respectively. There are four tiles formed around $\wt o$, denoted by $d_j$, for $j=1,\ldots, 4$. For each $j\in \set{1,\ldots,4}$, the tile $d_{j}$ is either a quadrilateral with a decorated vertex and exactly two arc edges carried by $\alo$ and $\alt$, or it is a pentagon with exactly three arc edges carried by $\alo,\alt$ and a third arc $\wt{\be_j}$, which is a lift of an arc $\be_j\in \edo\cap\edt$.  Let $\mathcal J\subset \{1,\ldots, 4\}$ be such that the tile $d_j$ is a pentagon if and only if $j\in \mathcal J$ .
	For $i=1,2$, let $x_i$ be the centre of the semi-circle containing $\wt{\al_i}$. For $j=1,\ldots,4$, let $y_j$ denote the ideal vertex of $d_j$ or the centre of the semi-circle containing $\wt{\be_j}$.
	For each $j$, the tile $d_j$ is either a quadrilateral with exactly one ideal vertex and two internal edges contained in $\al_1$ and $\al_2$, or it is a pentagon with exactly three internal edges: $\al_1,\al_2$ and a third arc $\be_j\in \edt\cap\edo$. 
	We shall construct a tile map  corresponding to the following linear combination:
	\begin{equation}\label{lincomid}
		c_{\al_1}f_{\al_1}(m)+c_{\al_2}f_{\al_2}(m)+\sum_{j\in \mathcal J}c_{\be_j} f_{\be}(m)=0,
	\end{equation}
	with $c_{\al_1},c_{\al_2}>0$ and $c_{\be_j}< 0$ for every $j\in \mathcal J$. In other words, we define a neutral tile map $\phi_0:\lrtile\longrightarrow\R_2[z]$ that is $\Gamma$-equivariant and satisfies Properties \eqref{idneutral}-\eqref{idbe}, as defined in Subsection \ref{ipdim1}. 
	
	Suppose that the endpoints of $\wt{\al_1}$ lie on the boundary geodesics $(a,b)$ and $(e,f)$ and those of $\wt{\al_2}$ lie on $(c,d)$ and $(g,h)$ such that the following inequalities hold: 
	\begin{align}
		&a<b\leq c<d\leq e<f\leq g<h.
	\end{align}
	For $j=1,\ldots,4$, define 
	\begin{eqnarray*}
		\phi_0&:\lrtile\longrightarrow &\twop\\
		&d \longmapsto &\left\{ 
		\begin{array}{ll}
			P_j:=(z\mapsto a_j(z-y_j)), & \text{if } d=d_j\\
			(\ga\cdot z\mapsto \frac{\mathrm{d}\ga(z)}{\mathrm{d}z}P_j(z))), & \text{if } d=\ga\cdot d_j, \ga\in \Gamma\smallsetminus\{Id\}\\
			0, & \text{otherwise,}
		\end{array}
		\right.
	\end{eqnarray*}
	where \[\begin{array}{llll}
		a_1=\frac{x_1-y_4}{x_1-y_1},&a_2=\frac{(x_1-y_4)(x_2-y_4)-(y_4-y_1)(y_4-y_3)}{(x_1-y_1)(x_2-y_3)}&a_3=\frac{x_2-y_4}{x_2-y_3} &a_4=1.
	\end{array} \]
	So, $a_1, a_2,a_3<0$.
	
	The tile map $\phi_0$ is $\Gamma$-equivariant by definition. From the subsection we get that it verifies all the properties except possibly for a sub-case of \eqref{idbe} when there exist $j,j'\in \mathcal J$ and $\ga\in \Gamma\smallsetminus\{Id\}$ such that $\wt{\be_j'}=\ga\cdot\wt{\be_j}$. Without loss of generality, we suppose that $j<j'$. We shall now prove that $\phi_0$ verifies this property for this case as well. 
	
	The geodesic arc $\ga\cdot \be_j$ is the common internal edge of $d_{j'}$ and $\ga\cdot d_j$. Suppose that $d_{j'}$ lies above $\ga\cdot d_j$. Since $j<j'$, we have that $j,j'\in \{1,2,3\}$. From the definition of $\phi_0$, we get that $\np{d_j},\np{d_{j'}}$ are hyperbolic Killing fields whose axes are perpendicular respectively to $ \be_j,\be_{j'}$, with attracting fixed points at $y_j,y_{j'}$. Since both $d_j$ and $d_{j'}$ lie above $\be_j$ and $\be_{j'}$, this means that the two Killing fields are directed towards these edges. Now $\ga$ maps $\be_j$ to $\be_{j'}$ and $\ga\cdot d_j$ lies below it. Since $\phi_0$ is $\Gamma$-equivariant, $\np{\ga\cdot d_j}$ is a hyperbolic Killing field directed towards $\ga\cdot\be_{j}$ and hence it points upwards. So the difference vector $\np{d_{j'}}-\np{\ga\cdot d_j}$ is hyperbolic, normal to $\be_{j'}$, directed downwards and towards $\ga\cdot d_j$, as required.  
	\begin{figure}
		\centering
		\frame{\includegraphics[width=15cm]{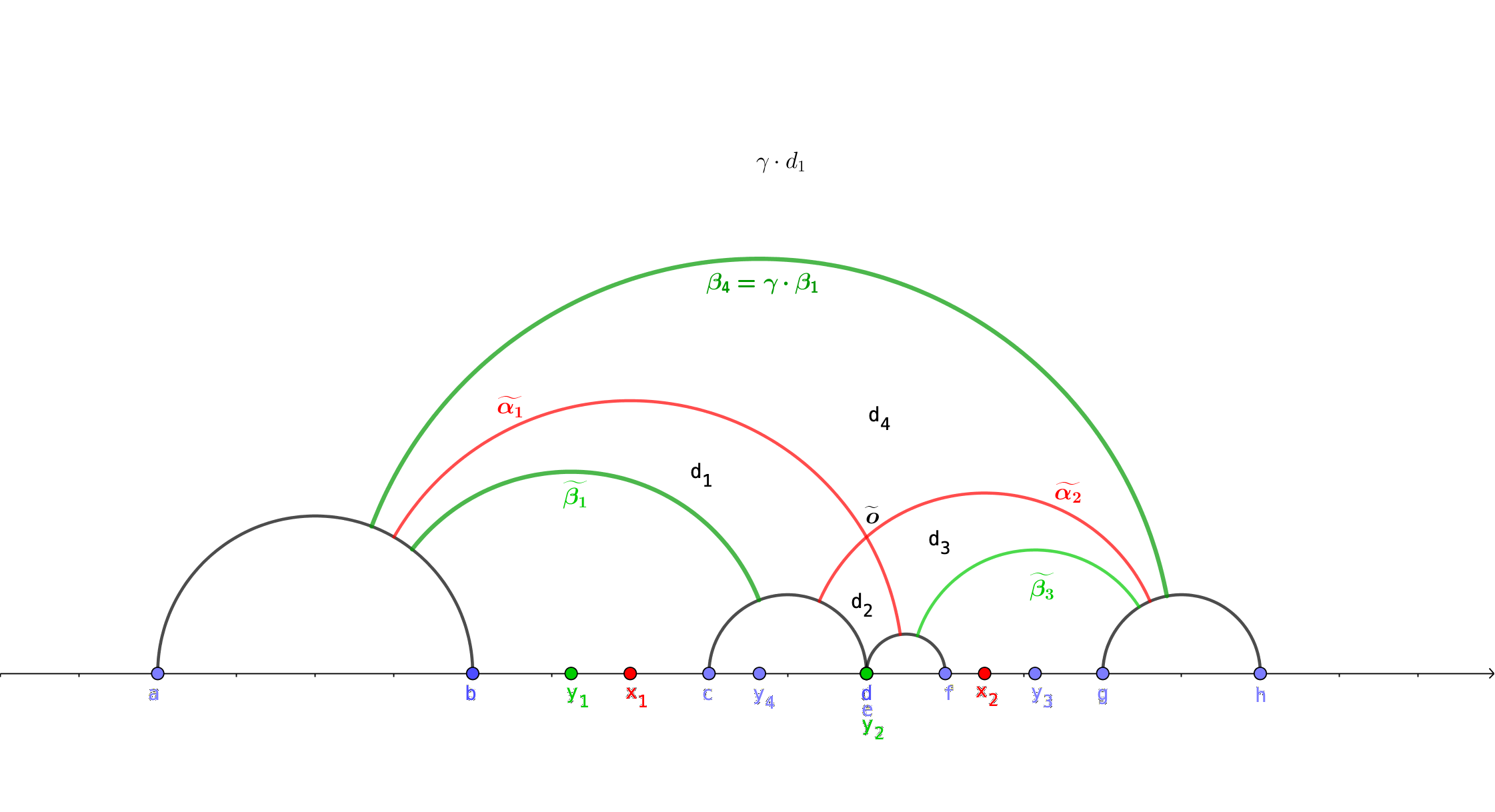}}
		\caption{$d=e=y_2$ and $j=1, j'=4$}
		\label{cod1sp}
	\end{figure}
	Finally, we suppose that $d_{j'}$ lies below $\ga\cdot d_j$. So $j'=4$ and $j\in \{1,2,3\}$. See Figure \ref{cod1sp}.  Again from the definition of $\phi_0$ we get that $\np {d_4}$ is a hyperbolic Killing field with axis perpendicular to $\be_4$ and attracting fixed point at $\infty$. So it is directed upwards. In this case, the tile $\ga\cdot d_j$ lies above $\be_4$, the Killing field $\np{\ga\cdot d_j}$ is directed downwards. Hence the difference vector field 
	$\np{d_{j'}}-\np{\ga\cdot d_j}$ is hyperbolic, directed upwards and towards $\ga\cdot d_j$, as required.  This finishes the proof of the theorem in this case.
\item Suppose that one of the two arcs, say $\al_1$, is finite, while the other one, $\al_2$, is an infinite vertex-to-edge arc. 
\begin{figure}
	\centering
	\frame{	\includegraphics[width=12cm]{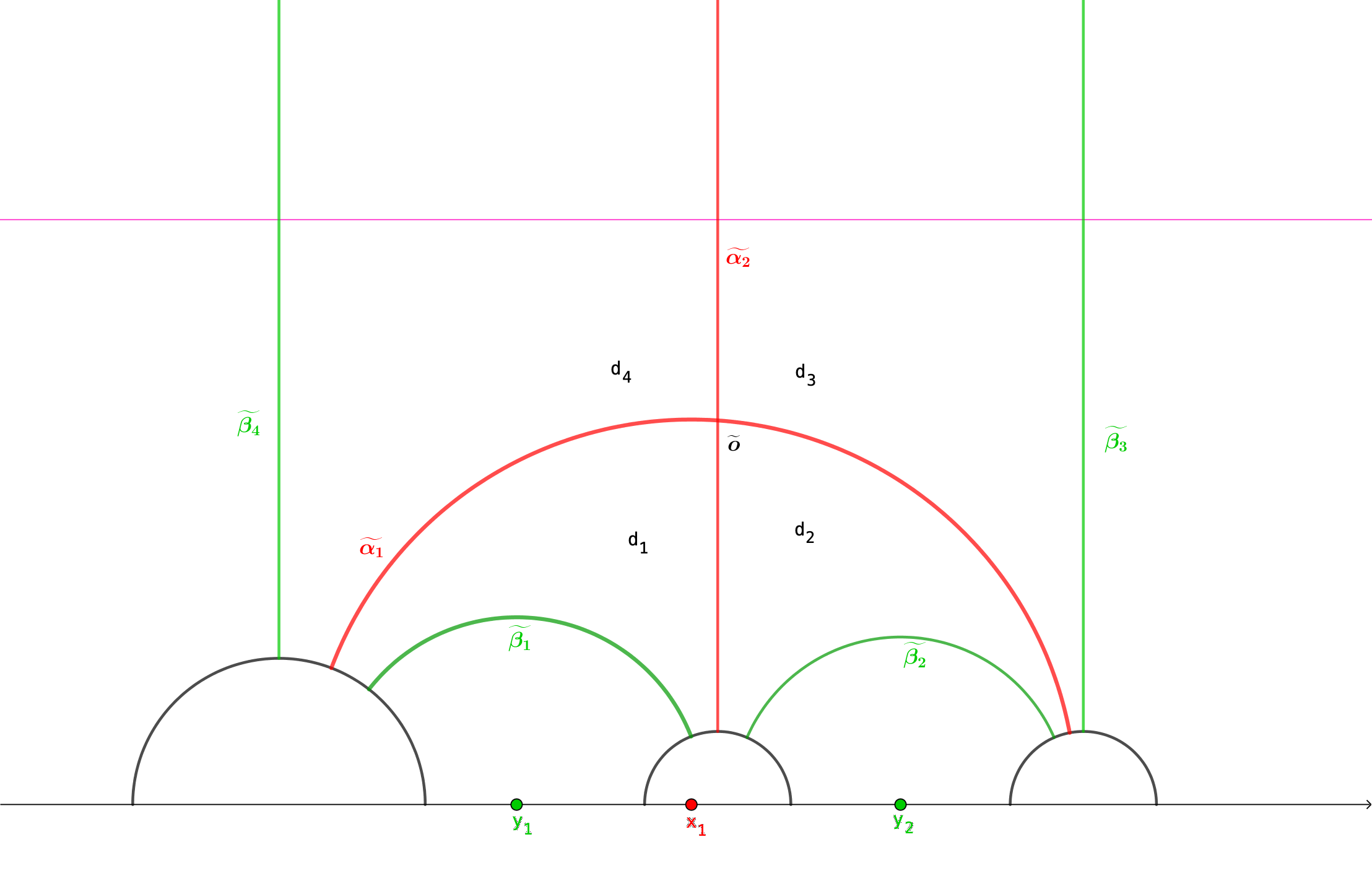}}
	\caption{}
	\label{cod1deco1}
\end{figure}
	We choose an embedding of the universal cover of the polygon in the upper half plane such that one of the lifts of the decorated vertex to which $\al_2$ converges, is at $\infty$. Let $\alt$ be the lift of $\al_2$ that passes through $\infty$ and let $\alo$ be the lift of $\al_1$ that intersects $\alt$ at a point $\wt o$, which is a lift of $o$ in $\HP$. See Figures \ref{cod1deco1}, \ref{cod1deco2}. The four tiles formed around $\wt o$ are labeled $d_1,\ldots,d_4$ in the trigonometric sense such that the tiles $d_3,d_4$ lie above $\alo$, while $d_1,d_2$ lie below $\alo$. The horizontal line is the horoball decoration at $\infty$. For $j\in \set{1,2}$, the tile $d_{j}$ is a pentagon with exactly three arc edges carried by $\alo,\alt$ and a third arc $\wt{\be_j}$, which is a lift of an arc $\be_j\in \edo\cap\edt$. For $j\in \set{3,4}$, the tile $d_{j}$ is either 
	\begin{itemize}
		\item a quadrilateral with a decorated ideal vertex and exactly three internal edges carried by $\alo,\alt$ and a third arc $\wt{\be_j}$, which is a lift of an arc $\be_j\in \edo\cap\edt$ (Fig.\ \ref{cod1deco1}), 
		\item or a triangle with exactly two arc edges carried by $\alo,\alt$ (Fig.\ \ref{cod1deco2}).
	\end{itemize}
	For $j=1,2$, let $y_j$ be the centre of the semi-circle carrying $\wt{\be_j}$. Let $x_0$ be the centre of the semi-circle carrying $\alo$.
	We define the tile map
	\begin{eqnarray*} 
		\phi_0&:\lrtile\longrightarrow &\R_2[z]\\
		&d \mapsto &\left\{ 
		\begin{array}{lll}
			P_j:=(z\mapsto a_i(z-y_i)), & \text{if } d=d _j, &i=1,2,\\
			P_3:=(z\mapsto a_3), & \text{if } d=d_3\\
			P_4:=(z\mapsto a_4), & \text{if } d=d_4\\
			(\ga(z)\mapsto 	\frac{\mathrm{d}\ga(z)}{\mathrm{d}z}P_j(z))), & \text{if } d=\ga\cdot d_j, \ga\in \Gamma\smallsetminus\{Id\}\\
			0, & \text{otherwise},
		\end{array}
		\right.
	\end{eqnarray*}
	where $a_1=a_2=-1, \quad a_3=y_2-x_0>0,\quad a_4=y_1-x_0<0.$
	\begin{figure}
		\centering
		\frame{\includegraphics[height=10cm]{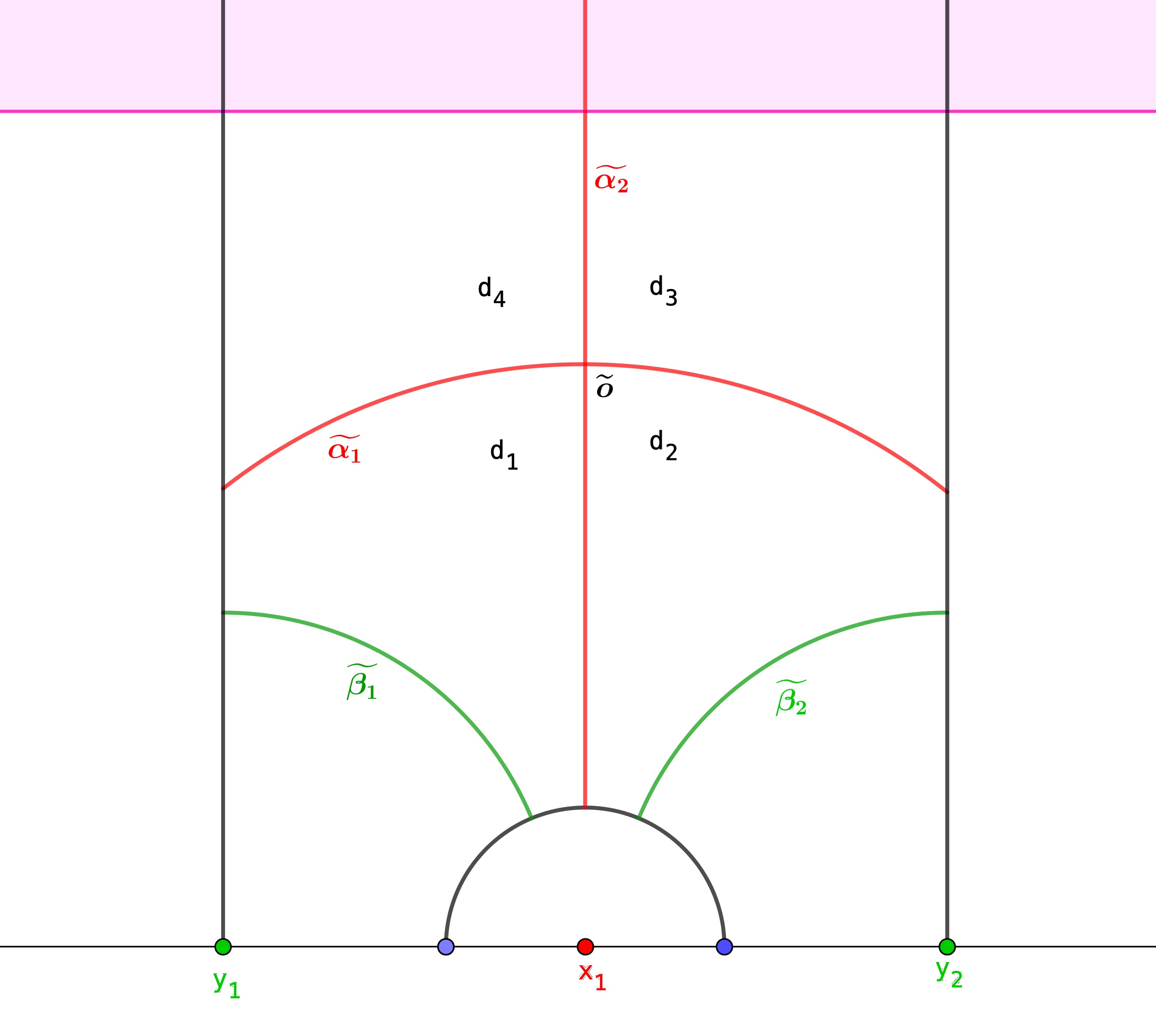}}
		\caption{${\wt\be_2}=\ga\cdot\wt{\be_1}$}
		\label{cod1deco2}
	\end{figure}
	This is a $\Gamma$-equivariant map by construction and satisfies the properties \ref{propdeco}\eqref{ideconeut}-\eqref{idecoalhori}, as defined in proof in Subsection \ref{decodim1}. 
	We shall now verify the sub-cases of \eqref{idecobehori} and \eqref{idecobevert} when there exist $j,j'\in \mathcal J$ and $\ga\in \Gamma\smallsetminus\{Id\}$ such that $\wt{\be_j'}=\ga\cdot\wt{\be_j}$.
	The only way to identify $\wt{\be_3}$ and $\wt{\be_4}$ is via a parabolic element in $\psl$ that has $\infty$ as fixed point. But such elements are not present in $\Gamma$. Hence, the verification of Property \ref{propdeco}\eqref{idecobevert}, as done in the proof of Theorem \ref{decodim1}, suffices for our general case as well. Thus we are left with the sub-case $\ga\cdot\be_1=\be_2$. Note that for this to happen, the endpoints of $\wt{\be_1}$ must lie on two geodesics that do not intersect in $\cHP$. Fig.\ \ref{cod1deco3} shows one such example.
	\begin{figure}
		\centering
		\frame{	\includegraphics[height=8cm]{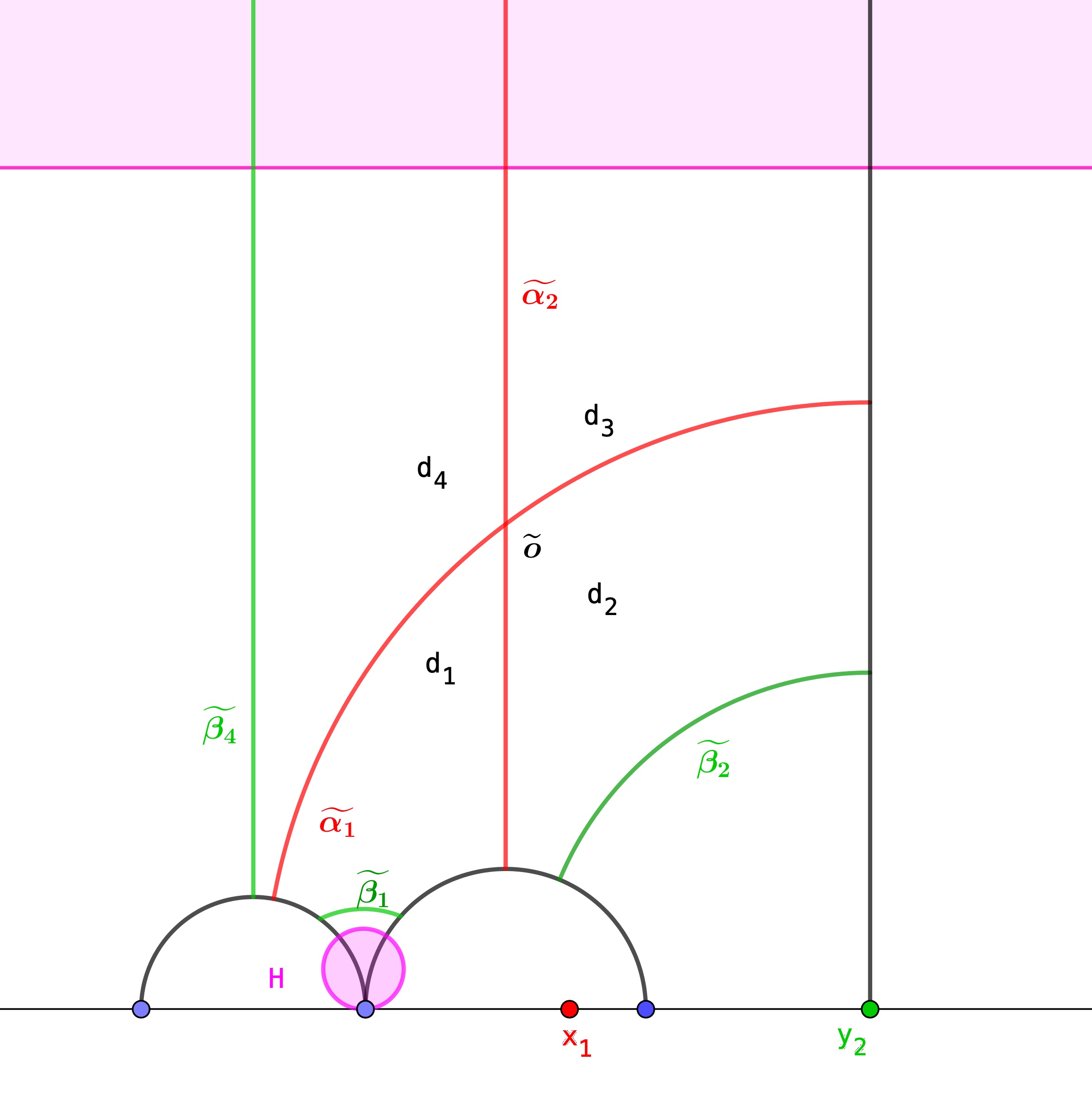}}
		\caption{}
		\label{cod1deco3}
	\end{figure}
	From the definition of $\phi_0$, we get that $\np{d_1},\np{d_{2}}$ are hyperbolic Killing fields whose axes are perpendicular respectively to $ \wt{\be_1},\wt{\be_2}$, with attracting fixed points at $y_1,y_2$. Since both $d_1$ and $d_2$ lie above $\be_1$ and $\be_2$, this means that the two Killing fields are directed towards these edges. Now $\ga$ maps $\be_1$ to $\be_2$ and $\ga\cdot d_1$ lies below $\be_2$. Since $\phi_0$ is $\Gamma$-equivariant, $\np{\ga\cdot d_1}$ is a hyperbolic Killing field directed towards $\ga\cdot\be_{1}$ and hence it points upwards. So the difference vector $\np{d_{2}}-\np{\ga\cdot d_1}$ is hyperbolic, directed downwards and towards $\ga\cdot d_1$, as required. This finishes the proof in this case.
\item The proof in the case of two intersecting maximal arcs is identical to that for undecorated punctured polygons. The tile containing the puncture in the complement of $\sigma_1\cap\sigma_2$ is a once-punctured quadrilateral bounded by a pair of opposite boundary edges and a pair of opposite edge-to-edge arcs. 
\end{enumerate}
	
\subsection{Local homeomorphsim: Codimension $\geq$ 2}
Let $p\in\ac \Pi$ such that $\codim{\sigma_p} \geq 2$.  In the cases of ideal polygons, punctured polygons, the arc complex is a sphere, we have that $$\Link {\sac {\Pi}}{\sigma_p} \simeq \mathbb{S}^{\codim{\sigma_p} -1.}$$ This is also true for decorated polygons because we have shown that their pruned arc complexes are open manifolds. In order to prove that $\mathbb{P}f$ is a local homeomorphism, it suffices to show that its restriction to the link of $\sigma_p$ is a homeomorphism. 
\begin{thm}\label{codimtwo}
	Let $\Pi$ be a ideal (posiibly decorated and once-punctured) polygon. Let $p\in\sac \Pi$ such that $\codim{\sigma_p}= 2$. Then, $\mathbb{P}f|_{\Link{\sac {\Pi}}{\sigma_p}}$ is a homeomorphism. 
\end{thm}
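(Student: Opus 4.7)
The plan is to adapt the strategy from \cite{dgk} recalled at the end of Section \ref{recap} to our polygonal setting. Since $\codim{\sigma_p}=2$, the link $\Link{\sac\Pi}{\sigma_p}$ is a $1$-dimensional sphere, and I would first enumerate the combinatorial possibilities. The $0$-skeleton of $\sigma_p$ decomposes $\Pi$ into tiles all but at most two of which are already minimal (hyperideal triangles in the undecorated case, or triangular tiles with one decorated vertex in the decorated case). Two pairwise-disjoint arcs remain to be drawn. Either there are two independent non-triangular regions, each completed in two ways by a diagonal exchange, giving a $4$-cycle link; or there is one region completed by two disjoint arcs chosen among five pentagonal-move configurations, giving a $5$-cycle link. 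In the decorated once-punctured case one must also check that the punctured tile is either already maximal in $\sigma_p$ or completed by edge-to-edge arcs only, but this follows from Definition \ref{big}.

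Let $V\subset\tang\Pi$ be the subspace spanned by $\{f_\al(m)\}_{\al\in\sigma_p^{(0)}}$. By Theorems \ref{thmip}, \ref{thmpunc}, \ref{thmdeco} and \ref{thmdecopunc}, these vectors are linearly independent, so $\dim V = N_0-2$ and $\mathbb{P}^+(\tang\Pi/V)$ is a circle. The projectivised strip map descends to a continuous piecewise-linear map $\Phi\colon\Link{\sac\Pi}{\sigma_p}\to \mathbb{P}^+(\tang\Pi/V)$, linear on each edge and sending a vertex $[\be]$ of the link to the class of $f_\be(m)$ modulo $V$. I would verify injectivity on each closed edge of the link by the basis theorem applied to the corresponding top-dimensional simplex $\sigma\subset\ac\Pi$: the two endpoints of the edge are sent to two projective classes whose representative vectors, together with a basis of $V$, span $\tang\Pi$, so they are distinct in the quotient and the edge embeds as a convex arc of the target $\s 1$. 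For two adjacent edges of the link, the corresponding top-dimensional simplices $\sigma_1,\sigma_2$ share a codimension-$1$ face whose interior lies in $\sac\Pi$; Theorem \ref{codim1} then provides a strip template for which $\mathbb{P}f(\sigma_1)\cup\mathbb{P}f(\sigma_2)$ is convex in $\ptan\Pi$, and passing to the quotient by $V$ yields that the two adjacent image arcs concatenate into a convex arc of $\s 1$ meeting only at their shared endpoint.

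It remains to rule out global wrapping: a priori the cycle of image arcs could cover $\s 1$ several times. The hardest step is therefore to show that $\Phi$ has degree one. I would argue as in \cite{dgk}: by the consecutive-convexity property just established, the image of the link is a closed cycle of convex arcs in $\s 1$ with pairwise disjoint interiors, so by induction on the number of arcs the cycle covers $\s 1$ exactly once. Consequently $\Phi$ is a P-L local homeomorphism of degree one between two copies of $\s 1$, hence a homeomorphism. In the decorated cases, one must additionally check that the strip templates used in the codimension-$1$ convexity statement can be chosen compatibly around $\sigma_p$ so that the two convexity assertions (one per pair of adjacent link-edges) hold simultaneously; this is the main technical obstacle, handled exactly as in the codimension-$1$ constructions of Subsections \ref{ipdim1}--\ref{decodim1punc} by making the waist and width choices depend only on $\sigma_p$ and the added arcs.
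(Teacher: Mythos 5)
Your setup (the link is a $4$-, $5$- or $6$-cycle, the map descends to a P-L map of circles after quotienting by $V$, each closed edge embeds by the basis theorems, and the whole problem reduces to showing degree one) matches the paper. The gap is in how you close the degree-one step. You assert that ``the image of the link is a closed cycle of convex arcs in $\s 1$ with \emph{pairwise} disjoint interiors'' and then induct. But Theorem \ref{codim1} only gives disjointness of interiors for \emph{consecutive} arcs, i.e.\ for two top-dimensional simplices sharing a codimension-$1$ face; for non-adjacent edges of the link there is no such statement, and pairwise disjointness of all the image arcs is precisely the global injectivity you are trying to prove, so invoking it is circular. Moreover the ``induction on the number of arcs'' is not a valid mechanism for extracting a global degree from purely local (consecutive) data: a cycle of arcs, each shorter than $\pi$ and with consecutive interiors disjoint, could in principle still wind more than once unless one controls the total angle.

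The paper's actual argument is quantitative and needs much less: writing $\theta_i\in(0,\pi)$ for the angle of each image arc (which follows from linear independence of the two endpoint vectors modulo $V$), the total $\sum_i\theta_i$ is a positive multiple of $2\pi$; Theorem \ref{codim1} supplies a strip template for which \emph{one} adjacent pair satisfies $\theta_1+\theta_2<\pi$, so for a pentagon $\sum_i\theta_i<\pi+3\pi=4\pi$, forcing $\sum_i\theta_i=2\pi$ and degree one (similarly for the $4$- and $6$-cycles). Crucially, this needs the convexity statement for only a single pair, so the compatibility problem you flag as ``the main technical obstacle'' --- choosing one strip template making all the consecutive convexity assertions hold simultaneously --- never arises; the paper instead removes the dependence on the template afterwards by noting that the space of strip templates is connected and the degree cannot jump. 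If you do want to pursue your route, note that simultaneous consecutive convexity for all five pairs would also suffice (summing $\theta_i+\theta_{i+1}<\pi$ over $i\in\Z_5$ gives $\sum_i\theta_i<\tfrac{5\pi}{2}$), but you would then genuinely have to construct a single template realising all five inequalities at once, which is not provided by the codimension-$1$ constructions as they stand.
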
 
\begin{proof}
	We shall prove the theorem separately for the different types polygons.
	\begin{itemize}
		\item Ideal $n$-gons $\Pi=\ip n$, for $n\geq 6$: The complex $\Link{\sac {\ip n}}{\sigma_p}$ is either a quadrilateral or a pentagon. So it is enough to show that the continuous map $\mathbb{P}f|_{\Link {\sac {\ip n}}{\sigma_p}}:\s 1 \longrightarrow \s 1$ has degree one.
		
		Suppose that the link is a pentagon. Let $\{\al_i\}_{i=1}^5$ be the five vertices of $\ac {\ip 5}$. Let $\theta_i$, $i\in \Z_5$ be the angle formed at the origin by the vectors $f(\al_{i+1})$ and $f(\al_i)$, in $\tei{\ip 5}$. Then, for every $i=1,\ldots,5$, we have $\theta_i\in (0,\pi)$.
		By theorem \ref{codim1}, we know that there is a choice of strip template such that $\theta_1+\theta_{2}<\pi$. Also, the sum $\sum_{i=3}^{5} \theta_i <3\pi$. Since, $f$ is a continuous map from the circle to itself, the quantity $\sum_{i=1}^{5} \theta_i $ is always a multiple of $2\pi$. Hence, we have $\sum_{i=1}^{5} \theta_i =2\pi$, which implies that the degree of $f$ is 1. Hence, $f$ is a homeomorphism for this choice of strip template. Since, the space of all strip templates is connected and since there is no continuous way of changing the sum of angles from $2\pi$ to $4\pi$, we have that $f$ is a homeomorphism for every choice of strip template. This also proves the homeomorphism of the projectivised strip map in the case of the ideal pentagon $\ip 5$.
		The proof works similarly when the link is a quadrilateral.
		\item Punctured $n$-gons $\Pi=\punc n$, for $n\geq 5$ or one-holed $n$-gons $\holed n$, for $n\geq3$ : Suppose that the complement $\Pi\backslash  \bigcup\limits_{\al\in\sigma^{(0)}} \al$ has one non-triangulated region. If this region contains the puncture, then it can be triangulated in six different ways using two disjoint arcs, exactly one of which is always a maximal arc. So we have that $\Link{\ac\Pi}{\sigma}$ is a hexagon. Like in the case of ideal polygons, for $i=1,\ldots,6$, let $\theta_i$ be the angle subtended at the origin by the vectors $f(\al_{i+1})$ and $f(\al_i)$, in $\tei{\Pi}$. Let $\al_1, \al_3,\al_5$ be the vertices corresponding to the maximal arcs.  By Theorem \ref{codim1}, there exists a strip template such that 
		\[ \theta_i+\theta_{i+1} < \pi, i=1,\ldots,5.\] So the degree of the map is 1.
		Since the arc complex of a punctured triangle $\punc 3$ are also $PL$-homeomorphic to a hexagon, the  homeomorphism of $\mathbb{P}f$ in these cases, is a consequence of the above proof.
		The two cases — exactly one non-triangulated region containing no puncture or hole, and two non-triangulated regions — are treated identically as in the case of ideal polygons.
		\item The proofs in the case of decorated polygons follows from the two above cases. 
	\end{itemize}
\end{proof}

\paragraph{Ideal Square, Punctured bigon, one-holed monogon:} When $\Pi=\ip 4$ or $\punc 2$, the arc complex $\ac {\Pi}$ is a sphere of dimension 0. Let $[\al]$ and $[\be]$ be the two isotopy classes of arcs. If we parametrise the deformation space using the length of $\al$, we see that $f(\al)$ corresponds to the origin where as $f(\be)$ increases its length. So, we have $\mathbb{P}f(\al)\neq \mathbb{P}f(\be)$, which proves the homeomorphism.

\begin{thm}
	Let $p\in\sac {\Pi}$ such that $\codim{\sigma_p}\geq 2$. Then, $\mathbb{P}f|_{\Link{\ac \Pi} {\sigma_p}}$ is a homeomorphism. 
\end{thm}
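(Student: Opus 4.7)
The plan is to proceed by induction on $d:=\codim{\sigma_p}$, following the same skeleton of argument that was recalled from Danciger--Guéritaud--Kassel at the end of Section~\ref{recap}. The base case $d=2$ is exactly Theorem~\ref{codimtwo}, established above case-by-case for each of the four types of polygons. For the induction to get off the ground I first need to know that the link $L:=\Link{\ac\Pi}{\sigma_p}$ really is a sphere of dimension $d-1$: for ideal and once-punctured polygons this is immediate from Theorems~\ref{acideal} and \ref{idpac} (the whole arc complex is a sphere, so every link of an interior simplex is a sphere), and for decorated (possibly punctured) polygons this is precisely the content of Theorem~\ref{sacdp}, which shows that the relevant subcomplex is an open manifold of the expected dimension with spherical vertex links. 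Likewise, using the fact (Theorems~\ref{thmip}, \ref{thmpunc}, \ref{thmdeco}, \ref{thmdecopunc}) that the infinitesimal strip deformations along the arcs of $\sigma_p^{(0)}$ are linearly independent, the quotient $\tang\Pi/V$, where $V=\operatorname{span}\{f_\al(m):\al\in\sigma_p^{(0)}\}$, has dimension $d$, so that the target $\mathbb{P}^+(\tang\Pi/V)$ is also homeomorphic to $\s{d-1}$.

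Second, I would reduce the statement to the analogous parametrised version: the restriction
\[
\mathbb{P}f\,:\,\Link{\ac\Pi}{\sigma_p}\;\longrightarrow\;\mathbb{P}^+(\tang\Pi/V)
\]
is a well-defined continuous map, since moving across the link only changes the combination by infinitesimal strip deformations along arcs disjoint from $\sigma_p^{(0)}$, and the equivalence relation modulo $V$ precisely kills the contribution of the arcs in $\sigma_p^{(0)}$. Both source and target being spheres of dimension $d-1$, it suffices to check that the map is a local homeomorphism, and then a covering map of simply-connected manifolds when $d\geq 3$.

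Third, for the inductive step, pick any point $q\in L$ and let $\sigma_q$ be the unique open simplex of $L$ containing it; its codimension \emph{in $L$} is $d-1-\dim\sigma_q$, which is strictly less than $d$. The combined simplex $\sigma_p\cup\sigma_q\subset\ac\Pi$ has codimension $d-1-\dim\sigma_q$ in $\ac\Pi$ wait, actually $\codim_{\ac\Pi}(\sigma_p\cup\sigma_q)=d-1-\dim\sigma_q+\text{(something)}$ — more carefully, if $\dim\sigma_q=k$, then $\sigma_p\cup\sigma_q$ has dimension $\dim\sigma_p+k+1$, hence codimension $d-k-1<d$. By the induction hypothesis applied to $\sigma_p\cup\sigma_q$, the map $\mathbb{P}f$ restricted to the link of this larger simplex (which is the link of $\sigma_q$ taken inside $L$) is a homeomorphism onto the projectivised quotient of $\tang\Pi$ by the strip deformations along $(\sigma_p\cup\sigma_q)^{(0)}$. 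This gives local homeomorphism of $\mathbb{P}f|_L$ at $q$.

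Finally, since $L\simeq\s{d-1}$ is compact, the local homeomorphism $\mathbb{P}f|_L\colon\s{d-1}\to\s{d-1}$ is a covering map. For $d\geq 3$ the target sphere is simply connected, hence the covering is trivial and $\mathbb{P}f|_L$ is a homeomorphism. The main obstacle, which is really absorbed into the earlier lemmas, is making sure the induction hypothesis can be applied \emph{inside} the link, i.e.\ that the relative codimension strictly decreases and that the restricted strip map still factors through the appropriate quotient; once the preliminary linear independence (Theorems~\ref{thmip}--\ref{thmdecopunc}), the codimension-one disjointness (Theorem~\ref{codim1}), and the spherical link structure (Theorem~\ref{sacdp}) are in hand, the induction goes through uniformly for all four classes of polygons.
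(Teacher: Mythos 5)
Your proposal is correct and follows essentially the same route as the paper: induction on $\codim{\sigma_p}$ with base case the codimension-two theorem, reduction of local homeomorphism at a point $q$ of the link to the (already established or inductively assumed) statement for the larger simplex $\sigma_p\cup\sigma_q$ of strictly smaller codimension, and then compactness plus simple connectivity of $\s{d-1}$ for $d\geq 3$ to upgrade the covering map to a homeomorphism. The additional bookkeeping you supply (that the link is a sphere of the right dimension and that the target $\mathbb{P}^+(\tang\Pi/V)$ is too, via the linear-independence theorems) is consistent with what the paper imports from the Danciger--Gu\'eritaud--Kassel argument recalled in Section~\ref{recap}.
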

\begin{proof}
	The statement is verified for $\codim{\sigma_p}=2$. Suppose that the statement holds for $2,\ldots,d-1$. We need to show that $$\mathbb{P}f|_{\Link{\ac \Pi} {\sigma_p}}:\s{d-1}\longrightarrow \s{d-1}$$ is a local homeomorphism. Let $x\in \Link{\ac \Pi} {\sigma_p}$. Then $x$ is contained in the interior of a simplex $\sigma_x$ whose codimension in the link is $d-1-\dim \sigma_x$, which is less than $d$. So by the induction hypothesis, the map  $\mathbb{P}f|_{\Link{\ac \Pi} {\sigma_p}}$ restricted to $\Link{\mathbb{P}f|_{\Link{\ac \Pi} {\sigma_p}}}{\sigma_x}$ is a homeomorphism. This proves that $\mathbb{P}f|_{\Link{\ac \Pi} {\sigma_p}}$ is a local homeomorphism. Since $\s{d-1}$ is compact and simply-connected for $d\geq 3$, it follows that $\mathbb{P}f|_{\Link{\ac \Pi} {\sigma_p}}$ is a homeomorphism. 
\end{proof}

Thus, the map $\mathbb{P}f: \sac \Pi\longrightarrow \ptan \Pi$ is a local homeomorphism for the surfaces $\Pi=\ip n\, (n\geq 4),\, \punc n\, (n\geq 2),\, \dep n\, (n\geq 3)$ and $ \depu n\, (n\geq 3)$. By compactness  and the simply-connectedness of the sphere $\s{m}$ $m\geq 2$, we get that the map  $\mathbb{P}f$is a homeomorphism in the case of the undecorated polygons. 	
\subsection{Properness of the strip map}
In this final section we prove that the projectivised strip map $\mathbb{P}f$ is proper in the case of decorated polygons, which will conclude the proof of the homeomorphism o the same.
\begin{thm}\label{properdeco}
	Let $\dep n$ be a decorated ideal polygon with a metric $m\in \tei {\dep n}$. Then
	the projectivised strip map $\mathbb{P}f:\sac {\dep n} \longrightarrow \mathbb{P}^+(\adm m)$ is proper.
\end{thm}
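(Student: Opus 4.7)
The plan is to reduce properness to the statement that any sequence $(x_n)_n\subset\sac{\dep n}$ that leaves every compact subset of the pruned arc complex has image $\mathbb{P}f(x_n)$ that leaves every compact subset of $\mathbb{P}^+(\adm m)$. Since $\ac{\dep n}$ is a finite simplicial complex and hence compact, I may pass to a subsequence with $x_n\to x_\infty\in\ac{\dep n}$. Because $(x_n)$ escapes $\sac{\dep n}$, the limit $x_\infty$ lies in the interior of a simplex $\sigma_\infty$ which is \emph{non-filling}, i.e.\ $x_\infty\in\ac{\dep n}\smallsetminus\sac{\dep n}$.

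The core of the argument will be to extract from the non-filling data an edge or diagonal $\gamma$ of $\dep n$ whose length is not infinitesimally increased by $f(x_\infty)$. By Definition \ref{big}, non-fillingness of $\sigma_\infty$ means that the arcs of $\sigma_\infty^{(0)}$ decompose $\dep n$ into components, at least one of which contains two distinct decorated vertices $v,v'$. Take $\gamma$ to be the horoball connection joining $v$ to $v'$; by construction $\gamma$ is disjoint from every arc in $\sigma_\infty^{(0)}$, so $\gamma\cap\supp(x_\infty)=\varnothing$. Lemma \ref{lenderiv} then immediately gives
\[
dl_\gamma(f(x_\infty))=\sum_{p\in \gamma\cap\supp(x_\infty)}w_{x_\infty}(p)\sin\angle_p(\alpha_g,\supp x_\infty)=0,
\]
so $f(x_\infty)\in\partial\adm m$.

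To turn this into an assertion about $\mathbb{P}f(x_n)$, I need to know that $f(x_\infty)\neq 0$ so that the projective limit makes sense. This is where Theorem \ref{thmdeco} enters: the non-filling simplex $\sigma_\infty$ is a face of some top-dimensional simplex $\tau$ of $\ac{\dep n}$ (pure-dimensionality follows from the link computation of Theorem \ref{sacdp}), and by Theorem \ref{thmdeco} the family $\{f_\alpha(m)\}_{\alpha\in\tau^{(0)}}$ is a basis of $T_m\tei{\dep n}$. Hence $\{f_\alpha(m)\}_{\alpha\in\sigma_\infty^{(0)}}$ is linearly independent, and since $x_\infty$ has strictly positive barycentric coordinates in $\sigma_\infty$, we obtain $f(x_\infty)\neq 0$. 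By continuity of $f$ on $\ac{\dep n}$ we get $\mathbb{P}f(x_n)\to\mathbb{P}f(x_\infty)\in\mathbb{P}(\partial\adm m)$, a point that lies outside the open cone $\mathbb{P}^+(\adm m)$. Consequently every compact $K\subset\mathbb{P}^+(\adm m)$ is eventually avoided by $\mathbb{P}f(x_n)$, which is exactly properness.

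The main subtle point I expect is ensuring that such a pair $(v,v')$ always exists and that $\gamma$ is genuinely an edge or diagonal among those cutting out $\adm m$, in particular in low-complexity cases $n=3$; this should follow from a direct case analysis using that $\dep n$ has $n\geq 3$ decorated vertices and at most $2n-4$ arcs appearing in any simplex. A secondary technicality is making sure the extension of $\sigma_\infty$ to a filling top-dimensional $\tau$ is possible whenever $\sigma_\infty$ is non-trivial, which is guaranteed by the openness of $\sac{\dep n}$ as a manifold (Theorem \ref{sacdp}) together with the fact that every non-filling simplex sits in the closure of $\sac{\dep n}$ inside $\ac{\dep n}$.
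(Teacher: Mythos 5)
Your proof follows essentially the same route as the paper's: compactness of the finite arc complex yields a subsequential limit in a non-filling simplex, a horoball connection disjoint from the limiting support has vanishing length derivative so the limit of $f$ lands on $\partial \adm m$, and non-vanishing of $f$ at the limit then forces $\mathbb{P}f(x_n)$ to leave every compact of $\mathbb{P}^+(\adm m)$. The only (minor, valid) deviation is the non-vanishing step: the paper applies Lemma \ref{lenderiv} to a horoball connection that does meet the surviving arcs to get a strictly positive length derivative, whereas you deduce $f(x_\infty)\neq 0$ from the linear independence supplied by Theorem \ref{thmdeco} after extending $\sigma_\infty$ to a top-dimensional simplex; both arguments are sound.
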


\begin{proof}
Let $(x_n)_{n\in \N}$ be a sequence in the pruned arc complex $\sac {\dep n}$ such that $x_n\rightarrow \infty$: for every compact $K$ in $\sac {\dep n}$, there exists an integer $n_0\in \N$ such that for all $n\geq n_0$, $x_n\notin K$.  We want to show that $\mathbb{P}f(x_n) \to \infty$ in the projectivised admissible cone $\mathbb{P}^+\adm m$. Recall that the admissible cone $\adm m$ is an open convex subset of $\tang {\dep n}$. Its boundary $\partial \adm m$ contains $\vec{0}\in \tang S$ and is supported by hyperplanes (and their limits) given by the kernels of linear functionals $\mathrm{d}l_{\be}:\tang S \longrightarrow \R$, where $\be$ is an edge or a diagonal (a horoball connection) of the polygon. It suffices to show that $f(x_n)$ tends to infinity (in the sense of leaving every compact subset) inside $\adm m$ but stays bounded away from $\vec{0}$ so that $\mathbb{P}f(x_n)$ tends to infinity in $\mathbb{P}^+\adm m$. \\
Since the arc complex $\ac{\dep n}$ is a finite simplicial complex, there exists a subsequence $(y_n)_{n\in\N}$ that converges to a point $y\in \ac {\dep n}\smallsetminus\sac{\dep n}$. So $y_n$ is of the form: \[ y_n=\sum_{i=1}^{N}t_i(n)[\al_{i}], \text {with } t_i(n)\in (0,1] \text{ and }\sum_{i=1}^{N}t_i(n)=1, \] 
and the limit point $y$ is then given by: $y=\sum_{i=1}^N t_{i}^\infty[\al_{i}],$  where there exists $\mathcal{I}\subsetneq \{1,\ldots,N\}$ such that \[\text{ for } i\in \mathcal I,\,t_{i}(n)\mapsto t_{i}^\infty\in (0,1],\text{ and }\sum_{i\in \mathcal{I}}t_{i}^\infty=1,\] 
\[\text{ for } i\in \{1,\ldots,N\}\smallsetminus \mathcal{I},\, t_{i}(n)\to t_i^\infty=0.\] 
Since $y\in \ac {\dep n}\smallsetminus\sac{\dep n}$, in the complement of $\supp y=\bigcup_{i\in\mathcal{I}}\al_i$, there is a horoball connection, denoted by $\be$. By construction, $\be$ intersects only the arcs $\{\al_i\}_{i \notin \mathcal{I}}$. By continuity of the infinitesimal strip map $f$ on $\sigma$, the sequence $(f(y_n))_n$ converges to $f(y)\in \partial\adm m$ and $$\mathrm{d}l_{\be}(f(y))=\sum_{i \notin \mathcal{I}} t_i^\infty \mathrm{d}l_{\be}(f_{\al_i}(m))=0.$$ Hence $f(y)$ fails to lengthen $\be$. Also 
 $f(y)\neq 0$. This is becaue we have
 \begin{equation}
 	\mathrm{d}l_{\ga}(f(y))=\sum_{i\in \mathcal{I}} t_i^\infty \mathrm{d}l_{\be}(f_{\al_i}(m))>0,
 \end{equation}  
whenever $\ga$ is any horoball connection intersecting the arcs inside $\supp y$ (for e.g the edge containing an endpoint of an arc in the support). 
This concludes the proof.
\end{proof}
A very identical argument proves that 
\begin{thm}\label{properdecop}
	Let $\depu n$ be a decorated ideal polygon with a metric $m\in \tei {\depu n}$. Then
	the projectivised strip map $\mathbb{P}f:\sac {\depu n} \longrightarrow \mathbb{P}^+(\adm m)$ is proper.
\end{thm}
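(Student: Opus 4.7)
The plan is to follow, nearly verbatim, the proof of Theorem \ref{properdeco}; the only genuinely new ingredient is the combinatorial identification of a missing horoball connection that reflects the punctured filling condition of Definition \ref{big}. Let $(x_n)_{n\in\N}$ be a sequence in $\sac{\depu n}$ with $x_n\to\infty$. Since $\ac{\depu n}$ is a finite simplicial complex, I extract a subsequence $(y_n)$ converging to some $y\in \ac{\depu n}\smallsetminus \sac{\depu n}$. Writing $y_n=\sum_{i=1}^N t_i(n)[\al_i]$, the coefficients split into an index set $\mathcal I$ where $t_i(n)\to t_i^\infty>0$ with $\sum_{i\in\mathcal I}t_i^\infty=1$, and its complement where $t_i(n)\to 0$. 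By continuity of $f$ on the closed simplex, $f(y_n)\to f(y)$.

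The crucial step is to exhibit a horoball connection $\be$ disjoint from $\supp y=\bigcup_{i\in\mathcal I}\al_i$. Since $y\notin \sac{\depu n}$, the family $\{\al_i\}_{i\in\mathcal I}$ is not a filling simplex, so by Definition \ref{big} at least one connected component of $\depu n\smallsetminus \supp y$ fails the filling condition — namely, either a non-punctured component contains at least two decorated vertices, or the punctured component contains at least one decorated vertex. In the first case, a horoball connection $\be$ joining two decorated vertices in the same non-punctured component lies entirely in the complement of $\supp y$. In the second case, a decorated vertex $v$ on the boundary of the punctured component is joined to itself by a non-trivial horoball connection $\be$ encircling the puncture, which again is disjoint from $\supp y$. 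In either case, $\be\cap \al_i=\varnothing$ for all $i\in\mathcal I$.

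From Lemma \ref{lenderiv}, the linear functional $\mathrm{d}l_\be$ on $\tang{\depu n}$ satisfies
\[
\mathrm{d}l_\be(f(y))=\sum_{i\in\mathcal I}t_i^\infty\,\mathrm{d}l_\be(f_{\al_i}(m))=0,
\]
because each $\al_i$ in the support is disjoint from $\be$. Hence $f(y)\in \partial\adm m$. On the other hand, $f(y)\neq 0$: picking any boundary edge or diagonal $\ga$ that meets $\supp y$ (for instance, an edge of $\depu n$ meeting the endpoint of some $\al_i$ with $i\in\mathcal I$), Lemma \ref{lenderiv} gives $\mathrm{d}l_\ga(f(y))=\sum_{i\in\mathcal I}t_i^\infty\,\mathrm{d}l_\ga(f_{\al_i}(m))>0$, so $f(y)$ is a non-zero vector on the boundary of the open convex cone $\adm m$.

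Consequently $\mathbb{P}f(y_n)\to [f(y)]\in\partial(\mathbb{P}^+\adm m)$, which means $\mathbb{P}f(x_n)$ leaves every compact subset of $\mathbb{P}^+(\adm m)$, and $\mathbb{P}f$ is proper. The only obstacle worth checking carefully is the existence of the horoball connection $\be$ in the punctured component case, where one must verify that a vertex-to-itself arc encircling the puncture qualifies as a horoball connection contributing a linear functional to the description of $\adm m$; this follows from the definition of $\adm{m}$ in terms of lengths of all edges and diagonals of $\depu n$, the latter explicitly including such arcs in the decorated once-punctured setting.
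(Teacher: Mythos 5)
Your proposal is correct and follows exactly the route the paper intends: the paper proves Theorem \ref{properdecop} by declaring the argument ``very identical'' to that of Theorem \ref{properdeco}, and your write-up reproduces that argument while correctly supplying the one detail the paper leaves implicit, namely that a non-filling support in the once-punctured case yields a disjoint horoball connection $\be$ either between two decorated vertices in a non-punctured complementary component or from a decorated vertex to itself around the puncture. No gaps; this matches the paper's approach.
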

\bibliography{decoratedpolygons.bib}
\bibliographystyle{plain}
\end{document}